\providecommand{\tabularnewline}{\\}
\providecommand{\tabularnewline}{\\}
\newtheorem{rem}{Remark}\newtheorem{thm}{Theorem}\newtheorem{prop}{Proposition}\newtheorem{cor}{Corollary}\newtheorem{example}{Example}\newtheorem{lem}{Lemma}\newtheorem{defn}{Definition}\usepackage{bbm}
\def\p{\vskip4truept \noindent}
\DeclareMathOperator{\E}{\mathbb{E}}
\DeclareMathOperator{\R}{\mathbb{R}}
\DeclareMathOperator{\N}{\mathbb{N}}
\DeclareMathOperator{\1}{\mathbbm{1}}
\DeclareMathOperator{\CU}{\mathcal{U}}
\DeclareMathOperator{\CW}{\mathcal{W}}
\DeclareMathOperator{\CV}{\mathcal{V}}
\DeclareMathOperator{\CG}{\mathcal{G}}
\DeclareMathOperator{\CQ}{\mathcal{Q}}
\DeclareMathOperator{\val}{val}
\DeclareMathOperator{\marg}{marg}
\def\conv{ {\rm conv} }
\newcommand{\dist}{\mathbbm{d}}
\newcommand{\Prob}{\mathbbm{P}}
\begin{document}

\title{Value-Based Distance Between Information Structures \thanks{We are grateful to Satoru Takahashi and Siyang Xiong for comments.
Our work has benefited from the AI Interdisciplinary Institute ANITI.
ANITI is funded by the French "Investing for the Future - PIA3"
program under the Grant agreement n°ANR-19-PI3A-0004. J. Renault
and F. Gensbittel gratefully acknowledge funding from the French National
Research Agency (ANR) under the Investments for the Future (Investissements
d'Avenir) program, grant ANR-17-EURE-0010. J. Renault gratefully acknowledges
funding from ANR MaSDOL. M. P\k{e}ski gratefully acknowledges financial
support from the Insight Grant of the Social Sciences and Humanities
Research Council of Canada and the hospitality of HEC Paris, where
parts of this research were completed.}}
\author{Fabien Gensbittel \thanks{Fabien Gensbittel: Toulouse School of Economics, University Toulouse
Capitole, \protect\url{fabien.gensbittel@tse-fr.eu}}, Marcin P\k{e}ski\thanks{M. P\k{e}ski: Department of Economics, University of Toronto, \protect\url{marcin.peski@utoronto.ca}},
Jérôme Renault\thanks{Jérôme Renault: Toulouse School of Economics, University Toulouse
Capitole, \protect\url{jerome.renault@tse-fr.eu}}}

\maketitle
\begin{abstract}
We define the distance between two information structures as the largest
possible difference in value across all zero-sum games. We provide
a tractable characterization of distance and use it to discuss the
relation between the value of information in games versus single-agent
problems, the value of additional information, informational substitutes,
complements, or joint information. The convergence to a countable
information structure under value-based distance is equivalent to
the weak convergence of belief hierarchies, implying, among other
things, that for zero-sum games, approximate knowledge is equivalent
to common knowledge. At the same time, the space of information structures
under the value-based distance is large: there exists a sequence of
information structures where players acquire increasingly more information,
and $\varepsilon>0$ such that any two elements of the sequence have
distance of at least $\varepsilon$. This result answers by the negative
the second (and last unsolved) of the three problems posed by J.F.
Mertens in his paper ``Repeated Games'', ICM 1986. 
\end{abstract}

\newpage{}

\section{Introduction}

The role of information is of fundamental importance for the economic
theory. It is well known that even small differences in information
may lead to significant differences in the behavior \citep{rubinstein_electronic_1989}.
A recent literature on strategic (dis)-continuities has studied these
differences intensively and in full generality. A typical approach
is to consider all possible information structures, modeled as elements
of an appropriately defined universal space of information structures,
and study the differences in the strategic behavior across all games.

A similar methodology has not been applied to study the relationship
between information and the agent's bottom line, their payoffs. There
are perhaps few reasons for this. First, following \citet{dekel_topologies_2006},
\citet{weinstein2007structure} and others, the literature has focused
on interim rationalizability as the solution concept. Compared with
the equilibrium, this choice has several advantages: it is easier
to analyze, it is more robust from a decision-theoretic perspective,
it can be factorized through the Mertens-Zamir hierarchies of beliefs
(\citet{dekel_topologies_2006}, \citet{ElyPeski06}), and it does
not suffer from existence problems (unlike the equilibrium - see \citet{simon_games_2003}).
However, the value of information is typically measured in the ex-ante
sense, where solution concepts like Bayesian Nash equilibrium are
more appropriate. Also, the multiplicity of solutions necessitates
that the literature take a set-based approach. This, of course, makes
a quantitative comparison of the value of information difficult. Last
but not least, the freedom to choose games without any restrictions
makes the equilibrium payoff comparison between information structures
trivial (see Section \ref{sec:Payoff distance NZS} for a detailed
discussion of this point).

Despite the challenges, we find the questions concerning the strategic
value of information to be important and fascinating. How can we measure
the value of information on the universal type space? How much can
a player gain (or lose) from additional information? Which information
structures are similar in the sense that they always lead to the same
payoffs? In order to address these questions, and given the last point
in the previous paragraph, we must restrict the analysis to a class
of games. We propose to focus on zero-sum games. We do so for both
substantive and pragmatic reasons. First, the question of the value
of information is of special importance when players' interests are
opposing. With zero-sum games, the information has natural comparative
statics: a player is better off when her information improves and/or
the opponent's information worsens (\citet{peski_comparison_2008}).
Such comparative statics are intuitive, and although they hold in
single-agent decision problems (\citet{blackwell1953}), they do not
hold for general non-zero-sum games, where better information may
worsen a player's strategic position, and where players may have incentive
to engage in pre-game communication to manipulate information available
to others. Second, many of the constructions in the strategic discontinuities
literature rely on special classes of games, like coordination games,
or betting games (\citet{rubinstein_electronic_1989}, \citet{morris_typical_2002},
\citet{ely_critical_2011}, \citet{chen_e-mail_2013} among others).
This begs the question whether some of the surprising phenomena, like
the difference between approximate knowledge and common knowledge,
apply in other classes of games. Our restriction allows for the clarification
of this issue for zero-sum games.

On the other hand, the restriction avoids all the problems mentioned
above. Finite zero-sum games always have an equilibrium on common
prior information structures (\citet{mertens_sorin_zamir_2015}) that
depends only on the distribution over hierarchies of beliefs. The
equilibrium has decent decision-theoretic foundations (\citet{brandt2019justifying}),
and, even if it is not unique, the ex-ante payoff always is unique
and equal to the value of the zero-sum game. Finally, as we demonstrate
through numerous results and examples, 
 the restriction uncovers a rich internal structure of the
universal type space.

We define the distance between two common prior information structures
as the largest possible difference in value across all zero-sum payoff
functions that are bounded by a constant. This has a straightforward
interpretation as a tight upper bound on the gain or loss moving from
one information structure to another. Our first result provides a
characterization of the distance in terms of total variation distance
between sets of information structures. This distance can be computed
as a solution to a convex optimization problem.

The characterization is tractable in applications. In particular,
we use it to describe the conditions under which the distance between
information structures is maximized in single-agent problems (which
are a subclass of zero-sum games). We provide bounds to measure the
impact of the marginal distribution over the state. We also use it
in a series of results on the comparison of the value of information.
A tight upper bound on the value of an additional piece of information
is defined as the distance between two type spaces, in one of which
one or two players have access to new information. We give conditions
when the value of new information is maximized in single-agent problems.
We describe the situations in which the value of one piece of information
decreases when another piece of information becomes available, i.e.
the opposing players' pieces of information are substitutes. Similarly,
we show that, under some conditions, the value of one piece of information
increases when the other player receives an additional piece of information,
i.e. the opposing players' pieces of information are complements.\footnote{\citet{hellwig2009knowing} studies the complementarity and substitutability
of information on acquisition decisions in a beauty contest game.} Finally, we show that the new information matters only if it is valuable
to at least one of the players individually. The joint information
contained in the correlation between players' signals is not valuable
in the zero-sum games.

The second main result shows that the space of information structures
is large under value-based distance: there exists an infinite sequence
of information structures $u^{n}$ and $\varepsilon>0$ such that
the value-based distance between each pair of structures is at least
$\varepsilon$. In particular, it is not possible to approximate the
set of information structures with finitely many well-chosen information
structures. In the proof, we construct a Markov chain wherein the
first element of the chain is correlated with the state of the world.
We construct an information structure $u^{n}$ so that one player
observes the first $n$ odd elements of the sequence and the other
player observes the first $n$ even elements. Our construction implies
that in information structure $u^{n+1}$, each player gets an extra
signal. Thus, having more and more information may lead... nowhere.
This is unlike the single-player case, where more signals corresponds
to a martingale and the values converge uniformly over bounded decision
problems.

The Markov construction implies that all the information structures
$n^{\prime}\geq n$ have the same $n$-th order belief hierarchies
(\citet{mertzam:85}). As a consequence, our distance is not robust
with respect to the product convergence of belief hierarchies. This
observation may sound familiar to a reader of the strategic (dis)continuities
literature. However, we emphasize that the proof of our result is
entirely novel. Among other reasons, many earlier constructions heavily rely on
 coordination games (\citet{rubinstein_electronic_1989}, \citet{morris_typical_2002},
\citet{ely_critical_2011}, \citet{chen_e-mail_2013} among others).
Such constructions cannot be done with zero-sum games. 

More importantly, there are significant differences between strategic
topologies and the topology induced by value-based distance. For instance,
the type spaces from the famous email game example of \citet{rubinstein_electronic_1989},
or any approximate knowledge spaces, converge to the common knowledge
of the state for value-based distance. More generally, we show that
any sequence of countable information structures converges to a countable
structure under value-based distance if and only if the associated
hierarchies of beliefs converge in the product topology. The impact
of higher-order beliefs becomes significant only for uncountable information
structures.

An important contribution is that our result leads to the solution
of the last open problem posed in \citet{mertens_repeated_1986}\footnote{\label{fn:Mertens Problems}Problem 1 asks about the convergence of
the value, and it has been proved false in \citet{ziliotto_zero-sum_2016}.
Problem 3 asks about the equivalence between the existence of the
uniform value and uniform convergence of the value functions, and
it has been proved to be false by \citet{monderer_asymptotic_1993}
and \citet{lehrer_discounting_1994}.}. Specifically, his Problem 2 asks about the equicontinuity of the
family of value functions over information structures across all (uniformly
bounded) zero-sum games. The positive answer would have implied the
equicontinuity of the discounted and average values in repeated games,
which would have consequences for convergence in the limit theorems\footnote{\label{foot_zerosum} Equicontinuity of value functions is used to
obtain limit theorems in several works such as \citet{mertens1971value},
\citet{forges1982infinitely}, 
\citet{rosenberg2001operator}, \citet{rosenberg2000zero}, \citet{rosenberg2000maxmin},
\citet{rosenberg2004stochastic}, \citet{renault_value_2006}, \citet{gensbittel_value_2012},
\citet{venel2014commutative}, and \citet{renault_venel_2017}.}. Our results, however, indicate that the answer to the problem is
negative.

Our paper adds to the literature on the topologies of information
structures. \citet{dekel_topologies_2006} (see also \citet{morris_typical_2002})
introduce \emph{uniform-strategic topologies}, where two types are
close if, for any (not necessarily zero-sum) game, the sets of (almost)
rationalizable outcomes are (almost) equal.\footnote{\citet{dekel_topologies_2006} focuses mostly on a weaker notion of
\emph{strategic topology} that differs from the uniform strategic
topology in the same way that pointwise convergence differs from uniform
convergence. \citet{chen_uniform_2010} and \citet{chen_uniform_2016}
provide a characterization of strategic and uniform-strategic topologies
in terms of convergences of belief hierarchies.} There are two key differences between that and our approach. First,
the uniform-strategic topology applies to all (including non-zero-sum)
games. Our restriction allows us to show that some of the surprising
phenomena studied in this literature, like the difference between
approximate knowledge and common knowledge, are not relevant for zero-sum
games. Second, we work with \emph{ex-ante} information structures
and the equilibrium solution concept, whereas uniform-strategic topology
is designed to work on the \emph{interim} level, with rationalizability.
The ex-ante equilibrium approach is more appropriate for value comparison
and other related questions. For instance, in the information design
context, the quality of the information structure is typically evaluated
\emph{before} players receive any information.

Finally, this paper contributes to a recent but rapidly growing field
of information design (\citet{kamenica_bayesian_2011}, \citet{ely_beeps_2015},
\citet{bergemann_bayes_2015}, to name a few). In that literature,
an agent designs or acquires an information that is later used in
either a single-agent decision problem or a strategic situation. In
principle, the design of information may be divorced from the game
itself. For example, a bank may acquire software to process and analyze
large amounts of financial information before knowing what stock it
will trade, or, a spy master allocates resources to different tasks
or regions before she understands the nature of future conflicts.
Value-based distance is a tight upper bound on the willingness to
pay for a change in information structure. Our results provide insight
into the structure of the information designer's choice space, including
its diameter and internal complexity.

\section{Model}

\label{sec:Model}

A (countable) \emph{information structure} is an element $u\in\Delta\left(K\times\N\times\N\right)$
of the space of probabilities over tuples $\left(k,c,d\right)$, where
$K$ is a fixed finite set with $|K|\geq2$, and $\N$ is the set
of nonnegative integers\footnote{All the results from Sections \ref{sec:Computing} and \ref{sec:Applications} can be extended to uncountable information structures, the proof is relegated to Appendix \ref{sec:Uncountable-information-structur}).}. The interpretation is that $k$ is a state of nature, and $c$ and
$d$ are the signals of player 1 (maximizer) and player 2 (minimizer),
respectively. In other words, an information structure is a 2-player
common prior Harsanyi type space over $K$ with at most countably
many types. The set of information structures is denoted by $\CU=\CU\left(\infty\right)$,
and for $L=1,2,...$, $\CU\left(L\right)$, denotes the subset of
information structures where each player receives a signal smaller
than or equal to $L-1$ with probability $1$. If $C$ and $D$ are
nonempty countable sets, we always interpret elements $u\in\Delta(K\times C\times D)$
as information structures, using fixed enumerations of $C$ and $D$.
In particular, if $C$ and $D$ are finite with cardinality of at
most $L$, we view $u\in\Delta(K\times C\times D)$ as an information
structure in $\CU\left(L\right)$. For each $u,v\in\CU$, let us define
the total variation norm as $\|u-v\|=\sum_{k,c,d}|u(k,c,d)-v(k,c,d)|$.

A\emph{ payoff function} is a mapping $g:K\times I\times J\to[-1,1]$,
where $I,J$ are finite nonempty action sets. The set of payoff functions
with action sets of cardinality $\leq L$ is denoted by $\CG(L)$,
and let $\CG=\bigcup_{L\geq1}\CG(L)$ be the set of all payoff functions.

Information structure $u$ and payoff function $g$ together define
a zero-sum Bayesian game $\Gamma(u,g)$ played as follows: first,
$(k,c,d)$ is selected according to $u$, player 1 learns $c$, and
player 2 learns $d$. Next, simultaneously, player 1 chooses $i\in I$
and player 2 chooses $j\in J$, and finally the payoff of player 1
is $g(k,i,j)$. The zero-sum game $\Gamma(u,g)$ has a value (the
unique equilibrium, or minmax, payoff of player 1), which we denote
by $\val(u,g)$.

We define \emph{the value-based distance} between two information
structures as the largest possible difference in value across all
payoff functions: 
\begin{equation}
\dist(u,v)=\sup_{g\in\CG}|\val(u,g)-\val(v,g)|.\label{eq:ZS distance}
\end{equation}
This has a straightforward interpretation as the tight upper bound
on the gain or loss from moving from one information structure to
another. Since all payoffs are in $[-1,1]$, it is easy to see that
$\dist(u,v)\leq\|u-v\|\leq2$.\footnote{\label{fn:The-inequality-is}The inequality is a property of zero-sum
games. For every game $g\in\CG$, let $\sigma$ be an optimal strategy
of player $1$ in $\Gamma(u,g)$ and let $\tau$ be an optimal strategy
of player $2$ in $\Gamma(v,g)$. Using the saddle-point property
of the value, the difference $\val(u,g)-\val(v,g)$ is no larger than
the differences of payoffs in $\Gamma(u,g)$ and $\Gamma(v,g)$ when
the players play $(\sigma,\tau)$ in both games. This difference is
clearly no larger than $\|u-v\|$.}

The distance (\ref{eq:ZS distance}) satisfies two axioms of a metric:
the symmetry and the triangular inequality. However, it is possible
that $\dist(u,v)=0$ for $u\neq v$. For instance, if we start from
an information structure $u$ and relabel the player signals, we obtain
an information structure $u'$ that is formally different from $u$
but ``equivalent'' to $u$. Say that $u$ and $v$ are equivalent,
and write $u\sim v$ if, for all game structures $g$ in $\CG$, $\val(u,g)=\val(v,g)$.
We let $\CU^{*}=\CU/\sim$ be the set of equivalence classes. Thus,
$\dist$ is a pseudo-metric on $\CU$ and a metric on $\CU^{*}$.

For each information structure $u\in\Delta\left(K\times C\times D\right)$,
there is a unique belief-preserving mapping that maps signals $c$
and $d$ into induced Mertens-Zamir hierarchies of beliefs $\tilde{c}\in\Theta_{1}$
and $\tilde{d}\in\Theta_{2}$, where $\Theta_{i}$ is the universal
space of player $i$'s belief hierarchies over $K$ (see \citet{mertens_sorin_zamir_2015}).
The mapping induces a consistent probability distribution $\tilde{u}\in\Delta(K\times\Theta_{1}\times\Theta_{2})$
over the state and hierarchies of beliefs. Let $\Pi_{0}=\left\{ \tilde{u}:u\in\CU\right\} $
be the space of all such distributions. The closure of $\Pi_{0}$
(in the weak topology, that is, the topology induced by the product
convergence of belief hierarchies) is denoted as $\Pi$, where $\Pi$
is the space of consistent probability distributions induced by generalized
(measurable, possibly uncountable) information structures. The space
$\Pi$ is compact under weak topology; $\Pi_{0}$ is dense in $\Pi$
(see Corollary III.2.3 and Theorem III.3.1 in \citet{mertens_sorin_zamir_2015}).
Note that, for a payoff function $g$ and $u\in\Pi$, one can similarly
define the value $\val(u,g)$ of the associated Bayesian game (see
Proposition III.4.2 in \citet{mertens_sorin_zamir_2015}).

\section{Characterization of the distance}

\label{sec:Computing}

We start with the notion of garbling, used in \citet{blackwell1953}
to compare statistical experiments. A garbling is a mapping $q:\N\rightarrow\Delta(\N)$.
The set of all garblings is denoted by $\CQ=\CQ\left(\infty\right)$,
and for each $L=1,2,...$, $\CQ(L)$ denotes the subset of garblings
$q:\N\to\Delta(\{0,...,L-1\})$. Given a garbling $q$ and an information
structure $u$, we define the information structures $q.u$ and $u.q$
so that, for each $k,c,d$, 
\[
q.u(k,c,d)=\sum_{c^{\prime}}u(k,c{}^{\prime},d)q(c{}|c^{\prime})\text{ and }u.q(k,c,d)=\sum_{d^{\prime}}u(k,c,d{}^{\prime})q(d|d').
\]
We will interpret garblings in two different ways. First, a garbling
is seen as an information loss in the Blackwell's comparison of experiments
sense: suppose that $(k,c',d)$ is selected according to $u$, $c$
is selected according to probability $q(c')$, and player 1 learns
$c$ (and player 2 learns $d$). The new information structure is
exactly equal to $q.u$, where the signal received by player 1 is
deteriorated from garbling $q$. Similarly, $u.q$ corresponds to
the dual situation where player 2's signal is deteriorated. Further,
garbling $q$ can also be seen as a behavioral strategy of a player
in a Bayesian game $\Gamma(u,g)$: if the signal received is $c$,
play the mixed action $q(c)$ (the sets of actions of $g$ being identified
with subsets of $\N$). The relation between the two interpretations
plays an important role in the proof of Theorem \ref{thm1} below.
\begin{thm} \label{thm1} For each $L=1,2,...,\infty$, and each
$u,v\in\CU\left(L\right)$, 
\begin{eqnarray}
\sup_{g\in\CG}\left(\val(v,g)-\val(u,g)\right) & = & \min_{q_{1},q_{2}\in\CQ\left(L\right)}\|q_{1}.u-v.q_{2}\|.\label{eq:d characterization}
\end{eqnarray}
\[
\mathnormal{Hence},\hspace{0.5cm}\dist(u,v)=\max\left\{ \min_{q_{1},q_{2}\in\CQ\left(L\right)}\|q_{1}.u-v.q_{2}\|,\min_{q_{1},q_{2}\in\CQ\left(L\right)}\|u.q_{1}-q_{2}.v\|\right\} .
\]
If $L<\infty$, the supremum in (\ref{eq:d characterization}) is
attained by some $g\in\CG\left(L\right)$. \end{thm}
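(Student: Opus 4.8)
The plan is to prove the first identity~\eqref{eq:d characterization}; the formula for $\dist(u,v)$ then follows immediately by applying it twice with the roles of $u$ and $v$ exchanged and taking the maximum, since $\dist(u,v)=\sup_g|\val(u,g)-\val(v,g)|$ is the larger of $\sup_g(\val(v,g)-\val(u,g))$ and $\sup_g(\val(u,g)-\val(v,g))$. So everything reduces to the single directed equality. I would establish it by proving the two inequalities separately.

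For the inequality $\leq$ (the value gap is at most the garbling distance), fix any garblings $q_1,q_2\in\CQ(L)$ and any game $g\in\CG$. The key is the two-faced reading of garblings described just before the theorem: a garbling can be post-composed with an optimal strategy. Concretely, in $\Gamma(v,g)$ let $\sigma$ be an optimal strategy of player~1 and $\tau$ an optimal strategy of player~2. Interpreting $q_1$ as a strategy-transformation, the strategy ``apply $q_1$ then $\sigma$'' in $\Gamma(u,g)$ guarantees player~1 at least the payoff he would get playing $\sigma$ against $\tau$ in the information structure $q_1.u$; similarly ``apply $q_2$ then $\tau$'' in $\Gamma(v,g)$ — wait, more precisely one compares the common game played on $q_1.u$ versus on $v.q_2$, and the payoff under any fixed strategy pair differs by at most $\|q_1.u-v.q_2\|$ (this is exactly the footnote~\ref{fn:The-inequality-is} estimate). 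Chaining: $\val(v,g)$ equals the payoff of $(\sigma,\tau)$ on $v$, which after absorbing $q_2$ into $\tau$ is a payoff on $v.q_2$, which is within $\|q_1.u-v.q_2\|$ of the payoff of the corresponding pair on $q_1.u$, which after absorbing $q_1$ into $\sigma$ is achievable by player~1 in $\Gamma(u,g)$ and hence is $\leq\val(u,g)$. Taking sup over $g$ and min over $q_1,q_2$ gives $\leq$.

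For the reverse inequality $\geq$, I would use LP/minimax duality. Write $m:=\min_{q_1,q_2\in\CQ(L)}\|q_1.u-v.q_2\|$. When $L<\infty$ this is a minimization of a convex (piecewise-linear) function over the compact polytope $\CQ(L)\times\CQ(L)$, so the minimum is attained; the value $\|q_1.u-v.q_2\|=\sum_{k,c,d}|\cdots|=\max_{w}\sum_{k,c,d}w(k,c,d)(q_1.u-v.q_2)(k,c,d)$ over $w\in[-1,1]^{K\times\{0,\dots,L-1\}^2}$. So $m=\min_{q_1,q_2}\max_w\langle w,\,q_1.u-v.q_2\rangle$. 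Now $\langle w, q_1.u\rangle$ is bilinear in $(w,q_1)$ and $\langle w, v.q_2\rangle$ is bilinear in $(w,q_2)$, and all three domains are compact convex, so Sion's minimax theorem lets me swap to $m=\max_w\big(\min_{q_1}\langle w,q_1.u\rangle-\max_{q_2}\langle w, v.q_2\rangle\big)$ — this is the step where I need to be careful that the swap is legitimate and that the inner optimizations can be done. The inner problem $\min_{q_1}\langle w,q_1.u\rangle$: since $q_1$ sends each signal $c'$ of $u$ independently to a distribution over $\{0,\dots,L-1\}$, this equals $\sum_{c'}\big(\min_{i}\sum_{k,d}w(k,i,d)u(k,c',d)\big)$, i.e. player~1 best-responding signal-by-signal; symmetrically for the $\max_{q_2}$ term. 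Reading $w$ as a payoff function $g\in\CG(L)$ (with $I=J=\{0,\dots,L-1\}$), the quantity $\min_{q_1}\langle w,q_1.u\rangle$ is precisely the guarantee level of player~1 when player~1 plays pure-after-the-signal — but against player~2 playing the \emph{fixed} trivial (uninformative) response. That is not yet $\val(u,g)$. To fix this I would instead build the game $g$ on larger action sets or argue that introducing $w$ together with a dependence on the opponent's signal recovers the value; the cleanest route is: set up the bilinear form so that the dual variable is a pair $(g,\text{response})$ and recognize $\min_{q_1}\max_{\text{resp}}$ as $\val(u,g)$ and $\max_{q_2}\min_{\text{resp}}$ as $\val(v,g)$ via the minmax theorem for finite Bayesian games (each such game has a value depending only on $u$). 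The upshot is that for the optimal $w$, the bracketed expression equals $\val(u,g)-\val(v,g)$ for the associated $g\in\CG(L)$, giving $m\leq\sup_{g\in\CG(L)}(\val(u,g)-\val(v,g))$; but we want the opposite direction $\val(v,g)-\val(u,g)$, so I'd run the construction with $u,v$ swapped throughout (or equivalently negate $g$), yielding $m\geq$ is wrong-signed — hence I should set $m=\min_{q_1,q_2}\|q_1.u-v.q_2\|$ corresponds to $\sup_g(\val(v,g)-\val(u,g))$ by choosing the pairing so that $q_1$ deteriorates player~1's information in $u$ (hurting the max player) while $q_2$ deteriorates player~2's information in $v$ (helping the max player), matching the asymmetry in~\eqref{eq:d characterization}. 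Finally, the case $L=\infty$ follows by approximation: for $u,v\in\CU$ supported (up to $\varepsilon$ in total variation) on signals $<L$, truncating garblings to $\CQ(L)$ changes $\|q_1.u-v.q_2\|$ by $O(\varepsilon)$, and $\CG=\bigcup_L\CG(L)$, so both sides are suprema/infima of their $L$-truncated versions; and attainment for finite $L$ is exactly the compactness remark already used.

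The main obstacle is the $\geq$ direction, specifically making the minimax swap rigorous and, more delicately, identifying the post-swap inner optimum with the \emph{value} $\val(u,g)$ rather than a one-sided best-response level — i.e. correctly encoding both players' strategic freedom into the bilinear duality so that the Bayesian minmax theorem of \citet{mertens_sorin_zamir_2015} applies and the optimal dual variable genuinely is a payoff function realizing the value gap. The $\leq$ direction and the passage from finite to infinite $L$ are comparatively routine.
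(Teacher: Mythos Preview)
Your $\leq$ direction is fine and is essentially the paper's argument: monotonicity of the value in information gives $\val(v,g)-\val(u,g)\le\val(v.q_2,g)-\val(q_1.u,g)$, and then the total-variation bound from footnote~\ref{fn:The-inequality-is} finishes it.

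The genuine gap is in your $\geq$ direction. You correctly set up the minimax swap via Sion, arriving (after fixing signs) at
\[
m=\max_{g}\Bigl(\min_{q_2}\langle g,\,v.q_2\rangle-\max_{q_1}\langle g,\,q_1.u\rangle\Bigr),
\]
and then correctly observe that $\min_{q_2}\langle g,v.q_2\rangle$ and $\max_{q_1}\langle g,q_1.u\rangle$ are one-sided best-response levels, not values. But here you stop and look for a more elaborate construction, which is unnecessary. The missing observation is that the one-sided levels already bound the values \emph{in the right direction}: since player~2 can play the identity strategy $\mathrm{Id}$ in $\Gamma(u,g)$, we have $\max_{q_1}\langle g,q_1.u\rangle\ge\val(u,g)$; since player~1 can play $\mathrm{Id}$ in $\Gamma(v,g)$, we have $\min_{q_2}\langle g,v.q_2\rangle\le\val(v,g)$. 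Hence the bracketed quantity is already $\le\val(v,g)-\val(u,g)$, and taking $\max_g$ gives $m\le\sup_g(\val(v,g)-\val(u,g))$ directly. No enlargement of action sets, no second minimax, no ``encoding both players' freedom into the dual'' is needed; the asymmetric choice of which player's information is deteriorated on which structure is precisely what makes the $\mathrm{Id}$ trick give the inequality with the correct sign.

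A smaller issue: your treatment of $L=\infty$ by truncation is sketchy and does not establish that the infimum over $\CQ(\infty)^2$ is actually a \emph{minimum}. The paper handles this not by approximation but by putting a non-standard compact topology on $\CQ$ (identifying $0$ with ``$+\infty$'') and checking that for each $g\in G$ the map $(q_1,q_2)\mapsto\langle g,v.q_2-q_1.u\rangle$ is lower semicontinuous, so Sion applies on the full space and the min is attained.
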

\begin{center}
\begin{figure}
\begin{centering}
\begin{tikzpicture} [scale = 6, dot/.style={circle,inner sep=2pt,fill},   extended line/.style={shorten >=-10,shorten <=-10}, blend mode=multiply] 
	
	\coordinate (u) at (0.6,0.75);
	\coordinate (v) at (1.5, 0.2);
	\fill[domain=0.1:1.1, variable=\x, red!30, opacity = 0.5] (0.1, 0.05) -- plot ({\x}, {0.75-0.7*4*(\x-0.6)*(\x-0.6)}) -- (1.1,0.05) -- cycle;
	\fill[domain=1:2, variable=\x, blue!30, opacity = 0.5] (1, 0.8) -- plot ({\x}, {0.2+0.6*4*(\x-1.5)*(\x-1.5)}) -- (2,0.8) -- cycle;

	\coordinate (x1) at (0.928, 0.449);
	\coordinate (x2) at (1.117, 0.552);

	\draw (u) node [dot] {};
	\draw (u) node [above] {$u$};
	\draw (v) node [dot] {};
	\draw (v) node [below] {$v$};

	\draw (0.6, 0.3) node [scale = 1.2] {$\CQ.u$};
	\draw (1.5, 0.6) node [scale = 1.2] {$v.\CQ$};
	
	\draw [>=triangle 45, <->] (x1)  -- (x2) node[above, midway, sloped, scale =0.7] {$\min\left\Vert \right\Vert >0$};

\end{tikzpicture}
\par\end{centering}
\caption{\label{fig:illustartion of theorem 1}}
\end{figure}
\par\end{center}

The first part of Theorem 1 finds a tight upper bound on the difference
of value between all zero-sum games played on information structures
$v$ and $u$. It is equal to a total variation distance between two
sets of garblings of the original information structures: the set
$\CQ.u=\left\{ q.u:q\in\CQ\right\} $ of structures obtained from $u$ by
deteriorating information of player 1 and the set $v.\CQ=\left\{ v.q:q\in\CQ\right\} $
obtained from $v$ by deteriorating information of player 2. The two
sets are illustrated on Figure \ref{fig:illustartion of theorem 1}.
(The direction up is better for player 1, and worse for player 2.) 

The result simplifies the problem of computing the value-based distance.
First, it reduces the dimensionality of the optimization domain from
payoff functions and strategy profiles (to compute the value) to a
pair of garblings. More importantly, the solution to the original
problem (\ref{eq:ZS distance}) is typically a saddle point as it
involves finding optimal strategies in a zero-sum game. On the other
hand, the function $\left\Vert q_{1}.u-v.q_{2}\right\Vert $ is convex
in garblings $(q_{1},q_{2})$, and, if $L<\infty$, the domains of
the optimization problem $\left\{ q.u:q\in\CQ\left(L\right)\right\} ,\left\{ u.q:q\in\CQ\left(L\right)\right\} $
are convex and compact. Thus, for finite structures, the right-hand
side of (\ref{eq:d characterization}) is a convex, compact, and finitely
dimensional optimization problem.

Theorem \ref{thm1} is closely related to the comparison of information
structures. Say \emph{player 1 prefers $u$ to $v$} in every game;
i.e., write $u\succeq v$, if for all $g\in\CG$, $\val(u,g)-\val(v,g)\geq0$.
The definition extends Blackwell's comparison of experiments to zero-sum
games. If the two sets from Figure \ref{fig:illustartion of theorem 1}
have non-empty intersection, the distance between them is equal to
0, and player 1 prefers $u$ to $v$. Conversely, if the two sets
do not have an intersection, then there are games for which the difference
in value on $v$ and $u$ is positive (and arbitrarily close to the
total variation distance) and player 1 does not prefer $u$ to $v$.
Hence Theorem \ref{thm1} implies the following \begin{cor} \label{cor1}
$u\succeq v$ $\Longleftrightarrow$ there exists $q_{1}$, $q_{2}$
in $\CQ$ s.t. $q_{1}.u=v.q_{2}$. \end{cor}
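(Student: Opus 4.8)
The plan is to read the corollary directly off Theorem \ref{thm1}, applied with $L=\infty$, so that $u,v\in\CU=\CU(\infty)$ and the garblings range over all of $\CQ=\CQ(\infty)$.

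\emph{Sufficiency.} Suppose $q_{1}.u=v.q_{2}$ for some $q_{1},q_{2}\in\CQ$. Then the right-hand side of (\ref{eq:d characterization}) is bounded above by $\|q_{1}.u-v.q_{2}\|=0$, hence equals $0$. By the first part of Theorem \ref{thm1}, $\sup_{g\in\CG}(\val(v,g)-\val(u,g))=0$, i.e.\ $\val(u,g)\ge\val(v,g)$ for every $g\in\CG$, which is exactly $u\succeq v$.

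\emph{Necessity.} Suppose $u\succeq v$, i.e.\ $\val(u,g)-\val(v,g)\ge0$ for all $g\in\CG$. Then $\sup_{g\in\CG}(\val(v,g)-\val(u,g))\le0$; since this supremum is also $\ge0$ (evaluate at the null payoff function $g\equiv0$, where both values vanish), it equals $0$. By Theorem \ref{thm1}, $\min_{q_{1},q_{2}\in\CQ}\|q_{1}.u-v.q_{2}\|=0$, and the minimum is \emph{attained} — this is part of the statement of Theorem \ref{thm1}. Taking a minimizer $(q_{1},q_{2})$ yields $\|q_{1}.u-v.q_{2}\|=0$, i.e.\ $q_{1}.u=v.q_{2}$.

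I expect no real obstacle here beyond that attainment point: geometrically, the content is that the two sets $\CQ.u$ and $v.\CQ$ of Figure \ref{fig:illustartion of theorem 1} are closed and intersect whenever the distance between them vanishes. If one wanted an argument that does not invoke attainment in Theorem \ref{thm1}, one could instead pick $q_{1}^{n},q_{2}^{n}\in\CQ$ with $\|q_{1}^{n}.u-v.q_{2}^{n}\|\to 0$, extract a subsequence along which $q_{1}^{n}$ and $q_{2}^{n}$ converge coordinatewise (the set of garblings being sequentially compact in the product topology), and check via Scheffé's lemma together with dominated convergence (using that $u$ and $v$ are probability measures) that $q_{1}^{n}.u\to q_{1}.u$ and $v.q_{2}^{n}\to v.q_{2}$ in total variation, so $q_{1}.u=v.q_{2}$ in the limit. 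Either way the corollary follows, and it specializes to Blackwell's comparison of experiments when one player is uninformed.
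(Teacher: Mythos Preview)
Your proof is correct and follows exactly the route the paper takes: the corollary is read off Theorem~\ref{thm1} by noting that $u\succeq v$ is equivalent to the left-hand side of (\ref{eq:d characterization}) being $0$, hence to the attained minimum on the right being $0$. The only remark is that your optional fallback argument (sequential compactness of $\CQ$ in the product topology) glosses over the loss-of-mass issue that the paper handles by compactifying $\Delta(\N)$; but since Theorem~\ref{thm1} already states the $\min$ is attained for $L=\infty$, your primary argument needs no such detour.
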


The above result extends \citep{peski_comparison_2008} to countable
information structures (\citep{peski_comparison_2008} was stated
only for finite structures). 

The intuition of the proof of Theorem \ref{thm1} is as follows. For
simplicity, suppose that the information structure is finite. We first
show that the maximum of the difference in values over all games $g$,
i.e., the right-hand side of (\ref{eq:d characterization}), is not
smaller than the left-hand side of (\ref{eq:d characterization}).
The starting point is to identify each garbling with a mixed strategy
in Bayesian game $\Gamma(u,g)$ induced from information structure
$u$. Using this identification, the expected payoff in this game
can be written as $\langle g,q_{1}.u.q_{2}\rangle$, where $\langle g,u\rangle=\sum_{k,c,d}g(k,c,d)u(k,c,d)$
and $q_{i}$ are garbling/strategies. Among others, each player can
use strategy $\text{Id}$ which plays the received signal. Because
such a strategy is always available to both players, using the minmax
property of equilibrium, the difference in values $\val(v,g)-\val(u,g)$
is no less than the difference between player 2's optimal payoff against
the $\text{Id}$ strategy of player 1 in $v$ (i.e. $\inf_{q_{2}}\langle g,v.q_{2}\rangle$)
and player 1's optimal payoff against the $\text{Id}$ strategy of
player 2 in $u$ (i.e. $\sup_{q_{1}}\langle g,q_{1}.u\rangle$). Since
this holds for any game $g$, it follows that the value-based distance
is bounded below by $\sup_{g}\inf_{q_{1},q_{2}}\langle g,v.q_{2}-q_{1}.u\rangle$.
The latter is equal to the left-hand side of (2). 

Next, we show that, for each game $g$, the difference in values is
not higher than the left-hand side of (\ref{eq:d characterization}).
Using the monotony of the value with respect to information, we have
that 
\[
\val(v,g)-\val(u,g)\leq\val(v.q_{2},g)-\val(q_{1}.u,g)\leq\|v.q_{2}-q_{1}.u\|
\]
for arbitrary garblings $q_{1}$ and $q_{2}$. The last inequality
was stated above (see footnote \ref{fn:The-inequality-is}). Observing
that $\|v.q_{2}-q_{1}.u\|=\sup_{g}\langle g,v.q_{2}-q_{1}.u\rangle$,
we deduce that the value-based distance is also bounded above by $\inf_{q_{1},q_{2}}\sup_{g}\langle g,v.q_{2}-q_{1}.u\rangle$.
Theorem 1 then follows from the Sion's Minimax Theorem. We leave the
complete proof to the Appendix.

\section{Applications}

\label{sec:Applications}

The characterization from Theorem \ref{thm1} is quite tractable.
This section contains a few straightforward applications. 
Appendix \ref{sec:Examples-and-Counterexamples} contains numerous examples to illustrate the computations and the subsequent results.

\subsection{The impact of the marginal over $K$}

Among the many ways in which two information structures can differ,
the most obvious one is that they may have different distributions
over states $k$. In order to capture the impact of such differences,
the next result provides tight bounds on the distance between two
type spaces with a given distribution overs the states: \begin{prop}
\label{prop:diameter} For each $p,q\in\Delta K$, each $u,v\in\CU$
such that $\marg_{K}u=p,\marg_{K}v=q$, we have 
\begin{align}
\sum_{k}\left|p_{k}-q_{k}\right|\leq\dist\left(u,v\right) & \leq2\left(1-\max_{p^{\prime},q{}^{\prime}\in\Delta K}\sum_{k}\min\left(p_{k}q{}_{k}^{\prime},p{}_{k}^{\prime}q_{k}\right)\right).\label{eq:diameter bounds}
\end{align}
If $p=q$, the upper bound is equal to $2\left(1-\max_{k}p_{k}\right)$.
\end{prop}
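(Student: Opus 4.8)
The plan is to use the characterization from Theorem \ref{thm1}, exploiting the fact that both the lower and upper bound are driven entirely by the marginals on $K$. For the lower bound, I would restrict attention to payoff functions $g(k,i,j) = h(k)$ that depend only on the state (and not on the actions). For such $g$, the value is $\val(u,g) = \sum_k p_k h(k)$ for any information structure with $\marg_K u = p$, since information is irrelevant when payoffs don't depend on actions. Taking $h(k) = \pm 1$ according to the sign of $p_k - q_k$ gives $|\val(u,g) - \val(v,g)| = \sum_k |p_k - q_k|$, which establishes the first inequality in \eqref{eq:diameter bounds}. This step is routine.

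For the upper bound, I would use the garbling characterization $\dist(u,v) = \max\{\min_{q_1,q_2}\|q_1.u - v.q_2\|, \min_{q_1,q_2}\|u.q_1 - q_2.v\|\}$ and construct explicit garblings that collapse each player's signal to a single point — i.e., garblings of the form $q(c') = \delta_0$ for all $c'$, which maps any structure to one where the corresponding player is completely uninformed. Applying such a garbling to player 1's signal in $u$ yields $q_1.u$ with $\marg_{\{K,D\}}(q_1.u) = \marg_{\{K,D\}} u$ but player 1 uninformed; similarly garbling player 2's signal in $v$. Then $\|q_1.u - v.q_2\|$ can be bounded by comparing the two: since in $q_1.u$ player 1 is uninformed and in $v.q_2$ player 2 is uninformed, both structures are essentially determined by one player's conditional distribution over $K$. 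The key computation is that the total variation distance between such "one-sided-uninformed" structures, minimized over the choice of the single atom and over further garbling of the informed player's signal, is controlled by $2(1 - \max_{p',q'} \sum_k \min(p_k q'_k, p'_k q_k))$. The quantity $\sum_k \min(p_k q'_k, p'_k q_k)$ is the "overlap mass" one can achieve by independently re-randomizing the informed signals; I would verify this by writing out the optimal coupling.

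The main obstacle I anticipate is the upper bound computation: one must be careful that garbling only one player's signal in each structure (player 1's in $u$, player 2's in $v$) still leaves enough freedom to align the two resulting measures, and that the optimization over the residual garbling of the still-informed player (player 2 in $q_1.u$, player 1 in $v.q_2$) combined with the choice of the degenerate target produces exactly the stated bound rather than something weaker. Concretely, after garbling player 1 in $u$ to be uninformed, $q_1.u$ still records player 2's signal $d$ with its conditional law over $K$; I then need to further garble this, and match against $v.q_2$ where player 2 is uninformed but player 1 retains signal $c$. Matching these two requires choosing how to map player 2's informative signal in the first structure and player 1's informative signal in the second into a common space, and the best achievable overlap is the $\sum_k \min(p_k q'_k, p'_k q_k)$ term, optimized over $p', q' \in \Delta K$ representing the (garbled) conditional laws. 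By symmetry the same bound holds for the other term in the $\max$, giving \eqref{eq:diameter bounds}.

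Finally, for the case $p = q$: the upper bound specializes because $\max_{p',q'} \sum_k \min(p_k q'_k, p_k q'_k) = \max_{q'} \sum_k p_k q'_k = \max_k p_k$, attained by putting $q' = \delta_{k^*}$ at a state $k^*$ maximizing $p_k$. Substituting gives $2(1 - \max_k p_k)$. I expect this last step to be a one-line simplification once the general bound is in hand.
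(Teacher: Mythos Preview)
Your lower bound argument is correct and matches the paper's: a payoff $g(k)$ depending only on the state gives $\val(u,g)=\sum_k p_k g(k)$, and choosing $g(k)=\mathrm{sgn}(p_k-q_k)$ yields $\sum_k|p_k-q_k|$.

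The upper bound plan, however, has a genuine gap. In the characterization $\min_{q_1,q_2}\|q_1.u-v.q_2\|$ from Theorem~\ref{thm1}, the garbling $q_1$ acts only on player~1's signal in $u$ and $q_2$ acts only on player~2's signal in $v$; player~2's signal in $u$ and player~1's signal in $v$ are \emph{untouched}. So after you collapse player~1's signal in $u$, the measure $q_1.u$ still carries player~2's original signal $d$ with its $u$-specific conditional law, and there is no ``further garbling'' available to align it with anything. The total variation $\|q_1.u-v.q_2\|$ then depends on the full marginals $\marg_{K\times D}u$ and $\marg_{K\times C}v$, not just on $p$ and $q$, and with your degenerate garblings the two measures typically have nearly disjoint supports, giving a bound close to $2$. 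You correctly flag this as the main obstacle, but the resolution you sketch---``residual garbling of the still-informed player''---is precisely the operation Theorem~\ref{thm1} does not permit.

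The paper fixes this by inserting a monotonicity step \emph{before} invoking Theorem~\ref{thm1}. Since the value is increasing in player~1's information and decreasing in player~2's, one has $\sup_g(\val(u,g)-\val(v,g))\le\sup_g(\val(\bar u,g)-\val(\underline v,g))$, where $\bar u$ gives player~1 full information ($c=k$) and player~2 none, while $\underline v$ does the reverse. Only then is Theorem~\ref{thm1} applied, to the pair $(\bar u,\underline v)$: now player~2's signal in $\bar u$ and player~1's signal in $\underline v$ are already trivial, so the remaining garblings reduce to choosing single distributions $q',p'\in\Delta K$, and the explicit computation $\|\bar u.q_2-q_1.\underline v\|=2\bigl(1-\sum_k\min(p_kq'_k,p'_kq_k)\bigr)$ goes through. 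Your $p=q$ simplification is then correct in spirit, though note the typo: you wrote $\min(p_kq'_k,p_kq'_k)$ where you meant $p_k\min(q'_k,p'_k)$.
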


The bounds are tight. The lower bound in (\ref{eq:diameter bounds})
is reached when the two information structures do not provide any
information to any of the players. The upper bound is reached with
information structures where one player knows the state perfectly
and the other player does not know anything.

When $p=q$, Proposition \ref{prop:diameter} describes the diameter
of the space of information structures with the same distribution
$p$ of states. The result is useful for, among others, information
design questions, where such space is exactly the choice set when
Nature fixes the distribution of states, and the designer of information
chooses how much information to acquire. In such a case, the diameter
has an interpretation of the (tight) upper bound on the potential
gain/loss from moving between information structures.

\subsection{Single-agent problems}

A natural question is what games maximize value-based distance $\dist$.
The next result characterizes the situations, when the maximum in
(\ref{eq:ZS distance}) is attained by a special class of zero-sum
games: single-agent problems.

Formally, a payoff function $g\in\CG(L)$ is a \emph{single-agent
(player 1) problem} if player 2's action set is a singleton, $J=\left\{ *\right\} $.
Let $\CG_{1}\subset\CG$ be the set of player 1 problems. Then, for
each $g\in\CG_{1}$ and each information structure $u$, $\val\left(g,u\right)$
is the maximal expected payoff of player 1 in problem $g$. Let 
\begin{equation}
\dist_{1}\left(u,v\right):=\sup_{g\in\CG_{1}}\left|\val\left(u,g\right)-\val\left(v,g\right)\right|\leq\dist\left(u,v\right).\label{eq:single agent distance}
\end{equation}

For any structure $u\in\Delta\left(K\times C\times D\right)$, we
say that the players' information is \emph{conditionally independent},
if, under $u$, signals $c$ and $d$ are conditionally independent
given $k$.

\begin{prop}\label{prop: SA=00003D00003D00003D00003D00003D00003D00003D00003D00003D00003DGames}Suppose
that $u,v\in\Delta\left(K\times C\times D\right)$ are two information
structures with conditionally independent information such that $\marg_{K\times D}u=\marg_{K\times D}v$.
Then, $\dist\left(u,v\right)=\dist_{1}\left(u,v\right).$ \end{prop}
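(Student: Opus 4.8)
The plan is to use the characterization from Theorem~\ref{thm1}. Since $\dist_1(u,v)\le\dist(u,v)$ always holds, it suffices to prove the reverse inequality, i.e.\ that every zero-sum game $g\in\CG$ can be matched (in terms of the value gap $|\val(u,g)-\val(v,g)|$) by a single-agent player~1 problem. By Theorem~\ref{thm1},
\[
\dist(u,v)=\max\Bigl\{\min_{q_1,q_2}\|q_1.u-v.q_2\|,\ \min_{q_1,q_2}\|u.q_1-q_2.v\|\Bigr\},
\]
and I will show that under the hypotheses both terms reduce to an expression that depends only on the maximizer's ability to distinguish states, which is exactly what a single-agent problem can extract.

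First I would exploit $\marg_{K\times D}u=\marg_{K\times D}v$ together with conditional independence. Write $u(k,c,d)=p(k)\,\alpha_k(c)\,\beta_k(d)$ and $v(k,c,d)=p(k)\,\alpha'_k(c)\,\beta_k(d)$, where the common marginal over $(k,d)$ forces the same $p$ and the same $\beta_k$ for both structures (and $\sum_c\alpha_k(c)=\sum_c\alpha'_k(c)=1$). The key observation is that garbling player~2's signal acts only on the $\beta$-part, and garbling player~1's signal acts only on the $\alpha$-part; because $u$ and $v$ share the $\beta$-part, a garbling $q_2$ applied to $v$ and the \emph{same} $q_2$ applied to $u$ produce structures with the same $(k,d)$-marginal. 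Thus in $\min_{q_1,q_2}\|q_1.u-v.q_2\|$, I would first argue that the optimal $q_2$ acting on $v$ should be mirrored by applying no further garbling is needed — more precisely, I would reduce the two-sided problem to a one-sided one: since player~2's information is identical in $u$ and $v$ (same $\beta_k$), only player~1's information differs, so $\min_{q_1,q_2}\|q_1.u-v.q_2\|$ collapses to a quantity of the form $\min_{q_1}\|q_1.u'-v'\|$ on the conditionally-independent ``player-1-only'' reductions $u'(k,c)=p(k)\alpha_k(c)$, $v'(k,c)=p(k)\alpha'_k(c)$, which live in $\Delta(K\times\N)$ — i.e.\ pure statistical experiments for player~1. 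The same argument handles the second term $\min\|u.q_1-q_2.v\|$ symmetrically, reducing to garbling player~2's identical experiment, which yields $0$ (or a comparison that cannot exceed the first term).

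Once the distance is reduced to a Blackwell-type comparison of single-agent experiments $u'$ versus $v'$ for player~1, I would invoke the single-agent side of the theory (essentially Blackwell / the $L=1$-for-player-2 case of Theorem~\ref{thm1}, or directly Proposition~\ref{prop:diameter}-style reasoning) to produce an explicit single-agent payoff function $g\in\CG_1$ achieving the gap: the optimal garbling $q_1$ and the corresponding hyperplane $g$ separating $\{q.u':q\in\CQ\}$ from $v'$ in total variation is exactly a decision problem for player~1 alone, so $g\in\CG_1$ and $\val(u,g)-\val(v,g)=\min_{q_1}\|q_1.u'-v'\|$. Assembling the pieces gives $\dist_1(u,v)\ge\dist(u,v)$, hence equality.

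The main obstacle I anticipate is the reduction step in the second paragraph: carefully justifying that in $\min_{q_1,q_2}\|q_1.u-v.q_2\|$ one may take $q_2=\mathrm{Id}$ (or otherwise eliminate $q_2$) using the shared $(k,d)$-marginal and conditional independence, rather than having $q_2$ act nontrivially to create spurious correlation with $c$. This requires showing that any correlation a garbling of $v$'s signal could introduce between the post-garbling player-2 signal and player-1's signal $c$ is useless for reducing the total variation distance to $q_1.u$, which itself has $c$ and $d$ conditionally independent — a structural/convexity argument about projections onto the conditionally-independent subset. The rest — rewriting marginals, invoking the single-agent characterization, and constructing the separating $g$ — is routine given Theorem~\ref{thm1}.
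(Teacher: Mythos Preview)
Your overall plan matches the paper's: invoke Theorem~\ref{thm1}, show that the player-2 garbling can be dropped, and reduce to the single-agent characterization~\eqref{eq:d_1_marg}. But there are two genuine gaps.

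\textbf{The key reduction is not proved.} You correctly flag that the crux is showing one may take $q_2=\mathrm{Id}$ in $\min_{q_1,q_2}\|u.q_2-q_1.v\|$, but you only gesture at a ``structural/convexity argument.'' The paper's proof is a concrete computation, and it is not obvious without it. Using conditional independence and the shared $(k,d)$-marginal, one writes
\[
\|u.q_2-q_1.v\|=\sum_{k,c}u(k)\sum_{d}\bigl|u(d|k)\,\Gamma(k,c)+\Delta(k,d)\,u(c|k)\bigr|,
\]
with $\Gamma(k,c)=\sum_\alpha v(\alpha|k)q_1(c|\alpha)-u(c|k)$ and $\Delta(k,d)=u(d|k)-\sum_\beta u(\beta|k)q_2(d|\beta)$. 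The inequality $|x+y|\ge|x|+\mathrm{sgn}(x)\,y$ together with $\sum_d\Delta(k,d)=0$ kills the $\Delta$-term, yielding $\|u.q_2-q_1.v\|\ge\|u-q_1.v\|$. This pointwise inequality is what makes $q_2=\mathrm{Id}$ optimal; convexity alone does not give it, because $q_1.u$ need not lie in the conditionally-independent subset, so there is no ``projection'' argument available.

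\textbf{The second term is mishandled.} You claim $\min_{q_1,q_2}\|u.q_1-q_2.v\|$ ``reduces to garbling player~2's identical experiment, which yields $0$.'' This is wrong: in that term $q_1$ garbles player~2's signal in $u$ while $q_2$ garbles player~1's signal in $v$, so it is not a pure player-2 comparison. The correct treatment is to swap $u\leftrightarrow v$ (both satisfy the hypotheses) and rerun the same computation, obtaining $\min_{q_1,q_2}\|v.q_2-q_1.u\|=\min_{q_1}\|v-q_1.u\|$. Both quantities $\min_{q_1}\|u-q_1.v\|$ and $\min_{q_1}\|v-q_1.u\|$ are needed, since $\dist_1(u,v)$ is the maximum of the two by~\eqref{eq:d_1_marg}; neither is generally zero.
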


Proposition \ref{prop: SA=00003D00003D00003D00003D00003D00003D00003D00003D00003D00003DGames}
says that, if two information structures differ only by one player's
additional piece of information, and the players' information are
conditionally independent in both cases, then the maximum value-based
distance (\ref{eq:ZS distance}) is attained in a single-agent decision
problem. Such problems form a relatively small subclass of games and
they are easier to identify. 
In Appendix \ref{sec:Value-of-experiments}, we apply the proposition to compute the exact distance between information structures induced by multiple Blackwell experiments.

The proof of Proposition \ref{prop: SA=00003D00003D00003D00003D00003D00003D00003D00003D00003D00003DGames}
relies on the characterization from Theorem \ref{thm1} and shows
that the minimum in the optimization problem is attained by the same
pair of garblings as in the single-agent version of the problem.

\subsection{Value of additional information: games vs. single agent }

Consider two information structures $u\in\Delta\left(K\times(C\times C{}^{\prime})\times D\right)$
and $v=\marg_{K\times C\times D}u$. When moving from $v$ to $u$,
player 1 gains an additional signal $c^{\prime}$. Because $u$ represents
more information, $u$ is (weakly) more valuable, and the value of
the additional information is defined as $\dist\left(u,v\right),$
which is equal to the tight upper bound on the gain from the additional
signal. A corollary to Proposition \ref{prop: SA=00003D00003D00003D00003D00003D00003D00003D00003D00003D00003DGames}
shows that if the signals of the two players are independent conditional
on the state, the gain from the new information is the largest in
single-agent problems. \begin{cor}\label{prop: Games vs Single agent}
Suppose that information in $u$ (and therefore in $v$) is conditionally
independent. Then, $\dist\left(u,v\right)=\dist_{1}\left(u,v\right).$
\end{cor}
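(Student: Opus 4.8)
The plan is to obtain the Corollary directly from Proposition~\ref{prop: SA=00003D00003D00003D00003D00003D00003D00003D00003D00003D00003DGames}. Since $\dist_{1}(u,v)\le\dist(u,v)$ always holds (see~(\ref{eq:single agent distance})), it suffices to prove the reverse inequality $\dist(u,v)\le\dist_{1}(u,v)$. The two structures $u\in\Delta(K\times(C\times C')\times D)$ and $v\in\Delta(K\times C\times D)$ do not literally live on a common player-1 signal space, so the first step is to replace $v$ by an equivalent structure on the larger space. Assuming without loss of generality that $C'$ is not a singleton (else $u=v$ and there is nothing to prove), I would fix $c'_{0}\in C'$, set $\tilde C:=C\times C'$, and define $\hat v\in\Delta(K\times\tilde C\times D)$ by $\hat v(k,(c,c'),d)=v(k,c,d)\,\1_{\{c'=c'_{0}\}}$; that is, $\hat v$ hands player~1 the extra, deterministic signal $c'_{0}$. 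Then $v$ is the deterministic garbling of $\hat v$ sending $(c,c')\mapsto c$, and $\hat v$ is the deterministic garbling of $v$ sending $c\mapsto(c,c'_{0})$, so by monotonicity of the value with respect to player~1's information one gets $\val(v,g)\le\val(\hat v,g)$ and $\val(\hat v,g)\le\val(v,g)$, hence $\val(\hat v,g)=\val(v,g)$, for every $g\in\CG$; consequently $\hat v\sim v$, so $\dist(u,\hat v)=\dist(u,v)$ and $\dist_{1}(u,\hat v)=\dist_{1}(u,v)$.

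Next I would verify that the pair $(u,\hat v)$, now both in $\Delta(K\times\tilde C\times D)$ with $\tilde C$ in the role of player~1's signal set, satisfies the two hypotheses of Proposition~\ref{prop: SA=00003D00003D00003D00003D00003D00003D00003D00003D00003D00003DGames}. Conditional independence of $u$ holds by assumption. For $\hat v$: since $v=\marg_{K\times C\times D}u$, the conditional law of $(c,d)$ given $k$ under $v$ factors as the product of the conditional laws of $c$ and of $d$ (marginalizing $c'$ out preserves the product factorization inherited from $u$); and because $c'=c'_{0}$ $\hat v$-almost surely, the pair $(c,c')$ is a function of $c$, so $(c,c')$ is conditionally independent of $d$ given $k$ under $\hat v$, i.e.\ $\hat v$ has conditionally independent information. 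Finally, $\marg_{K\times D}\hat v=\marg_{K\times D}v=\marg_{K\times D}u$: the first equality because $c'$ is appended and then dropped, the second by the definition of $v$ as the $K\times C\times D$-marginal of $u$.

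Proposition~\ref{prop: SA=00003D00003D00003D00003D00003D00003D00003D00003D00003D00003DGames} applied to $(u,\hat v)$ then gives $\dist(u,\hat v)=\dist_{1}(u,\hat v)$, and combining this with the two equalities $\dist(u,\hat v)=\dist(u,v)$ and $\dist_{1}(u,\hat v)=\dist_{1}(u,v)$ from the first step yields $\dist(u,v)=\dist_{1}(u,v)$, as desired. I expect no serious obstacle here, since the Corollary is a genuine specialization of the Proposition; the one point that needs a line of care is the reduction to a common signal space — that is, checking that enlarging player~1's signal by an almost-surely-constant coordinate changes neither the value of any zero-sum Bayesian game nor either of the two distances — and this is exactly the deterministic-garbling observation made in the first paragraph.
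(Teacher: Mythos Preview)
Your proposal is correct and is essentially the argument the paper has in mind: the paper states the result as an immediate corollary of Proposition~\ref{prop: SA=00003D00003D00003D00003D00003D00003D00003D00003D00003D00003DGames} without giving a separate proof, and your embedding of $v$ into $\Delta(K\times(C\times C')\times D)$ via a deterministic dummy coordinate is exactly the routine step needed to make the hypotheses of that proposition apply verbatim. The checks you list (equivalence $\hat v\sim v$, conditional independence of $\hat v$, and $\marg_{K\times D}\hat v=\marg_{K\times D}u$) are all straightforward and correct.
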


\subsection{Informational substitutes}

Next, we ask two questions about the impact of a piece of information
on the value of another piece of information. In both cases, we use
some conditional independence assumptions that are weaker than in
Proposition \ref{prop: SA=00003D00003D00003D00003D00003D00003D00003D00003D00003D00003DGames}.
Suppose that 
\[
\begin{split} & u\in\Delta\left(K\times(C\times C_{1}\times C_{2})\times D\right)\text{ and }v=\marg_{K\times(C\times C_{1})\times D}u,\\
 & u^{\prime}=\marg_{K\times(C\times C_{2})\times D}u,\text{ and }v^{\prime}=\marg_{K\times C\times D}u.
\end{split}
\]
When moving from $v^{\prime}$ to $u^{\prime}$ or $v$ to $u$, player
1 gains an additional signal $c_{2}$. The difference is that, in
the latter case, player 1 has more information that comes from signal
$c_{1}$. The next result shows the impact of an additional signal
on the value of information. \begin{prop} \label{prop. Info substitutes}Suppose
that, under $u$, $c_{1}$ is conditionally independent from $\left(c,c_{2},d\right)$
given $k$. Then, $\dist\left(u^{\prime},v^{\prime}\right)\geq\dist\left(u,v\right).$
\end{prop}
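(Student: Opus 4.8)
The plan is to use the characterization of Theorem~\ref{thm1} to reduce both sides to total variation distances between sets of garblings, and then to exhibit, for every pair of garblings witnessing closeness between $u$ and $v$, a corresponding pair witnessing closeness between $u'$ and $v'$. Write $\dist(u,v)=\max\{A,B\}$ where $A=\min_{q_1,q_2}\|q_1.u-v.q_2\|$ and $B=\min_{q_1,q_2}\|u.q_1-q_2.v\|$, using garblings that act on player~1's composite signal $(c,c_1,c_2)$ in $u$ and on player~1's composite signal $(c,c_1)$ in $v$; similarly $\dist(u',v')=\max\{A',B'\}$ with garblings on $(c,c_2)$ in $u'$ and on $c$ in $v'$. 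Since $u'$ and $v'$ are obtained from $u$ and $v$ by marginalizing out $c_1$, the first step is to observe that marginalization over $c_1$ is itself a (deterministic) garbling of player~1's signal, so $u'=\pi.u$ and $v'=\pi.v$ for the same projection garbling $\pi$, and this operation commutes with garbling player~2's signal.

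The core step is the direction $A'\ge$ something, but in fact what we want is $A'\le A$ and $B'\le B$, i.e.\ that \emph{forgetting} $c_1$ can only bring the two sets of garblings closer. First I would handle $B$ and $B'$: here we garble player~1's signal in $u$ (resp.\ $u'$) and player~2's signal in $v$ (resp.\ $v'$). Given optimal $q_1,q_2$ for $B$, I want to push them through $\pi$. The point where the hypothesis enters is precisely this: since $c_1$ is conditionally independent of $(c,c_2,d)$ given $k$ under $u$, the conditional law of $c_1$ given $(c,c_2)$ depends only on... no --- rather, one can \emph{reconstruct} a version of $u$ from $u'$ by appending a fresh $c_1$ drawn from its conditional law given $k$; but that law is not observable from $(c,c_2)$ alone. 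The correct move: because of conditional independence, appending $c_1$ to $u'$ according to $\Prob(c_1\mid k)$ recovers exactly $u$, and appending the same $c_1$-channel (which, by conditional independence, can equivalently be viewed as acting on $k$, hence as a garbling applied on the \emph{player~2 / state} side that does not touch player~1's action) lets any garbling pair for $u'$ be lifted to one for $u$ with the same norm, and conversely any garbling pair for $u$ descends to $u'$ by postcomposing player~1's garbling with $\pi$. This gives $B'\le B$; the dual bookkeeping, using that $v=\marg u$ already does not see $c_2$ and that the $v$-side garbles player~2, gives $A'\le A$.

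The main obstacle I anticipate is making the ``lifting'' rigorous on the $u$-side without extra assumptions on $v$: we must check that appending a conditionally-independent copy of $c_1$ to $u'$ really yields $u$ (this is exactly the conditional-independence hypothesis, so it should go through cleanly), and that when we instead start from a garbling pair for $u$ and want to produce one for $u'$, the forgetting map $\pi$ on player~1's signal interacts correctly with player~1's garbling $q_1$ --- i.e.\ that $\pi.(q_1.u)$ can be written as $\tilde q_1.(\pi.u)=\tilde q_1.u'$ for some garbling $\tilde q_1$. This last identity is false for a general $q_1$ (a garbling of $(c,c_1,c_2)$ need not factor through $(c,c_2)$), so the argument must instead go the other way: take optimal $q_1',q_2'$ for $B'$, lift $q_1'$ trivially to a garbling of $(c,c_1,c_2)$ that ignores $c_1$, and then \emph{use conditional independence to delete $c_1$ from $u$ itself}, obtaining $q_1'.u$ has the same distribution over (state, player-1-action, player-2-signal) as $q_1'.u'$ after re-marginalizing, so $\|q_1'.u-v.q_2'\|$ and $\|q_1'.u'-v'.q_2'\|$ differ only through the $c_1$-coordinate on the $u$-side versus its absence on the $u'$-side, and conditional independence forces the $c_1$-marginal to match on both sides, making the two norms equal. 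Hence $B\le B'$ as well, so in fact $B=B'$, and symmetrically one shows $A\le A'$. Combining, $\dist(u',v')=\max\{A',B'\}\ge\max\{A,B\}=\dist(u,v)$, as claimed. I would close by noting the inequality can be strict, deferring an example to the appendix.
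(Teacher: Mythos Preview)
Your proposal has a genuine gap. You end up claiming both $B'\le B$ and $B\le B'$, hence $B=B'$, which would force $\dist(u,v)=\dist(u',v')$. But equality is false under the hypothesis. Take $C=D=\{*\}$, $K=C_1=C_2=\{0,1\}$, $k$ uniform, $c_1=k$, and $c_2$ a nontrivial noisy copy of $k$; since $c_1$ is a function of $k$, the conditional-independence assumption holds. Then $\dist(u,v)=0$ (once player~1 already knows $k$ via $c_1$, the extra signal $c_2$ is worthless in every game), while $\dist(u',v')>0$ (without $c_1$, $c_2$ is informative). So only the one-sided inequality $\dist(u,v)\le\dist(u',v')$ can be true, and your ``descend'' argument for $B'\le B$ must fail. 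Concretely, the player-1 garbling in the $(u,v)$ problem has domain $C\times C_1$; neither pre- nor post-composing with the projection $\pi$ yields a garbling with domain $C$, which is what the $(u',v')$ problem requires. Your initial sentence ``what we want is $A'\le A$ and $B'\le B$'' already points in the wrong direction.

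You also never use the simplification that drives the paper's short proof. Since $u\succeq v$ and $u'\succeq v'$ (in each comparison player~1 merely gains signal $c_2$), one of the two minima in Theorem~\ref{thm1} vanishes, and $\dist(u,v)=\min_{q_1,q_2}\|u.q_2-q_1.v\|$ with $q_2$ a player-2 garbling and $q_1$ a player-1 garbling of $v$; likewise for $\dist(u',v')$. The paper then proves only the needed direction by \emph{restricting} the player-1 garblings to the class $\hat q_1(c',c_1',c_2'\mid c,c_1)=\1_{c_1'=c_1}\,q_1(c',c_2'\mid c)$, i.e.\ garblings that keep $c_1$ and act on $(c',c_2')$ through $c$ alone. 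For this class, conditional independence factors $u(c_1\mid k)$ out of both $u.q_2$ and $\hat q_1.v$, giving $\|u.q_2-\hat q_1.v\|=\|u'.q_2-q_1.v'\|$; minimizing over $q_1:C\to\Delta(C\times C_2)$ and $q_2$ yields exactly $\dist(u',v')$, and the restriction only raises the minimum, so $\dist(u,v)\le\dist(u',v')$. Your final paragraph gestures toward this lift, but it confuses which garbling is being lifted (you write ``lift $q_1'$ to a garbling of $(c,c_1,c_2)$'' when $q_1'$ in your notation garbles player~2's signal $d$) and overstates the conclusion as an equality.
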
 Given the assumptions, the marginal value of signal $c_{2}$
decreases when signal $c_{1}$ is also present. In other words, the
two pieces of information are substitutes.

\subsection{Informational complements}

Another question is about the impact of an additional piece of information
for the other player on the value of information. Suppose that 
\begin{align*}
u & \in\Delta\left(K\times(C\times C_{1})\times(D\times D_{1})\right)\text{ and }v=\marg_{K\times C\times(D\times D_{1})}u,\\
u^{\prime} & =\marg_{K\times(C\times C_{1})\times D}u\text{ and }v^{\prime}=\marg_{K\times C\times D}u.
\end{align*}
When moving from $v^{\prime}$ to $u^{\prime}$ or $v$ to $u$, in
both cases, player 1 gains additional signal $c_{1}$. However, in
the latter case, player 2 obtains an additional piece of information
from signal $d_{1}$. The next result shows the impact of the opponent's
signal on the value of information. \begin{prop} \label{prop. Info complements}
Suppose that $\left(c,c_{1}\right)$ and $d$ are conditionally independent
given $k$. Then, $\dist\left(u^{\prime},v^{\prime}\right)\leq\dist\left(u,v\right).$
\end{prop}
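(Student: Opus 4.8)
\textbf{Proof plan for Proposition \ref{prop. Info complements}.}

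The plan is to apply the characterization from Theorem \ref{thm1} on both sides of the inequality and exhibit a correspondence between garblings that is ``worse'' for the structure with the extra opponent signal. Write the value-based distance between $u'$ and $v'$ as the maximum of two terms as in Theorem \ref{thm1}; since both $u'\succeq v'$ and $u\succeq v$ (player $1$ has strictly more information in the unprimed-to-primed direction), only one of the two terms is nonzero in each case, namely
\[
\dist(u',v')=\min_{q_1,q_2\in\CQ}\|q_1.u'-v'.q_2\|,\qquad
\dist(u,v)=\min_{q_1,q_2\in\CQ}\|q_1.u-v.q_2\|,
\]
where on the left player $1$ garbles away the signal $c_1$ and on the right player $2$ garbles on the $D$-coordinate only (player $2$ has no extra signal in $u',v'$). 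The goal is to show the left-hand minimum is bounded by the right-hand minimum.

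First I would take an optimal pair $(q_1,q_2)$ for $\dist(u,v)$, where $q_1:C\times C_1\to\Delta(C\times C_1)$ and $q_2:D\times D_1\to\Delta(D\times D_1)$. The key step is to "lift" $q_2$, which acts on the richer signal $(d,d_1)$, to a garbling $\bar q_2$ acting only on $d$: using the conditional independence of $(c,c_1)$ and $d$ given $k$ — which by marginalization also gives conditional independence of $c$ and $d$ given $k$ in $v'$ — one can integrate out $d_1$ against its conditional law given $(k,d)$ (equivalently, given $d$, since in $v$ the variable $d_1$ carries information about $k$ only through what is already common). Concretely, set $\bar q_2(\,\cdot\mid d)=\sum_{d_1}\Prob_v(d_1\mid d)\,\marg_D q_2(\,\cdot\mid d,d_1)$. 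Then I would verify the two identities $v'.\bar q_2=\marg_{K\times C\times \tilde D}\big(v.q_2\big)$ and $q_1.u'=\marg_{K\times C\times D}\big(q_1.u\big)$ (the latter is immediate because player $1$'s garbling commutes with marginalizing out player $2$'s extra coordinate). Since total variation cannot increase under marginalization, $\|q_1.u'-v'.\bar q_2\|\le\|q_1.u-v.q_2\|=\dist(u,v)$, and the left-hand side is at least $\dist(u',v')$ by Theorem \ref{thm1}. This gives the claimed inequality.

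The main obstacle — and the step that needs care — is the construction of $\bar q_2$ and the proof that $v'.\bar q_2$ equals the marginal of $v.q_2$. This is where the conditional-independence hypothesis is essential: without it, the distribution of $d_1$ given $d$ would depend on $(c,c_1)$ in a way that is not reproducible by a garbling acting on $d$ alone, so the "projected" garbling would not track the projected information structure. I would handle this by writing everything in terms of conditional probabilities $\Prob(k,c,c_1\mid d)$ and $\Prob(d_1\mid k,d)$, using the hypothesis to replace $\Prob(d_1\mid k,d)$ by $\Prob(d_1\mid d)$ whenever it is multiplied against the $(k,c,c_1)$-block, and then summing over $d_1$. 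A secondary point to check is that one is allowed to restrict attention to the single relevant term in the $\max$ of Theorem \ref{thm1}; this follows from Corollary \ref{cor1} together with the observation that adding a signal to player $1$ makes $u\succeq v$ and $u'\succeq v'$, so the "reverse" garbling term vanishes. None of this requires finiteness, since Theorem \ref{thm1} is stated for $L=\infty$ as well.
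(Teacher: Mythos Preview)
Your approach has a genuine gap, and it is precisely at the step you flag as ``the main obstacle''. The hypothesis is that $(c,c_1)$ and $d$ are conditionally independent given $k$; this says nothing about the relationship between $d_1$ and $(k,c,c_1)$. In particular, it does \emph{not} let you replace $\Prob(d_1\mid k,d)$ by $\Prob(d_1\mid d)$, which is exactly what you invoke. Concretely, the identity you need, $u'.\bar q_2=\marg_{D_1}(u.q_2)$, unwinds to
\[
\sum_{d_1'} u(d_1'\mid k,c,c_1,d')\,r(d\mid d',d_1')=\bar q_2(d\mid d'),\qquad r(d\mid d',d_1'):=\sum_{d_1}q_2(d,d_1\mid d',d_1'),
\]
so you would need $u(d_1'\mid k,c,c_1,d')$ to be independent of $(k,c,c_1)$, i.e.\ $d_1\perp(k,c,c_1)\mid d$. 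That is a different (and generally false) assumption; think of $d_1=c_1$. There is also a notational slip: since $u'\succeq v'$, the nonzero side of Theorem~\ref{thm1} is $\min_{q_1,q_2}\|u'.q_2-q_1.v'\|$, not $\min_{q_1,q_2}\|q_1.u'-v'.q_2\|$ (the latter equals $0$).

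The paper's proof avoids all of this by going through the single-agent distance. Because $(c,c_1)\perp d\mid k$ under $u'$, Corollary~\ref{prop: Games vs Single agent} (an instance of Proposition~\ref{prop: SA=00003D00003D00003D00003D00003D00003D00003D00003D00003D00003DGames}) gives $\dist(u',v')=\dist_1(u',v')$. But $\dist_1$ depends only on the $K\times(\text{player 1 signals})$ marginals, and $u,u'$ (resp.\ $v,v'$) share those marginals, so $\dist_1(u',v')=\dist_1(u,v)$. Finally $\dist_1(u,v)\le\dist(u,v)$ by definition. Note that player~2's extra signal $d_1$ never enters this argument, which is why no assumption on $d_1$ is needed; your route, by contrast, tries to push $d_1$ through a garbling and therefore requires control of its conditional law that the hypothesis simply does not provide.
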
 Given the assumptions, signal $c_{1}$ becomes more valuable
when the opponent also has access to additional information. Hence,
the two pieces of information are complements.

\subsection{Value of joint information}

\label{subsec:Value-of-joint}

Finally, we consider a situation where two players simultaneously
receive additional information. Consider a distribution $\mu\in\Delta\left(X\times Y\times Z\right)$
over countable spaces. We say that random variables $x$ and $y$
are $\varepsilon$-conditionally independent given $z$ if 
\[
\sum_{z}\mu\left(z\right)\sum_{x,y}\left|\mu\left(x,y|z\right)-\mu\left(x|z\right)\mu\left(y|z\right)\right|\leq\varepsilon.
\]

Let $u\in\Delta\left(K\times(C\times C_{1})\times(D\times D_{1})\right)$
and $v=\text{marg}_{K\times C\times D}u$. When moving from $v$ to
$u$, both players receive a piece of additional information.

\begin{prop} \label{prop: joint info}Suppose that $d_{1}$ is $\varepsilon$-conditionally
independent from $\left(k,c\right)$ given $d$, and $c_{1}$ is $\varepsilon$-conditionally
independent from $\left(k,d\right)$ given $c$. Then, $\dist\left(u,v\right)\leq\varepsilon.$
\end{prop}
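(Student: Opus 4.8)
The plan is to use the characterization from Theorem \ref{thm1}, so that it suffices to exhibit garblings $q_1,q_2\in\CQ$ with $\|q_1.u-v.q_2\|\le\varepsilon$ (and, by a symmetric argument with the roles of the players exchanged, also garblings realizing the second term in $\dist$). The natural candidate garblings are the ones that \emph{forget} the new signals: $q_1$ maps a signal $(c,c_1)$ of player~1 in $u$ to the signal $c$, and $q_2$ maps a signal $(d,d_1)$ of player~2 in $u$ to the signal $d$. With these choices, $q_1.u$ is the information structure on $K\times C\times(D\times D_1)$ obtained from $u$ by erasing $c_1$, and $v.q_2$ is the information structure on $K\times(C\times C_1)\times D$ obtained from $v$ by erasing $d$... but wait, $v$ already lives on $K\times C\times D$, so $v.q_2$ where $q_2$ forgets would just be $v$ again. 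The cleaner route is to compare both $q_1.u$ and $v.q_2$ to the common reference structure $v$ itself (viewed with the appropriate trivial signal coordinates), and bound each piece separately using $\varepsilon$-conditional independence.

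Concretely, I would first establish an elementary lemma: if in a distribution $\mu\in\Delta(X\times Y\times Z)$ the variable $y$ is $\varepsilon$-conditionally independent from $x$ given $z$, then the total variation distance between $\mu$ and the distribution $\mu'$ in which $y$ is re-sampled from $\mu(\cdot\mid z)$ independently of $x$ is at most $\varepsilon$ (this is essentially the definition, since $\sum_{z}\mu(z)\sum_{x,y}|\mu(x,y\mid z)-\mu(x\mid z)\mu(y\mid z)|$ is exactly $\|\mu-\mu'\|$). Applying this with $z=c$, $x=(k,d)$ suitably extended, and $y=c_1$, I can replace, at total-variation cost $\le\varepsilon$, the true joint law of $c_1$ with the law where $c_1$ is drawn from $u(\cdot\mid c)$ independently; symmetrically for $d_1$ given $d$. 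The upshot is that $u$ is within total variation $2\varepsilon$ (or $\varepsilon$ with a more careful single application, see below) of a structure $\hat u$ in which, conditionally on $(c,d)$, the pair $(c_1,d_1)$ is drawn independently of $(k)$ and of each other from their respective conditionals. But such a $\hat u$ is exactly of the form $q_1'.\,v\,.q_2'$ for suitable garblings that \emph{add} independent noise: player~1's signal $c$ is split into $(c,c_1)$ by drawing $c_1\sim u(\cdot\mid c)$, and likewise for player~2. Since adding such noise is a garbling, $\hat u\in\CQ.v$ and $\hat u\in v.\CQ$ in the relevant sense, giving $\min_{q_1,q_2}\|q_1.u-v.q_2\|\le\|u-\hat u\|\le\varepsilon$ after arranging the two approximations to land on the \emph{same} $\hat u$ and using the triangle inequality carefully so the total cost is $\varepsilon$ rather than $2\varepsilon$.

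To get the sharp constant $\varepsilon$ rather than $2\varepsilon$, I would be careful about the order of conditioning. Condition everything on $(c,d)$: the hypothesis ``$c_1$ is $\varepsilon$-conditionally independent from $(k,d)$ given $c$'' in particular controls $\sum |u(c_1,k,d\mid c)-u(c_1\mid c)u(k,d\mid c)|$, and similarly for $d_1$ given $c$... I would instead note that the two hypotheses together let me pass from $u(k,c_1,d_1\mid c,d)$ to $u(k\mid c,d)\,u(c_1\mid c)\,u(d_1\mid d)$ in one combined estimate whose total-variation cost is bounded by $\varepsilon$ (because the two perturbation terms, being supported on disjoint coordinates $c_1$ versus $d_1$ after the first substitution, can be combined without doubling — one uses that after replacing $c_1$ the replacement of $d_1$ costs the conditional-independence defect of $d_1$ given $d$, which is already $\le\varepsilon$, and the two defects are for the \emph{same} $u$). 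This bookkeeping is the main obstacle: making the constant exactly $\varepsilon$ and not $2\varepsilon$ requires either a slick single-shot coupling argument or a careful chaining, and I expect this to be where most of the proof's care goes. Everything else — the identification of ``add independent noise'' with a garbling, and the appeal to Theorem \ref{thm1} — is routine, and the symmetric bound for the second term $\min\|u.q_1-q_2.v\|$ in $\dist$ follows by exchanging the roles of the two players in the entire argument, using that the hypotheses are symmetric in $(c,c_1)\leftrightarrow(d,d_1)$.
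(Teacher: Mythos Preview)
Your overall strategy—invoke Theorem~\ref{thm1} and exhibit explicit garblings—is correct, and you were in fact one move away from the right construction in your first paragraph. The paper handles the two halves of $\dist$ \emph{separately and asymmetrically}, each using only one of the two hypotheses. For $\sup_g\bigl(\val(u,g)-\val(v,g)\bigr)=\min_{q_1,q_2}\|q_1.v-u.q_2\|$, take $q_2:(d,d_1)\mapsto d$ (forget $d_1$) and $q_1:c\mapsto(c,c_1)$ with $c_1\sim u(\cdot\mid c)$ (add independent noise). Both $u.q_2$ and $q_1.v$ live on $K\times(C\times C_1)\times D$, and
\[
\|u.q_2-q_1.v\|=\sum_{k,c,c_1,d}\bigl|u(k,c,c_1,d)-u(k,c,d)\,u(c_1\mid c)\bigr|,
\]
which is \emph{exactly} the $\varepsilon$-conditional-independence defect of $c_1$ from $(k,d)$ given $c$. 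The other direction swaps players (forget $c_1$, add $d_1$) and uses only the $d_1$ hypothesis. There is no bookkeeping and no factor of~$2$: each hypothesis pays for one side of the $\max$. The point you stumbled on—``$v.q_2$ where $q_2$ forgets would just be $v$''—is resolved by noticing that in $\|q_1.v-u.q_2\|$ the garbling $q_1$ acts on the \emph{smaller} signal space $C$ and may therefore \emph{enlarge} it, while $q_2$ acts on the \emph{larger} space $D\times D_1$ and may \emph{shrink} it.

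Your detour through $\hat u$ (with both $c_1$ and $d_1$ re-sampled independently) has a genuine gap, not just a constant issue. Bounding $\|u-\hat u\|$ requires controlling, say, the conditional independence of $c_1$ from $(k,d,d_1)$ given $c$, or of $d_1$ from $(k,c,c_1)$ given $d$—and neither is given: the hypotheses only control independence from $(k,d)$ and $(k,c)$ respectively. Whichever of $c_1,d_1$ you replace first in a chaining argument, the relevant defect involves the \emph{other} extra signal, which the hypotheses do not cover. (Concretely: $d_1$ could be independent of $(k,c)$ given $d$ yet perfectly correlated with $c_1$, making $\|u-\hat u\|$ large.) So the ``disjoint coordinates'' intuition does not go through, and the route via $\hat u$ cannot be completed without a stronger assumption.
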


The proposition considers the potential scenario where the additional
signal for each player does not provide the respective player with
any significant information about the state of the world or the original
information of the other player. While such signals would be useless
in a single-agent decision problem, they may be useful in a strategic
setting, as valuable information may be contained in their joint distribution.\footnote{How useful it is depends on the solution concept. The joint information
is important for Bayesian Nash Equilibrium and Independent Interim
Rationalizability - see the leading example of \citet{ElyPeski06}.
The joint information is not important \emph{by assumption }for the
Bayes Correlated Equilibrium of \citet{bergemann_bayes_2015} or Interim
Correlated Rationalizability of \citet{dekel_interim_2007}. } Nevertheless, Proposition \ref{prop: joint info} says that the information
that is jointly shared by the two players is not valuable in zero-sum
games.

Despite its simplicity, Proposition \ref{prop: joint info} has powerful
consequences. Below, we use it to show that information structures
with approximate knowledge of the state also have approximate common
knowledge of the state. More generally, we use it in the proof of
Theorem \ref{THM4WEAKTOPOLOGY}.

\section{Large space of information structures}

\label{sec:Compactness}

\subsection{$(\CU^{*},\dist)$ is not totally bounded}

In this section, we assume without loss of generality that $K=\left\{ 0,1\right\} $.

\begin{thm} \label{THM3LARGESPACE}There exists $\varepsilon>0$
and a sequence $(u^{l})$ of information structures such that $\dist(u^{l},u^{p})>\varepsilon$
if $l\neq p$.\end{thm}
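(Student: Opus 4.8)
The plan is to construct a single stochastic process and let $u^{l}$ be the information structure in which the two players observe increasingly long, interleaved windows of that process. Concretely, fix $K=\{0,1\}$ and build a Markov chain $(\xi_{0},\xi_{1},\xi_{2},\dots)$ together with the state $k$, arranged so that: (i) $k$ is correlated with $\xi_{0}$; (ii) the chain has a symmetric ``mixing/forgetting'' structure, so that conditioning on a long window $\xi_{m},\dots,\xi_{n}$ tells you less and less about $k$ as $m$ grows; and (iii) conditionally on any $\xi_{t}$, the past $(\xi_{0},\dots,\xi_{t})$ and the future $(\xi_{t},\xi_{t+1},\dots)$ are independent (the Markov property). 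Then define $u^{l}$ by letting player~$1$ observe $c=(\xi_{1},\xi_{3},\dots,\xi_{2l-1})$ (the first $l$ odd coordinates) and player~$2$ observe $d=(\xi_{2},\xi_{4},\dots,\xi_{2l})$ (the first $l$ even coordinates), with $\xi_{0}$ unobserved. Passing from $u^{l}$ to $u^{l+1}$ gives each player exactly one more signal, so this is a chain of information structures with strictly more information, as promised in the introduction.

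The next step is to obtain a uniform \emph{lower} bound $\dist(u^{l},u^{p})>\varepsilon$ for $l\ne p$. The natural route is \emph{not} Theorem \ref{thm1} directly but rather to exhibit, for each pair $l<p$, an explicit zero-sum game $g$ (depending only on the ``interface'' of the two structures, i.e. on a bounded number of coordinates around index $2l$) on which $|\val(u^{l},g)-\val(u^{p},g)|\ge\varepsilon$. The key structural fact to exploit is that in $u^{p}$ player~$2$ observes $\xi_{2l}$, which lies ``between'' player~$1$'s last signal $\xi_{2l-1}$ and the rest of player~$1$'s signals in $u^{p}$ — except there are none, since $2l-1$ is player~$1$'s last index in $u^{l}$. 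So the right comparison is: in $u^{l}$, player $2$'s last signal is $\xi_{2l}$ and player~$1$ has nothing beyond $\xi_{2l-1}$, whereas in $u^{p}$ player $1$ additionally sees $\xi_{2l+1},\xi_{2l+3},\dots$. By the Markov/interleaving design, in $u^{l}$ the pair $(\xi_{2l-1},\xi_{2l})$ is, from the players' joint viewpoint, a fresh correlated pair about which player~$1$ knows only $\xi_{2l-1}$; in $u^{p}$ player~$1$ can ``see past'' $\xi_{2l}$ via $\xi_{2l+1}$. One then designs $g$ as a small ``matching/guessing'' game in which player~$1$'s ability to anticipate player~$2$'s private $\xi_{2l}$ changes the value by a fixed amount. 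Making this quantitative — producing the single $\varepsilon$ that works for all pairs simultaneously — is where the stationarity of the construction is essential: because every ``interface'' between $u^{l}$ and $u^{l+1}$ looks identical (a shift of the same Markov kernel), the same game $g$ (suitably shifted, or even the same $g$ after relabeling) yields the same gap $\varepsilon$ for every consecutive pair, and the triangle inequality upgrades consecutive gaps to gaps for all pairs $l\ne p$, possibly after replacing $\varepsilon$ by $\varepsilon/2$ or arguing monotonicity of the informativeness.

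The main obstacle, I expect, is the lower bound — specifically, ruling out that player~$1$'s extra signals in $u^{p}$ could be \emph{useless} (which would make the distance small) while simultaneously ensuring the distance does \emph{not} shrink to $0$ as $l,p\to\infty$. This is exactly the delicate point flagged in the introduction: one must choose the Markov kernel so that each new coordinate $\xi_{2l+1}$ carries a non-vanishing amount of information about the neighboring coordinate $\xi_{2l}$ that the opponent holds privately, uniformly in $l$, even though it carries vanishing information about $k$. A clean way to organize the argument is via Theorem \ref{thm1}: show that for $l<p$, any garbling $q_{1}$ of $u^{l}$'s player-$1$ signal and any garbling $q_{2}$ of $u^{p}$'s player-$2$ signal must leave $\|q_{1}.u^{l}-u^{p}.q_{2}\|$ bounded below, because $q_{1}$ cannot manufacture the correlation between $\xi_{2l}$ and $\xi_{2l+1}$ that is present in $u^{p}$ but absent (conditionally on what player~$1$ sees) in $u^{l}$, while $q_{2}$ degrading player~$2$'s signal only makes player~$2$ worse off and cannot help match $u^{l}$; a symmetric argument handles the other term in $\dist$. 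The bookkeeping that this lower bound is uniform in $l$ — and hence that a single $\varepsilon$ works — is the crux of the whole theorem, and it is where the self-similar (shift-invariant) nature of the Markov chain does the heavy lifting.
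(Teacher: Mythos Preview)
Your overall architecture matches the paper: a Markov chain with state correlated to the first coordinate, players observing interleaved odd/even windows of increasing length. That part is right. But the plan for the lower bound has real gaps.

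First, the triangle-inequality remark is simply wrong. Knowing $\dist(u^{l},u^{l+1})\ge\varepsilon$ for every $l$ gives only an \emph{upper} bound $\dist(u^{l},u^{p})\le (p-l)\varepsilon$ via the triangle inequality, never a lower bound. ``Monotonicity of informativeness'' does not rescue this either: passing from $u^{l}$ to $u^{p}$ gives \emph{both} players more information, so there is no $\succeq$-comparison to exploit. You therefore need, for each pair $l<p$, a single game on which the two values differ by $\varepsilon$; you cannot bootstrap from consecutive pairs.

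Second, and relatedly, your guessing-game sketch is too weak for this. A game in which player~1 predicts $\xi_{2l}$ may separate $u^{l}$ from $u^{l+1}$, but to separate $u^{l}$ from \emph{every} $u^{p}$ with $p>l$ you must control what happens when player~2 also acquires arbitrarily many extra signals. The paper's device is a sequential \emph{revelation} game $g^{p+1}$: each player announces a string, and the payoff is $+\varepsilon$ if the interleaved announcements lie in the Markov support, while the first player whose announcement leaves the support is fined $5\varepsilon$. Two facts then carry the whole proof: (a) for every $l\ge p+1$, truthful reporting by player~1 guarantees $\val(u^{l},g^{p+1})\ge\varepsilon$, \emph{regardless of how much extra information player~2 has}; (b) in $u^{p}$ player~1 must guess one coordinate she does not see, and the mixing of the chain forces $\val(u^{p},g^{p+1})\le-\varepsilon$. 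This is exactly the property your plan is missing: one game that pins $u^{p}$ low while leaving all richer $u^{l}$ high.

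Third, ``stationarity does the heavy lifting'' understates what is needed. Making (a) and (b) hold requires quantitative mixing conditions on the transition (the paper's conditions $UI1$/$UI2$): roughly, given any partial history, a misreported coordinate lands in the support with probability close to $\tfrac12$, uniformly in $l$. The paper does not construct such a kernel explicitly; it proves existence by the probabilistic method (choosing, for each state, a random $N/2$-subset of successors and showing the required intersection estimates hold with high probability for large $N$). Your plan should at least flag that an existence argument of this type is required; a generic ergodic chain will not do.

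Finally, the alternative route you float via Theorem~\ref{thm1} --- lower-bounding $\min_{q_{1},q_{2}}\|q_{1}.u^{l}-u^{p}.q_{2}\|$ directly --- is not obviously tractable here and the paper does not attempt it; the garblings range over very large spaces and there is no clean obstruction of the ``cannot manufacture correlation'' kind once player~2's signal in $u^{p}$ is also being degraded. Constructing the explicit revelation games is both easier and what the paper does.
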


The theorem says that the space of information structures is large:
it cannot be partitioned into finitely many subsets such that all
structures in a subset are arbitrarily close to each other.

The proof, with the exception of one step that we describe below,
is constructive. For fixed large $N$, we construct a probability
$\mu$ over infinite sequences $k,c_{1},d_{1},c_{2},d_{2},...$ that
starts with a state $k$ followed by alternating signals for each
player. The sequence $c_{1},d_{1},c_{2},d_{2},...$ follows a Markov
chain on $\{1,...,N\}$, and state $k$ only depends on $c_{1}$.
In structure $u^{l}$, player 1 observes signals $(c_{1},c_{2},....,c_{l})$,
and player 2 observes $(d_{1},d_{2},....,d_{l})$. Thus, the sequence
of structures $u^{l}$ can be understood as fragments of a larger
information structure, where progressively more information is revealed
to each player. The theorem shows that the larger structure is not
the limit of its fragments in the value-based distance. In particular,
there is no analog of the martingale convergence theorem for the value-based
distance for such sequences.

This has to be contrasted with two other settings, where the limits
of information structures are well defined. First, in the single-player
case, any sequence of information structures in which the player is
receiving progressively more signals converges for distance $\dist_{1}$.
Second, the Markov property means that (a) the state is independent
from all players' information conditionally on $c_{1}$, and (b) each
new piece of information is independent from the previous pieces of
information, conditional on the most recent information of the other
player. This ensures that the $l$-th level hierarchy of beliefs of
any type in structure $u^{l}$ is preserved by all consistent types
in structures $u^{p}$ for $p\geq l$. Therefore, Theorem \ref{THM3LARGESPACE}
exhibits a sequence of type spaces in which belief hierarchies converge
in the product topology. In particular, it shows that the knowledge
of the $l$-th level hierarchy of beliefs for any arbitrarily high
$l$ is not sufficient to play $\varepsilon$-optimally in all finite
zero-sum games.

\subsection{Last open problem of Mertens\label{subsec:Last-open-problem}}

Recall that, for each information structure $u$, $\tilde{u}$ denotes
the associated consistent probability distribution over belief hierarchies.
Because each finite-level hierarchy of beliefs becomes constant as
we move along the sequence $u^{l}$, it must be that sequence $\tilde{u}^{l}$
converges weakly in $\Pi$ to the limit $\tilde{u}^{l}\rightarrow\tilde{\mu}.$
The limit is the consistent probability obtained from the prior distribution
$\mu$. Theorem \ref{THM3LARGESPACE} shows that 
\[
\limsup_{l}\sup_{g\in\CG}\left|\val\left(\mu,g\right)-\val\left(u^{l},g\right)\right|\geq\varepsilon.
\]
In particular, the family of all functions $(u\mapsto\val(u,g))_{g\in\CG}$
is not equicontinuous on $\Pi$ equipped with the weak topology. This
answers negatively the second of the three problems posed by \citet{mertens_repeated_1986}
in his Repeated Games survey from ICM: ``This equicontinuity or Lispchitz
property character is crucial in many papers...'' (see also footnote
\ref{fn:Mertens Problems}).

The importance of the Mertens question comes from the role that it
plays in the limit theorems of repeated games. The existence of a
limit value has attracted a lot of attention since the first results
by \citet{aumann_repeated_1995} and \citet{mertens1971value} for
repeated games and by \citet{bewley1976asymptotic} for stochastic
games. Once the equicontinuity of an appropriate family of value functions
is established, the existence of the limit value is typically obtained
by showing that there is at most one accumulation point of the family
$(v_{\delta})$, for example, by showing that any accumulation point
satisfies a system of variational inequalities admitting at most one
solution (see e.g. the survey \citet{laraki2015advances} and footnote
\ref{foot_zerosum} for related works).

\subsection{Comments on the proof}

Fix $\alpha<\frac{1}{25}$. We show that we can find a sufficiently
high, even-valued $N$ and a set $S\subseteq\left\{ 1,...,N\right\} ^{2}$
with certain mixing properties: 
\begin{align*}
\left|\left\{ i:\left(i,j\right)\in S\right\} \right| & \simeq\frac{N}{2},\text{ for each }j,\\
\left|\left\{ i:\left(i,j\right),\left(i,j^{\prime}\right)\in S\right\} \right| & \simeq\frac{N}{4},\text{ for each }j,j{}^{\prime},\\
\left|\left\{ i:\left(i,j\right),\left(i,j^{\prime}\right),\left(l,i\right)\in S\right\} \right| & \simeq\frac{N}{8},\text{ for each }j,j{}^{\prime},l,etc.
\end{align*}
The ``$\simeq$'' means that the left-hand side is within $\alpha$-related
distance to the right-hand side. Altogether, there are 8 properties
of this sort (see Appendix \ref{subsec:Existence-of-Markov chain})
that essentially mean that various sections of $S$ are ``uncorrelated''
with one another.

We are unable to directly construct $S$ with the required properties.
Instead, we show the existence of set $S$ using the probabilistic
method of P. Erd\H{o}s (for a general overview of the method, see
\citet{alon_probabilistic_2008}). Suppose that sets $S\left(i\right)$,
for $i=1,...,N$, are chosen independently and uniformly from all
$\frac{N}{2}$-element subsets of $\left\{ 1,...,N\right\} $. We
show that, if $N\geq10^{8}$, then set $S=\left\{ \left(i,j\right):j\in S\left(i\right)\right\} $
satisfies the required properties with positive probability, proving
that a set satisfying these properties exists. Our proof is not particularly
careful about optimal $N$ (or about the largest $\varepsilon$ allowing
for the conclusions of Theorem \ref{THM3LARGESPACE}).

Given $S$, we construct probability distribution $\mu$. First, state
$k$ is chosen with equal probability, and $c_{1}$ is chosen so that
$\frac{c_{1}}{N+1}$ is the conditional probability of $k=1$. Next,
inductively, for each $l\geq1$, we choose 
\begin{itemize}
\item $d_{l}$ uniformly from set $S\left(c_{l}\right)=\left\{ j:\left(c_{l},j\right)\in S\right\} $
and conditionally independently from $k,...,d_{l-1}$ given $c_{l}$,
and 
\item $c_{l+1}$ uniformly from set $S\left(d_{l}\right)$ and conditionally
independently from $k,...,c_{l}$ given $d_{l}$. 
\end{itemize}
As a result, $c_{1}$, $d_{1}$, $c_{2}$, $d_{2}$,... follows a
Markov chain.

To provide a lower bound for the distance between different information
structures, we construct a sequence of games. In game $g^{p}$, player
1 is supposed to reveal the first $p$ pieces of her information;
player 2 reveals the first $p-1$ pieces. The payoffs are such that
it is a dominant strategy for player 1 to precisely reveal her first-order
belief about the state, which amounts to truthfully reporting $c_{1}$.
Furthermore, we verify whether the sequence of reports $\left(\hat{c}_{1},\hat{d}_{1},...,\hat{c}_{p-1},\hat{d}_{p-1},\hat{c}_{p}\right)$
belongs to the support of the distribution of the Markov chain. If
it does, then player 1 receives payoff $\varepsilon\sim\frac{1}{10\left(N+1\right)^{2}}$.
If it does not, we identify the first report in the sequence that
deviates from the support. The responsible player is punished with
payoff $-5\varepsilon$ (and the opponent receives $5\varepsilon$).

The payoffs and the mixing properties of matrix $S$ ensure that players
have incentives to report their information truthfully. We check it
formally, and we show that, if $l>p$, then $\dist\left(u^{l},u^{p}\right)\geq\val\left(u^{l},g^{p+1}\right)-\val\left(u^{p},g^{p+1}\right)\geq2\varepsilon.$

Our argument implies that the conclusion of Theorem \ref{THM3LARGESPACE}
is true for $\varepsilon=2.10^{-17}$. However, our argument is not
optimized for the largest possible value of $\varepsilon$, and we
strongly suspect that the threshold $\varepsilon$ is much larger.

\section{Value-based topology}

\label{sec:Value-based-topology}

\subsection{Relation to the weak topology}

Previous sections discussed the quantitative aspect of value-based
distance. Now, we analyze its qualitative aspect: topological information.

\begin{thm} \label{THM4WEAKTOPOLOGY} Let $u$ be in $\CU^{*}$.
A sequence $(u_{n})$ in $\CU^{*}$ converges to $u$ for value-based
distance if and only if the sequence $(\tilde{u}_{n})$ converges
weakly to $\tilde{u}$ in $\Pi_{0}$. \end{thm}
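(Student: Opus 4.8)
The plan is to establish the two directions separately, using Theorem~\ref{thm1} to translate value-based distance into total variation distance between sets of garblings. For the ``only if'' direction: if $\dist(u_n,u)\to 0$, then since $\dist_1\le\dist$ and single-agent problems include the experiments that elicit each finite-level belief hierarchy, the $n$-th order hierarchy distributions of $u_n$ must converge to those of $u$ in total variation, hence weakly; since this holds at every level and $\Pi$ is metrizable and compact under the weak (product) topology, we get $\tilde u_n\to\tilde u$ weakly. (Alternatively, and more cleanly, one notes that each coordinate of the map $u\mapsto\tilde u$ — i.e. the pushforward to the first $m$ levels of beliefs — is realized by a bounded family of single-agent statistical decision problems, and value-based closeness forces closeness of values on all of them, which by a standard Blackwell-type argument forces weak closeness of the finite-level hierarchy distributions.)

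The substantive direction is ``if.'' Assume $\tilde u_n\to\tilde u$ weakly with all structures countable. I would first reduce to the case where $u$ is \emph{finite}: given $\delta>0$, approximate $u$ in value-based distance by a finite structure $u'$ — this is possible for countable structures by truncating the signal sets and noting that the tail probability mass contributes at most its total variation weight to $\dist$ (footnote~\ref{fn:The-inequality-is}) — and then it suffices to show $\limsup_n\dist(u_n,u')\le C\delta$. So suppose $u\in\CU(L)$ for some finite $L$, with signal sets that we may take to separate the (finitely many) distinct belief hierarchies appearing under $u$. Now fix a large level $m$. Weak convergence $\tilde u_n\to\tilde u$ means the joint distribution of $\bigl(k,(\text{level-}{\le}m\text{ hierarchy of }c),(\text{level-}{\le}m\text{ hierarchy of }d)\bigr)$ under $u_n$ converges (in total variation, since the target is finitely supported) to that under $u$. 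Define garblings $q_1^n,q_2^n$ that map each player's signal in $u_n$ to (a signal in $u$ encoding) its level-$m$ hierarchy class; symmetrically, garblings on the $u$ side are near-identities since signals of $u$ already determine hierarchy classes at every finite level once $m$ is large enough relative to $L$. Then $q_1^n.u_n$ and $u.q_2^n$ (appropriately) are both close in total variation to a common reference distribution — the one supported on hierarchy classes — \emph{up to the discrepancy coming from higher-order ($>m$) correlations between the two players' signals}. Theorem~\ref{thm1} then bounds $\dist(u_n,u)$ by twice this discrepancy plus the truncation error.

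The crux — and the main obstacle — is controlling that higher-order discrepancy: a priori, even if all level-$m$ marginals agree, $u_n$ could carry correlation between the players' signals that only manifests beyond level $m$, and such joint information is exactly what a garbling on one side cannot erase while a garbling on the other side reproduces. This is where Proposition~\ref{prop: joint info} does the essential work: the point of weak convergence to a \emph{countable} limit is that, along the sequence, the ``excess'' joint information between the players' signals that is not already reflected in low-order beliefs becomes, in the conditional-independence sense of Section~\ref{subsec:Value-of-joint}, negligible — each player's signal is $\varepsilon_n$-conditionally independent of the other's given a sufficiently high-order summary, with $\varepsilon_n\to0$. Concretely, I would choose $m=m(n)\to\infty$ slowly, apply Proposition~\ref{prop: joint info} (in the form that bounds $\dist$ by the conditional-independence defect) to the structure obtained by conditioning on the level-$m(n)$ hierarchies, and combine with the total-variation convergence of those hierarchies' joint law to conclude $\dist(u_n,u)\to0$. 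Assembling the estimates — truncation error $\le C\delta$, hierarchy-marginal error $\to0$, joint-information defect $\to0$ — and letting $\delta\to0$ finishes the proof. The delicate quantitative point is verifying that weak convergence of the full hierarchy distribution genuinely forces the conditional-independence defect to vanish; this uses that the limit is a genuine (countable, hence ``tight'') information structure rather than an arbitrary element of $\Pi$, which is precisely why the statement is restricted to convergence within $\Pi_0$ to a point of $\Pi_0$.
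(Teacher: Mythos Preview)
Your ``only if'' argument contains a genuine error: single-agent problems depend only on $\marg_{K\times C}u$, hence only on player~1's first-order beliefs about $K$; they cannot detect second- or higher-order hierarchies, so $\dist_1(u_n,u)\to 0$ does not force convergence of the full hierarchy distribution. The paper's proof of this direction is a one-liner that does not go through $\dist_1$ at all: $\dist(u_n,u)\to 0$ gives $\val(\tilde u_n,g)\to\val(\tilde u,g)$ for every $g\in\CG$, and Theorem~12 of \citet{gossner_value_2001} says the family $(\val(\cdot,g))_{g\in\CG}$ generates the weak topology on $\Pi$.

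For the ``if'' direction your architecture matches the paper's --- truncate $u$ to finite, coarse-grain both players' signals, invoke Proposition~\ref{prop: joint info} to kill residual joint information, and use weak convergence for the coarse marginals --- and you correctly locate the crux in the conditional-independence defect. But one step is wrong as written: weak convergence of $\tilde u_n$ to the finitely supported $\tilde u$ does \emph{not} yield total-variation convergence of the level-$m$ hierarchy distributions, because the level-$m$ hierarchies realized under $u_n$ need not coincide with those under $u$ (think $\delta_{1/n}\to\delta_0$). The paper fixes this by building \emph{continuous} partitions of unity $\{\kappa_d^{p^m}\}$ on $\Theta_2$ and then $\{\kappa_c^m\}$ on $\Theta_1$, the latter engineered so that any $\theta_1$ with $\kappa_c^m(\theta_1)>0$ has belief about $(k,\hat d)$ within $\varepsilon^m$ of that of $\tilde c$. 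This design does two jobs at once: (i) the coarse-grained structure $L^m u_n$ becomes a continuous functional of $\tilde u_n$, so weak convergence gives $\|L^m u_n-L^m u\|\to 0$; and (ii) the very definition of the cells furnishes the $\varepsilon$-conditional-independence bound that Proposition~\ref{prop: joint info} needs, with defect $\le 2\varepsilon^m + K^m v(\hat c=m{+}1)$. Your ``map to level-$m$ hierarchy class'' garbling and the appeal to ``choose $m=m(n)\to\infty$ slowly'' do neither job on their own; the construction has to be made via neighborhoods (or soft partitions) and the conditional-independence estimate argued explicitly from their geometry.
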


The result says that a convergence in value-based topology to a countable
structure is equivalent to the convergence in distribution of finite-order
hierarchies of beliefs. Informally, around countable structures, the
higher-order beliefs have diminishing importance.

The intuition of the proof is as follows: If $u$ is finite, we surround
the hierarchies $\tilde{c}$ for $c\in C$ by sufficiently small and
disjoint neighborhoods, so that all hierarchies in the neighborhood
of $\tilde{c}$ have similar beliefs about the state and the opponent.
We do the same for the other player. Weak convergence ensures that
the converging structures assign large probability to the neighborhoods.
We show that any information about a player's hierarchy beyond the
neighborhood to which it belongs is almost conditionally independent
(in the sense of Section \ref{subsec:Value-of-joint}) from information
about the state and the opponents' neighborhoods. By Proposition \ref{prop: joint info},
only information about neighborhoods matters, and the latter is similar
to the information in limit structure $u$. If $u$ is countable,
we also show that it can be appropriately approximated by finite structures.

There are two reasons why Theorem \ref{THM4WEAKTOPOLOGY} is surprising:
It seems to (a) convey a message that is opposite to the literature
on strategic (dis)continuities, and (b) contradict our discussion
of Theorem \ref{THM3LARGESPACE}. We deal with these two issues in
order.

\subsubsection{Strategic discontinuities}

For an illustration of the first issue, consider email-game information
structures $u$ from \citet{rubinstein_electronic_1989}\footnote{The email game information structure $u_\varepsilon$ is as follows: there are two states $0$ and $1$, the latter with probability $p$. Player 1
knows the state. If the state is $1$, a message is sent to the other
player, who, upon receiving it, immediately sends it back. The message
travels back and forth until it is lost, which happens with i.i.d.
probability $\varepsilon>0$ each time it travels. The signal of each player in $u_\varepsilon$ is the number of messages she received. In \cite{rubinstein_electronic_1989}, a non-zero-sum coordination game is considered, and shown to have the property that the set of (Bayesian Nash) equilibrium payoffs with $u_\varepsilon$ does not converge to the set of equilibrium payoffs with $u_0$, where $u_0$ is simply the structure corresponding to common knowledge of the state.}. Player 1
always knows the state. Player 2's first-order belief attaches probability
of at least $\frac{1}{1+\varepsilon\frac{p}{1-p}}$ to one of the
states, where $p<1$ is the initial probability of one of the states
and $\varepsilon$ is the probability of losing the message. It is
well-known that, as $\varepsilon\rightarrow0$, the Rubinstein type
spaces converge in the weak topology to the common knowledge of the
state. Theorem \ref{THM4WEAKTOPOLOGY} implies that the Rubinstein
type spaces also converge under value-based distance.

We can make the last point somehow more general. An information structure
$u\in\Delta\left(K\times C\times D\right)$ exhibits $\varepsilon$-knowledge
of the state if there is a mapping $\kappa:C\cup D\rightarrow K$
such that 
\[
u\Big(\left\{ u(\{k=\kappa(c)\}|c)\geq1-\varepsilon\right\} \Big)\geq1-\varepsilon\text{ and }u\Big(\left\{ u(\{k=\kappa(d)\}|d)\geq1-\varepsilon\right\} \Big)\geq1-\varepsilon.
\]
In other words, the probability that any player assigns at least $1-\varepsilon$
to some state is at least $1-\varepsilon$.

\begin{prop} \label{PROP: APPROXIMATE KNOWLEDGE}Suppose that $u$
exhibits $\varepsilon$-knowledge of the state and that $v\in\Delta\left(K\times K_{C}\times K_{D}\right)$,
where $K_{C}=K_{D}=K$, $\marg_{K}v=\marg_{K}u$, and $v\left(k=k_{C}=k_{D}\right)=1$.
(In other words, $v$ is a common knowledge structure with the only
information about the state.) Then, 
\[
\dist\left(u,v\right)\leq20\varepsilon.
\]
\end{prop}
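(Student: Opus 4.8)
The plan is to reduce Proposition~\ref{PROP: APPROXIMATE KNOWLEDGE} to Proposition~\ref{prop: joint info} by viewing the $\varepsilon$-knowledge structure $u$ as a (small) perturbation of a richer structure in which each player's signal is the \emph{pair} consisting of the ``guessed state'' $\kappa(c)$ and the original signal $c$. First I would define an auxiliary information structure $w\in\Delta(K\times(K_C\times C)\times(K_D\times D))$ by letting player~1's signal be $(\kappa(c),c)$ and player~2's signal be $(\kappa(d),d)$, with the same underlying distribution $u$ over $(k,c,d)$. Since these new signals are just relabelings/refinements of the old ones together with a deterministic function of them, $w$ is equivalent to $u$ in the sense of the paper (a garbling recovers $u$ from $w$ by forgetting the first coordinate, and conversely $\kappa$ is deterministic given $c$), so $\dist(u,w)=0$ and it suffices to bound $\dist(w,v)$.

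Next I would check the $\varepsilon$-conditional independence hypotheses of Proposition~\ref{prop: joint info}, applied with the ``common'' coarse signal being $k_C$ for player~1 and $k_D$ for player~2, and the ``extra'' signal being $c$ for player~1 and $d$ for player~2. Concretely, I want: $c$ is $O(\varepsilon)$-conditionally independent of $(k,k_D)$ given $k_C=\kappa(c)$, and symmetrically $d$ is $O(\varepsilon)$-conditionally independent of $(k,k_C)$ given $k_D=\kappa(d)$. The point of the $\varepsilon$-knowledge assumption is precisely that, with probability at least $1-\varepsilon$ on each side, $k=\kappa(c)$ and $k=\kappa(d)$, so conditioning on $\kappa(c)$ essentially pins down $k$, and hence (up to the $\varepsilon$-mass where knowledge fails, which the total-variation quantity in the definition of $\varepsilon$-conditional independence absorbs) $c$ carries no further information about $(k,k_D)$ than $k_C$ already does. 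I would carry this out by splitting on the good event and crudely bounding the contribution of the bad event; the constants ($20\varepsilon$) come from adding up a handful of such $O(\varepsilon)$ terms (two knowledge failures per player, plus the correlation between the two players' guesses) and I would not optimize them.

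With those estimates in hand, Proposition~\ref{prop: joint info} (applied to $w$ with $v$ playing the role of the marginal $\marg_{K\times K_C\times K_D}w$) gives $\dist(w,v')\le O(\varepsilon)$ where $v'=\marg_{K\times K_C\times K_D}w$. The last step is to observe that $v'$ has $\marg_K v'=\marg_K u=\marg_K v$ and that under $v'$ we have $k_C=\kappa(c)$, $k_D=\kappa(d)$, which agree with $k$ except on an event of probability $O(\varepsilon)$; hence $\|v'-v\|\le O(\varepsilon)$ and $\dist(v',v)\le\|v'-v\|\le O(\varepsilon)$. Combining via the triangle inequality, $\dist(u,v)\le\dist(u,w)+\dist(w,v')+\dist(v',v)\le 20\varepsilon$ after collecting constants.

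The main obstacle I anticipate is the careful bookkeeping in verifying the $\varepsilon$-conditional-independence inequalities: one must be precise about \emph{which} variables are being conditioned on (the guessed state, not the true state) and show that the total-variation discrepancy introduced by the event $\{k\neq\kappa(c)\}$ or $\{k\neq\kappa(d)\}$ is genuinely first order in $\varepsilon$ and not, say, amplified by conditioning on a low-probability value of $k_C$. A clean way to handle this is to bound $\sum_{k_C}w(k_C)\sum_{k,k_D,c}\big|w(k,k_D,c\mid k_C)-w(k,k_D\mid k_C)w(c\mid k_C)\big|$ directly, dominating it by $2\Prob(k\neq\kappa(c))+2\Prob(k\neq\kappa(d))$ plus a term controlling how much $k_D$ fails to be determined by $k_C$, each of which is at most $2\varepsilon$ or so by the $\varepsilon$-knowledge hypothesis. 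Everything else is routine triangle-inequality and garbling manipulation.
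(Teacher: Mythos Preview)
Your proposal is correct and follows essentially the same route as the paper: define the auxiliary structure $w$ (the paper calls it $u'$) by adjoining $\kappa(c),\kappa(d)$ to the signals, use the $\varepsilon$-knowledge hypothesis to show that $c$ is $O(\varepsilon)$-conditionally independent of $(k,k_D)$ given $k_C$ (the paper gets the constant $16\varepsilon$ via the bound $u'\{k_C\neq k\text{ or }k_D\neq k\}\le 4\varepsilon$), apply Proposition~\ref{prop: joint info}, and finish by comparing $v'=\marg_{K\times K_C\times K_D}w$ to $v$ in total variation. The concern you flag about conditioning on low-probability values of $k_C$ is exactly what the paper's direct computation handles, since the conditional-independence quantity is an average over $k_C$ weighted by $u'(k_C)$.
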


Therefore, approximate knowledge structures are close to common knowledge
structures. The convergence of approximate knowledge type spaces to
common knowledge is a consequence of Theorem \ref{THM4WEAKTOPOLOGY}.
The metric bound stated in Proposition \ref{PROP: APPROXIMATE KNOWLEDGE}
requires a separate (simple) proof based on Proposition \ref{prop: joint info}.

The above results seem to go against the main message of the strategic
discontinuities literature (\citet{rubinstein_electronic_1989}, \citet{dekel_topologies_2006},
\citet{weinstein2007structure}, \citet{ely_critical_2011}, etc.),
where the convergence of finite-order hierarchies does not imply strategic
convergence even around finite structures. There are three important
ways in which our setting differs. First, we rely on the ex-ante equilibrium
concept, rather than interim rationalizability. We are also interested
in payoff comparison rather than behavior. Second, we restrict attention
to zero-sum games. Finally, we only work with common prior type spaces.

We believe that each of these differences is important. First, if
we worked with rationalizability, an argument due to \citet{weinstein2007structure}
applies, and, assuming sufficient richness, it can be used to show
that the resulting topology is strictly finer than weak topology\footnote{We are grateful to Satoru Takahashi for clarifying this point.}.
Further, the ex-ante focus and payoff comparison (but without restriction
to zero-sum games) lead to a topology that is significantly finer
than weak topology (in fact, so fine that it can be useless - see
Section \ref{sec:Payoff distance NZS} for a detailed discussion).
The role of common prior is less clear. On the one hand, \citet{lipman:03}
implies that, at least from the interim perspective, a common prior
does not generate significant restrictions on finite-order hierarchies.
On the other hand, we rely on the ex-ante perspective, and common
prior is definitely important for Proposition \ref{prop: joint info},
which plays an important role in the proof.

Let us also mention that Proposition \ref{PROP: APPROXIMATE KNOWLEDGE} is also related to \cite{kajiimorris97robust}(and additional results in \cite{MORRISUI} Morris-Ui and \cite{oyama2010robust}). In that paper, the authors fix a
game with complete information and study the robustness of equilibria
to perturbation of complete information in similar way to our notion
of $\varepsilon$-knowledge. They show that if the game has a unique
correlated equilibrium, then for sufficiently small $\varepsilon$,
nearby games have an equilibrium with behavior close to the equilibrium
of the complete information game. Proposition \ref{PROP: APPROXIMATE KNOWLEDGE} says that, for zero-sum
games that may have multiple equilibria, equilibrium payoffs are robust to small amounts of incomplete information. 

\subsubsection{Relation to Theorem \ref{THM3LARGESPACE}}

For the second issue, recall that Theorem \ref{THM3LARGESPACE} exhibits
a sequence of countable information structures such that the hierarchies
of beliefs converge in the weak topology along the sequence, but the
sequence does not converge in the value-based distance. The limiting
structure, namely the distribution of the realizations of the infinite
Markov chain, is \emph{uncountable}. On the other hand, Theorem \ref{THM4WEAKTOPOLOGY}
says that convergence in weak topology to a \emph{countable} information
structure is equivalent to convergence in value-based distance. Together,
the two results imply that, although weak and value-based topologies
are equivalent around countable structures $\CU^{*}$, they differ
beyond $\CU^{*}$. The impact of higher-order beliefs becomes significant
only for uncountable information structures.

Another way to illustrate the relation between two results is to observe
that, although the two topologies coincide on $\CU^{*}\simeq\Pi_{0}$,
and the latter has compact closure $\Pi$ under weak topology, the
completion of $\CU^{*}$ with respect to $\dist$ is not compact.
This should not be confusing, as the ``completion'' is metric specific
and not a purely topological notion, and different metrics that induce
the same topology can have different completions.

\subsection{Pointwise value-based topology and completions}

An alternative way to define a topology on the space of information
structures would be through the convergence of values. Say that a
sequence of information structures $(u_{n})$ converges pointwise
to $u$ if, for all payoff functions $g\in\CG$, $\lim_{n\rightarrow\infty}\val(u_{n},g)=\val(u,g)$.
Clearly, if $(u_{n})$ converges to $u$ for value-based distance,
then it also converges to $u$ pointwise.

The topology of pointwise convergence is the weakest topology that
makes the value mappings continuous. Since $\val(\mu,g)$ is also
well defined for $\mu$ in $\Pi$, pointwise convergence is also well-defined
on $\Pi$. Moreover by Theorem 12 of \citet{gossner_value_2001},
the topology of pointwise convergence coincides with the topology
of weak convergence on $\Pi$. Using Theorem \ref{THM4WEAKTOPOLOGY},
we obtain the following corollary: \begin{cor} On set $\CU^{*}$,
the topology induced by value-based distance, the topology of weak
convergence, and the topology of pointwise convergence coincide. In
particular, let $u$ be in $\CU^{*}$ and $(u_{n})$ be in $\CU^{*}$.Then
$(u_{n})$ converges to $u$ for value-based distance if and only
if for every $g$ in $\CG$, $\val(u_{n},g)\xrightarrow[n\to\infty]{}\val(u,g)$.
\end{cor}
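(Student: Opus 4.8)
The plan is to deduce the corollary from the three results already available: Theorem~\ref{THM4WEAKTOPOLOGY}, which identifies value-based convergence on $\CU^{*}$ with weak convergence of the hierarchies $\tilde u_{n}\to\tilde u$ in $\Pi_{0}$; Theorem~12 of \citet{gossner_value_2001}, which says that on the compact space $\Pi$ the topology of pointwise convergence of the maps $\mu\mapsto\val(\mu,g)$ coincides with the weak topology; and the elementary implication that value-based convergence always implies pointwise convergence (since $|\val(u_{n},g)-\val(u,g)|\le\dist(u_{n},u)$ for every fixed $g$). The target is the chain of equivalences: (value-based convergence on $\CU^{*}$) $\Leftrightarrow$ (weak convergence of $\tilde u_{n}\to\tilde u$) $\Leftrightarrow$ (pointwise convergence $\val(u_{n},g)\to\val(u,g)$ for all $g\in\CG$).

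First I would record the trivial direction: if $\dist(u_{n},u)\to 0$ then for each $g\in\CG$, $|\val(u_{n},g)-\val(u,g)|\le\dist(u_{n},u)\to 0$, so $(u_{n})$ converges to $u$ pointwise; combined with Theorem~\ref{THM4WEAKTOPOLOGY} this also gives $\tilde u_{n}\to\tilde u$ weakly. Next I would close the loop by showing pointwise convergence implies weak convergence of the hierarchies. This is where Gossner--Tomala enters: embed $\tilde u_{n}$ and $\tilde u$ in $\Pi$, note that pointwise convergence of $\val(\cdot,g)$ over all $g\in\CG$ is exactly convergence in the topology of pointwise convergence on $\Pi$, and invoke Theorem~12 of \citet{gossner_value_2001} to conclude $\tilde u_{n}\to\tilde u$ in the weak topology of $\Pi$. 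Since $u,u_{n}\in\CU^{*}\simeq\Pi_{0}$, this is precisely weak convergence in $\Pi_{0}$, and then Theorem~\ref{THM4WEAKTOPOLOGY} upgrades it back to value-based convergence $\dist(u_{n},u)\to 0$. Assembling the implications yields all three equivalences, and the final sentence of the corollary is just the ``value-based $\Leftrightarrow$ pointwise'' part spelled out.

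The only genuine subtlety — hardly an obstacle, but the point that needs a careful sentence — is making sure the invocation of \citet{gossner_value_2001} is legitimate: their result concerns $\Pi$, the compact space of all consistent distributions over belief hierarchies, so I must use that $\tilde u_{n},\tilde u\in\Pi_{0}\subseteq\Pi$ and that the weak topology on $\Pi_{0}$ is the subspace topology inherited from $\Pi$ (noted in Section~\ref{sec:Model}), so that weak convergence in $\Pi$ of a sequence lying in $\Pi_{0}$ with limit in $\Pi_{0}$ is the same as weak convergence in $\Pi_{0}$. One should also remark that $\val(u_{n},g)$ for the information structure $u_{n}$ agrees with $\val(\tilde u_{n},g)$ for the associated element of $\Pi_{0}$ — this is the standard fact that the value depends only on the distribution over hierarchies (Proposition~III.4.2 in \citet{mertens_sorin_zamir_2015}), already used implicitly throughout the paper. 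With these identifications in place the argument is a short diagram chase through the three cited results.
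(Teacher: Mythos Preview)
Your proposal is correct and follows essentially the same route as the paper: combine Theorem~\ref{THM4WEAKTOPOLOGY} (value-based $\Leftrightarrow$ weak on $\CU^{*}$) with Theorem~12 of \citet{gossner_value_2001} (pointwise $\Leftrightarrow$ weak on $\Pi$), together with the trivial implication value-based $\Rightarrow$ pointwise. Your extra care about the subspace topology $\Pi_{0}\subseteq\Pi$ and the identification $\val(u_{n},g)=\val(\tilde u_{n},g)$ is welcome but already implicit in the paper's treatment.
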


A standard way to define a metric compatible with pointwise topology
is the following. Consider any sequence $(g_{n})_{n}$ that is dense
in the set of payoff functions $\cup_{L\geq1}[-1,1]^{K\times L^{2}}$
in the sense\footnote{To construct such a sequence, one can for instance proceed as follows.
For each positive integer $L$, consider a finite grid approximating
$[-1,1]^{K\times L^{2}}$ up to $1/L$ , then define $(g_{n})_{n}$
by collecting the elements of all grids.} that, for each $g$ in $[-1,1]^{K\times L^{2}}$ and $\varepsilon>0$,
there exists $n$ such that $|g(k,i,j)-g_{n}(k,i,j)|\leq\varepsilon$
for all $(k,i,j)\in K\times L^{2}$. The particular choice of $(g_{n})_{n}$
will play no role in the sequel. Define now the distance $\dist_{W}$
on $\CU^{*}$ by: 
\[
\dist_{W}(u,v)=\sum_{n=1}^{\infty}\frac{1}{2^{n}}|\val(u,g_{n})-\val(v,g_{n})|.
\]
By density of $(g_{n})_{n}$, we have $\dist_{W}(u_{l},u)\xrightarrow[l\to\infty]{}0$
if and only if, for all $g$, $\val(u_{l},g)\xrightarrow[l\to\infty]{}\val(u,g)$.
$\CU^{*}$ equipped with $\dist_{W}$ is a metric space, and we denote
by $\CV$ its completion for $\dist_{W}$. For this distance, $\CU^{*}$
is isometric to a dense subset of $\CV$, so that $\CV$ can be seen
as the closure of $\CU^{*}$. Using Theorem 12 of \citet{gossner_value_2001},
we have the following result.

\begin{thm}\label{thm5completion} $\CV$ is homeomorphic to the
space $\Pi$, endowed with weak topology.\end{thm}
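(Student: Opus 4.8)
The plan is to show that the map $u^{*}\mapsto\tilde u$, which by construction of $\dist_{W}$ and Theorem \ref{THM4WEAKTOPOLOGY} is a well-defined injection of $\CU^{*}$ into $\Pi$, extends to a homeomorphism between the completion $\CV$ and $\Pi$. The key ingredients are: (i) $\Pi$ is compact metrizable under the weak topology (stated in Section \ref{sec:Model}, from \citet{mertens_sorin_zamir_2015}); (ii) $\Pi_{0}=\{\tilde u:u\in\CU\}$ is dense in $\Pi$ (same reference); (iii) by Theorem 12 of \citet{gossner_value_2001}, pointwise convergence of values coincides with weak convergence on all of $\Pi$, so in particular the value functions $\mu\mapsto\val(\mu,g)$, $g\in\CG$, separate points of $\Pi$ and generate its topology; and (iv) the identification $\CU^{*}\simeq\Pi_{0}$ is a homeomorphism for the two respective topologies — this is exactly the content of Theorem \ref{THM4WEAKTOPOLOGY} together with the definition of $\dist_{W}$.

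First I would make (iv) precise: define $\Phi:\CU^{*}\to\Pi_{0}$ by $\Phi(u^{*})=\tilde u$. This is a bijection because $\val(u,g)$ depends on $u$ only through $\tilde u$ (Proposition III.4.2 in \citet{mertens_sorin_zamir_2015}), so $u\sim v$ iff $\tilde u=\tilde v$. By the remark following the definition of $\dist_{W}$, a sequence $u_{l}^{*}\to u^{*}$ in $\dist_{W}$ iff $\val(u_{l},g)\to\val(u,g)$ for every $g$, which by Theorem 12 of \citet{gossner_value_2001} (applied on $\Pi$) is equivalent to $\tilde u_{l}\to\tilde u$ weakly. Hence $\Phi$ is a homeomorphism onto its image $\Pi_{0}$, where $\Pi_{0}$ carries the subspace weak topology.

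Next I would extend $\Phi$ to the completions. Fix a metric $\rho$ on $\Pi$ inducing the weak topology (available since $\Pi$ is compact metrizable). The composition $\rho\circ(\Phi\times\Phi)$ is a metric on $\CU^{*}$ inducing the same topology as $\dist_{W}$; since $\Pi$ is compact, hence complete, and $\Pi_{0}$ is dense in it, the completion of $(\CU^{*},\rho\circ(\Phi\times\Phi))$ is canonically identified with $(\Pi,\rho)$, and $\Phi$ extends to a homeomorphism $\bar\Phi:\overline{\CU^{*}}\to\Pi$. The only remaining point is that the completion of $\CU^{*}$ does not depend on which of the two (topologically equivalent) metrics $\dist_{W}$ or $\rho\circ(\Phi\times\Phi)$ we use: this is not automatic for arbitrary equivalent metrics, so here I would invoke uniform equivalence. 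Concretely, $\dist_{W}$ is itself uniformly continuous on $\Pi$ — it equals $\sum 2^{-n}|\val(\cdot,g_{n})-\val(\cdot,g_{n})|$ and each $\mu\mapsto\val(\mu,g_{n})$ is continuous on the compact space $\Pi$, hence the series defines a continuous, therefore uniformly continuous, function on $\Pi\times\Pi$ — so $\dist_{W}$ extends continuously to $\Pi\times\Pi$; one checks it remains a metric there (it separates points of $\Pi$ by density of $(g_{n})$ and Theorem 12 of \citet{gossner_value_2001}), and it is uniformly equivalent on $\Pi_{0}$ to the restriction of $\rho$ because both induce the same topology on the compact set $\Pi$. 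Therefore the $\dist_{W}$-completion $\CV$ coincides with $\Pi$, and $\bar\Phi$ is the desired homeomorphism.

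The main obstacle is the completion-invariance step just described: passing from ``$\dist_{W}$ and the pulled-back metric induce the same topology on $\CU^{*}$'' to ``they have the same completion'' requires an argument, since homeomorphic metric spaces need not have homeomorphic completions. The clean way around it is to observe that $\dist_{W}$ extends to a genuine metric on the compact set $\Pi$ inducing its topology — which makes $(\Pi,\dist_{W})$ a complete space containing $(\CU^{*},\dist_{W})$ as a dense subspace, so $\CV=\Pi$ by the uniqueness of completion. All other steps are bookkeeping built directly on Theorem \ref{THM4WEAKTOPOLOGY}, the cited density/compactness facts, and Gossner--Mertens Theorem 12.
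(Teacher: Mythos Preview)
Your argument is correct and ultimately lands on the paper's proof, but you take a detour that obscures the one-line reason everything works: the map $\Phi:u^{*}\mapsto\tilde u$ is not merely a homeomorphism for $\dist_{W}$, it is an \emph{isometry}. Indeed $\val(u,g_{n})=\val(\tilde u,g_{n})$ for every $n$ (values factor through hierarchies, Proposition III.4.2 in \citet{mertens_sorin_zamir_2015}), so $\dist_{W}(u,v)=\dist_{W}(\tilde u,\tilde v)$ term by term. Once you have an isometry onto $\Pi_{0}$, the completions are isometric by definition, and your ``main obstacle'' (does the completion depend on which topologically equivalent metric you use?) never arises. The paper's proof is exactly this: extend $\dist_{W}$ to $\Pi$ by the same formula, note $\Phi$ is an isometry, and conclude $\CV$ is the completion of $(\Pi_{0},\dist_{W})$, which is $(\Pi,\dist_{W})$ since $\Pi$ is compact (hence complete) and $\Pi_{0}$ dense.

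Two smaller points. First, you invoke Theorem~\ref{THM4WEAKTOPOLOGY} to justify that $\Phi$ is a homeomorphism, but that theorem concerns the value-based distance $\dist$, not $\dist_{W}$; it is neither needed nor directly relevant here. The equivalence of $\dist_{W}$-convergence and weak convergence follows from the density of $(g_{n})$ together with Theorem~12 of \citet{gossner_value_2001}, as you in fact argue in your second paragraph. Second, your route through an auxiliary metric $\rho$ and uniform equivalence is unnecessary once the isometry is noted; you yourself recover the clean argument in your final paragraph, so you could simply start there.
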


\begin{proof} Define similarly distance $\dist_{W}$ on $\Pi$ as
$\dist_{W}(\mu,\nu)=\sum_{n=1}^{\infty}\frac{1}{2^{n}}|\val(\mu,g_{n})-\val(\nu,g_{n})|.$
By construction, the mapping $(u\mapsto\tilde{u})$ from $\CU^{*}$
to $\Pi_{0}$ is an isometry for $\dist_{W}$. So $\CV$ is isometric
to the completion of $\Pi_{0}$ for $\dist_{W}$. But on $\Pi$, the
topology induced by $\dist_{W}$ is weak topology, and for this topology,
$\Pi$ is the closure of $\Pi_{0}$. So the completion of $\Pi_{0}$
for $\dist_{W}$ is $\Pi$. \end{proof}

As a consequence, $\CV$ is compact and does not depend on the choice
of $(g_{n})$. It contains not only the information structures with
countably many types, but also the information structures with a continuum
of signals, obtained as limits of sequences of information structures
with countably many types.

The main point of interest in Theorem \ref{thm5completion} is that
we can now view $\Pi$ as set $\CV$. We can recover the exact space
$(\Pi,weak)$ using values of zero-sum Bayesian games and the completion
of a metric space\footnote{We could have worked from the beginning with possibly uncountable
information structures, i.e., with Borel probabilities over $K\times[0,1]\times[0,1]$.
Endowing this set with distance $\dist_{W}$ yields a metric space
directly homeomorphic to $\Pi$, with no need to go to completion
since the space would already be complete. See online material.}. This may be seen as a duality result between games and information:
$\Pi$ is defined with hierarchies of beliefs but with no reference
to games and payoffs, whereas $\CV$ is defined by values of zero-sum
games, with no explicit reference to belief hierarchies. In particular,
restricting attention to the values of zero-sum games is still sufficient
to obtain the full space $\Pi$ with weak topology. Now the construction
of $\CV$ yields a new, alternative, interpretation of $\Pi$, and
one might possibly hope to deduce properties of ($\Pi$, weak) by
transferring, via the homeomorphism, properties first proven on $\CV$.

Finally, although $\dist_{W}$ and our value-based distance $\dist$
induce the same topology on $\CU^{*}$, their completions differ.
Theorem \ref{THM3LARGESPACE} implies that completion $\CW$ of $\CU^{*}$
for $\dist$ is not compact. Space $\CW$ also contains information
structures with a continuum of signals and represents a new space
of incomplete information structures with strong foundations based
on the suprema of differences between values of Bayesian games.

\section{Payoff-based distance}

\label{sec:Payoff distance NZS}

In this section, we consider a version of distance (\ref{eq:ZS distance})
where the supremum is taken over all (including non-zero-sum) games.
Rubinstein's e-mail game (\citep{rubinstein_electronic_1989}) shows the relevance of almost conditional independent information for non-zero-sum games (in the sense of Section \ref{subsec:Value-of-joint}). Therefore similar statements as Theorem \ref{THM4WEAKTOPOLOGY} and Proposition \ref{PROP: APPROXIMATE KNOWLEDGE} do not hold when considering non-zero-sum games. We show the stronger result that such a payoff-based distance between information structures is mostly trivial.

A \emph{non-zero sum payoff function} is a mapping $g:K\times I\times J\rightarrow\left[-1,1\right]^{2}$
where $I,J$ are finite sets. Let $\text{Eq}\left(u,g\right)\subseteq\R^{2}$
be the set of Bayesian Nash Equilibrium (BNE) payoffs in game $g$
on information structure $u$. Assume that the space $\R^{2}$ is
equipped with the maximum norm $\left\Vert x-y\right\Vert _{\max}=\max_{i=1,2}\left|x_{i}-y_{i}\right|$
and that the space of compact subsets of $\R^{2}$ is equipped with
the induced Hausdorff distance $\dist_{\max}^{H}$. Let 
\begin{equation}
\dist_{NZS}\left(u,v\right)=\sup_{g\text{ is a non-zero-sum payoff function}}\dist_{\max}^{H}\left(\text{Eq}\left(u,g\right),\text{Eq}\left(v,g\right)\right).\label{eq:NZSdistance}
\end{equation}
Then, clearly as in our original definition, $0\leq\dist_{NZS}\left(u,v\right)\leq2.$\footnote{A very similar approach to closeness of information structures is
taken in \citet{monderer_samet_1996} and \citet{kajmorr:97}. \citet{monderer_samet_1996}
define a notion of distance $d_{MS}$ on common prior information
structures that relies on closeness of common belief events. They
show that if two information structures are $\varepsilon$-close,
then any equilibrium on one of them is $\varepsilon$-close to an
$\varepsilon$-equilibrium on the other structure. \citet{kajmorr:97}
use the last property as their definition. They say that an information
structure is $\varepsilon$-close to another one if, for all bounded
games, any equilibrium of one is $\varepsilon$-close to an $\varepsilon$-equilibrium
of the other. Thus, the main difference between the approach on these
two papers and our metric $\dist_{NZS}$ is that we require $\varepsilon$-closeness
to a (proper) equilibrium. Our metric $\dist_{NZS}$ is closer in
spirit to the value-based distance defined using zero-sum games, and,
arguably, it is easier to interpret for values of $\varepsilon$ that
are far away from 0. }

Contrary to the value in the zero-sum game, the BNE payoffs on information
structure $u$ cannot be factorized through distribution $\tilde{u}\in\Pi$
over the hierarchies of beliefs induced by $u$. For this reason,
we only restrict our analysis to information structures that are non-redundant, which means that two different signals (occurring with positive probability) induce tow different hierarchies of beliefs.  
We do so because
the dependence of the BNE on redundant information is not yet well-understood\footnote{See \citet{sadzik2008beliefs}. An alternative approach would be to
take an equilibrium solution concept that can be factorized through
the hierarchies of beliefs. An example is Bayes Correlated Equilibrium
from \citet{bergemann_bayes_2015}.}.
For convenience, we also restrict ourselves to information structures with finite support.

Let $u\in\Delta\left(K\times C\times D\right)$ be an information
structure with finite support. A subset $A\subseteq K\times C\times D$ is a \emph{proper
common knowledge component} if $u\left(A\right)\in\left(0,1\right)$
and for each signal $s\in C\cup D$, $u\left(A|s\right)\in\left\{ 0,1\right\}$.
An information structure is \emph{simple} if it does not have a proper
common knowledge component. As it follows from Lemma III.2.7 in \citet{mertens_sorin_zamir_2015}, each non-redundant information structure $u$ with finite support has a representation as a finite convex combination of (non-redundant) simple information structures\footnote{Let us sketch the argument  for this result. For each signal (type) $s$ of a player in the support of $u$, we can define $N_1(s)$ as the support of $u(.|s)$. Then, we repeat the construction for every signal in $N_1(s)$ and define $N_2(s)$ as the union of $N_1(s)$ and all the sets obtained this way, $N_2(s)=N_1(s)\cup (\cup_{\tilde s \in N_1(s)} N_1(\tilde s))$. Repeating this process, the sequence is eventually stationary, i.e. $N_{t+1}(s)=N_t(s)$ for some integer $t$. We obtain a finite set $N(s)=N_t(s)$ having the property that the conditional distribution of $u(.|N(s))$ is a (non-redundant) simple information structure with support $N(s)$. There are finitely many different sets $N(s)$ when $s$ ranges through all signals in the support of $u$ and they form a partition of the support of $u$. The representation as a convex combination follows directly from the construction.} $u=\sum_{\alpha}p_{\alpha}u_{\alpha}$, where $\sum p_{\alpha}=1,p_{\alpha}\geq0$.

\begin{thm}\label{THM:NZS DISTANCE}Suppose that $u,v$ are non-redundant
information structures with finite support. If $u$ and $v$ are simple, then 
\begin{align*}
\dist_{NZS}\left(u,v\right) & =\begin{cases}
0, & \text{if }\tilde{u}=\tilde{v},\\
2 & \text{otherwise. }
\end{cases}
\end{align*}
More generally, suppose that $u=\sum p_{\alpha}u_{\alpha}$ and $v=\sum q_{\alpha}v_{\alpha}$
are the decompositions into simple information structures. We can
always choose the decompositions so that $\tilde{u}_{\alpha}=\tilde{v}_{\alpha}$
for each $\alpha$. Then, 
\[
\dist_{NZS}\left(u,v\right)=\sum_{\alpha}\left|p_{\alpha}-q_{\alpha}\right|.
\]
\end{thm}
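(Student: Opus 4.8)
The plan is to prove the simple case first and then bootstrap to the general decomposition. For the simple case, the direction $\tilde u=\tilde v\Rightarrow\dist_{NZS}(u,v)=0$ should follow because a simple non-redundant structure is determined up to relabeling by its induced distribution over hierarchies: since $u,v$ are simple, non-redundant, and finitely supported with $\tilde u=\tilde v$, there is a bijection between signals preserving both the common prior over $K$ and all conditional beliefs, hence $u\sim v$ in the sense of the non-zero-sum games too (BNE payoff sets are invariant under relabeling signals), giving $\dist_{NZS}(u,v)=0$. The substantive direction is $\tilde u\neq\tilde v\Rightarrow\dist_{NZS}(u,v)=2$. Here I would exhibit, for each pair of simple non-redundant structures with $\tilde u\neq\tilde v$, a single non-zero-sum game $g$ bounded by $1$ whose BNE payoff sets on $u$ and on $v$ are Hausdorff-distance $2$ apart — i.e.\ one contains a payoff near $(1,1)$ that the other cannot approximate, or they are disjoint at the extreme corners. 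The natural device is a coordination/revelation game in the spirit of Rubinstein's email game: because the structures are simple (no proper common knowledge component), the whole support is ``strategically connected,'' and one can design $g$ so that on one structure there is an equilibrium implementing full coordination on the realized state (payoff $(1,1)$) while on the other the difference in hierarchies forces every equilibrium away from that corner. The key input is that simplicity rules out the structure decomposing, so a contagion-type argument propagates a small belief discrepancy across the entire support.

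For the general statement, I would first justify the claimed \emph{aligned} decompositions $u=\sum_\alpha p_\alpha u_\alpha$, $v=\sum_\alpha q_\alpha v_\alpha$ with $\tilde u_\alpha=\tilde v_\alpha$: using the decomposition into simple components from Lemma III.2.7 of \citet{mertens_sorin_zamir_2015} (sketched in the footnote), group the simple components of $u$ and of $v$ by their hierarchy class $\tilde u_\alpha$, and index by those classes, allowing $p_\alpha=0$ or $q_\alpha=0$ when a class appears in only one of the two. Then I would prove the upper bound $\dist_{NZS}(u,v)\le\sum_\alpha|p_\alpha-q_\alpha|$ and the matching lower bound. For the upper bound: in any non-zero-sum game $g$, an equilibrium on $u$ splits, on each common knowledge component $\alpha$, into an equilibrium of the induced game on $u_\alpha$; since $\tilde u_\alpha=\tilde v_\alpha$, those componentwise equilibria transfer to $v_\alpha$, and the only change to the ex-ante payoff comes from reweighting components, which costs at most $\sum_\alpha|p_\alpha-q_\alpha|$ in each coordinate — hence a near-equilibrium on $v$ with payoff within that bound, and symmetrically. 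Care is needed that a genuine (exact) equilibrium exists on $v$ with the transferred behavior; since behavior is prescribed separately on each component and components are common knowledge, best-response conditions decouple, so it is exact, not just approximate.

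For the lower bound $\dist_{NZS}(u,v)\ge\sum_\alpha|p_\alpha-q_\alpha|$, I would combine the simple-case construction with a direct-sum trick: build a game $g$ that on component $\alpha$ runs the coordination game tailored to $u_\alpha$ versus $v_\alpha$ and scales payoffs so that the ex-ante payoff reads off the total weight placed on ``well-coordinated'' components; because the components are common knowledge, the designer can effectively run a different subgame in each, and the optimal equilibrium payoff on $u$ exceeds that on $v$ by exactly the mismatch in the weights $\sum_\alpha(p_\alpha-q_\alpha)^+$, which by $\sum p_\alpha=\sum q_\alpha=1$ equals $\tfrac12\sum_\alpha|p_\alpha-q_\alpha|$; doing this symmetrically in the two coordinates of the payoff vector and taking the Hausdorff distance recovers the full $\sum_\alpha|p_\alpha-q_\alpha|$. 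The main obstacle I anticipate is the simple-case lower bound $\dist_{NZS}=2$: one must produce an explicit non-zero-sum game exploiting $\tilde u\neq\tilde v$ to drive equilibrium payoffs to opposite corners, and making the contagion argument work uniformly for \emph{all} simple non-redundant finitely supported structures — rather than just the email-game family — is the delicate part; everything else (relabeling invariance, the decomposition, the componentwise transfer of equilibria) is essentially bookkeeping around the common-knowledge-component structure.
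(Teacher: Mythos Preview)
Your upper bounds are fine and match the paper: relabeling when $\tilde u=\tilde v$, and componentwise transfer of equilibria for the general decomposition. The gap is in the simple-case lower bound, and it propagates to the general lower bound.

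The ``coordination on the realized state plus contagion'' idea is the wrong mechanism here. In a generic simple structure neither player knows the state, so there is no equilibrium implementing coordination on $k$ with payoff $(1,1)$; more importantly, you never use the actual consequence of simplicity that drives the proof. The key fact (Lemma III.2.7 in \citet{mertens_sorin_zamir_2015}) is that if $u,v$ are simple, non-redundant, and $\tilde u\neq\tilde v$, then the supports $H_u=\text{supp}\,\tilde u$ and $H_v=\text{supp}\,\tilde v$ in the universal type space are \emph{disjoint}. Given disjointness, the paper's construction is: (i) take a finite game $g^{(0)}$ in which every type's unique rationalizable action includes truthfully reporting her own hierarchy (such games exist by \citet{dekel_topologies_2006}, \citet{ely_critical_2011}); (ii) add to player~2 a binary action $\{u,v\}$ with payoff $+\tfrac12$ for matching the structure to which player~1's reported hierarchy belongs, which is well defined precisely because $H_u\cap H_v=\emptyset$; (iii) scale the original game by $\varepsilon$ and give player~1 an \emph{externality} payoff $\pm(1-\varepsilon)$ depending solely on player~2's $\{u,v\}$ choice. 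In the unique rationalizable profile, player~1's BNE payoff lies in $[1-\varepsilon,1]$ on $u$ and in $[-1,-1+\varepsilon]$ on $v$, yielding $\dist_{NZS}(u,v)\geq 2-2\varepsilon$. No contagion argument is needed, and indeed none would work uniformly across all simple structures.

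For the general lower bound, the same device with the set $A=\{\alpha:p_\alpha>q_\alpha\}$ replacing $H_u$ (and its complement replacing $H_v$) gives player~1 a payoff near $\sum_{\alpha\in A}p_\alpha-\sum_{\alpha\notin A}p_\alpha$ on $u$ and near $\sum_{\alpha\in A}q_\alpha-\sum_{\alpha\notin A}q_\alpha$ on $v$; the difference is $\sum_\alpha|p_\alpha-q_\alpha|$ in a \emph{single} coordinate, not $\tfrac12\sum_\alpha|p_\alpha-q_\alpha|$ as you wrote, so there is no need to combine two coordinates.
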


The distance between the two non-redundant simple information structures
is binary, either 0 if the information structures are equivalent,
or 2 if they are not. In particular, the distance between all simple
information structures that do not have the same hierarchies of beliefs
is trivially equal to its maximum possible value 2. The distance between
two non-redundant but not necessarily simple information structures,
$\dist_{NZS}$, depends on the similarity of their simple components
after decomposition. Theorem \ref{THM:NZS DISTANCE} implies that
(\ref{eq:NZSdistance}) is too fine a measure of distance between
information structures to be useful.

The proof in the case of two non-redundant and simple structures $u$
and $v$ is straightforward. Let $\tilde{u}\neq\tilde{v}$. Earlier results have shown that there exists a finite game $g:K\times I\times J\rightarrow[-1,1]^{2}$
in which each type of player 1 in the support of $\tilde{u}$ and
$\tilde{v}$ reports her hierarchy of beliefs as the unique rationalizable
action (see lemma 4 in \cite{dekel_topologies_2006} and lemma 11 in \cite{ely_critical_2011}).
Second, Lemma III.2.7 in \citet{mertens_sorin_zamir_2015}
(or Corollary 4.7 in \citet{mertzam:85}) shows that the supports
of distributions $\tilde{u}$ and $\tilde{v}$ must be disjoint (it
is also a consequence of the result in \citet{samet1998iterated}).
Therefore, we can construct a game in which, additionally to the first
game, player 2 chooses between two actions $\left\{ u,v\right\} $
and it is optimal for her to match the information structure to which
player 1's reported type belongs. Finally, we multiply the resultant
game by $\varepsilon>0$ and construct a new game in which, additionally,
player 1 receives payoff $1-\varepsilon$ if player 2 chooses $u$
and a payoff of $-1+\varepsilon$ if player 2 chooses $v$. Hence,
the payoff distance between the two information structures is at least
$2-\varepsilon$, where $\varepsilon$ is arbitrarily small. The resultant
game has a BNE in the unique rationalizable profile. \footnote{This construction relies creating new games by adding externality
to payoffs of one player that depend only on the actions of the other
player. Such techniques are available with non-zero-sum games, but
not with zero-sum games. We are grateful to a referee for pointing
it out.}

\section{Conclusion}

In this paper, we have introduced and analyzed value-based distance
on the space of information structures. The main advantage of the
definition is that it has a simple and useful interpretation as the
tight upper bound on the loss or gain from moving between two information
structures. This allows us to apply it directly to numerous questions
about the value of information, the relation between games and single-agent
problems, a comparison of information structures, etc. Additionally,
we show that the distance contains interesting topological information.
On the one hand, the topology induced on countable information structures
is equivalent to the topology of weak convergence of consistent probabilities
over coherent hierarchies of beliefs. On the other hand, the set of
countable information structures is not entirely bounded for value-based
distance, which negatively solves the last open question raised in
\citet{mertens_repeated_1986}, with deep implications for stochastic
games.

By restricting our attention to zero-sum games, we were able to re-examine
the relevance of many phenomena observed and discussed in the strategic
discontinuities literature. While the distinction between approximate
knowledge and approximate common knowledge is not important in situations
of conflict, higher order beliefs may matter on some potentially uncountably
large structures. More generally, we believe that the discussion of
the strategic phenomena on particular classes of games can be a fruitful
line of future research. It is not the case that each problem must
involve coordination games. Interesting classes of games to study
could be common interest games, potential games, etc.\footnote{As an example of work in this direction, \citet{yamashita2018} studies
an order on hierarchies and types induced by payoffs in supermodular
games.}

\appendix

\section{Proof of Theorem 1}

The proof of Theorem 1 relies on two main aspects: the two interpretations
of garbling (deterioration of signals and strategy) and the minmax
theorem.

\emph{Part 1.} We start with general considerations and first identify
payoff functions with particular infinite matrices. For $1\leq L<\infty$,
let $G(L)$ be the set of maps from $K\times\N\times\N$ to $[-1,1]$
such that $g(k,i,j)=-1$ if $i\geq L,j<L$, $g(k,i,j)=1$ if $i<L,j\geq L$,
and $g(k,i,j)=0$ if $i>L,j>L$. Elements in $G(L)$ correspond to
payoff functions with action set $\N$ for each player, with any strategy
$\geq L$ that is weakly dominated. We define $G=G(\infty)=\bigcup_{L\geq1}G(L)$;
for each $u$, $v$ in $\CU$ the values $\val(u,g)$ and $\val(v,g)$
are well defined; and $\dist(u,v)=\sup_{g\in G}|\val(u,g)-\val(v,g)|$.

For $u\in\CU$ and $g\in G$, let $\gamma_{u,g}(q_{1},q_{2})$ denote
the payoff of player 1 in the zero-sum game $\Gamma(u,g)$ when player
1 plays $q_{1}\in\CQ$ and player 2 plays $q_{2}\in\CQ$. Extending
$g$ to mixed actions, as usual, we have $\gamma_{u,g}(q_{1},q_{2})=\sum_{k,c,d}u(k,c,d)g(k,q_{1}(c),q_{2}(d))$.
Notice that the scalar product $\langle g,u\rangle=\sum_{k,c,d}g(k,c,d)u(k,c,d)$
is well defined and corresponds to payoff $\gamma_{u,g}(Id,Id)$,
where $Id\in\CQ$ is the strategy that plays the signal received with
probability one. A straightforward computation leads to $\gamma_{u,g}(q_{1},q_{2})=\langle g,q_{1}.u.q_{2}\rangle$.
Consequently, 
\[
\val(u,g)=\max_{q_{1}\in\CQ}\min_{q_{2}\in\CQ}\langle g,q_{1}.u.q_{2}\rangle=\min_{q_{2}\in\CQ}\max_{q_{1}\in\CQ}\langle g,q_{1}.u.q_{2}\rangle.
\]
For $L=1,2,...,+\infty$ and $g\in G(L)$, the max and min can be
obtained by elements of $\CQ(L)$. Since both players can play the
$Id$ strategy in $\Gamma({u,g})$, we have for all $u\in\CU$ and
$g\in G(L)$ that $\inf_{q_{2}\in\CQ(L)}\langle g,u.q_{2}\rangle\leq\val(u,g)\leq\sup_{q_{1}\in\CQ(L)}\langle g,q_{1}.u\rangle$.
Notice also that for all $u$, $v$ in $\CU(L)$, $\|u-v\|=\sup_{g\in G(L)}\langle g,u-v\rangle$.

\emph{Part 2.} We now prove Theorem \ref{thm1}. Fix $u$, $v$ in
$\CU(L)$, with $L=1,2,...,+\infty$. For $g\in G(L)$, we have $\inf_{q_{1},q_{2}\in\CQ(L)}\langle g,v.q_{2}-q_{1}.u\rangle\leq\val(v,g)-\val(u,g)$,
so 
\begin{equation}
\sup_{g\in G(L)}\left(\val(v,g)-\val(u,g)\right)\geq\sup_{g\in G(L)}\inf_{q_{1},q_{2}\in\CQ(L)}\langle g,v.q_{2}-q_{1}.u\rangle.\label{eq2}
\end{equation}
For $g\in G$, $q_{1},q_{2}\in\CQ\left(L\right)$, by monotonicity
of the value with respect to information, we have $\val(v.q_{2},g)\geq\val(v,g)\text{ and }\val(u,g)\geq\val(q_{1}.u,g)$.
So $\val(v,g)-\val(u,g)\leq\dist\left(q_{1}.u,v.q_{2}\right)\leq\|q_{1}.u-v.q_{2}\|.$
Therefore, 
\begin{equation}
\sup_{g\in\CG}\left(\val(v,g)-\val(u,g)\right)\leq\inf_{q_{1},q_{2}\in\CQ\left(L\right)}\|q_{1}.u-v.q_{2}\|=\inf_{q_{1},q_{2}\in\CQ\left(L\right)}\sup_{g\in G(L)}\langle g,v.q_{2}-q_{1}.u\rangle.\label{eq3}
\end{equation}
We are now going to show that 
\begin{equation}
\sup_{g\in G(L)}\inf_{q_{1},q_{2}\in\CQ(L)}\langle g,v.q_{2}-q_{1}.u\rangle=\min_{q_{1},q_{2}\in\CQ\left(L\right)}\sup_{g\in G(L)}\langle g,v.q_{2}-q_{1}.u\rangle.\label{eq1}
\end{equation}
Together with inequalities \ref{eq2} and \ref{eq3}, it will give
\[
\sup_{g\in\CG}\left(\val(v,g)-\val(u,g)\right)=\sup_{g\in G(L)}\left(\val(v,g)-\val(u,g)\right)=\min_{q_{1},q_{2}\in\CQ\left(L\right)}\|q_{1}.u-v.q_{2}\|.
\]

To prove \ref{eq1}, we will apply a variant of Sion's theorem (see,
e.g., \citet{mertens_sorin_zamir_2015} Proposition I.1.3) to the
zero-sum game with strategy spaces $G(L)$ for the maximizer, $\CQ(L)^{2}$
for the minimizer, and payoff $h(g,(q_{1},q_{2}))=\langle g,v.q_{2}-q_{1}.u\rangle$.
Strategy sets $G(L)$ and $\CQ\left(L\right)^{2}$ are convex, and
$h$ is bilinear.

Case 1: $L<+\infty$. Then, $\Delta(\{0,...,L-1\})$ is compact, and
$\CQ\left(L\right)^{2}$ is compact for the product topology. Moreover,
$h$ is continuous, so by Sion's theorem, \ref{eq1} holds. Furthermore,
$\sup_{g\in G(L)}(\val(v,g)-\val(u,g))$ is achieved, since $G(L)$
is compact.

Case 2: $L=+\infty$. We are going to modify the topology on $\CQ$
to have compact $\CQ\left(L\right)^{2}$ and lower semi-continuous
$h$ on $(q_{1},q_{2})$. The idea is to identify 0 and $+\infty$
in $\N$. Formally, given $q\in\Delta(\N)$ and a sequence $(q_{n})_{n}$
of probabilities over $\N$, we define: $(q_{n})_{n}$ converges to
$q$ if and only if: $\forall c\geq1$, $\lim_{n\to\infty}q_{n}(c)=q(c)$.
It implies $\limsup_{n}q_{n}(0)\leq q(0)$.

$\Delta(\N)$ is now compact, and we endow $\CQ$ with the product
topology so that $\CQ\left(L\right)^{2}$ is itself compact. Fix $g\in G$.
We finally show that $\langle g,q.u\rangle$ is u.s.c. in $q\in\CQ$
and $\langle g,v.q\rangle$ is l.s.c. in $q\in\CQ$. For this, we
take advantage of the particular structure of $G$: there exists $L'$
such that $g\in G(L')$.

For each $q$ in $\Delta(\N)$, we have for each $k$ in $K$ and
$d$ in $\N$: 
\begin{eqnarray*}
g(k,q,d) & = & \sum_{c\in\N}g(c)g(k,c,d)\\
\, & = & g(k,0,d)+\sum_{c=1}^{L'-1}(g(k,c,d)-g(k,0,d))q(c)+\sum_{c\geq L'}(g(k,c,d)-g(k,0,d))q(c).
\end{eqnarray*}
For each $c\geq L'$, we have $g(k,c,d)-g(k,0,d)\leq0$. If $(q_{n})_{n}$
converges to $q$ for our new topology, $\lim_{n}\sum_{c=1}^{L'-1}(g(k,c,d)-g(k,0,d))q_{n}(c)=\sum_{c=1}^{L'-1}(g(k,c,d)-g(k,0,d))q(c)$
and, by Fatou's lemma, $\limsup_{n}\sum_{c\geq L'}(g(k,c,d)-g(k,0,d))q_{n}(c)\leq\sum_{c\geq L'}(g(k,c,d)-g(k,0,d))q(c)$.
As a consequence, $\limsup_{n}g(k,q_{n},d)\leq g(k,q,d)$. This is
true for each $k$ and $d$, and we easily obtain that $\langle g,q.u\rangle=\sum_{k,c,d}u(k,c,d)g(k,q(c),d)$
is u.s.c. in $q\in\CQ$.

Similarly, for each $q\in\Delta(\N)$, $k\in K$, and $c\in\N$, we
can write $g(k,c,q)=g(k,c,0)+\sum_{d=1}^{L'-1}(g(k,c,d)-g(k,c,0))q(c)+\sum_{d\geq L'}(g(k,c,d)-g(k,c,0))q(c),$
with $g(k,c,d)-g(k,c,0)\geq0$ for $d\geq L'$, and show that $\langle g,v.q\rangle$
is l.s.c. in $q\in\CQ$.

\section{Proofs of Section 4}

\subsection{Proof of Proposition \ref{prop:diameter}}

We prove the lower bound of (\ref{eq:diameter bounds}). Let $g\left(k\right)=\1_{p_{k}>q_{k}}-\1_{p_{k}\leq q_{k}}$.
Then, 
\[
\dist\left(u,v\right)\geq\val\left(u,g\right)-\val\left(v,g\right)=\sum_{k\in K}\left(p_{k}-q_{k}\right)g\left(k\right)=\sum_{k\in K}\left|p_{k}-q_{k}\right|.
\]
Now, let us prove the upper bound of (\ref{eq:diameter bounds}).
Define $\bar{u}$ and $\underline{v}$ in $\Delta(K\times K_{C}\times K_{D})$
with $K=K_{C}=K_{D}$ such that $\bar{u}(k,c,d)=p_{k}\1_{c=k}\1_{d=k_{0}}$
for some fixed $k_{0}\in K$ (complete information for player 1, trivial
information for player 2, and the same prior about $k$ as $u$) and
$\underline{v}(k,c,d)=q_{k}\1_{c=k_{0}}\1_{d=k}$ for all $(k,c,d)$
(trivial information for player 1, complete information for player
2, and the same beliefs about $k$ as $v$). Since the value of a
zero-sum game is weakly increasing with player 1's information and
weakly decreasing with player 2's information, we have 
\[
\sup_{g\in\CG}(\val(u,g)-\val(v,g))\leq\sup_{g\in\CG}(\val(\bar{u},g)-\val(\underline{v},g))=\min_{q_{1}\in\CQ,q_{2}\in\CQ}\|\bar{u}.q_{2}-q_{1}.\underline{v}\|,
\]
where, according to Theorem \ref{thm1}, the minimum in the last expression
is attained for garblings with values in $\Delta K$. Since player
2 has a unique signal in $\bar{u}$, only $q_{2}(.|k_{0})\in\Delta K$
matters. We denote it by $q'=q_{2}\left(.|k_{0}\right)$. Similarly,
we define $p^{\prime}=q_{1}(.|k_{0})\in\Delta(K)$. Then, 
\[
\begin{split}\|\bar{u}.q_{2}-q_{1}.\underline{v}\| & =\sum_{(k,c,d)\in K^{3}}|p_{k}\1_{c=k}q{}_{d}^{\prime}-q_{k}\1_{d=k}p{}_{c}^{\prime}|\\
 & =\sum_{k\in K}|p_{k}q{}_{k}^{\prime}-q_{k}p{}_{k}^{\prime}|+p_{k}(1-q{}_{k}^{\prime})+q_{k}(1-p{}_{k}^{\prime})\\
 & =2+\sum_{k\in K}|p_{k}q_{k}^{\prime}-q_{k}p_{k}^{\prime}|-p_{k}q_{k}^{\prime}-q_{k}p_{k}^{\prime}=2\left(1-\sum_{k\in K}\min\left(p_{k}q_{k}^{\prime},q_{k}p_{k}^{\prime}\right)\right).
\end{split}
\]
A similar inequality holds by inverting the roles of $u$ and $v$,
and the upper bound follows from the fact that one can choose $p^{\prime},q^{\prime}$
arbitrarily.

If $p=q$, then $\sum_{k\in K}\min\left(p_{k}q_{k}^{\prime},q_{k}p_{k}^{\prime}\right)=\sum_{k\in K}p_{k}\min\left(q_{k}^{\prime},p_{k}^{\prime}\right)\leq\sum_{k\in K}p_{k}p_{k}^{\prime}\leq\max_{k\in K}p_{k}$,
where the latter is attained by $p_{k}^{\prime}=q_{k}^{\prime}=\1_{\{k=k^{*}\}}$
for some $k^{*}\in K$ such that $p_{k^{*}}=\max_{k\in k}p_{k}$.

\subsection{Proof of Proposition \ref{prop: SA=00003D00003D00003D00003D00003D00003D00003D00003D00003D00003DGames}}

Let us start with general properties of $\dist_{1}$. Let us define
the set of single-agent information structures as $\CU_{1}=\Delta(K\times\N)$
using the same convention that countable sets are identified with
subsets of $\N$. Note that, given $u\in\Delta(K\times C\times D)$,
$\marg_{K\times C}u\in\CU_{1}$. Let $\CG'_{1}=\{g':K\times I\rightarrow\R\,|\,I\text{ finite}\}$
be the set of single-agent decision problems, and define for $u',v'\in\CU_{1}$,
$\dist'_{1}(u',v')=\sup_{g'\in\CG'_{1}}|\val(v',g')-\val(u',g')|$.
It is easily seen that, for any $u,v\in\Delta(K\times C\times D)$,
\begin{equation}
\dist_{1}(u,v)=\dist'_{1}(u',v')=\max\{\min_{q\in\CQ}\|u'-q.v'\|,\min_{q\in\CQ}\|q.u'-v'\|\},\label{eq:d_1_marg}
\end{equation}
where $u'=\marg_{K\times C}u$, $v'=\marg_{K\times C}v$, $q.u'(k,c)=\sum_{s\in C}u'(k,s)q(s)(c)$
and where the last equality can be obtained by mimicking (and simplifying)
the arguments of the proof of Theorem \ref{thm1}.

We now prove Proposition \ref{prop: SA=00003D00003D00003D00003D00003D00003D00003D00003D00003D00003DGames}.
Using the assumptions, we have $u(k)=v(k)$, $u\left(c,d|k\right)=u\left(c|k\right)u\left(d|k\right)$,
and $v\left(c^{\prime},d|k\right)=v\left(d|k\right)v\left(c^{\prime}|k\right)=u(d|k)v\left(c^{\prime}|k\right)$.
For any pair of garblings $q_{1},q_{2}$, 
\[
\begin{split}\left\Vert u.q_{2}-q_{1}.v\right\Vert  & =\sum_{k,c,d}\left|\sum_{\beta}u\left(k,c,\beta\right)q_{2}\left(d|\beta\right)-\sum_{\alpha}v\left(k,\alpha,d\right)q_{1}\left(c|\alpha\right)\right|\\
 & =\sum_{k,c}u\left(k\right)\sum_{d}\left|u\left(c|k\right)\sum_{\beta}u\left(\beta|k\right)q_{2}\left(d|\beta\right)-\left(\sum_{\alpha}v\left(\alpha|k\right)q_{1}\left(c|\alpha\right)\right)u\left(d|k\right)\right|\\
 & =\sum_{k,c}u\left(k\right)\sum_{d}\left|u\left(d|k\right)\Gamma\left(k,c\right)+\Delta\left(k,d\right)u\left(c|k\right),\right|
\end{split}
\]
where $\Delta\left(k,d\right)=u\left(d|k\right)-\sum_{\beta}u\left(\beta|k\right)q_{2}\left(d|\beta\right)$,
and $\Gamma\left(k,c\right)=\sum_{\alpha}v\left(\alpha|k\right)q_{1}\left(c|\alpha\right)-u\left(c|k\right)$.
Because $\left|x+y\right|\geq|x|+\text{sgn}(x)y$ for each $x,y\in\R$,
we have 
\[
\begin{split} & \sum_{d}\left|u\left(d|k\right)\Gamma\left(k,c\right)+\Delta\left(k,d\right)u\left(c|k\right)\right|\\
\geq & \sum_{d}u\left(d|k\right)\left|\Gamma\left(k,c\right)\right|+\text{sgn}\left(\Gamma\left(k,c\right)\right)u\left(c|k\right)\sum_{d}\Delta\left(k,d\right)=\sum_{d}u\left(d|k\right)\left|\Gamma\left(k,c\right)\right|.
\end{split}
\]
where the last equality comes from the fact that $\sum_{d}\Delta\left(k,d\right)=0$.
Therefore, we obtain 
\[
\begin{split}\left\Vert u.q_{2}-q_{1}.v\right\Vert  & \geq\sum_{k,c,d}u\left(k\right)\left|u\left(d|k\right)\Gamma\left(k,c\right)\right|\\
 & =\sum_{k,c,d}u\left(k\right)\left|u\left(d|k\right)u\left(c|k\right)-\sum_{\alpha}u\left(d|k\right)v\left(\alpha|k\right)q_{1}\left(c|\alpha\right)\right|=\left\Vert u-q_{1}.v\right\Vert .
\end{split}
\]

We deduce that $\min_{q_{1},q_{2}}\left\Vert u.q_{2}-q_{1}.v\right\Vert =\min_{q_{1}}\left\Vert u-q_{1}.v\right\Vert .$
Inverting the roles of the players, we also have $\min_{q_{1},q_{2}}\left\Vert v.q_{2}-q_{1}.y\right\Vert =\min_{q_{1}}\left\Vert v-q_{1}.u\right\Vert .$
We conclude that 
\begin{align*}
\dist(u,v) & =\max\{\min_{q_{1},q_{2}}\left\Vert u.q_{2}-q_{1}.v\right\Vert ;\min_{q_{1},q_{2}}\left\Vert v.q_{2}-q_{1}.y\right\Vert \}\\
 & =\max\{\min_{q_{1}}\left\Vert u-q_{1}.v\right\Vert ;\min_{q_{1}}\left\Vert v-q_{1}.u\right\Vert \}=\dist_{1}(u,v),
\end{align*}
where the last equality follows from (\ref{eq:d_1_marg}) together
with the fact that $\marg_{K\times D}u=\marg_{K\times D}v$.

\subsection{Proof of Proposition \ref{prop. Info substitutes}}

Because $u\succeq v$, 
\[
\dist\left(u,v\right)=\min_{q_{2}\in\CQ}\min_{q_{1}\in\CQ}\left\Vert u.q_{2}-q_{1}.v\right\Vert \leq\min_{q_{2}\in\CQ}\min_{q_{1}:C\rightarrow\Delta\left(C\times C_{2}\right)}\left\Vert u.q_{2}-\hat{q}_{1}.v\right\Vert ,
\]
where in the right-hand side of the inequality, we use a restricted
set of player 1's garblings. Precisely, for every garbling $q_{1}:C\rightarrow\Delta\left(C\times C_{2}\right)$,
we associate garbling $\hat{q}_{1}$ defined by $\hat{q}_{1}(c',c'_{1},c'_{2}|c,c_{1})=\1_{\{c_{1}\}}(c'_{1})q_{1}(c',c'_{2}|c)$.
Further, for any such $q_{1}$ and an arbitrary garbling $q_{2}$,
we have 
\[
\begin{split} & \left\Vert u.q_{2}-\hat{q}_{1}.v\right\Vert =\sum_{k,c,c_{1},c_{2},d}\left|\sum_{\beta}u\left(k,c,c_{1},c_{2},\beta\right)q_{2}\left(d|\beta\right)-\sum_{\alpha}u\left(k,\alpha,c_{1},d\right)q_{1}\left(c,c_{2}|\alpha\right)\right|\\
 & \;\;=\sum_{k,c,c_{1},c_{2},d}u\left(k,c_{1})\right)\left|\sum_{\beta}u\left(c,c_{2},\beta|k,c_{1}\right)q_{2}\left(d|\beta\right)-\sum_{\alpha}u\left(\alpha,d|k,c_{1}\right)q_{1}\left(c,c_{2}|\alpha\right)\right|.
\end{split}
\]
Because of the conditional independence assumption, the above is equal
to 
\[
\begin{split}= & \sum_{k,c,c_{2},d}\left(\sum_{c_{1}}u\left(k,c_{1}\right)\right)\left|\sum_{\beta}u\left(c,c_{2},\beta|k\right)q_{2}\left(d|\beta\right)-\sum_{\alpha}u\left(\alpha,d|k\right)q_{1}\left(c,c_{2}|\alpha\right)\right|\\
= & \sum_{k,c,c_{2},d}\left|\sum_{\beta}u\left(k,c,c_{2},\beta\right)q_{2}\left(d|\beta\right)-\sum_{\alpha}u\left(k,\alpha,d\right){q}_{1}\left(c,c_{2}|\alpha\right)\right|=\left\Vert u^{\prime}.q_{2}-{q}_{1}.v{}^{\prime}\right\Vert .
\end{split}
\]
Therefore, $\dist\left(u,v\right)\leq\min_{q_{2}}\min_{q_{1}:C\rightarrow\Delta\left(C\times C_{2}\right)}\left\Vert u^{\prime}.q_{2}-q_{1}.v{}^{\prime}\right\Vert =\dist\left(u^{\prime},v{}^{\prime}\right)$.

\subsection{Proof of Proposition \ref{prop. Info complements}}

We have $\dist\left(u^{\prime},v{}^{\prime}\right)=\dist_{1}\left(u^{\prime},v{}^{\prime}\right)=\dist_{1}\left(u,v\right)\leq\dist\left(u,v\right).$
The first equality comes from Proposition \ref{prop: Games vs Single agent};
the second comes from the fact that $u$ and $u'$ ($v$ and $v'$,
respectively) induce the same distribution on player $1$'s first-order
beliefs, and the inequality from the definition of the two distances.

\subsection{Proof of Proposition \ref{prop: joint info}}

It is sufficient to show that, if $c_{1}$ is $\varepsilon$-conditionally
independent from $\left(k,d\right)$ given $c$, then $\sup_{g\in\CG}\val\left(u,g\right)-\val\left(v,g\right)\leq\varepsilon.$

For this, let $q_{2}:D\times D_{1}\rightarrow D$ be defined as $q_{2}\left(d,d_{1}\right)(d')=\1_{d'=d}$.
Let $q_{1}:C\rightarrow C\times C_{1}$ be defined as $q_{1}\left(c,c_{1}|c\right)=u\left(c_{1}|c\right)$.
Then, 
\begin{align*}
\left\Vert u.q_{2}-q_{1}.v\right\Vert  & =\sum_{k,c,c_{1},d}\left|u\left(k,c,c_{1},d\right)-u\left(k,c,d\right)u\left(c_{1}|c\right)\right|\\
 & =\sum_{c}u\left(c\right)\sum_{k,c_{1},d}\left|u\left(k,c_{1},d|c\right)-u\left(k,d|c\right)u\left(c_{1}|c\right)\right|\leq\varepsilon.
\end{align*}
The claim follows from Theorem \ref{thm1}.

\section{Proof of Theorem \ref{THM3LARGESPACE}}

$N$ is a very large even-valued integer to be fixed later, and we
write $A=C=D=\{1,...,N\}$, with the idea of using $C$ while speaking
of the actions or signals of player 1 and using $D$ while speaking
of the actions and signals of player 2. We fix $\varepsilon$ and
$\alpha$, to be used later, such that $0<\varepsilon<\frac{1}{10(N+1)^{2}}\text{ and }\alpha=\frac{1}{25}$.
We will consider a Markov chain with law $\nu$ on $A$, satisfying
the following:

$\bullet$ the law of the first state of the Markov chain is uniform
on $A$;

$\bullet$ given the current state, the law of the next state is uniform
on a subset of size $N/2$;

$\bullet$ and a few more conditions, to be defined later.

A sequence $(a_{1},...,a_{l})$ of length $l\geq1$ is said to be
\textit{nice} if it is in the support of the Markov chain: $\nu(a_{1},...,a_{l})>0$.
For instance, any sequence of length 1 is nice, and $N^{2}/2$ sequences
of length 2 are nice.

The remainder of the proof is split into 3 parts: we first define
information structures $(u^{l})_{l\geq1}$ and payoff structures $(g^{p})_{p\geq1}$.
Then, we define two conditions $UI1$ and $UI2$ on the information
structures and show that they imply the conclusions of Theorem \ref{THM3LARGESPACE}.
Finally, we show, via the probabilistic method, the existence of a
Markov chain $\nu$ satisfying all our conditions.

\subsection{Information and payoff structures $(u^{l})_{l\protect\geq1}$ and
$(g^{l})_{l\protect\geq1}$}

For $l\geq1$, define the information structure $u^{l}\in\Delta(K\times C^{l}\times D^{l})$
so that for each state $k$ in $K$, signal $c=(c_{1},...,c_{l})$
in $C^{l}$ of player 1 and signal $d=(d_{1},...,d_{l})$ in $D^{l}$
for player 2, 
\[
u^{l}(k,c,d)=\nu(c_{1},d_{1},c_{2},d_{2},...,c_{l},d_{l})\left(\frac{c_{1}}{N+1}\mathbf{1}_{k=1}+\left(1-\frac{c_{1}}{N+1}\right)\mathbf{1}_{k=0}\right).
\]
The following interpretation of $u^{l}$ holds: first select $(a_{1},a_{2},...,a_{2l})=(c_{1},d_{1},...,c_{l},d_{l})$
in $A^{2l}$ according to Markov chain $\nu$ (i.e., uniformly among
the nice sequences of length $2l$), then tell $(c_{1},c_{2},...,c_{l})$
(the elements of the sequence with odd indices) to player 1 and $(d_{1},d_{2},...,d_{l})$
(the elements of the sequence with even indices) to player 2. Finally,
choose state $k=1$ with probability $c_{1}/(N+1)$ and state $k=0$
with the complement probability $1-c_{1}/(N+1)$.

Notice that the definition is not symmetric among players: player
1's first signal $c_{1}$ is uniformly distributed and plays a particular
role. The marginal of $u^{l}$ on $K$ is uniform, and the marginal
of $u^{l+1}$ over $(K\times C^{l}\times V^{l})$ is equal to $u^{l}$.

Consider a sequence of elements $(a_{1},...,a_{l})$ of $A$ that
is not nice (i.e., such that $\nu(a_{1},...,a_{l})=0$). We say that
the sequence is\emph{ not nice because of player 1} if $\min\{t\in\{1,...,l\},\nu(a_{1},...,a_{t})=0\}$
is odd and \emph{not nice because of player 2} if $\min\{t\in\{1,...,l\},\nu(a_{1},...,a_{t})=0\}$
is even. Sequence $(a_{1},...,a_{l})$ is now nice, or not nice because
of player 1, or not nice because of player 2. A sequence of length
2 is either nice, or not nice because of player 2.

For $p\geq1$, define payoff structure $g^{p}:K\times C^{p}\times D^{p-1}\to[-1,1]$
such that, for all $k$ in $K$, $c'=(c'_{1},...,c'_{p})$ in $C^{p}$,
$d'=(d'_{1},...,d'_{p-1})$ in $D^{p-1}$: 
\begin{eqnarray*}
g^{p}(k,c',d') & = & g_{0}(k,c'_{1})+h^{p}(c',d'),\text{ where }g_{0}(k,c'_{1})=-{\left(k-\frac{c'_{1}}{N+1}\right)}^{2}+\frac{N+2}{6(N+1)},\\
h^{p}(c',d') & = & \left\{ \begin{array}{ccl}
\varepsilon & \mbox{if} & (c'_{1},d'_{1},...,c'_{p})\text{ is nice},\\
5\varepsilon & \mbox{if} & (c'_{1},d'_{1},...,c'_{p})\text{ is not nice because of player 2,}\\
-5\varepsilon & \mbox{if} & (c'_{1},d'_{1},...,c'_{p})\text{ is not nice because of player 1}.
\end{array}\right.
\end{eqnarray*}

One can check that $|g^{p}|\leq5/6+5\varepsilon\leq8/9$. Regarding
the $g_{0}$ part of the payoff, consider a decision problem for player
1 where $c_{1}$ is selected uniformly in $A$ and the state is selected
to be $k=1$ with probability $c_{1}/(N+1)$ and $k=0$ with probability
$1-c_{1}/(N+1)$. Player 1 observes $c_{1}$ but not $k$, and she
chooses $c'_{1}$ in $A$ and receives payoff $g_{0}(k,c'_{1})$.
We have $\frac{c_{1}}{N+1}g_{0}(1,c'_{1})+(1-\frac{c_{1}}{N+1})g_{0}(0,c'_{1})$
$=$ $\frac{1}{(N+1)^{2}}(c'_{1}(2c_{1}-c'_{1})+(N+1)((N+2)/6-c_{1}))$.
To maximize this expected payoff, it is well known that player 1 should
play her belief on $k$, i.e. $c'_{1}=c_{1}$. Moreover, if player
1 chooses $c'_{1}\neq c_{1}$, her expected loss from not having chosen
$c_{1}$ is at least $\frac{1}{(N+1)^{2}}\geq10\varepsilon$. Furthermore,
the constant $\frac{N+2}{6(N+1)}$ has been chosen such that the value
of this decision problem is 0.

Consider now $l\geq1$ and $p\geq1$. By definition, the Bayesian
game $\Gamma(u^{l},g^{p})$ is played as follows: first, $(c_{1},d_{1},...,c_{l},d_{l})$
is selected according to law $\nu$ of the Markov chain, player 1
learns $(c_{1},...,c_{l})$, player 2 learns $(d_{1},...,d_{l})$,
and the state is $k=1$ with probability $c_{1}/(N+1)$ and $k=0$
otherwise. Then, player 1 chooses $c'$ in $C^{p}$ and player 2 chooses
$d'$ in $D^{p-1}$ \textit{simultaneously}, and finally, player 1's
payoff is $g^{p}(k,c',d')$. Notice that, by the previous paragraph
about $g_{0}$, it is always strictly dominant for player 1 to truthfully
report her first signal, i.e. choose $c'_{1}=c_{1}$. We will show
in the next section that if $l\geq p$ and player 1 simply plays the
sequence of signals she has received, player 2 cannot do better than
also truthfully reporting his own signals, leading to a value not
lower than the payoff for nice sequences, which is $\varepsilon$.
On the contrary, in game $\Gamma(u^{l},g^{l+1})$, player 1 has to
report not only the $l$ signals she has received but also an extra-signal
$c'_{l+1}$ that she has to guess. In this game, we will prove that,
if player 2 truthfully reports his own signals, player 1 will incur
payoff of $-5\varepsilon$ with a probability of at least (approximately)
1/2, and this will result in a low value. These intuitions will prove
correct in the next section, under conditions $UI1$ and $UI2$.

\subsection{Conditions UI and values}

To prove that the intuition of the previous paragraph is correct,
we need to ensure that players have incentives to report their true
signals, so we need additional assumptions on the Markov chain. \\

\noindent \textbf{Notations and definition:} Let $l\geq1$, $m\geq0$,
$c=(c_{1},...,c_{l})$ in $C^{l}$, and $d=(d_{1},...,d_{m})$ in
$D^{m}$. We write

\centerline{%
\begin{tabular}{lcll}
$a^{2q}(c,d)$  & =  & $(c_{1},d_{1},....,c_{q},d_{q})\in A^{2q}$  & {for each } $q\leq\min\{l,m\}$,\tabularnewline
$a^{2q+1}(c,d)$  & =  & $(c_{1},d_{1},....,c_{q},d_{q},c_{q+1})\in A^{2q+1}$  & { for each }$q\leq\min\{l-1,m\}$.\tabularnewline
\end{tabular}} For $r\leq\min\{2l,2m+1\}$, we say that $c$ and $d$ are \emph{nice
at level }$r$, and we write $c\smile_{r}d,$ if $a^{r}(c,d)$ is
nice.

In the next definition, we consider information structure $u^{l}\in\Delta(K\times C^{l}\times D^{l})$
and let ${\tilde{c}}$ and ${\tilde{d}}$ denote the respective random
variables of the signals of player 1 and player 2. \begin{defn} \label{defUI}We
say that the \emph{conditions $UI1$ are satisfied} if for all $l\geq1$,
all ${c}=({c}_{1},...,{c}_{l})$ in $C^{l}$ and $c^{\prime}=(c_{1}^{\prime},...,c_{l+1}^{\prime})$
in $C^{l+1}$ such that ${c}_{1}=c_{1}^{\prime}$, we have 
\begin{equation}
u^{l}\left(c^{\prime}\smile_{2l+1}\tilde{d}\;\big|\;\tilde{c}={c},c^{\prime}\smile_{2l}\tilde{d}\right)\in[1/2-\alpha,1/2+\alpha]\label{eq61}
\end{equation}
and for all $m\in\{1,...,l\}$ such that ${c}_{m}\neq c_{m}^{\prime}$,
for $r=2m-2,2m-1$, 
\begin{equation}
u^{l}\left(c^{\prime}\smile_{r+1}\tilde{d}\;\big|\;\tilde{c}=c,c^{\prime}\smile_{r}\tilde{d}\right)\in[1/2-\alpha,1/2+\alpha].\label{eq62}
\end{equation}
\end{defn} We say that the\emph{ conditions $UI2$ are satisfied}
if for all $1\leq p\leq l$, for all ${d}\in D^{l}$, for all $d^{\prime}\in D^{p-1}$,
for all $m\in\{1,...,p-1\}$ such that ${d}_{m}\neq d_{m}^{\prime}$,
for $r=2m-1,2m$ 
\begin{equation}
u^{l}\left(\tilde{c}\smile_{r+1}d^{\prime}|\tilde{d}={d},\tilde{c}\smile_{r}d^{\prime}\right)\in[1/2-\alpha,1/2+\alpha].\label{eq63}
\end{equation}

To understand the conditions $UI1$, consider the Bayesian game $\Gamma(u^{l},g^{l+1})$,
and assume that player 2 truthfully reports his sequence of signals
and that player 1 has received signals $(c_{1},...,c_{l})$ in $C^{l}$.
Equation (\ref{eq61}) states that, if the sequence of reported signals
$(c'_{1},\tilde{d}_{1},...,c'_{l},\tilde{d}_{l})$ is nice at level
$2l$, then whatever the last reported signal $c'_{l+1}$ is, the
conditional probability that $(c'_{1},\tilde{d}_{1},...,c'_{l},\tilde{d}_{l},c'_{l+1})$
is still nice is in $[1/2-\alpha,1/2+\alpha]$, (i.e., close to 1/2).
Regarding (\ref{eq62}), first notice that if $c'=c$, then by construction
$(c'_{1},\tilde{d}_{1},...,c'_{l},\tilde{d}_{l})$ is nice and $u^{l}\left(c^{\prime}\smile_{r+1}\tilde{d}\;\big|\;\tilde{c}=c,c^{\prime}\smile_{r}\tilde{d}\right)=u^{l}\left(c\smile_{r+1}\tilde{d}\;\big|\;\tilde{c}=c\right)=1$
for each $r=1,...,2l-1$. Assume now that, for some $m=1,...,l$,
player 1 misreports her $m^{th}$-signal (i.e., reports $c'_{m}\neq c_{m}$).
Equation (\ref{eq62}) requires that, given that the reported signals
were nice thus far (at level $2m-2$), the conditional probability
that the reported signals are not nice at level $2m-1$ (integrating
$c'_{m}$) is close to 1/2, and moreover, if the reported signals
are nice at level $2m-1$, adding the next signal $\tilde{d}_{m}$
for player 2 has a probability close to 1/2 of keeping the reported
sequence nice. Conditions $UI2$ have a similar interpretation, considering
the Bayesian games $\Gamma(u^{l},g^{p})$ for $p\leq l$, assuming
that player 1 truthfully reports her signals and that player 2 plays
$d'$ after having received $d$ signals. \begin{prop} \label{pro2}
Conditions $UI1$ and $UI2$ imply 
\begin{eqnarray}
\forall l\geq1,\forall p\in\{1,...,l\}, & \val(u^{l},g^{p})\geq\varepsilon.\label{eq64}\\
\forall l\geq1, & \val(u^{l},g^{l+1})\leq-\varepsilon.\label{eq66}
\end{eqnarray}
\end{prop}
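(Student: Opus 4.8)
The plan is to prove each of the two inequalities by fixing a \emph{truthful reporting} strategy for the player whom the bound favours and bounding the value against an arbitrary opponent strategy; since the payoff is affine in mixed strategies it suffices to consider pure opponent strategies. In both cases I would split $g^{p}=g_{0}+h^{p}$ and use that the one‑shot decision problem behind $g_{0}$ has value $0$, so that $\E[g_{0}]=0$ when the first signal is reported truthfully and $\E[g_{0}]\le-10\varepsilon\,\Prob(\text{first signal misreported})$ in general (this is precisely the ``$10\varepsilon$ loss'' recorded after the definition of $g^{p}$). The analysis of $h^{p}$ is organised around the random position $F$ at which the reported sequence first leaves the support of the Markov chain ($F=\infty$ if the whole sequence is nice); since $h^{p}$ equals $\varepsilon$, $5\varepsilon$, $-5\varepsilon$ according as $F=\infty$, $F$ even, $F$ odd, one always has $\E[h^{p}\mid\cdot]=\varepsilon+4\varepsilon\,\Prob(F\text{ even}\mid\cdot)-6\varepsilon\,\Prob(F\text{ odd}\mid\cdot)$, so everything reduces to comparing $\Prob(F\text{ even})$ with $\Prob(F\text{ odd})$.

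For (\ref{eq64}) player 1 reports $c'_{i}=c_{i}$, $i=1,\dots,p$ (possible since $p\le l$); then $\E[g_{0}]=0$ and it remains to show $\E[h^{p}]\ge\varepsilon$ against any pure strategy $d\mapsto d'$ of player 2, which after conditioning on $\tilde{d}=d$ amounts to $\Prob(F\text{ odd}\mid\tilde{d}=d)\le\tfrac{2}{3}\Prob(F\text{ even}\mid\tilde{d}=d)$. The combinatorial key is that a position can be the first failure only if it immediately follows an index $m$ with $d'_{m}\neq d_{m}$: a position following a truthful report of player 1 stays nice, because the true chain has all transitions in the support and, by the Markov property, conditioning on niceness of an earlier prefix leaves the conditional law of the next genuine transition unchanged. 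Hence $\Prob(F\text{ even})$ and $\Prob(F\text{ odd})$ are sums over the deviation indices $m$ of $\Prob(F=2m\mid\tilde{d}=d)$ and $\Prob(F=2m+1\mid\tilde{d}=d)$, and for each such $m$ conditions $UI2$ at the levels $r=2m-1$ and $r=2m$ give $\Prob(F=2m\mid\tilde{d}=d)\ge(\tfrac{1}{2}-\alpha)\Prob(F\ge2m\mid\tilde{d}=d)$ and $\Prob(F=2m+1\mid\tilde{d}=d)\le(\tfrac{1}{2}+\alpha)^{2}\Prob(F\ge2m\mid\tilde{d}=d)$, whence $\Prob(F=2m+1\mid\tilde{d}=d)\le\frac{(1/2+\alpha)^{2}}{1/2-\alpha}\Prob(F=2m\mid\tilde{d}=d)$. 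Summing and using $\frac{(1/2+\alpha)^{2}}{1/2-\alpha}=\frac{729}{1150}<\frac{2}{3}$ (recall $\alpha=\tfrac{1}{25}$) gives $\E[h^{p}\mid\tilde{d}=d]\ge\varepsilon$ for every $d$, so player 1's truthful strategy guarantees value at least $\varepsilon$.

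For (\ref{eq66}) player 2 reports $d'_{i}=d_{i}$ and I would bound the value against any pure strategy $c'(\cdot)\in C^{l+1}$ of player 1 ($p=l+1$). On $\{c'_{1}(\tilde{c})\neq\tilde{c}_{1}\}$ I use only $\E[h^{l+1}\mid\tilde{c}]\le5\varepsilon$ together with the $g_{0}$‑loss. On $\{c'_{1}(\tilde{c})=\tilde{c}_{1}\}$, conditioning on $\tilde{c}=c$ I run the same first‑failure decomposition for the sequence $(c_{1},d_{1},\dots,c_{l},d_{l},c'_{l+1})$: now player 1 is the deviator, odd failures carry $-5\varepsilon$, and only indices $m\in\{2,\dots,l\}$ with $c'_{m}\neq c_{m}$, plus the \emph{forced} last report $c'_{l+1}$, can produce a first failure. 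Conditions $UI1$ — equation (\ref{eq62}) at a deviation index (levels $r=2m-2,2m-1$) and equation (\ref{eq61}) at level $2l$ — give $\Prob(F=2m\mid c)\le\frac{729}{1150}\Prob(F=2m-1\mid c)$ for deviation indices and $\Prob(F=\infty\mid c)\le\frac{1/2+\alpha}{1/2-\alpha}\Prob(F=2l+1\mid c)=\frac{27}{23}\Prob(F=2l+1\mid c)$. Writing $P_{e}=\Prob(F\text{ even}\mid c)$ and $P_{\infty}=\Prob(F=\infty\mid c)$, the identity $\E[h^{l+1}\mid c]=\varepsilon+4\varepsilon P_{e}-6\varepsilon\,\Prob(F\text{ odd}\mid c)$ makes $\E[h^{l+1}\mid c]\le-\varepsilon$ equivalent to $5P_{e}+3P_{\infty}\le2$, and the two displayed bounds together with $\Prob(F\text{ even})+\Prob(F\text{ odd})+\Prob(F=\infty)=1$ force $5P_{e}+3P_{\infty}\le\frac{3645}{1879}<2$ (the maximum of a small linear program). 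Hence $\E[h^{l+1}\mid\tilde{c}]\le-\varepsilon$ on $\{c'_{1}=\tilde{c}_{1}\}$, and combining everything, $\E[g^{l+1}]\le-10\varepsilon\,\Prob(c'_{1}\neq\tilde{c}_{1})-\varepsilon\,\Prob(c'_{1}=\tilde{c}_{1})+5\varepsilon\,\Prob(c'_{1}\neq\tilde{c}_{1})=-\varepsilon\,\Prob(c'_{1}=\tilde{c}_{1})-5\varepsilon\,\Prob(c'_{1}\neq\tilde{c}_{1})\le-\varepsilon$.

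The main obstacle is the bookkeeping around repeated and re‑synchronising deviations: one must verify that the first failure can occur only immediately after a deviation index, that the relevant $UI$ inequality is available at precisely those positions (and is not needed where the deviating player happens to report truthfully, since those positions stay nice automatically), and that the numeric slack of $\alpha=\tfrac{1}{25}$ is enough for both $\frac{729}{1150}<\frac{2}{3}$ and $\frac{3645}{1879}<2$. The two structural facts that make the decomposition clean — and which I would isolate as small lemmas — are that a genuine signal of either player never leaves the support, and the Markov property, which allows one to condition on niceness of a prefix without disturbing the conditional law of subsequent genuine transitions.
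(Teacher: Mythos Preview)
Your argument is correct and uses the same ingredients as the paper (truthful play for the favoured player, the observation that a genuine transition never leaves the support, and the $UI$ bounds at deviation indices), but the \emph{organization} is genuinely different. The paper fixes the opponent's report and runs a backward induction along the nested events $A_{2n-1}=\{\tilde c\smile_{2n-1}d'\}$ (resp.\ $B_{2n}$), showing at each step that $\E[h^{p}\mid \tilde d=d,A_{2n-1}]\ge\varepsilon$ (resp.\ $\le-\varepsilon$); the numerical check at a deviation step is the single inequality $3/2-11\alpha-4\alpha^{2}\ge 1$. You instead work globally with the first-failure time $F$, pair each even failure with the odd failure at the same deviation block via $\Prob(F=2m+1)\le\frac{(1/2+\alpha)^{2}}{1/2-\alpha}\Prob(F=2m)$ (and symmetrically in part~2), and close with the ratio test $P_{o}\le\tfrac{2}{3}P_{e}$ in part~1 and the two-variable linear program $5P_{e}+3P_{\infty}\le 2$ in part~2. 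Both routes are clean; the paper's induction avoids the LP and is slightly more mechanical, while your decomposition makes the role of the ``pairing'' between adjacent failures at a deviation index very transparent and yields the explicit constants $\tfrac{729}{1150}$ and $\tfrac{3645}{1879}$ directly. One small remark: your appeal to the Markov property for ``a genuine signal never leaves the support'' is more than you need---it follows simply from the fact that the true sequence $(c_{1},d_{1},\dots)$ is $\nu$-almost surely nice, so every true transition is in the support; the Markov property is what underlies the $UI$ conditional probabilities, not this structural fact.
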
 As a consequence of this proposition, under the existence
of a Markov chain satisfying conditions $UI1$ and $UI2$, we obtain
Theorem \ref{THM3LARGESPACE}: 
\[
{\rm If}\;l>p,\;\;{\rm then}\;\;d(u^{l},u^{p})\geq\val(u^{l},g^{p+1})-\val(u^{p},g^{p+1})\geq2\varepsilon.
\]

\textbf{Proof of Proposition \ref{pro2}.} We assume that $UI1$ and
$UI2$ hold. We fix $l\geq1$, work on probability space $K\times C^{l}\times D^{l}$
equipped with probability $u^{l}$, and let $\tilde{c}$ and $\tilde{d}$
denote the random variables of the signals received by the players.

\noindent 1) We first prove (\ref{eq64}). Consider the game $\Gamma(u^{l},g^{p})$
with $p\in\{1,...,l\}$. We assume that player 1 chooses the truthful
strategy. Fix $d=(d_{1},...,d_{l})$ in $D^{l}$ and $d'=(d'_{1},...,d'_{p-1})$
in $D^{p-1}$, and assume that player 2 has received signal $d$ and
chooses to report $d'$. Define the non-increasing sequence of events:
$A_{n}=\{\tilde{c}\smile_{n}d'\}.$ We will prove by backward induction
that 
\begin{equation}
\forall n=1,...,p,\;\;\mathbb{E}[h^{p}(\tilde{c},d')|\tilde{d}=d,A_{2n-1}]\geq\varepsilon.\label{eq65}
\end{equation}

If $n=p$, $h^{p}(\tilde{c},d')=\varepsilon$ on event $A_{2p-1}$,
implying the result. Assume now that, for some $n$ such that $1\leq n<p$,
we have $\mathbb{E}[h^{p}(\tilde{c},d')|\tilde{d}=d,A_{2n+1}]\geq\varepsilon.$
Since we have a non-increasing sequence of events, $\1{}_{A_{2n-1}}=\1_{A_{2n+1}}+\1_{A_{2n-1}}\1_{A_{2n}^{c}}+\1_{A_{2n}}\1_{A_{2n+1}^{c}},$
so by definition of the payoffs, $h^{p}(\tilde{c},d')\1_{A_{2n-1}}=h^{p}(\tilde{c},d')\1_{A_{2n+1}}+5\varepsilon\1_{A_{2n-1}}\1_{A_{2n}^{c}}-5\varepsilon\1_{A_{2n}}\1_{A_{2n+1}^{c}}.$

First, assume that $d'_{n}=d_{n}$. By construction of the Markov
chain, $u^{l}(A_{2n+1}|A_{2n-1},\tilde{d}=d)=1$, implying that $u^{l}(A_{2n+1}^{c}|A_{2n-1},\tilde{d}=d)=u^{l}(A_{2n}^{c}|A_{2n-1},\tilde{d}=d)=0$.
As a consequence, 
\begin{eqnarray*}
\mathbb{E}[h^{p}(\tilde{c},d')|\tilde{d}=d,A_{2n-1}] & = & \mathbb{E}[h^{p}(\tilde{c},d')\1_{A_{2n+1}}|\tilde{d}=d,A_{2n-1}]\\
 & = & \mathbb{E}[\mathbb{E}[h^{p}(\tilde{c},d')|\tilde{d}=d,A_{2n+1}]\1_{A_{2n+1}}|\tilde{d}=d,A_{2n-1}]\geq\varepsilon.
\end{eqnarray*}
Assume now that $d'_{n}\neq d_{n}$. Assumption UI2 implies that 
\begin{eqnarray*}
u^{l}(A_{2n}^{c}|A_{2n-1},\tilde{d}=d) & \geq & 1/2-\alpha,\\
u^{l}(A_{2n}\cap A_{2n+1}^{c}|A_{2n-1},\tilde{d}=d) & \leq & (1/2+\alpha)^{2},\\
u^{l}(A_{2n+1}|A_{2n-1},\tilde{d}=d) & \geq & (1/2-\alpha)^{2}.
\end{eqnarray*}
It follows that 
\[
\begin{split} & \mathbb{E}[h^{p}(\tilde{c},d'|\tilde{d})=d,A_{2n-1}]\\
= & \mathbb{E}[\mathbb{E}[h^{p}(\tilde{c},d')|\tilde{d}=d,A_{2n+1}]\1_{A_{2n+1}}|\tilde{d}=d,A_{2n-1}]\\
 & +5\varepsilon u^{l}(A_{2n}^{c}|A_{2n-1},\tilde{d}=d)-5\varepsilon u^{l}(A_{2n}\cap A_{2n+1}^{c}|A_{2n-1},\tilde{d}=d)\\
\geq & \varepsilon(\frac{1}{4}-\alpha+\alpha^{2})+5\varepsilon(\frac{1}{2}-\alpha)-5\varepsilon(\frac{1}{4}+\alpha+\alpha^{2})=\varepsilon(\frac{3}{2}-11\alpha-4\alpha^{2})\geq\varepsilon,
\end{split}
\]
and (\ref{eq65}) follows by backward induction.

Since $A_{1}$ is an event that holds almost surely, we deduce that
$\mathbb{E}[h^{p}(\tilde{c},d')|\tilde{d}=d]\geq\varepsilon.$ Therefore,
player 1's truthful strategy guarantees payoff $\varepsilon$ in $\Gamma(u^{l},g^{p})$.

2) We now prove (\ref{eq66}). Consider the game $\Gamma(u^{l},g^{l+1})$.
We assume that player 2 chooses the truthful strategy. Fix $c=(c_{1},...,c_{l})$
in $C^{l}$ and $c'=(c'_{1},...,c'_{l-1})$ in $C^{l-1}$, and assume
that player 1 has received signal $c$ and chooses to report $c'$.
We will show that player 1's expected payoff is no larger than $-\varepsilon$,
and assume w.l.o.g. that $c'_{1}=c_{1}$. Consider the non-increasing
sequence of events $B_{n}=\{c^{\prime}\smile_{n}\tilde{d}\,\}.$ We
will prove by backward induction that $\forall n=1,...,l,\;\;\mathbb{E}[h^{l+1}(c^{\prime},\tilde{d})|\tilde{c}=c,B_{2n}]\leq-\varepsilon$.

If $n=l$, we have $\1_{B_{2l}}=\1_{B_{2l+1}}+\1_{B_{2l}}\1_{B_{2l+1}^{c}}$,
and $h^{l+1}(c^{\prime},\tilde{d})\1_{B_{2l}}=\varepsilon\1_{B_{2l+1}}-5\varepsilon\1_{B_{2l}}\1_{B_{2l+1}^{c}}.$
UI1 implies that $|u^{l}(B_{2l+1}|\tilde{c}=c,B_{2l})-\frac{1}{2}|\leq\alpha$
, and it follows that 
\begin{align*}
\mathbb{E}[h^{l+1}(c^{\prime},\tilde{d})|\tilde{c}=c,B_{2l}] & =\varepsilon\,u^{l}(B_{2l+1}|\tilde{c}=c,B_{2l})-5\varepsilon\,u^{l}(B_{2l+1}^{c}|u=\hat{u},B_{2l})\\
 & \leq\varepsilon\,(\frac{1}{2}+\alpha)-5\varepsilon\,(\frac{1}{2}-\alpha)\leq-\varepsilon.
\end{align*}

Assume now that, for some $n=1,...,l-1$, we have $\mathbb{E}[h^{l+1}(c^{\prime},\tilde{d})|\tilde{c}=c,B_{2n+2}]\leq-\varepsilon.$
We have $\1_{B_{2n}}=\1_{B_{2n+2}}+\1_{B_{2n}}\1_{B_{2n+1}^{c}}+\1_{B_{2n+1}}\1_{B_{2n+2}^{c}},$
and by definition of $h^{l+1}$, 
\[
h^{l+1}(c^{\prime},\tilde{d})\1_{B_{2n}}=h^{l+1}(c^{\prime},\tilde{d})\1_{B_{2n+2}}-5\varepsilon\1_{B_{2n}}\1_{B_{2n+1}^{c}}+5\varepsilon\1_{B_{2n+1}}\1_{B_{2n+2}^{c}}.
\]
First, assume that $c_{n+1}^{\prime}=c_{n+1}$, then $u^{l}(B_{2n+2}|B_{2n},\tilde{c}=c)=1$.
Then 
\begin{eqnarray*}
\mathbb{E}[h^{l+1}(c^{\prime},\tilde{d})|\tilde{c}=c,B_{2n}] & = & \mathbb{E}[h^{l+1}(c^{\prime},\tilde{d})\1_{B_{2n+2}}|\tilde{c}=c,B_{2n}],\\
 & = & \mathbb{E}[\mathbb{E}[h^{l+1}(c^{\prime},\tilde{d})|\tilde{c}=c,B_{2n+2}]\1_{B_{2n+2}}|\tilde{c}=c,B_{2n}]\leq-\varepsilon.
\end{eqnarray*}

Assume on the contrary that $c_{n+1}^{\prime}\neq c_{n+1}$. Assumption
UI1 implies that 
\begin{eqnarray*}
u^{l}(B_{2n+1}^{c}|B_{2n},\tilde{c}=c) & \geq & 1/2-\alpha,\\
u^{l}(B_{2n+1}\cap B_{2n+2}^{c}|B_{2n},\tilde{c}=c) & \leq & (1/2+\alpha)^{2},\\
u^{l}(B_{2n+2}|B_{2n},\tilde{c}=c) & \geq & (1/2-\alpha)^{2}.
\end{eqnarray*}
It follows that 
\[
\begin{split}\mathbb{E}[h^{l+1}(c^{\prime},\tilde{d})|\tilde{c}=c,B_{2n}] & =\mathbb{E}[\mathbb{E}[h^{l+1}(c^{\prime},\tilde{d})|\tilde{c}=c,B_{2n+2}]\1_{B_{2n+2}}|\tilde{c}=c,B_{2n}]\\
 & \quad-5\,\varepsilon\,u^{l}(B_{2n+1}^{c}|B_{2n},\tilde{c}=c)+5\,\varepsilon\,u^{l}(B_{2n+1}\cap B_{2n+2}^{c}|B_{2n},\tilde{c}=c)\\
 & \leq-\,\varepsilon\,(\frac{1}{4}-\alpha+\alpha^{2})-5\,\varepsilon\,(\frac{1}{2}-\alpha)+5\,\varepsilon\,(\frac{1}{4}+\alpha+\alpha^{2})\leq-\varepsilon.
\end{split}
\]
By induction, we obtain $\mathbb{E}[h^{l+1}(c^{\prime},\tilde{d})|\tilde{c}=c,B_{2}]\leq-\varepsilon$.
Since $B_{2}$ holds almost surely here, we get $\mathbb{E}[h^{l+1}(c^{\prime},\tilde{d})|\tilde{c}=c]\leq-\varepsilon,$
showing that player 2's truthful strategy guarantees that the payoff
of the maximizer is less than or equal to $-\varepsilon$, which concludes
the proof.

\subsection{Existence of an appropriate Markov chain\label{subsec:Existence-of-Markov chain}\label{existence}}

Here we conclude the proof of Theorem \ref{THM3LARGESPACE} by showing
the existence of an even-valued integer $N$ and a Markov chain with
law $\nu$ on $A=\{1,...,N\}$ satisfying our conditions

$1)$ the law of the first state of the Markov chain is uniform on
$A$,

$2)$ for each $a$ in $A$, there are exactly $N/2$ elements $b$
in $A$ such that $\nu(b|a)=2/N$, and

$3)$ $UI1$ and $UI2$.

Letting $P=(P_{a,b})_{(a,b)\in A^{2}}$ denote the transition matrix
of the Markov chain, we must prove the existence of $P$ satisfying
$2)$ and $3)$. The proof is nonconstructive and uses the following
probabilistic method, where we select (independently for each $a$
in $A$) the set $\{b\in A,P_{a,b}>0\}$ uniformly among the subsets
of $A$ with cardinal $N/2$. We will show that, when $N$ goes to
infinity, the probability of selecting an appropriate transition matrix
becomes strictly positive and, in fact, converges to 1.

Formally, let $\mathcal{S}_{A}$ denote the collection of all subsets
$S\subseteq A$ with cardinality $\left\vert S\right\vert =\frac{1}{2}N$.
We consider a collection $\left(S_{a}\right)_{a\in A}$ of i.i.d.
random variables uniformly distributed over $\mathcal{S}_{A}$ defined
on probability space $(\Omega_{N},\mathcal{F}_{N},\mathbb{P}_{N})$.
For all $a$, $b$ in $A$, let $X_{a,b}=\1_{\{b\in S_{a}\}}$ and
$P_{a,b}=\frac{2}{N}X_{a,b}$. By construction, $P$ is a transition
matrix satisfying $2)$. Theorem \ref{THM3LARGESPACE} will now follow
from the following proposition.

\begin{prop} \label{pro3} 
\[
\mathbb{P}_{N}\left(\text{ }P\text{ induces a Markov chain satisfying UI1 and UI2 }\right)\xrightarrow[N\to\infty]{}1.
\]
In particular, this probability is strictly positive for all sufficiently
large $N$.

\end{prop}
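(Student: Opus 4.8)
The plan is to show that each of the eight mixing conditions embedded in $UI1$ and $UI2$ fails with probability tending to $0$ as $N\to\infty$, after which a union bound over the (polynomially many in $N$) relevant conditioning events gives the result. The starting point is the observation that every conditional probability appearing in (\ref{eq61})--(\ref{eq63}) is, by the Markov structure and the uniform choice of the sets $S_a$, a ratio whose numerator and denominator are sums of products of independent Bernoulli variables $X_{a,b}=\1_{\{b\in S_a\}}$. For a fixed conditioning history (a fixed nice sequence and a fixed misreported coordinate), each such conditional probability has the schematic form $|T(a)\cap S(a')|/|T(a)|$ or similar, where $T(a)=S_{a}$ is the support set reached along the true path and $S(a')$ is the support set reached along the reported path; conditionally on the relevant sets, the intersection size is a hypergeometric-type random variable with mean exactly $\tfrac{1}{2}|T(a)|$ (since $|S(a')|=N/2$ and $S(a')$ is uniform and independent of $T(a)$). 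The first step is therefore to write each of the eight conditions in this explicit ratio form, identify the independence that makes the relevant intersection size concentrate, and record that its conditional mean is $\tfrac12$ of the denominator — this is where the ``$\simeq N/2,\ N/4,\ N/8,\dots$'' cardinalities in the sketch come from.

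Next I would apply a concentration inequality — Hoeffding for sampling without replacement, or a Chernoff/Azuma bound — to show that, for a single fixed history, the deviation of the ratio from $\tfrac12$ exceeds $\alpha$ with probability at most $\exp(-c\alpha^2 N)$ for an absolute constant $c>0$. The subtlety is that the denominators themselves ($|T(a)|$, and the nested intersections at deeper levels) are random, so one must first condition on the event that these denominators are themselves close to their expected values $N/2,\,N/4,\dots$ (again by concentration, each failing with exponentially small probability), and only then bound the ratio; a short chain of conditionings handles the nested cardinalities $N/4$, $N/8$, etc. Since the constants $\alpha=\tfrac1{25}$ and the depth of nesting ($\le 3$ levels, giving denominators down to $\sim N/8$) are fixed, each individual bad event has probability $\le e^{-c'N}$ for an absolute $c'>0$.

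Finally I would count the histories. For each fixed $l$ the conditioning data in (\ref{eq61})--(\ref{eq62}) is a pair $(c,c')$ of sequences over $A=\{1,\dots,N\}$ together with the index $m$ of the first misreport; naively this is exponentially many in $l$, but the key point is that, because the Markov chain has memory one and the payoffs only depend on whether the reported prefix is nice, \emph{the relevant conditioning event depends only on the last true symbol $a=c_m$ (or $d_m$), the last reported symbol $a'=c'_m$, and the parity/level $r$} — there are at most $O(N^2)$ such data per level and $O(N)$ relevant levels, hence $O(N^3)$ conditions in total for $UI1$, and similarly $O(N^3)$ for $UI2$. A union bound then gives failure probability $\le O(N^6)\,e^{-c'N}\to 0$, which proves the proposition and in particular shows the probability is positive (indeed close to $1$) for all large $N$. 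The main obstacle I anticipate is bookkeeping step two correctly: isolating, for each of the eight conditions, exactly which family of sets $S_a$ is being ``fresh'' (independent of the conditioning) so that the hypergeometric concentration applies, and making sure the nested conditionings on denominator sizes do not destroy that independence — this is routine but must be done carefully for each of the eight cases, and it is where a sign error or a spurious dependence would hide.
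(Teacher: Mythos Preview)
Your overall architecture is the same as the paper's: reduce each conditional probability in (\ref{eq61})--(\ref{eq63}) via the Markov property to a ratio of cardinalities built from the $X_{a,b}$, prove exponential concentration of each such ratio around $\tfrac12$, and union-bound over the finitely many ratios. The concentration step (Hoeffding for sampling without replacement, with a preliminary conditioning on the denominators) is exactly what the paper does through its Lemmas on the variables $Y_a$, $Y_{a,b}$, $Y_a^c$, $Y_{a,b}^{c,d}$, etc.

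There is, however, a genuine gap in your counting step. You write that the conditioning event ``depends only on the last true symbol $a=c_m$, the last reported symbol $a'=c'_m$, and the parity/level $r$'', and then invoke ``$O(N)$ relevant levels''. Two things are wrong here. First, the Markov reduction actually leaves dependence on up to \emph{four} local symbols, not two: for instance in the case $r=2m-1$, $m>1$, the paper computes
\[
u^{l}\bigl(\tilde c\smile_{2m}d'\mid \tilde d=d,\ \tilde c\smile_{2m-1}d'\bigr)=\frac12\,\frac{Y_{d_m,d'_m}^{\,d_{m-1},d'_{m-1}}}{Y_{d_m}^{\,d_{m-1},d'_{m-1}}},
\]
so the ratio depends on $(d_{m-1},d'_{m-1},d_m,d'_m)$. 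Second, and more seriously, the conditions $UI1,UI2$ must hold for \emph{all} $l\ge1$, so ``level $r$'' is unbounded and an $O(N)$-level union bound is not available. What saves the argument is precisely that the Markov reduction makes the ratio \emph{independent of $r$} (beyond determining which of the eight ratio types one is in): the same $O(N^4)$ events $E_{a,b,c,d}$, indexed by four elements of $A$, simultaneously control all infinitely many instances across all $l,p,m$. The paper packages this as a single deterministic event $E=\bigcap_{a\neq b,\,c\neq d}E_{a,b,c,d}$ and proves $\mathbb P_N(E)\ge 1-7e^4N^5e^{-N/2163200}$, after which Lemma~\ref{lemT4} checks that $E\Rightarrow UI1\wedge UI2$ by the explicit ratio computations above. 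Once you correct the local-coordinate count to four and drop the spurious dependence on $r$, your plan coincides with the paper's proof.
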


The remainder of this section is devoted to the proof of Proposition
\ref{pro3}. We start with probability bounds based on Hoeffding's
inequality.

\begin{lem} \label{lemT1}For any $a\neq b,$ each $\gamma>0$ 
\[
\mathbb{P}_{N}\left(\left\vert |S_{a}\cap S_{b}|-\frac{1}{4}N\right\vert \geq\gamma N\right)\leq\frac{1}{2}e^{4}Ne^{-2\gamma^{2}N}.
\]
\end{lem}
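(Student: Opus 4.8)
The statement concerns $|S_a \cap S_b|$ where $S_a, S_b$ are independent, each uniform over the $\frac12 N$-element subsets of $A=\{1,\dots,N\}$. The plan is to condition on $S_b$ (by symmetry and independence it does not matter which one we fix), so that $|S_a \cap S_b| = \sum_{b' \in S_b} \1_{\{b' \in S_a\}}$ is a sum of $\frac12 N$ indicator variables, each with mean $\Prob_N(b' \in S_a) = \frac{|S_a|}{N} = \frac12$. Hence $\E[|S_a \cap S_b|] = \frac12 \cdot \frac12 N = \frac14 N$, which is the centering constant in the statement. The difficulty is that these indicators are \emph{not} independent: $S_a$ is sampled without replacement, so the events $\{b' \in S_a\}$ are negatively associated. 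I would handle this by invoking Hoeffding's inequality in the form valid for sampling without replacement (Hoeffding 1963, Section 6, shows the same bound $\Prob(|\bar X - \mu| \ge t) \le 2e^{-2nt^2}$ holds when $X_1,\dots,X_n$ are drawn without replacement from a finite population, since the sum is then a ``concentrated'' function dominated in the convex order by the with-replacement version). Applying this with $n = \frac12 N$ bounded $[0,1]$ summands and deviation $t = 2\gamma$ on the \emph{average} (equivalently $\gamma N$ on the \emph{sum}), I get
\[
\Prob_N\!\left(\left||S_a \cap S_b| - \tfrac14 N\right| \ge \gamma N \,\Big|\, S_b\right) \le 2\exp\!\left(-2 (2\gamma)^2 \cdot \tfrac12 N\right) = 2 e^{-4\gamma^2 N}.
\]

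This bound is uniform in $S_b$, so it survives taking expectation over $S_b$, giving $\Prob_N(|\,|S_a\cap S_b| - \frac14 N| \ge \gamma N) \le 2 e^{-4\gamma^2 N}$. The final step is to reconcile this with the stated bound $\frac12 e^4 N e^{-2\gamma^2 N}$: it suffices to check $2 e^{-4\gamma^2 N} \le \frac12 e^4 N e^{-2\gamma^2 N}$, i.e. $4 \le e^4 N e^{2\gamma^2 N}$, which holds trivially for all $N \ge 1$ since $e^4 > 50$. (The somewhat lossy form with the extra $Ne^{-2\gamma^2 N}$ factor is presumably chosen because a weaker exponent $2\gamma^2 N$ is all that is needed downstream, and the polynomial factor $N$ and constant $\frac12 e^4$ make later union bounds over the $O(N^{\text{const}})$ sequence-and-type combinations cleanest to state.)

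The only real obstacle is the without-replacement dependence; everything else is bookkeeping with the exponent and constant. If one prefers to avoid quoting the without-replacement version of Hoeffding directly, an alternative is to note that $|S_a \cap S_b|$ follows a hypergeometric distribution (drawing $\frac12 N$ items from a population of $N$ of which $\frac12 N$ are ``marked''), and hypergeometric random variables satisfy the same sub-Gaussian tail bound as the binomial with matching mean — a classical fact (again due to Hoeffding, or derivable from the fact that sampling without replacement is a mixture that is more concentrated than sampling with replacement). Either route yields the displayed inequality after absorbing constants.
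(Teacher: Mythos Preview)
Your argument is correct and in fact yields the stronger bound $2e^{-4\gamma^{2}N}$, which you then rightly observe dominates the stated $\frac{1}{2}e^{4}Ne^{-2\gamma^{2}N}$. The route, however, is genuinely different from the paper's. The paper does \emph{not} condition on $S_{b}$ or invoke the without-replacement version of Hoeffding. Instead, it introduces i.i.d.\ Bernoulli$(\tfrac12)$ variables $\widetilde{X}_{i,j}$ for $i=a,b$ and $j\in A$, sets $\widetilde{S}_{i}=\{j:\widetilde{X}_{i,j}=1\}$, and observes that the law of $(S_{a},S_{b})$ is the law of $(\widetilde{S}_{a},\widetilde{S}_{b})$ conditioned on $\widetilde{L}_{a}\cap\widetilde{L}_{b}=\{|\widetilde{S}_{a}|=|\widetilde{S}_{b}|=N/2\}$. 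This gives
\[
\mathbb{P}_{N}\bigl(||S_{a}\cap S_{b}|-\tfrac14 N|\geq\gamma N\bigr)\leq\frac{\mathbb{P}\bigl(|\sum_{j}\widetilde{X}_{a,j}\widetilde{X}_{b,j}-\tfrac14 N|\geq\gamma N\bigr)}{\mathbb{P}(\widetilde{L}_{a}\cap\widetilde{L}_{b})},
\]
and then standard i.i.d.\ Hoeffding on the numerator plus Stirling on the denominator produce the factors $2e^{-2\gamma^{2}N}$ and $\tfrac{4}{Ne^{4}}$, respectively. So the somewhat odd constant $\tfrac12 e^{4}N$ and the weaker exponent $-2\gamma^{2}N$ are artifacts of this conditioning trick, not a deliberate slackening for downstream convenience as you guessed. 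Your approach buys a cleaner and sharper inequality at the cost of quoting the (slightly less standard) without-replacement Hoeffding; the paper's approach uses only the textbook i.i.d.\ version but pays for it with the extra polynomial factor.
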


\begin{proof} Consider a family of i.i.d. Bernoulli variables $(\widetilde{X}_{i,j})_{i=a,b,\,j\in A}$
of parameter $\frac{1}{2}$ defined on space $(\Omega,\mathcal{F},\mathbb{P})$.
For $i=a,b$, define events $\widetilde{L}_{i}=\{\sum_{j\in A}\widetilde{X}_{i,j}=\frac{N}{2}\}$
and set-valued variables $\widetilde{S}_{i}=\{j\in A\,|\,\widetilde{X}_{i,j}=1\}$.
It is straightforward to check that the conditional law of $(\widetilde{S}_{a},\widetilde{S}_{b})$
given $\widetilde{L}_{a}\cap\widetilde{L}_{b}$ under $\mathbb{P}$
is the same as the law of $(S_{a},S_{b})$ under $\mathbb{P}_{N}$.
It follows that 
\begin{align*}
\mathbb{P}_{N}\left(\left\vert |S_{a}\cap S_{b}|-\frac{1}{4}N\right\vert \geq\gamma N\right) & =\mathbb{P}\left(\left\vert |\widetilde{S}_{a}\cap\widetilde{S}_{b}|-\frac{1}{4}N\right\vert \geq\gamma N\,\Big|\,\widetilde{L}_{a}\cap\widetilde{L}_{b}\right)\\
 & \leq\frac{\mathbb{P}\left(\left\vert |\widetilde{S}_{a}\cap\widetilde{S}_{b}|-\frac{1}{4}N\right\vert \geq\gamma N\right)}{\mathbb{P}\left(\widetilde{L}_{a}\cap\widetilde{L}_{b}\right)}.
\end{align*}
Using Hoeffding's inequality, we have 
\[
\mathbb{P}\left(\left\vert |\widetilde{S}_{a}\cap\widetilde{S}_{b}|-\frac{1}{4}N\right\vert \geq\gamma N\right)=\mathbb{P}\left(\left\vert \sum_{j\in A}\widetilde{X}_{a,j}\widetilde{X}_{b,j}-\frac{1}{4}N\right\vert \geq\gamma N\right)\leq2e^{-2\gamma^{2}N}.
\]
On the other hand, using Stirling's approximation\footnote{We have $n^{n+\frac{1}{2}}e^{-n}\leq n!\leq en^{n+\frac{1}{2}}e^{-n}$
for each $n$.}, we have 
\[
\mathbb{P}\left(\widetilde{L}_{a}\cap\widetilde{L}_{b}\right)=\left(\frac{1}{2^{N}}\frac{N!}{\left(\frac{N}{2}!\right)^{2}}\right)^{2}\geq\left(\frac{2^{N+1}N^{-\frac{1}{2}}}{2^{N}e^{2}}\right)^{2}=\frac{4}{Ne^{4}}.
\]
We deduce that $\mathbb{P}_{N}\left(\left\vert |S_{a}\cap S_{b}|-\frac{1}{4}N\right\vert \geq\gamma N\right)\leq\frac{1}{2}e^{4}Ne^{-2\gamma^{2}N}.$
\end{proof}

\begin{lem} \label{lemT2}For each $a\neq b,$ for any subset $S\subseteq A$
and any $\gamma\geq\frac{1}{2N-2}$, 
\[
\mathbb{P}_{N}\left(\left\vert \sum_{i\in S}X_{i,a}-\frac{1}{2}\left\vert S\right\vert \right\vert \geq\gamma N\right)\leq2e^{-2N\gamma^{2}},\;{\mathnormal{a}nd}\;\mathbb{P}_{N}\left(\left\vert \sum_{i\in S}X_{i,a}X_{i,b}-\frac{1}{4}\left\vert S\right\vert \right\vert \geq\gamma N\right)\leq2e^{-\frac{1}{2}N\gamma^{2}}.
\]
\end{lem}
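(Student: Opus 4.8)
The plan is to derive both tail bounds from Hoeffding's inequality for sums of independent $[0,1]$-valued random variables, in the same spirit as the proof of Lemma~\ref{lemT1}; the only non-routine point is a small bias correction in the second estimate, and this is exactly what forces the hypothesis $\gamma\geq\frac{1}{2N-2}$.

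First I would observe that, for fixed $a$, the family $(X_{i,a})_{i\in S}$ is independent, since $X_{i,a}=\1_{\{a\in S_i\}}$ depends only on $S_i$ and the $S_i$ are i.i.d.; moreover each $X_{i,a}$ is Bernoulli with parameter $\mathbb{P}_N(a\in S_i)=\tfrac{N/2}{N}=\tfrac12$, so $\mathbb{E}_N[\sum_{i\in S}X_{i,a}]=\tfrac12|S|$. Applying Hoeffding's inequality to this sum of $|S|\leq|A|=N$ independent variables in $[0,1]$ yields
\[
\mathbb{P}_N\Big(\big|\textstyle\sum_{i\in S}X_{i,a}-\tfrac12|S|\big|\geq\gamma N\Big)\leq 2\,e^{-2(\gamma N)^2/|S|}\leq 2\,e^{-2\gamma^2 N},
\]
which is the first inequality (it in fact holds for all $\gamma>0$, the restriction on $\gamma$ not being needed here).

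For the second inequality I would set $Y_i:=X_{i,a}X_{i,b}=\1_{\{a\in S_i\}}\1_{\{b\in S_i\}}$; again $(Y_i)_{i\in S}$ is independent, and since $a\neq b$ each $Y_i$ is Bernoulli with parameter
\[
p:=\mathbb{P}_N(\{a,b\}\subseteq S_i)=\frac{\binom{N-2}{N/2-2}}{\binom{N}{N/2}}=\frac{(N/2)(N/2-1)}{N(N-1)}=\frac14\cdot\frac{N-2}{N-1}.
\]
Thus $\mathbb{E}_N[\sum_{i\in S}Y_i]=p|S|$, so the centering $\tfrac14|S|$ used in the statement differs from the true mean by $\tfrac14|S|-p|S|=\frac{|S|}{4(N-1)}$. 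The role of the hypothesis is precisely to make this bias at most $\tfrac12\gamma N$: indeed $\frac{|S|}{4(N-1)}\leq\frac{N}{4(N-1)}\leq\tfrac12\gamma N$ as soon as $\gamma\geq\frac{1}{2N-2}$. Hence by the triangle inequality and Hoeffding (again using $|S|\leq N$),
\[
\mathbb{P}_N\Big(\big|\textstyle\sum_{i\in S}Y_i-\tfrac14|S|\big|\geq\gamma N\Big)\leq\mathbb{P}_N\Big(\big|\textstyle\sum_{i\in S}Y_i-p|S|\big|\geq\tfrac12\gamma N\Big)\leq 2\,e^{-2(\gamma N/2)^2/|S|}\leq 2\,e^{-\frac12\gamma^2 N}.
\]

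The only real obstacle — the single place where this is not a verbatim repeat of Lemma~\ref{lemT1} — is noticing that the pairwise inclusion probability $p$ equals $\frac14(1-\frac1{N-1})$ rather than exactly $\frac14$, so this discrepancy must be absorbed into the deviation; quantifying it is what pins down the threshold $\gamma\geq\frac1{2N-2}$. Everything else is a routine application of Hoeffding's inequality to independent Bernoulli sums combined with the crude bound $|S|\leq N$.
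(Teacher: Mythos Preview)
Your proof is correct and follows essentially the same approach as the paper: both inequalities are direct applications of Hoeffding's inequality to independent Bernoulli sums $(X_{i,a})_{i\in S}$ and $(X_{i,a}X_{i,b})_{i\in S}$, with the second requiring the bias correction $|p-\tfrac14|\,|S|\le \tfrac{N}{4(N-1)}\le \tfrac12\gamma N$ that explains the threshold $\gamma\ge\frac{1}{2N-2}$.
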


\begin{proof} For the first inequality, notice that $X_{i,a}$ are
i.i.d. Bernoulli random variables with parameter $\frac{1}{2}$. Hoeffding's
inequality implies that 
\[
\mathbb{P}_{N}\left(\left\vert \sum_{i\in S}X_{i,a}-\frac{1}{2}\left\vert S\right\vert \right\vert \geq\gamma N\right)\leq2e^{-2\gamma^{2}\frac{N^{2}}{|S|}}\leq2e^{-2N\gamma^{2}}.
\]
\end{proof}

For the second inequality, let $Z_{i}=X_{i,a}X_{i,b}.$ Notice that
all variables $Z_{i}$ are i.i.d. Bernoulli random variables with
parameter $p=\frac{1}{2}\left(\frac{\frac{N}{2}-1}{N-1}\right)=\frac{1}{4}-\frac{1}{4N-4}$.
Hoeffding's inequality implies that 
\begin{eqnarray*}
\mathbb{P}_{N}\left(\left\vert \sum_{i\in S}Z_{i}-\frac{1}{4}\left\vert S\right\vert \right\vert \geq\gamma N\right) & \leq & \mathbb{P}_{N}\left(\left\vert \sum_{i\in S}Z_{i}-p\left\vert S\right\vert \right\vert \geq\frac{1}{2}\gamma N\right)\leq2e^{-2\gamma^{2}\frac{N^{2}}{|S|}}\leq2e^{-\frac{1}{2}N\gamma^{2}},
\end{eqnarray*}
where we used $|S||p-\frac{1}{4}|\leq\frac{N}{4N-4}\leq\frac{\gamma N}{2}$
for the first inequality.

For each $a\neq b$ and $c\neq d,$ each $\gamma>0,$ define 
\begin{center}
\begin{tabular}{lll}
$Y_{a}=2\sum_{i\in A}X_{i,a},$  & $Y^{c}=2\sum_{i\in A}X_{c,i}=N$,  & \;\tabularnewline
$Y_{a,b}=4\sum_{i\in A}X_{i,a}X_{i,b},$  & $Y_{a}^{c}=4\sum_{i\in A}X_{i,a}X_{c,i},$  & $Y^{c,d}=4\sum_{i\in A}X_{c,i}X_{d,i},$\tabularnewline
$Y_{a,b}^{c}=8\sum_{i\in A}X_{i,a}X_{i,b}X_{c,i},$  & $Y_{a}^{c,d}=8\sum_{i\in A}X_{i,a}X_{c,i}X_{d,i},$  & $Y_{a,b}^{c,d}=16\sum_{i\in A}X_{i,a}X_{i,b}X_{c,i}X_{d,i}.$\tabularnewline
\end{tabular}
\par\end{center}

\begin{lem} \label{lemT3}For each $a\neq b$ and $c\neq d,$ each
$\gamma\geq64/N,$ each of the variables 
\[
Z\in\{Y_{a},Y^{c},Y_{a,b},Y^{c,d},Y_{a}^{c},Y_{a,b}^{c},Y_{a}^{c,d},Y_{a,b}^{c,d}\},
\]
\[
\mathbb{P}_{N}\left(\left\vert Z-N\right\vert \geq\gamma N\right)\leq e^{4}Ne^{-\frac{N}{32}{(\frac{\gamma}{10})}^{2}}.
\]
\end{lem}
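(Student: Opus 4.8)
The idea is to reduce the deviation bound for each of the eight variables $Z$ to the simpler Hoeffding-type estimates already established in Lemmas \ref{lemT1} and \ref{lemT2}. Each $Z$ is (up to the multiplicative normalisation $2,4,8,16$) a sum over $i\in A$ of products of between one and four of the Bernoulli variables $X_{i,a},X_{i,b},X_{c,i},X_{d,i}$. These products are not independent across $i$ in general, because for a fixed row $c$ the variables $(X_{c,i})_{i\in A}$ are sampled without replacement (exactly $N/2$ of them equal $1$), and similarly along columns. The standard device is to \emph{condition on the row/column sums being exactly $N/2$}, exactly as in the proof of Lemma \ref{lemT1}: replace the without-replacement sampling by i.i.d.\ fair coins $\widetilde X_{i,j}$, so that the conditional law of the relevant family given the conditioning events $\widetilde L=\{\sum_j \widetilde X_{i,j}=N/2\ \text{for the finitely many rows/columns involved}\}$ coincides with $\mathbb{P}_N$. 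Since at most four indices ($a,b$ as columns, $c,d$ as rows) appear in any given $Z$, we condition on at most four such events, each of probability $\ge 2/(\sqrt N e^2)$ by Stirling, so $\mathbb{P}(\widetilde L)\ge (2/(\sqrt N e^2))^4 = 16/(N^2 e^8)$; a crude bound like $\mathbb{P}(\widetilde L)\ge e^{-4}/N$ (or any polynomial lower bound) suffices for the stated estimate after absorbing constants.

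With the conditioning in place, I would treat the two structurally different cases. \emph{Case A: $Z$ contains at most one column index among $\{a,b\}$ and the summation index $i$ ranges freely}, e.g.\ $Y_a=2\sum_i X_{i,a}$, $Y_{a,b}=4\sum_i X_{i,a}X_{i,b}$, $Y_a^{c,d}$, $Y_{a,b}^{c,d}$, etc. Here, after conditioning on the column sums for $a$ and $b$ (if both appear), the summands indexed by $i$ are i.i.d.\ across $i$ — each is a product of independent fair coins, taking value $1$ with probability $2^{-(\text{number of factors})}$ — so the \emph{un}conditioned Hoeffding inequality applies directly to give $\mathbb{P}(|\sum_i(\cdots)-N2^{-r}|\ge t)\le 2e^{-2t^2/N}$, where $r$ is the number of factors. \emph{Case B: $Z$ involves a row index $c$ (or $c,d$) appearing as $X_{c,i}$ inside the $i$-sum}, e.g.\ $Y^c=N$ trivially, $Y_a^c=4\sum_i X_{i,a}X_{c,i}$, $Y_{a,b}^c$, $Y^{c,d}$. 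Now even after conditioning, the variables $X_{c,i}$ over $i$ are sampled without replacement, so I would instead condition on the rows $c$ (and $d$) being fixed sets and use Lemma \ref{lemT2} (which is precisely a without-replacement Hoeffding bound of the form $\sum_{i\in S}X_{i,a}$ and $\sum_{i\in S}X_{i,a}X_{i,b}$ with $S$ an arbitrary fixed subset of size $N/2$); the set $S$ is the realised row, of size $N/2$. For the triple and quadruple products I iterate: condition on row $c$, reduce $Y_{a,b}^{c}=8\sum_{i\in S_c}X_{i,a}X_{i,b}$ to the second estimate of Lemma \ref{lemT2} with $|S|=N/2$, and similarly condition on both rows $c,d$ for $Y_a^{c,d}$, $Y_{a,b}^{c,d}$, intersecting $S_c\cap S_d$ (whose size concentrates around $N/4$ by Lemma \ref{lemT1}, so after a further conditioning one again has a fixed set of roughly known size).

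Putting the pieces together: in every case the centred, normalised deviation $|Z-N|\ge\gamma N$ translates — after multiplying out the normalisation constant (which is at most $16$) and absorbing the deterministic error from the fact that the true mean of the un-normalised sum is $N/2^r$ rather than exactly $N/2^r$ but the normalisation puts it at exactly $N$ — into an event of the form "a Hoeffding-controllable sum deviates from its mean by at least $cN\gamma$" for some absolute constant $c$; the factor $(\gamma/10)^2$ in the exponent and the denominator $32$ are deliberately generous so that all the bookkeeping constants (the $16$ from normalisation, the factor coming from $|S|\le N$, the $2$'s in Lemma \ref{lemT2}, and the slack needed so that $\gamma\ge 64/N$ dominates the $\tfrac{1}{2N-2}$-type thresholds and the $O(1/N)$ mean corrections in Lemma \ref{lemT2}) fit comfortably. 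Finally divide by $\mathbb{P}(\widetilde L)\ge e^{-4}/N$ (Case A) or multiply through by the analogous constant when conditioning on rows (Case B), which produces the prefactor $e^4 N$; taking a union bound over the at most four conditioning events only changes absolute constants, again absorbed into the factor $10$. The main obstacle is purely organisational: one must check, for each of the eight variables separately, that the chain of conditionings leaves an i.i.d.\ (or fixed-subset without-replacement) sum to which Lemma \ref{lemT1} or \ref{lemT2} applies, and that the accumulated constants never exceed the generous budget encoded in $e^4 N e^{-\frac{N}{32}(\gamma/10)^2}$; there is no conceptual difficulty beyond careful constant-chasing, and the condition $\gamma\ge 64/N$ is exactly what is needed to swallow the lower-order corrections.
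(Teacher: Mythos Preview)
Your overall strategy---reduce everything to Lemmas \ref{lemT1} and \ref{lemT2}, conditioning on the realised rows $S_c$ (or $S_c\cap S_d$) when superscripts are present---is exactly the paper's approach, and your Case B description matches it closely. The paper only spells out $Y_{a,b}^{c,d}$ in detail: write $Y_{a,b}^{c,d}=16\sum_{i\in S_c\cap S_d}Z_i$ with $Z_i=X_{i,a}X_{i,b}$; the $Z_i$ for $i\neq c,d$ are i.i.d.\ and independent of $S_c\cap S_d$; replace $Z_c,Z_d$ by independent copies (error $\le 32\le \tfrac12\gamma N$); then split the deviation between ``$|S_c\cap S_d|$ far from $N/4$'' (Lemma \ref{lemT1}) and ``$\sum_{i\in S}Z'_i$ far from $\tfrac14|S|$'' (second bound of Lemma \ref{lemT2}). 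The remaining mixed cases $Y_a^c,Y_{a,b}^c,Y_a^{c,d}$ are strictly easier.

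Your Case A, however, contains a genuine misconception. You propose to pass to i.i.d.\ coins $\widetilde X_{i,j}$ and condition on ``at most four'' row/column-sum events to recover $\mathbb P_N$. But for a variable like $Y_a=2\sum_i X_{i,a}$, the summand $X_{i,a}=\mathbf 1_{a\in S_i}$ depends on \emph{every} row $S_i$, $i=1,\dots,N$; recovering $\mathbb P_N$ from the i.i.d.\ model would require conditioning on all $N$ row-sum events simultaneously, whose joint probability is of order $(c/\sqrt N)^N$, destroying the polynomial prefactor. (Conditioning instead on the \emph{column} sum $\sum_i \widetilde X_{i,a}=N/2$ is a single event, but it does not reproduce $\mathbb P_N$: under $\mathbb P_N$ that column sum is genuinely Binomial$(N,\tfrac12)$, not fixed.) The fix is much simpler than your coupling: under $\mathbb P_N$ the rows $S_1,\dots,S_N$ are independent, so $(X_{i,a})_i$ are already i.i.d.\ Bernoulli$(\tfrac12)$, and $(X_{i,a}X_{i,b})_i$ are i.i.d.\ Bernoulli$\big(\tfrac14-\tfrac{1}{4N-4}\big)$. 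Hence $Y_a$ and $Y_{a,b}$ fall directly under Lemma \ref{lemT2} with $S=A$, no coupling needed; $Y^c=N$ is trivial; and $Y^{c,d}=4|S_c\cap S_d|$ is exactly Lemma \ref{lemT1}. Once you make this correction, your argument collapses to the paper's.
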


\begin{proof} In case $Z=Y_{a}$ or $Y_{a,b},$ the bound follows
from Lemma \ref{lemT2} (for $S=A)$. If case $Z=Y^{c},$ the bound
is trivially satisfied. If $Z=Y^{c,d},$ the bound follows from Lemma
\ref{lemT1}.

In case $Z=Y_{a,b}^{c,d}$, notice that $Y_{a,b}^{c,d}=16\sum\limits _{i\in S_{c}\cap S_{d}}Z_{i}$
where $Z_{i}=X_{i,a}X_{i,b}$. All variables $Z_{i}$ are i.i.d. Bernouilli
random variables with parameter $p=\frac{1}{4}-\frac{1}{4N-4}$. Moreover,
$\left\{ Z_{i}\right\} _{i\neq c,d}$ are independent of $S_{c}\cap S_{d}$.
Enlarging the probability space, we can construct a new collection
of i.i.d. Bernoulli random variables $Z_{i}^{\prime}$ such that $Z_{i}^{\prime}=Z_{i}$
for all $i\neq c,d$ and such that $\left\{ (Z_{i}^{\prime})_{i\in A},S_{c}\cap S_{d}\right\} $
are all independent. Then, $\left\vert Y_{a,b}^{c,d}-16\sum\limits _{i\in S_{c}\cap S_{d}}Z_{i}^{\prime}\right\vert \leq32,$
and, because $\frac{1}{2}\gamma N\geq32,$ we have 
\[
\mathbb{P}_{N}\left(\left\vert Y_{a,b}^{c,d}-N\right\vert \geq\gamma N\right)\leq\mathbb{P}_{N}\left(\left\vert \sum\limits _{i\in S_{c}\cap S_{d}}Z_{i}^{\prime}-\frac{1}{16}N\right\vert \geq\frac{1}{32}\gamma N\right).
\]
Define the events 
\[
A=\left\{ \left\vert \frac{1}{4}\left\vert S_{c}\cap S_{d}\right\vert -\frac{N}{16}\right\vert \geq\frac{1}{160}\gamma N\right\} ,\;\;\;B=\left\{ \left\vert \sum\limits _{i\in S_{c}\cap S_{d}}Z_{i}^{\prime}-\frac{1}{4}\left\vert S_{c}\cap S_{d}\right\vert \right\vert \geq\frac{1}{40}\gamma N\right\} .
\]

Then, the probability can be further bounded by 
\[
\leq\mathbb{P}_{N}\left(A\right)+\mathbb{P}_{N}\left(B\right)\leq\frac{1}{2}e^{4}Ne^{-2N\left(\frac{1}{40}\gamma\right)^{2}}+2e^{-\frac{1}{2}N\left(\frac{1}{40}\gamma\right)^{2}}\leq e^{4}Ne^{-\frac{N\gamma^{2}}{3200}},
\]
where the first bound comes from Lemma \ref{lemT1} and the second
bound comes from the second bound in Lemma \ref{lemT2}.

The remaining bounds have proofs similar to (and simpler than) the
case $Z=Y_{a,b}^{c,d}$. We omit the details in the interest of space.
\end{proof}

Finally, we describe an event $E$ that collects these bounds. Recall
that $\alpha=1/25$, and define for each $a\neq b$ \ and $c\neq d$,
\begin{eqnarray*}
E_{a,b,c,d} & = & \left\{ \left\vert \frac{Y_{a,b}}{Y_{a}}-1\right\vert \leq2\alpha\right\} \cap\left\{ \left\vert \frac{Y_{a,b}^{c}}{Y_{a}^{c}}-1\right\vert \leq2\alpha\right\} \cap\left\{ \left\vert \frac{Y_{a}^{c,d}}{Y_{a}^{c}}-1\right\vert \leq2\alpha\right\} \cap\left\{ \left\vert \frac{Y_{a,b}^{c,d}}{Y_{a}^{c,d}}-1\right\vert \leq2\alpha\right\} \\
 & \cap & \left\{ \left\vert \frac{Y^{c,d}}{Y^{c}}-1\right\vert \leq2\alpha\right\} \cap\left\{ \left\vert \frac{Y_{a}^{c}}{Y^{c}}-1\right\vert \leq2\alpha\right\} \cap\left\{ \left\vert \frac{Y_{a}^{c,d}}{Y^{c,d}}-1\right\vert \leq2\alpha\right\} .
\end{eqnarray*}
Finally, let $E=\bigcap\limits _{a,b,c,d:a\neq b\ \text{and }c\neq d}E_{a,b,c,d}.$

\begin{lem} \label{lembound} We have 
\[
\mathbb{P}_{N}(E)>1-7e^{4}N^{5}e^{-\frac{N}{2163200}}\xrightarrow[n\to\infty]{}1.
\]
\end{lem}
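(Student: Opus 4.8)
The plan is to control $\mathbb{P}_{N}(E^{c})$ by a union bound over quadruples. Since $E=\bigcap_{a\neq b,\,c\neq d}E_{a,b,c,d}$ and there are fewer than $N^{4}$ such quadruples, it suffices to show $\mathbb{P}_{N}(E_{a,b,c,d}^{c})\leq 7e^{4}Ne^{-N/2163200}$ for each fixed quadruple; multiplying by $N^{4}$ then yields the stated bound, and convergence to $1$ is immediate because the exponential dominates the polynomial factor. For small $N$ the right-hand side $1-7e^{4}N^{5}e^{-N/2163200}$ is negative and there is nothing to prove, so I may assume throughout that $N\geq 26\cdot 64$, which is exactly what is needed to invoke Lemma \ref{lemT3} with the value of $\gamma$ chosen below.

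Fix a quadruple and set $\gamma=\tfrac{1}{26}$. This choice is dictated by two requirements: $\tfrac{2\gamma}{1-\gamma}=\tfrac{2}{25}=2\alpha$, and $\tfrac{N}{32}\bigl(\tfrac{\gamma}{10}\bigr)^{2}=\tfrac{N}{2163200}$. Of the eight variables entering the seven ratios that define $E_{a,b,c,d}$, the variable $Y^{c}$ equals $N$ deterministically; only the remaining seven random variables
\[
Z\in\bigl\{\,Y_{a},\,Y_{a,b},\,Y_{a}^{c},\,Y_{a,b}^{c},\,Y_{a}^{c,d},\,Y_{a,b}^{c,d},\,Y^{c,d}\,\bigr\}
\]
need concentration. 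Let $F_{a,b,c,d}$ be the event that $|Z-N|<\gamma N$ for all seven of these $Z$. The elementary observation is that if $|Z_{1}-N|\le\gamma N$ and $|Z_{2}-N|\le\gamma N$ with $\gamma<1$, then $Z_{2}\ge(1-\gamma)N>0$ and hence $\bigl|\tfrac{Z_{1}}{Z_{2}}-1\bigr|=\tfrac{|Z_{1}-Z_{2}|}{Z_{2}}\le\tfrac{2\gamma}{1-\gamma}=2\alpha$; since $Y^{c}=N$ trivially satisfies $|Y^{c}-N|\le\gamma N$, on $F_{a,b,c,d}$ every one of the seven ratios lies within $2\alpha$ of $1$, so $F_{a,b,c,d}\subseteq E_{a,b,c,d}$.

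Then a second union bound together with Lemma \ref{lemT3} (applicable because $\gamma=\tfrac{1}{26}\ge\tfrac{64}{N}$) gives
\[
\mathbb{P}_{N}(E_{a,b,c,d}^{c})\le\mathbb{P}_{N}(F_{a,b,c,d}^{c})\le\sum_{Z}\mathbb{P}_{N}\bigl(|Z-N|\ge\gamma N\bigr)\le 7\,e^{4}N\,e^{-N/2163200},
\]
the sum ranging over the seven variables above. Summing over the (at most $N^{4}$) quadruples $a\neq b$, $c\neq d$ yields $\mathbb{P}_{N}(E^{c})<7e^{4}N^{5}e^{-N/2163200}$, whence the claim.

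There is no genuine obstacle here: the argument is a routine two-level union bound sitting on top of Lemma \ref{lemT3}. The only points requiring attention are the bookkeeping of constants---in particular pinning $\gamma=\tfrac{1}{26}$ so that $\tfrac{2\gamma}{1-\gamma}$ is exactly $2\alpha$ while simultaneously making the exponent in Lemma \ref{lemT3} equal to $\tfrac{N}{2163200}$---and the observation that $Y^{c}$ is non-random, which reduces the per-quadruple count from eight to seven and so produces the constant $7$ rather than $8$.
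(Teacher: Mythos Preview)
Your proof is correct and follows essentially the same route as the paper: choose $\gamma=\tfrac{1}{26}=\tfrac{\alpha}{1+\alpha}$, define the auxiliary event $F_{a,b,c,d}=\bigcap_{Z}\{|Z-N|\le\gamma N\}$, observe $F_{a,b,c,d}\subseteq E_{a,b,c,d}$, apply Lemma~\ref{lemT3} (valid once $N\ge 64/\gamma=1664$) to get $\mathbb{P}_{N}(F_{a,b,c,d}^{c})\le 7e^{4}Ne^{-N/2163200}$, and finish with the union bound over the at most $N^{4}$ quadruples. Your exposition is in fact slightly more explicit than the paper's in two places---you spell out the ratio estimate $|Z_{1}/Z_{2}-1|\le 2\gamma/(1-\gamma)=2\alpha$ that justifies the inclusion $F\subseteq E$, and you note that $Y^{c}=N$ is deterministic, which is why only seven (not eight) applications of Lemma~\ref{lemT3} are needed.
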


\begin{proof} Take $\gamma=\frac{\alpha}{1+\alpha}=\frac{1}{26}$
and let 
\[
F_{a,b,c,d}=\bigcap\limits _{Z\in\{Y_{a},Y_{a,b},Y^{c,d},Y^{c,d},Y_{a}^{c},Y_{a,b}^{c},Y_{a}^{c,d},Y_{a,b}^{c,d}\}}\left\{ \left\vert Z-N\right\vert \leq\gamma N\right\} .
\]
It is easy to see that $F_{a,b,c,d}\subseteq E_{a,b,c,d}.$ The probability
that $F_{a,b,c,d}$ holds can be bounded from Lemma \ref{lemT3} (as
soon as $N\geq\frac{64}{\gamma}=1664$), as 
\[
\mathbb{P}_{N}\left(F_{a,b,c,d}\right)\geq1-7e^{4}Ne^{-\frac{N}{32.(260)^{2}}}.
\]
The result follows since there are fewer than $N^{4}$ ways of choosing
$(a,b,c,d)$. \end{proof}

Computations using the bound of Lemma \ref{lembound} show that $N=52.10^{6}$
is sufficient for the existence of an appropriate Markov chain. Therefore,
one can take $\varepsilon=3.10^{-17}$ in the statement of Theorem
\ref{THM3LARGESPACE}. We conclude the proof of Proposition \ref{pro3}
by showing that event $E$ implies conditions $UI1$ and $UI2.$

\begin{lem} \label{lemT4} If event $E$ holds, then conditions $UI1,UI2$
are satisfied. \end{lem}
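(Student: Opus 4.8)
The plan is to turn every conditional probability in Definition \ref{defUI} into $\tfrac12$ times a ratio $Y_{\mathrm{num}}/Y_{\mathrm{den}}$ of the random variables introduced before Lemma \ref{lemT3}, where this ratio is \emph{exactly} one of the seven ratios appearing in the definition of $E_{a,b,c,d}$, and then to read off the bound $[\tfrac12-\alpha,\tfrac12+\alpha]$ from the inclusion $E\subseteq E_{a,b,c,d}$. Write $\hat{S}_a=\{b\in A:\ a\in S_b\}=\{b:\ X_{b,a}=1\}$, so that the constraint ``$i\in S_c$'' corresponds to a superscript $c$ in the $Y$'s (recall $Y^c=2\sum_i X_{c,i}=2|S_c|$) and the constraint ``$i\in\hat{S}_a$'', i.e. ``$a\in S_i$'', corresponds to a subscript $a$ (recall $Y_a=2\sum_i X_{i,a}=2|\hat S_a|$). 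By their very definitions one then has $|S_c|=\tfrac12 Y^c=N/2$, $|\hat S_a|=\tfrac12 Y_a$, $|\hat S_a\cap S_c|=\tfrac14 Y_a^{c}$, $|\hat S_a\cap\hat S_b\cap S_c|=\tfrac18 Y^{c}_{a,b}$, $|\hat S_a\cap\hat S_b\cap S_c\cap S_d|=\tfrac1{16}Y^{c,d}_{a,b}$, and so on.

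First I would record the conditional-independence structure of $u^l$. Since $(c_1,d_1,\dots,c_l,d_l)$ is drawn uniformly among nice sequences and niceness factorises over consecutive pairs ($d_i\in S_{c_i}$ and $c_{i+1}\in S_{d_i}$, i.e. $d_i\in S_{c_i}\cap\hat S_{c_{i+1}}$ for $i<l$, $d_l\in S_{c_l}$, with the mirror constraints on the $c_i$), conditionally on $\tilde c=c$ the signals $\tilde d_1,\dots,\tilde d_l$ are independent with $\tilde d_i$ uniform on $S_{c_i}\cap\hat S_{c_{i+1}}$ for $i<l$ and $\tilde d_l$ uniform on $S_{c_l}$; symmetrically, conditionally on $\tilde d=d$ the $\tilde c_i$ are independent with $\tilde c_1$ uniform on $\hat S_{d_1}$ and $\tilde c_i$ uniform on $S_{d_{i-1}}\cap\hat S_{d_i}$ for $i\ge2$. (The state $k$ is a function of $c_1$ and plays no role.)

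Next, for each of (\ref{eq61}), (\ref{eq62}), (\ref{eq63}) I would note that the numerator event $\{c'\smile_{r+1}\tilde d\}$ (resp. $\{\tilde c\smile_{r+1}d'\}$) differs from the conditioning event $\{c'\smile_{r}\tilde d\}$ by a single ``$\smile$''-constraint, and that constraint touches exactly one coordinate of the player whose signal is not conditioned on. Because, conditionally on the other player's signal, the coordinates are independent and each ``$\smile$''-constraint involves a single coordinate, the conditioning cuts the law of that coordinate down to the uniform law on an intersection of at most three of the sets $S_{(\cdot)},\hat S_{(\cdot)}$, and the extra constraint in the numerator intersects one further such set. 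Using the displayed identities, the conditional probability therefore equals $\tfrac12\cdot Y_{\mathrm{num}}/Y_{\mathrm{den}}$, with $Y_{\mathrm{num}}$ carrying exactly one index more than $Y_{\mathrm{den}}$ (a superscript when the added constraint is ``$\in S_{(\cdot)}$'', a subscript when it is ``$\in\hat S_{(\cdot)}$''), and one checks case by case that $(Y_{\mathrm{num}},Y_{\mathrm{den}})$ is precisely one of $(Y_{a,b},Y_a)$, $(Y^{c}_{a,b},Y^{c}_a)$, $(Y^{c,d}_{a},Y^{c}_a)$, $(Y^{c,d}_{a,b},Y^{c,d}_{a})$, $(Y^{c,d},Y^{c})$, $(Y^{c}_a,Y^{c})$, $(Y^{c,d}_a,Y^{c,d})$ --- invoking $Y^{c}=N$ whenever the denominator is a bare $|S_c|=N/2$. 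On $E$ each of these ratios lies in $[1-2\alpha,1+2\alpha]$, so the conditional probability lies in $[\tfrac12-\alpha,\tfrac12+\alpha]$, which is exactly the content of $UI1$ and $UI2$.

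I expect the real substance to be only the conditional-independence computation of the second paragraph and the dictionary of the third; everything else is bookkeeping. The points requiring care are: (i) whether two ``reported-versus-true'' coordinates coincide, which decides whether a pair of superscripts (or of subscripts) collapses, hence whether one lands on a $2$-, $3$- or $4$-index $Y$ --- but one always lands on one of the seven ratios above; (ii) the boundary indices, namely $m=1$ in (\ref{eq62})--(\ref{eq63}) where the level-$r$ conditioning is vacuous, and $m=l$ in (\ref{eq61})--(\ref{eq62}) where $\tilde d_l$ is uniform on the bare set $S_{c_l}$ of size $N/2=\tfrac12 Y^{c_l}$; and (iii) observing that on $E$ every denominator satisfies $Y_{\mathrm{den}}\ge(1-2\alpha)^2N>0$, so that all conditioning events in Definition \ref{defUI} (in particular the atoms $\{\tilde c=c\}$ and $\{\tilde d=d\}$) have positive probability and the conditional probabilities are well defined. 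None of these should present a genuine obstacle; the only discipline needed is the exhaustive matching of the cases to the seven ratios.
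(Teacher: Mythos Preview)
Your proposal is correct and takes essentially the same approach as the paper: compute each conditional probability in Definition~\ref{defUI} as $\tfrac12$ times one of the seven ratios controlled by $E$, then read off the bound. Your framing via the conditional independence of the coordinates $(\tilde d_i)_i$ given $\tilde c$ (resp.\ $(\tilde c_i)_i$ given $\tilde d$) is exactly what the paper invokes as ``the Markov property'' in each case; the paper works through the cases $r=2m-1$ (with $m>1$), $r=2m$, and $m=1,r=1$ of $UI2$ explicitly and leaves $UI1$ as ``similar,'' arriving at the same ratios you list.

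One small point: your blanket lower bound $Y_{\mathrm{den}}\ge(1-2\alpha)^2N$ is not literally available for \emph{every} denominator from the definition of $E$ alone (for instance $Y_a$ sits only as a denominator in $E$, never as a numerator over $Y^c=N$). What you actually need, and what is immediate, is that on $E$ every ratio in the list is finite, hence every denominator is nonzero; that suffices for the conditional probabilities to be well defined. This does not affect the argument.
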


\begin{proof} We fix law $\nu$ of the Markov chain on $A$ and assume
that it has been induced, as explained at the beginning of Section
\ref{existence}, by a transition matrix $P$ satisfying $E$. For
$l\geq1$, we forget about the state in $K$ and still let $u^{l}$
denote the marginal of $u^{l}$ over $C^{l}\times D^{l}$. If $c=(c_{1},...,c_{l})\in C^{l}$
and $d=(d_{1},...,d_{l})\in D^{l}$, we have $u^{l}(c,d)=\nu(c_{1},d_{1},...,c_{l},d_{l})$.

Let us begin with condition UI2, which we recall here: for all $1\leq p\leq l$,
for all ${d}\in D^{l}$, for all $d^{\prime}\in D^{p-1}$, for all
$m\in\{1,...,p-1\}$ such that ${d}_{m}\neq d_{m}^{\prime}$, for
$r=2m-1,2m$, 
\[
u^{l}\left(\tilde{c}\smile_{r+1}d^{\prime}|\tilde{d}={d},\tilde{c}\smile_{r}d^{\prime}\right)\in[1/2-\alpha,1/2+\alpha],(\ref{eq63})
\]
where $(\tilde{c},\tilde{d})$ is a random variable selected according
to $u^{l}$. The quantity $u^{l}\left(\tilde{c}\smile_{r+1}d^{\prime}|\tilde{d}={d},\tilde{c}\smile_{r}d^{\prime}\right)$
is thus the conditional probability of the event $(\tilde{c}$ and
$d'$ are nice at level $r+1$) given that they are nice at level
$r$ and that the signal received by player 2 is $d$. We divide the
problem into different cases.

\underline{Case $m>1$ and $r=2m-1$. } The events $\{\tilde{c}\smile_{2m}d'\}$
and $\{\tilde{c}\smile_{2m-1}d'\}$ can be decomposed as follows:
\begin{eqnarray*}
\{\tilde{c}\smile_{2m-1}d'\} & = & \{\tilde{c}\smile_{2m-2}d'\}\cap\{X_{d'_{m-1},\tilde{c}_{m}}=1\},\\
\{\tilde{c}\smile_{2m}d'\} & = & \{\tilde{c}\smile_{2m-2}d'\}\cap\{X_{d_{m-1}^{\prime},\tilde{c}_{m}}=1\}\cap\{X_{\tilde{c}_{m},d'_{m}}=1\}.
\end{eqnarray*}
So $u^{l}\left(\tilde{c}\smile_{2m}d^{\prime}|\tilde{d}={d},\tilde{c}\smile_{2m-1}d^{\prime}\right)=u^{l}\left(X_{\tilde{c}_{m},d'_{m}}=1|\tilde{d}={d},\tilde{c}\smile_{2m-1}d^{\prime}\right)$,
and the Markov property gives 
\[
\begin{split}u^{l}\left(\tilde{c}\smile_{2m}d^{\prime}|\tilde{d}={d},\tilde{c}\smile_{2m-1}d^{\prime}\right) & =u^{l}\left(X_{\tilde{c}_{m},d'_{m}}=1|X_{d'_{m-1},\tilde{c}_{m}}=1,X_{d_{m-1},\tilde{c}_{m}}=1,X_{\tilde{c}_{m},d_{m}}=1\right)\\
 & =\frac{\sum_{i\in U}X_{i,d_{m}^{\prime}}X_{d'_{m-1},i}X_{d_{m-1},i}X_{i,d_{m}}}{\sum_{i\in U}X_{d'_{m-1},i}X_{d_{m-1},i}X_{i,d_{m}}}.
\end{split}
\]
This is equal to $\frac{1}{2}\frac{Y_{d_{m},d_{m}^{\prime}}^{d_{m-1},d_{m-1}^{\prime}}}{Y_{d_{m}}^{d_{m-1},d_{m-1}^{\prime}}}$
if $d'_{m-1}\neq d_{m-1}$, and to $\frac{1}{2}\frac{Y_{d_{m},d_{m}^{\prime}}^{d_{m-1}}}{Y_{d_{m}}^{d_{m-1}}}$
if $d_{m-1}^{\prime}=d_{m-1}$. In both cases, $E$ implies (\ref{eq63}).

\underline{Case $r=2m$.}

We have $u^{l}\left(\tilde{c}\smile_{2m+1}d'|\tilde{d}=d,\tilde{c}\smile_{2m}d^{\prime}\right)=u^{l}\left(X_{d_{m}^{\prime},\tilde{c}_{m+1}}=1|\tilde{d}=d,\tilde{c}\smile_{2m}d^{\prime}\right)$,
and by the Markov property 
\begin{eqnarray*}
 &  & u^{l}\left(\tilde{c}\smile_{2m+1}d'|\tilde{d}=d,\tilde{c}\smile_{2m}d^{\prime}\right)\\
 & = & u^{l}\left(X_{d_{m}^{\prime},\tilde{c}_{m+1}}=1|X_{{d_{m}},\tilde{c}_{m+1}}=1,X_{\tilde{c}_{m+1},d_{m+1}}=1\right)\\
 & = & \frac{\sum_{i\in U}X_{d_{m}^{\prime},i}X_{d_{m},i}X_{i,d_{m+1}}}{\sum_{i\in U}X_{d_{m},i}X_{i,d_{m+1}}}=\frac{1}{2}\frac{Y_{d_{m+1}}^{d'_{m},d_{m}}}{Y_{d_{m+1}}^{d_{m}}}\;\in[1/2-\alpha,1/2+\alpha].
\end{eqnarray*}

\underline{Case $m=1$, $r=1$.} 
\begin{eqnarray*}
 &  & u^{l}\left(\tilde{c}\smile_{2}d'|\tilde{d}=d,\tilde{c}\smile_{1}d^{\prime}\right)\\
 & = & u^{l}\left(\tilde{c}\smile_{2}d'|\tilde{d}=d\right)=u^{l}\left(X_{\tilde{c}_{1},d_{1}^{\prime}}=1|X_{\tilde{c}_{1},d_{1}}=1\right),\\
 & = & \frac{\sum_{i\in U}X_{i,d'_{1}}X_{i,d_{1}}}{\sum_{i\in U}X_{i,d_{1}}}=\frac{1}{2}\frac{Y_{d_{1},d_{1}^{\prime}}}{Y_{d_{1}}}\;\in[1/2-\alpha,1/2+\alpha].
\end{eqnarray*}

The proof of condition $UI1$ being similar, it is omitted here.\end{proof}

\section{Proofs of Theorem \ref{THM4WEAKTOPOLOGY}}

\subsection{Theorem \ref{THM4WEAKTOPOLOGY}: weak topology is contained in value-based
topology}

Assume that $u_{n}\in\Delta\left(K\times C_{n}\times D_{n}\right)$
and $u\in\Delta\left(K\times C\times D\right)$ are information structures
such that $\dist\left(u_{n},u\right)\rightarrow0$. Then, for all
games $g$ in $\CG$, $|\val(\tilde{u_{n}},g)-\val(\tilde{u},g)|=|\val(u_{n},g)-\val(u,g)|\rightarrow0$.
By Theorem 12 in \citet{gossner_value_2001}, the functions $(\val(.,g))_{g}$
span the topology on $\Pi$. So $(\tilde{u}_{n})_{n}$ converges weakly
to $\tilde{u}$.

\subsection{Theorem \ref{THM4WEAKTOPOLOGY}: value-based topology is contained
in weak topology}

Assume that $u_{n}\in\Delta\left(K\times C_{n}\times D_{n}\right)$
and $u\in\Delta\left(K\times C\times D\right)$ are information structures
such that $\tilde{u}_{n}$ converges to $\tilde{u}$ in the weak topology.
We will prove that 
\begin{equation}
\limsup_{n\rightarrow\infty}\sup_{g\in\CG}\left(\val\left(u_{n},g\right)-\val\left(u,g\right)\right)\leq0.\label{eq:topologies 1}
\end{equation}
Because we can switch the roles of players, this will suffice to establish
that $\dist\left(u_{n},u\right)\rightarrow0$.

\emph{Partitions of unity}. We can assume without loss of generality
that $u$ is non-redundant and that all signals $c$ and $d$ have
positive probability. We can associate signals $c\in C\subseteq\N$
and $d\in D\subseteq\N$ with the corresponding hierarchies of beliefs
in $\Theta_{1}$ and $\Theta_{2}$. In other words, we identify $C\subseteq\Theta_{1}$
as the (countable) support of $\tilde{u}$ and $D\subseteq\Theta_{2}$
as the smallest countable set such that, for each $c\in C$, $\phi_{1}\left(K\times D|c\right)=1$
(i.e., $D$ is the union of countable supports of all beliefs of hierarchies
in $C$). For each $c\in C$ and $d\in D$, we denote the corresponding
hierarchies under $u$ as $\tilde{c}$ and $\tilde{d}$. Also, let
$C^{m}=C\cap\left\{ 0,...,m\right\} $ and $D^{m}=D\cap\left\{ 0,...,m\right\} $.

Because $\Theta_{2}$ is Polish, for each $m\in\N$ and each $d\in D^{m}$,
we can find continuous functions $\kappa_{d}^{m}:\Theta_{2}\rightarrow\left[0,1\right]$
for $m\in\N,d\in\{0,...,m\}$ such that $\kappa_{d}^{m}\left(\tilde{d}\right)=1\text{ for each }d\in D^{m},$
$\kappa_{d}^{m}\equiv0$ if $d\notin D,$ and $\sum_{d=0}^{m}\kappa_{d}^{m}\left(\theta_{2}\right)=1$
for each $\theta_{2}\in\Theta_{2}.$ In other words, for each $m$,
$\left\{ \kappa_{d}^{m}\right\} _{0\leq d\leq m}$ is a continuous
partition of unity on space $\Theta_{2}$ with the property that,
for each $d\in D^{m}$, $\kappa_{d}^{m}$ peaks at hierarchy $\tilde{d}$.
Notice that, for each $c\in C$ and each $d\in D^{p}$, we have $\E_{\phi_{1}(\tilde{c})}[\1_{\{k\}}(.)\kappa_{d}^{p}(.)]\geq u\left(k,d|c\right),$
and 
\[
\sum_{k\in K}\sum_{d=0}^{p}\left|\E_{\phi_{1}(\tilde{c})}[\1_{\{k\}}(.)\kappa_{d}^{p}(.)]-u\left(k,d|c\right)\right|=u(D\setminus D^{p}|c).
\]
Because all hierarchies $\tilde{c},c\in C$ are distinct, there exists
$p^{m}<\infty$ and $\varepsilon^{m}\in\left(0,\frac{1}{m}\right)$
for each $m$ such that, for any $c,c'\in C^{m}$ such that $c\neq c{}^{\prime}$,
\[
\sum_{k\in K}\sum_{d=0}^{p^{m}}\left|\E_{\phi_{1}(\tilde{c})}[\1_{\{k\}}\kappa_{d}^{p^{m}}]-\E_{\phi_{1}(\tilde{c}^{\prime})}[\1_{\{k\}}\kappa_{d}^{p^{m}}]\right|\geq2\varepsilon^{m}.
\]
\[
\text{ Let }\;h_{c}^{m}\left(\theta_{1}\right)=\sum_{k}\sum_{d=0}^{p^{m}}\left|\E_{\phi_{1}(\theta_{1})}[\1_{\{k\}}\kappa_{d}^{p^{m}}]-\E_{\phi_{1}(\tilde{c})}[\1_{\{k\}}\kappa_{d}^{p^{m}}]\right|.
\]
Then, $h_{c}^{m}$ is a continuous function such that $h_{c}^{m}\left(\tilde{c}\right)=0$
and such that, if $h_{c}^{m}\left(\theta_{1}\right)\leq\varepsilon^{m}$
for some $c\in C^{m}$, then $h_{c^{\prime}}^{m}\left(\theta_{1}\right)\geq\varepsilon^{m}$
for any $c'\in C^{m}$ such that $c^{\prime}\neq c$. For $0\leq c\leq m+1$,
define continuous functions 
\begin{align*}
\kappa_{c}^{m}\left(\theta_{1}\right) & =\max\left(1-\frac{1}{\varepsilon^{m}}h_{c}^{m}\left(\theta_{1}\right),0\right)\text{ for }c\in C_{m},\\
\kappa_{c}^{m} & \equiv0\text{ if }c\notin C,\text{ and }\kappa_{m+1}^{m}\left(\theta_{1}\right)=1-\sum_{c=0}^{m}\kappa_{c}^{m}\left(\theta_{1}\right).
\end{align*}
Then, for each $m$, $\sum_{c=0}^{m+1}\kappa_{c}^{m}\equiv1$, and
$\kappa_{c}^{m}\left(\theta_{1}\right)\in\left[0,1\right]$ for each
$c=0,...,m+1$, which implies that $\left\{ \kappa_{c}^{m}\right\} _{0\leq c\leq m+1}$
is a continuous partition of unity on space $\Theta_{1}$ such that,
for each $c\in C^{m}$, $\kappa_{c}^{m}(\tilde{c})=1$.

\emph{Conditional independence}. For each information structure $v\in\Delta\left(K\times C{}^{\prime}\times D{}^{\prime}\right)$,
define information structure $K^{m}v\in\Delta\left(K\times C{}^{\prime}\times\left\{ 0,...,m+1\right\} \times D{}^{\prime}\times\left\{ 0,...,p^{m}\right\} \right)$
so that $K^{m}v\left(k,c^{\prime},\hat{c},d^{\prime},\hat{d}\right)=v\left(k,c^{\prime},d^{\prime}\right)\kappa_{\hat{c}}^{m}\left(\tilde{c}^{\prime}\right)\kappa_{\hat{d}}^{p^{m}}\left(\tilde{d}^{\prime}\right).$
Let $\delta^{m}v=2\varepsilon^{m}+K^{m}v\left(\hat{c}=m+1\right).$
We are going to show that, under $K^{m}v$, signal $c^{\prime}$ is
$\delta^{m}v$-conditionally independent from $\left(k,\hat{d}\right)$
given $\hat{c}$. Notice first that, if $K^{m}v\left(k,d^{\prime},\hat{d},c{}^{\prime},\hat{c}\right)>0$
for some $\hat{c}\in C^{m}$, then $h_{\hat{c}}^{m}\left(\tilde{c}^{\prime}\right)\leq\varepsilon^{m}.$
It follows that 
\begin{align*}
 & \sum_{k}\sum_{\hat{d}=0}^{p^{m}}\left|K^{m}v\left(k,\hat{d}|\hat{c},c{}^{\prime}\right)-\E_{\phi_{1}\left(\tilde{\hat{c}}\right)}[\1_{\{k\}}\kappa_{\hat{d}}^{p^{*}}]\right|\\
 & =\sum_{k}\sum_{\hat{d}=0}^{p^{m}}\left|K^{m}v\left(k,\hat{d}|c{}^{\prime}\right)-\E_{\phi_{1}\left(\tilde{\hat{c}}\right)}[\1_{\{k\}}[\kappa_{\hat{d}}^{p^{m}}]\right|\\
 & =\sum_{k}\sum_{\hat{d}=0}^{p^{m}}\left|\E_{\phi_{1}(\tilde{c}^{\prime})}[\1_{\{k\}}\kappa_{\hat{d}}^{p^{m}}]-\E_{\phi_{1}\left(\tilde{\hat{c}}\right)}[\1_{\{k\}}\kappa_{\hat{d}}^{p^{m}}]\right|=h_{\hat{c}}^{m}\left(\tilde{c}^{\prime}\right)\leq\varepsilon^{m}.
\end{align*}
On the other hand, 
\begin{align*}
 & \sum_{k}\sum_{\hat{d}=0}^{p^{m}}\left|K^{m}v\left(k,\hat{d}|\hat{c}\right)-\E_{\phi_{1}\left(\tilde{\hat{c}}\right)}[\1_{\{k\}}\kappa_{\hat{d}}^{p^{*}}]\right|\\
 & =\sum_{k}\sum_{\hat{d}=0}^{p^{m}}\left|\frac{1}{K^{m}v(\hat{c})}\sum_{c'\in C'}K^{m}v(c',\hat{c})K^{m}v\left(k,\hat{d}|\hat{c},c{}^{\prime}\right)-\E_{\phi_{1}\left(\tilde{\hat{c}}\right)}[\1_{\{k\}}[\kappa_{\hat{d}}^{p^{m}}]\right|\\
 & \leq\sum_{c'\in C'}\frac{K^{m}v(c',\hat{c})}{K^{m}v(\hat{c})}\sum_{k}\sum_{\hat{d}=0}^{p^{m}}\left|K^{m}v\left(k,\hat{d}|\hat{c},c{}^{\prime}\right)-\E_{\phi_{1}\left(\tilde{\hat{c}}\right)}[\1_{\{k\}}[\kappa_{\hat{d}}^{p^{m}}]\right|=h_{\hat{c}}^{m}\left(\tilde{c}^{\prime}\right)\leq\varepsilon^{m}.
\end{align*}
\begin{align*}
\text{ Hence, }\; & \sum_{\hat{c}=1}^{m+1}\sum_{c^{\prime}}K^{m}v\left(\hat{c},c{}^{\prime}\right)\sum_{k,\hat{d}}\left|K^{m}v\left(k,\hat{d}|\hat{c},c{}^{\prime}\right)-K^{m}v\left(k,\hat{d}|\hat{c}\right)\right|\\
 & \leq2\varepsilon^{m}\sum_{\hat{c}=1}^{m}K^{m}v\left(\hat{c}\right)+K^{m}v\left(\hat{c}=m+1\right)\leq\delta^{m}v.
\end{align*}
Define information structure $L^{m}v=\marg_{K\times\left\{ 0,...,p^{m}\right\} \times\left\{ 0,...,m+1\right\} }K^{m}v.$
Then, because $d\left(K^{m}v,v\right)=0$, the proof of Proposition
\ref{prop: joint info} implies that

\noindent $\sup_{g\in\CG}\left(\val\left(v,g\right)-\val\left(L^{m}v,g\right)\right)$
$\leq\delta^{m}v.$

\emph{Proof of claim (\ref{eq:topologies 1})}. Observe that, for
each $k,\hat{c},\hat{d}$, 
\[
\left(L^{m}u_{n}\right)\left(k,\hat{c},\hat{d}\right)=\E_{\tilde{u}_{n}}\left(\kappa_{\hat{c}}^{m}\left(\theta_{1}\right)\E_{\phi_{1}\left(\theta_{1}\right)}[\1_{\{k\}}\kappa_{\hat{d}}^{p^{m}}]\right).
\]
Because all the functions in brackets above are continuous, weak convergence
$\tilde{u}_{n}\rightarrow\tilde{u}$ implies that $\left(L^{m}u_{n}\right)\left(k,\hat{c},\hat{d}\right)\rightarrow\left(L^{m}u\right)\left(k,\hat{c},\hat{d}\right)$
for each $k,\hat{c},\hat{d}$. Because the information structures
$L^{m}u_{n}$ and $L^{m}u$ are described on the same and finite spaces
of signals, the pointwise convergence implies $\dist\left(L^{m}u_{n},L^{m}u\right)\leq\left\Vert L^{m}u_{n}-L^{m}u\right\Vert \rightarrow0\text{ as \ensuremath{n\rightarrow\infty}.}$
Moreover, if $\hat{c}\in C^{m}$ and $\hat{d}\in D^{p^{m}}$, the
definitions imply that $\left(L^{m}u\right)\left(k,\hat{c},\hat{d}\right)\geq u\left(k,\hat{c},\hat{d}\right)$.
Therefore, 
\[
\dist\left(L^{m}u,u\right)\leq\|L^{m}u-u\|\leq2\left(u\left(C\backslash C^{m}\right)+u\left(D\backslash D^{p^{m}}\right)\right)\underset{n\rightarrow\infty}{\longrightarrow}0.
\]
It follows that $\delta^{m}u_{n}=\left(K^{m}u_{n}\right)\left(\hat{c}=m+1\right)\underset{n\rightarrow\infty}{\longrightarrow}\left(L^{m}u\right)\left(\hat{c}=m+1\right)$,
and 
\[
\left(L^{m}u\right)\left(\hat{c}=m+1\right)=1-(L^{m}u)(C^{m}\times D^{p^{m}})\leq1-u(C^{m}\times D^{p^{m}})\leq u\left(C\backslash C^{m}\right)+u\left(D\backslash D^{p^{m}}\right).
\]
Together, we obtain for each $m,n$ 
\[
\begin{split}\sup_{g\in\CG}\left(\val\left(u_{n},g\right)-\val\left(u,g\right)\right) & \leq\sup_{g\in\CG}\left(\val\left(u_{n},g\right)-\val\left(L^{m}u_{n},g\right)\right)\\
 & +\sup_{g\in\CG}\left(\val\left(L^{m}u_{n},g\right)-\val\left(L^{m}u\right)\right)+\sup_{g\in\CG}\left(\val\left(L^{m}u\right)-\val\left(u,g\right)\right)\\
\leq & \delta^{m}u_{n}+\left\Vert L^{m}u_{n}-L^{m}u\right\Vert +\left(u\left(C\backslash C^{m}\right)+u\left(D\backslash D^{p^{m}}\right)\right).
\end{split}
\]
\[
\text{Therefore,}\;\lim\sup_{n\rightarrow\infty}\sup_{g\in\CG}\left(\val\left(v,g\right)-\val\left(L^{m}v,g\right)\right)\leq3\left(u\left(C\backslash C^{m}\right)+u\left(D\backslash D^{p^{m}}\right)\right).
\]
When $m\rightarrow\infty$, the right-hand side converges to $0$
as well.

\section{Proof of Proposition \ref{PROP: APPROXIMATE KNOWLEDGE}}

Let $u^{\prime}\in\Delta\left(K\times(K_{C}\times C)\times(K_{D}\times D)\right)$
be defined so that $u=\marg_{K\times C\times D}u{}^{\prime}$ and
$u^{\prime}(\{k_{C}=\kappa\left(c\right),k_{D}=\kappa\left(d\right)\})=1$.
Because $u^{\prime}$ does not have any new information, we verify
(for instance, using Proposition \ref{prop: joint info}) that $\dist\left(u,u^{\prime}\right)=0$.
We are going to show that $C$ is $16\varepsilon$-conditionally independent
from $K\times K_{D}$ given $K_{C}$. Notice that, because $u$ exhibits
$\varepsilon$-knowledge, 
\begin{align*}
u^{\prime}\left\{ k_{C}\neq k\text{ or }k_{D}\neq k\right\}  & \leq u^{\prime}\left\{ k_{C}\neq k\right\} +u^{\prime}\left\{ k_{D}\neq k\right\} \\
 & \leq2\varepsilon+2\varepsilon=4\varepsilon.
\end{align*}
Therefore, 
\begin{align*}
 & \sum_{k,k_{C},k_{D}}u{}^{\prime}\left(k_{C}\right)\sum_{c}\left|u^{\prime}\left(k,k_{D},c|k_{C}\right)-u{}^{\prime}\left(k,k_{D}|k_{C}\right)u{}^{\prime}\left(c|k_{C}\right)\right|\\
= & \sum_{k,k_{C},k_{D}}u{}^{\prime}\left(k,k_{C},k_{D}\right)\sum_{c}\left|u^{\prime}\left(c|k,k_{C},k_{D}\right)-\sum_{k^{\prime},k_{D}{}^{\prime}}u{}^{\prime}\left(c|k^{\prime},k_{C},k{}_{D}^{\prime}\right)u{}^{\prime}\left(k^{\prime},k{}_{D}^{\prime}|k_{C}\right)\right|\\
\leq & \sum_{k}u{}^{\prime}\left(k,k,k\right)\sum_{c}\left|u^{\prime}\left(c|k,k,k\right)-\sum_{k^{\prime},k_{D}{}^{\prime}}u{}^{\prime}\left(c|k^{\prime},k_{C}=k,k{}_{D}^{\prime}\right)u{}^{\prime}\left(k^{\prime},k{}_{D}^{\prime}|k_{C}=k\right)\right|\\
 & +2u^{\prime}\left\{ k_{C}\neq k\text{ or }k_{D}\neq k\right\} \\
\leq & \sum_{k}u{}^{\prime}\left(k,k,k\right)\sum_{c}\left|u^{\prime}\left(c|k,k,k\right)-u{}^{\prime}\left(c|k,k,k\right)\frac{u^{\prime}\left(k,k,k\right)}{u^{\prime}\left(k_{C}=k\right)}\right|\\
 & +\sum_{k}u{}^{\prime}\left(k,k,k\right)\sum_{c}\sum_{k^{\prime}\neq k\text{, or }k_{D}^{\prime}\neq k}\left|u^{\prime}\left(c|k^{\prime},k_{C}=k,k_{D}^{\prime}\right)u{}^{\prime}\left(k^{\prime},k_{D}^{\prime}|k_{C}=k\right)\right|\\
 & +2u^{\prime}\left\{ k_{C}\neq k\text{ or }k_{D}\neq k\right\} \\
\leq & \sum_{k}u{}^{\prime}\left(k,k,k\right)\left|1-\frac{u^{\prime}\left(k,k,k\right)}{u^{\prime}\left(k_{C}=k\right)}\right|+3u^{\prime}\left\{ k_{C}\neq k\text{ or }k_{D}\neq k\right\} \\
\leq & \sum_{k}\left|u^{\prime}\left(k_{C}=k\right)-u^{\prime}\left(k,k,k\right)\right|+3u^{\prime}\left\{ k_{C}\neq k\text{ or }k_{D}\neq k\right\} \\
\leq & 4u^{\prime}\left\{ k_{C}\neq k\text{ or }k_{D}\neq k\right\} \leq16\varepsilon.
\end{align*}

Because an analogous result applies to the information of the other
player, Proposition \ref{prop: joint info} shows that 
\[
\dist\left(u^{\prime},v^{\prime}\right)\leq16\varepsilon,
\]
where $v^{\prime}=\marg_{K\times K_{C}\times K_{D}}u^{\prime}$. Because
\begin{align*}
\dist\left(v,v^{\prime}\right)\leq & \sum_{k,k_{C},k_{D}}\left|v\left(k,k_{C},k_{D}\right)-v{}^{\prime}\left(k,k_{C},k_{D}\right)\right|\\
\leq & 2v^{\prime}\left\{ k_{C}\neq k\text{ or }k_{D}\neq k\right\} =2u{}^{\prime}\left\{ k_{C}\neq k\text{ or }k_{D}\neq k\right\} \leq4\varepsilon,
\end{align*}
the triangle inequality implies that 
\[
\dist\left(u,v\right)\leq\dist\left(u,u^{\prime}\right)+\dist\left(u^{\prime},v^{\prime}\right)+\dist\left(v,v^{\prime}\right)\leq20\varepsilon.
\]

\section{Proof of Theorem \ref{THM:NZS DISTANCE}}

Suppose that $u$ and $v$ are two simple and non-redundant information
structures with finite support. Let $\tilde{u}$ and $\tilde{v}$ be the associated probability
distributions over player 1's belief hierarchies. It is easy to show
that, if two non-redundant information structures induce the same
distributions over hierarchies of beliefs $\tilde{u}=\tilde{v}$,
then they are equivalent from any strategic point of view, and, in
particular, they induce the same set of ex-ante BNE payoffs. Hence,
we assume that $\tilde{u}\neq\tilde{v}$.

Let $H_{u}=\text{supp}\tilde{u}$ and $H_{v}=\text{supp}\tilde{v}$.
Lemma III.2.7 in \citet{mertens_sorin_zamir_2015} implies that the
sets $H_{u}$ and $H_{v}$ are disjoint.

By adapting the construction made in Lemma 4 of \cite{dekel_topologies_2006} (see also Lemma 11 in \cite{ely_critical_2011}), there exists a non-zero sum payoff function
$g^{\left(0\right)}:K\times\left(I\times I_{0}\right)\times J\rightarrow\left[-1,1\right]^{2}$
such that $I_{0}=H_{u}\cup H_{v}$ and such that the set of player
1's rationalizable actions of type $c\in C$ with hierarchy $h\left(c\right)$
is contained in set $I\times\left\{ h\left(c\right)\right\} $. In
particular, in a BNE, each type of player 1 will report its hierarchy.
Construct game $g^{\left(1\right)}:K\times\left(I\times I_{0}\right)\times\left(J\times\left\{ u,v\right\} \right)\rightarrow\left[-1,1\right]^{2}$
with payoffs 
\begin{align*}
g_{1}^{\left(1\right)}\left(k,i,i_{0},j,j_{0}\right) & =g_{1}^{\left(0\right)}\left(k,i,i_{0},j\right),\\
g_{2}^{\left(1\right)}\left(k,i,i_{0},j,j_{0}\right) & =\frac{1}{2}g_{2}^{\left(0\right)}\left(k,i,i_{0},j\right)+\begin{cases}
\frac{1}{2}, & \text{if }j_{0}=u\text{ and }i_{0}\in H_{u}\\
-\frac{1}{2}, & \text{if }j_{0}=u\text{ and }i_{0}\notin H_{u},\\
0, & \text{if }j_{0}=v.
\end{cases}
\end{align*}
Then, the rationalizable actions of player 2 in game $g^{\left(1\right)}$
are contained in $J\times\left\{ u\right\} $ for any type in type
space $u$ and in $J\times\left\{ v\right\} $ for any type in type
space $v$.

Finally, for any $\varepsilon\in\left(0,1\right),$construct game
$g^{\varepsilon}:K\times\left(I\times I_{0}\right)\times\left(J\times\left\{ u,v\right\} \right)\rightarrow\left[-1,1\right]^{2}$
with payoffs 
\begin{align*}
g_{1}^{\varepsilon}\left(k,i,i_{0},j,j_{0}\right) & =\varepsilon g_{1}^{\left(0\right)}\left(k,i,i_{0},j,j_{0}\right)+\left(1-\varepsilon\right)\begin{cases}
1, & \text{if }j_{0}=u,\\
-1, & \text{if }j_{0}=v,
\end{cases},\\
g_{2}^{\varepsilon} & \equiv g_{2}^{\left(1\right)}.
\end{align*}
Then, the BNE payoff of player belongs to $\left[1-\varepsilon,1\right]$
on structure $u$ and $\left[-1,-1+\varepsilon\right]$ on structure
$v$. It follows that the payoff distance between the two type spaces
is at least $2-2\varepsilon$, for arbitrary $\varepsilon>0$.

Next, suppose that $u$ and $v$ are two non-redundant information
structures with decomposition $u=\sum_{\alpha}p_{\alpha}u_{\alpha}$
and $v=\sum_{\alpha}q_{\alpha}v_{\alpha}$ such that $\tilde{u}_{\alpha}=\tilde{v}_{\alpha}$
for each $\alpha$. Let $g$ be a non-zero sum payoff function. Let
$\sigma_{\alpha}$ be an equilibrium on $u_{\alpha}$ with payoffs
$g_{\alpha}=g\left(\sigma_{a}\right)\in\R^{2}$. Let $s_{\alpha}$
be the associated equilibrium on $v_{\alpha}$ (that can be obtained
by mapping the hierarchies of beliefs through an appropriate bijection)
with the same payoffs $g_{\alpha}$. The distance between payoffs
is bounded by 
\begin{align*}
\left\Vert \sum p_{\alpha}g\left(\sigma_{\alpha}\right)-q_{\alpha}g\left(s_{\alpha}\right)\right\Vert _{\max} & =\left\Vert \sum\left(p_{\alpha}-q_{\alpha}\right)g_{a}\right\Vert _{\max}\\
 & \leq\sum\left|p_{\alpha}-q_{\alpha}\right|\left\Vert g_{\alpha}\right\Vert _{\max}\leq\sum\left|p_{\alpha}-q_{\alpha}\right|,
\end{align*}
where the last inequality comes from the fact that payoffs are bounded.

On the other hand, let $A=\left\{ \alpha:p_{\alpha}>q_{\alpha}\right\} $.
Using a similar construction as above, we can construct game $g^{\left(1\right)}$
such that player 2's actions have form $J\times\left\{ u_{A},u_{B}\right\} $,
and his rationalizable actions are contained in set $J\times\left\{ u_{A}\right\} $
for any type in type space $u_{\alpha},\alpha\in A$ and in $J\times\left\{ u_{B}\right\} $
otherwise. Further, we construct game $g^{\left(\varepsilon\right)}$
as above. Then, any player $1$'s equilibrium payoff $g_{1,\alpha}^{(\varepsilon)}$
is at least $1-\varepsilon$ for any type in type space $u_{\alpha},\alpha\in A$,
and $-1+\varepsilon$ for any type in type space $u_{\alpha}$ for
$\alpha\notin A$. Denoting player 2's equilibrium payoff as $g_{2,\varepsilon}^{\varepsilon}$,
the payoff distance in game $g^{\varepsilon}$ is at least 
\begin{align*}
 & \max\left(\left|\sum_{\alpha}\left(p_{\alpha}-q_{\alpha}\right)g_{1,\alpha}\right|,\left|\sum_{\alpha}\left(p_{\alpha}-q_{\alpha}\right)g_{2,\alpha}\right|\right)\geq\left|\sum_{\alpha}\left(p_{\alpha}-q_{\alpha}\right)g_{1,\alpha}\right|\\
\geq & \left[\sum_{\alpha\in A}\left(p_{\alpha}-q_{\alpha}\right)-\sum_{\alpha\notin A}\left(p_{\alpha}-q_{\alpha}\right)\right]\left(1-\varepsilon\right)\geq\left(1-\varepsilon\right)\sum\left|p_{\alpha}-q_{\alpha}\right|.
\end{align*}
Because $\varepsilon>0$ is arbitrary, the two above inequalities
show that the payoff distance is equal to $\sum\left|p_{\alpha}-q_{\alpha}\right|$.

\section{Examples and Counterexamples\label{sec:Examples-and-Counterexamples}}

This section contains various examples and counterexamples.

\subsection{Computing the distance}

We begin with two examples of computations of the value-based distance
based on the characterization from Theorem \ref{thm1}.

We assume that there are two states, $K=\{\textcolor{blue}{Blue},\textcolor{red}{Red}\}$.
We consider information structures which are uniform over a finite
subset of $K\times\N\times\N$.

%
%
%
%
%
%

The next picture illustrates information structure $u_{1}$: 
\begin{center}
\begin{picture}(50,100) \textcolor{blue}{{} \put(20,60){\line(1,0){90}}}
\textcolor{blue}{{} \qbezier(20,30)(55,15)(100,00)} \textcolor{red}{{}
\qbezier(15,30) (15,30)(100,60) } \textcolor{red}{{} \put(10,00){\line(1,0){90}}}
\put(10, 30){\oval(15, 80)} \put(100, 30){\oval(15, 80)} \put(10,60){\circle*{5}}
\put(10,30){\circle*{5}} \put(10,00){\circle*{5}} \put(100,60){\circle*{5}}
\put(100,00){\circle*{5}} \put(0,80){$P1$} \put(90,80){$P2$}
\put(-10,60){$0$} \put(-10,30){$1$} \put(-10,00){$2$}
\put(110,60){$0$} \put(110,00){$1$} \put(50,-20){$u_{1}$}
\end{picture} 
\par\end{center}

\bigskip{}
 \bigskip{}

Each line corresponds to a pair of signals from the support of the
information structure; each such a pair has equal probability. The
color of the line represents the state. For example, conditional on
both players receiving signal 0, the state is \textcolor{blue}{Blue}.
Note that signals 0 and 1 convey different information for player
2. After receiving signal 0, player 2 knows that if the state is \textcolor{blue}{Blue},
then player 1 knows it, and if the state is \textcolor{red}{Red},
player 1's belief on the state is uniform. Whereas after receiving
signal 1, player 2 knows that : if the state is \textcolor{blue}{Blue},
then player 1's belief on the state is uniform, and if the state is
\textcolor{red}{Red}, player 1 knows it.

The next figure presents information structures $u_{2}$ and $u_{2}^{\prime}$: 
\begin{center}
\begin{picture}(50,90) \textcolor{blue}{\put(15,60){\line(1,0){90}}}
\textcolor{red}{{} \qbezier(10,30) (10,30)(100,60) } \put(10, 45){\oval(15,
50)} \put(100, 45){\oval(15, 50)} \put(10,60){\circle*{5}}
\put(10,30){\circle*{5}} \put(100,60){\circle*{5}} \put(100,30){\circle*{5}}
\put(0,80){$P1$} \put(90,80){$P2$} \put(-10,60){$0$} \put(-10,30){$1$}
\put(110,60){$0$} \put(110,30){$1$} \put(50,00){$u_{2}$}
\end{picture} \hspace{5cm} \begin{picture}(50,100) \textcolor{blue}{\put(10,50){\line(1,0){90}}}
\textcolor{red}{{} \put(10,50){\line(3,-1){90}}} \put(10,
35){\oval(15, 50)} \put(100, 35){\oval(15, 50)} \put(50,00){$u'_{2}$}
\put(10,50){\circle*{5}} \put(10,20){\circle*{5}} \put(100,50){\circle*{5}}
\put(100,20){\circle*{5}} \put(0,70){$P1$} \put(90,70){$P2$}
\put(-10,50){$0$} \put(-10,20){$1$} \put(110,50){$0$}
\put(110,20){$1$} \end{picture} 
\par\end{center}

\bigskip{}

Under information structure $u_{2}$, player 1 knows the state, but
player 2 does not. Under $u_{2}^{\prime}$, player 2 knows the state,
but player 1 does not. We have the following observation: 
\begin{prop}
$\dist\left(u_{1},u_{2}\right)=\frac{1}{2}$. $\dist\left(u_{1},u_{2}^{\prime}\right)=1.$ 
\end{prop}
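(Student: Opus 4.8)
The plan is to combine the garbling characterization of Theorem~\ref{thm1} with the comparison Corollary~\ref{cor1}. Reading off the figures, $u_1$ is the uniform law on $\{(\text{Blue},0,0),(\text{Blue},1,1),(\text{Red},1,0),(\text{Red},2,1)\}$, $u_2$ is uniform on $\{(\text{Blue},0,0),(\text{Red},1,0)\}$ (player~$1$ knows the state, player~$2$ is uninformed), and $u_2'$ is uniform on $\{(\text{Blue},0,0),(\text{Red},0,1)\}$ (player~$2$ knows the state, player~$1$ is uninformed). First I would record the orderings $u_2\succeq u_1$ and $u_1\succeq u_2'$. For $u_2\succeq u_1$, take $q_2$ the garbling collapsing every player-$2$ signal to $0$ and $q_1$ the garbling spreading player~$1$'s signal $0$ (resp.\ $1$) uniformly on $\{0,1\}$ (resp.\ on $\{1,2\}$); a one-line check gives $q_1.u_2=u_1.q_2$, so Corollary~\ref{cor1} applies. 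Symmetrically, collapsing player~$1$'s signal in $u_1$ and fully randomizing player~$2$'s state-revealing signal in $u_2'$ onto a uniform signal on $\{0,1\}$ shows $u_1\succeq u_2'$ (both sides equal the uniform law on $\{(\text{Blue},0,0),(\text{Blue},0,1),(\text{Red},0,0),(\text{Red},0,1)\}$). Consequently Theorem~\ref{thm1} reduces the two claims to $\dist(u_1,u_2)=\min_{q_1,q_2\in\CQ}\|q_1.u_1-u_2.q_2\|$ and $\dist(u_1,u_2')=\min_{q_1,q_2\in\CQ}\|q_1.u_2'-u_1.q_2\|$, since in each case the reverse supremum vanishes.

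For $\dist(u_1,u_2)$ the upper bound $\le\tfrac12$ is witnessed by the pair $(q_1,q_2)$ where $q_1$ merges player~$1$'s signals $0$ and $1$ (and relabels $2$ as $1$) and $q_2$ spreads player~$2$'s constant signal uniformly on $\{0,1\}$: then $q_1.u_1$ and $u_2.q_2$ agree except for mass $\tfrac14$ placed at $(\text{Red},0,0)$ versus $(\text{Red},1,0)$, so $\|q_1.u_1-u_2.q_2\|=\tfrac12$. The matching lower bound is already delivered by single-agent problems: in the ``guess the state'' problem $g_0(k,i)=2\,\mathbf{1}_{\{i=k\}}-1$ on action set $I=K$, player~$1$ obtains value $1$ under $u_2$ and value $\tfrac14\cdot1+\tfrac12\cdot0+\tfrac14\cdot1=\tfrac12$ under $u_1$ (her middle signal carries no information), so $\dist(u_1,u_2)\ge\dist_1(u_1,u_2)\ge\tfrac12$, giving $\dist(u_1,u_2)=\tfrac12$.

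For $\dist(u_1,u_2')$ the upper bound $\le1$ is witnessed by $q_1$ sending player~$1$'s signal to the constant $1$ and $q_2$ transposing player~$2$'s two signals $0\leftrightarrow1$: then $q_1.u_2'$ is concentrated on $\{(\text{Blue},1,0),(\text{Red},1,1)\}$ and $u_1.q_2$ on $\{(\text{Blue},0,1),(\text{Blue},1,0),(\text{Red},1,1),(\text{Red},2,0)\}$, so $\|q_1.u_2'-u_1.q_2\|=1$. The crux is the lower bound $\dist(u_1,u_2')\ge1$, which I would obtain from the garbling characterization rather than from an explicit zero-sum game (the obvious guessing games stall at $\tfrac34$, precisely because in $u_1$ player~$2$ has no information about the state). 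For arbitrary $q_1,q_2\in\CQ$ write $P=q_1.u_2'$ and $Q=u_1.q_2$; since player~$1$'s signal in $u_2'$ is constant, $P$ is supported on $\{(\text{Blue},c,0),(\text{Red},c,1)\}$, while $Q$ is supported on $(k,c)\in\{(\text{Blue},0),(\text{Blue},1),(\text{Red},1),(\text{Red},2)\}$. Intersecting supports, the overlap $\sum_{k,c,d}\min\big(P(k,c,d),Q(k,c,d)\big)$ is a sum of four terms, bounded respectively by $\tfrac14 q_2(0\mid0),\ \tfrac14 q_2(0\mid1),\ \tfrac14 q_2(1\mid0),\ \tfrac14 q_2(1\mid1)$; grouping the first and third (conditioned on $d'=0$) and the second and fourth (conditioned on $d'=1$), and using that $q_2(\cdot\mid0)$ and $q_2(\cdot\mid1)$ are probability vectors, this overlap is at most $\tfrac14(1+1)=\tfrac12$. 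Since $P$ and $Q$ are probability measures, $\|P-Q\|=2-2\sum\min\ge1$, so $\dist(u_1,u_2')\ge1$, and with the upper bound $\dist(u_1,u_2')=1$.

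The only genuinely delicate point is this last overlap estimate; everything else is routine arithmetic with finitely supported vectors, and the two orderings above make sure the value-based distance coincides with a single one-sided garbling minimum in each case.
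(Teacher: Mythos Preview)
Your proof is correct. The main divergence from the paper is in the lower bounds. For $\dist(u_1,u_2)\ge\tfrac12$ you use a single-agent guessing problem, whereas the paper exhibits a $2\times 2$ zero-sum game; both work. For $\dist(u_1,u_2')\ge 1$ you argue directly from the Theorem~\ref{thm1} minimization via the overlap bound $\sum\min(P,Q)\le\tfrac12$, which is a clean and valid route. The paper instead produces an explicit payoff function, namely
\[
g(\textcolor{blue}{\text{Blue}},\cdot,\cdot)=\begin{pmatrix}-1&1\\-1&1\end{pmatrix},\qquad
g(\textcolor{red}{\text{Red}},\cdot,\cdot)=\begin{pmatrix}1&-1\\1&-1\end{pmatrix},
\]
and checks $\val(u_1,g)-\val(u_2',g)=0-(-1)=1$. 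Note that this game is precisely a ``guessing game'' for player~$2$ (player~$1$'s action is irrelevant), so your parenthetical remark that obvious guessing games stall at $\tfrac34$ is not quite right: a pure player-$2$ guessing game already achieves the full gap, because in $u_1$ player~$2$ has no information about the state while in $u_2'$ she knows it perfectly. This inaccuracy is only motivational and does not affect your argument, which stands on the overlap estimate. Your approach has the advantage of making the tightness of Theorem~\ref{thm1} visible without searching for a witness game; the paper's approach has the advantage of identifying the extremal game explicitly.
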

It follows that $u_{1}$ is closer to $u_{2}$ than to $u_{2}^{\prime}$. 
\begin{proof}
We have $u_{2}\succeq u_{1}$, hence 
\[
\dist(u_{2},u_{1})=\min_{q_{1}\in\CQ,q_{2}\in\CQ}\|q_{1}.u_{1}-u_{2}.q_{2}\|.
\]
Define $q_{1}$ in $\CQ$ such that $q_{1}(0)=\delta_{0}$, $q_{1}(1)=q_{1}(2)=\delta_{1}$,
and $q_{2}$ in $\CQ$ satisfying $q_{2}(0)=1/2\,\delta_{0}+1/2\,\delta_{1}$.
The information structures $q_{1}.u_{1}$ and $u_{2}.q_{2}$ can be
represented as follows:

\begin{picture}(100,100) \textcolor{blue}{\put(20,50){\line(1,0){90}}}
\textcolor{red}{{} \put(20,20){\line(1,0){90}}} \textcolor{blue}{\qbezier(10,20)(50,10)(100,20)}
\textcolor{red}{{} \put(10,20){\line(3,1){90}}} \put(10, 35){\oval(15,
50)} \put(100, 35){\oval(15, 50)} \put(50,00){$q_{1}.u_{1}$}
\put(10,50){\circle*{5}} \put(10,20){\circle*{5}} \put(100,50){\circle*{5}}
\put(100,20){\circle*{5}} \put(0,70){$P1$} \put(90,70){$P2$}
\put(-10,50){$0$} \put(-10,20){$1$} \put(110,50){$0$}
\put(110,20){$1$} \textcolor{blue}{\put(270,50){\line(1,0){90}}}
\textcolor{red}{{} \put(270,20){\line(1,0){90}}} \textcolor{blue}{\qbezier(260,50)(260,50)(350,20)}
\textcolor{red}{{} \put(260,20){\line(3,1){90}}} \put(260,
35){\oval(15, 50)} \put(350, 35){\oval(15, 50)} \put(300,00){$u_{2}.q_{2}$}
\put(260,50){\circle*{5}} \put(260,20){\circle*{5}} \put(350,50){\circle*{5}}
\put(350,20){\circle*{5}} \put(250,70){$P1$} \put(340,70){$P2$}
\put(240,50){$0$} \put(240,20){$1$} \put(360,50){$0$}
\put(360,20){$1$} \end{picture}

\vspace{0.5cm}

\noindent Notice that $u_{2}.q_{2}\sim u_{2}$, whereas $q_{1}.u_{1}\preceq u_{1}.$
$\|q_{1}.u_{1}-u_{2}.q_{2}\|=1/2$, hence $d(u_{2},u_{1})\leq1/2$.

Consider now the payoff structure $g$ given by $\left\{ \textcolor{blue}{ \left(\begin{array}{cc}
0 & 1\\
0 & -1
\end{array} \right) }\;,\;\textcolor{red}{\left(\begin{array}{cc}
-1 & 0\\
1 & 0
\end{array} \right)}\right\} $, where the color corresponds to the payoffs in a given state. In
the game $(u_{2},g)$, it is optimal for player 1 to play Top if $0$
and Bottom if $1$, and $\val(u_{1},g)=1/2$. In the game $(u_{1},g)$
it is optimal for player 2 to play Left if $0$ and Right if $1$,
and $\val(u_{1},g)=0$. Consequently, $\dist(u_{2},u_{1})\geq1/2$,
and we obtain $\dist(u_{2},u_{1})=1/2.$

Notice that $u'_{2}\sim u''_{2}$, with $u''_{2}$ obtained from $u_{2}^{\prime}$
by exchanging the signals 0 and 1 for each player, and $\|u_{1}-u''_{2}\|=1$.
Considering the payoff structure given by $\left\{ \textcolor{blue}{ \left(\begin{array}{cc}
-1 & 1\\
-1 & 1
\end{array} \right) }\;,\;\textcolor{red}{\left(\begin{array}{cc}
1 & -1\\
1 & -1
\end{array} \right)}\right\} $ gives $\dist(u'_{2},u_{1})=1$. 
\end{proof}

\subsection{Impact of the marginal over states}

We illustrate an application of Proposition \ref{prop:diameter} in the binary
case $K=\left\{ 0,1\right\} $. Fix $p,q\in\Delta K$. In such a case,
one easily checks that the maximum in the right hand side of inequalities
\eqref{eq:diameter bounds} is attained by either $p^{\prime}=q{}^{\prime}=(1,0)$,
or $p^{\prime}=q{}^{\prime}=(0,1)$, or $p^{\prime}=p,q{}^{\prime}=q$.
It follows that for any two information structures $u,v$, 
\[
\dist\left(u,v\right)\leq2\left(1-\max\left(\min\left(p_{0},q_{0}\right),\min\left(p_{1},q_{1}\right),p_{0}q_{0}+p_{1}q_{1}\right)\right).
\]

\begin{example}
\label{ex: extreme distance}The bound is attained when $u\left(k,c,d\right)=p_{k}\1_{c=k}\1_{d=0}$
and $v\left(k,c,d\right)=q_{k}\1_{c=0}\1_{d=k}$ for each $k,c,d\in\left\{ 0,1\right\} $. 
\end{example}

\subsection{Value of additional information: games vs single-agent problems}

The next example illustrates that the thesis of Proposition \ref{prop: SA=00003D00003D00003D00003D00003D00003D00003D00003D00003D00003DGames} 
does not hold without a conditional independence. 
\begin{example}
\label{ex value added}Suppose that $C=\left\{ *\right\} ,K=D=C{}^{\prime}=\left\{ 0,1\right\} $,
and let $u\in\Delta(K\times(C\times C')\times D)$ be defined by 
\[
u\left(k,(*,c^{\prime}),d\right)=\begin{cases}
\frac{1}{4}\frac{k+c^{\prime}}{2}, & \text{if }d=1,\\
\frac{1}{4}\left(1-\frac{k+c^{\prime}}{2}\right), & \text{if }d=0.
\end{cases}
\]
Let $v$ denote the marginal of $u$ over $K\times C\times D$. Going
from $v$ to $u$, the new information $c^{\prime}$ is independent
from $k$, but $d$ is both correlated with $k$ and $c^{\prime}$.
Because $c^{\prime}$ is independent from the state, the value of
new information in player 1 single-agent problem is 0: 
\[
\dist_{1}\left(u,v\right)=0.
\]
However, signal $c^{\prime}$ provides non-trivial information about
the signal of the other player, hence it is valuable in some games,
which implies that $\dist(u,v)>0$. (Indeed, using Theorem 1, since
$u$ gives player $1$ more information, we have 
\[
\dist(u,v)=\min_{q_{1},q_{2}}\|u.q_{2}-q_{1}.v\|,
\]
where $q_{2}:\{0,1\}\rightarrow\Delta\{0,1\}$ and $q_{1}\in\Delta\{0,1\}$.
The existence of a pair $(q_{1},q_{2})$ such that $\|u.q_{2}-q_{1}.v\|=0$
is equivalent to the system of equations 
\[
\forall(k,d,c')\in\{0,1\}^{3},\;u.q_{2}(k,d,c')=v.q_{1}(k,d,c'),
\]
where the unknowns are $q_{1},q_{2}$, and one can check that this
system does not admit any solution. In other words, the information
that would be useless in a single-agent decision problem is valuable
in a strategic setting. 
\end{example}

\subsection{Informational substitutes}

Here, we discuss the assumptions of Proposition \ref{prop. Info substitutes}. The conditional
independence assumption is equivalent to two simpler assumptions (a)
$c_{1}$ is conditionally independent from $\left(c,c_{2}\right)$
given $\left(k,d\right)$, and (b) $c_{1}$ and $d$ are conditionally
independent given $k$. Both (a) and (b) are important as it is illustrated
in the two subsequent examples. 
\begin{example}
Violation of (a). Suppose that $C=D=\left\{ *\right\} $, $K=C_{1}=C_{2}=\left\{ 0,1\right\} $,
$c_{1}$ and $c_{2}$ are uniformly and independently distributed,
and $k=c_{1}+c_{2}\mod2$. Then, signal $c_{2}$ is itself independent
from the state, hence useless without $c_{1}$. Knowing $c_{1}$ and
$c_{2}$ means knowing the state, which is, of course, very valuable.
Thus, the value of $c_{2}$ increases when $c_{1}$ is also present. 
\end{example}
\medskip{}

\begin{example}
\label{ex17} Violation of (b). Suppose that $C=\left\{ *\right\} $,
$K=C_{1}=C_{2}=D=\left\{ 0,1\right\} $, $c_{1}$ and $d$ are uniformly
and independently distributed, $c_{2}=d$, and $k=c_{1}+d\mod2$.
Notice that part (a) of the assumption holds (given $\left(k,d\right)$,
both signals $c_{1}$ and $c_{2}$ are constant, hence, independent),
but part (b) is violated. Again, signal $c_{2}$ is useless alone,
but together with $c_{1}$ it allows the determine the state and the
information of the other player. 
\end{example}

\subsection{Informational complements}

The next example shows that the independence assumptions in Proposition \ref{prop. Info complements}
 are important. 
\begin{example}
Suppose that $C=\left\{ *\right\} $ and $K=D=D_{1}=C_{1}=\left\{ 0,1\right\} $.
The state is drawn uniformly. Signal $d$ is equal to the state with
probability $\frac{2}{3}$ and signal $d_{1}$ is equal to the state
for sure. Finally, $c_{1}=1$ iff $d=k$. 
\end{example}
In other words, player 2's signal $d$ is an imperfect information
about the state. Signal $c_{1}$ carries information about the quality
of the signal of player 1. Such signal is valuable in some games,
hence $\dist\left(u^{\prime},v{}^{\prime}\right)>0.$ In the same
time, if player 2 learns the state perfectly, signal $c$ becomes
useless, and $\dist\left(u,v\right)=0$. (The last claim can be formally
proven using Proposition \ref{prop: joint info}.)

\subsection{Convergence to simple structures}

Next, we provide an example of a convergent sequence of information
structures. 
\begin{example}
\label{exa6}Consider a sequence of information structures (which
are all uniform over a finite subset of $K\times\N\times\N$):

\vspace{0.5cm}
 
\begin{center}
\setlength{\unitlength}{.3mm} \begin{picture}(350,120) 

\textcolor{blue}{\put(35,30){\line(1,0){90}}}
\textcolor{blue}{\put(35,60){\line(1,0){90}}}
\textcolor{blue}{\put(30,90){\line(1,0){90}}} 
 \textcolor{blue}{\put(25,120){\line(1,0){90}}}

\textcolor{red}{{} \qbezier(20,30)(15,30)(110,00) }
 \textcolor{red}{{} \qbezier(15,60) (15,60)(110,30)}
\textcolor{red}{{} \qbezier(15,90) (15,90)(100,60) }
 \textcolor{red}{{} \qbezier(10,120)(32,114)(100,90)}

 \put(10, 60){\oval(15, 130)} \put(100, 60){\oval(15,
130)} \put(10,60){\circle*{5}} \put(10,30){\circle*{5}}
\put(10,90){\circle*{5}} \put(10,120){\circle*{5}} \put(100,90){\circle*{5}}
\put(100,120){\circle*{5}} \put(100,00){\circle*{5}} \put(100,60){\circle*{5}}
\put(100,30){\circle*{5}} \put(0,130){$P1$} \put(90,130){$P2$}
\put(-10,115){$0$} \put(-10,90){$1$} \put(-10,60){$...$}
\put(-10,30){$n$} \put(110,0){$n+1$} \put(110,60){$...$}
\put(110,30){$...$} \put(110,90){$1$} \put(110,120){$0$}
\put(50, -20){$u_{n}$} \put(150,70){$\xrightarrow[n\to\infty]{}$}
\textcolor{blue}{\put(215,60){\line(1,0){90}}} \textcolor{red}{{} \qbezier(210,60)
(250,40)(300,60)} \put(250, 30){$u$} \put(210, 60){\oval(15,
30)} \put(300, 60){\oval(15, 30)} \put(210,60){\circle*{5}}
\put(300,60){\circle*{5}} \put(200,80){$P1$} \put(290,80){$P2$}
\put(190,60){$0$} \put(310,60){$0$} \end{picture} 
\par\end{center}
\vspace{0.5cm}

Signals $0$ and $n+1$ perfectly reveal the state for player 2; no
other signals provide any information about the state. When $n$ is
large, with a high probability, none of the players learns anything
about the state (even if they learn about the beliefs of the other
player). Notice that the signal of each player is $\frac{1}{n+1}$-independent
from the state of the world. Proposition \ref{prop: joint info} implies that $\dist\left(u_{n},u\right)\leq\frac{2}{n+1}\rightarrow0$.
In particular, the information structures $u_{n}$ converge to the
structure $u$, where no player receive any information.

It is instructive to provide an elementary argument for the convergence.
Consider garblings $q_{1}$, $q_{2}$, such that $q_{1}(0)$ is uniform
on $\{0,...,n\}$, and $q_{2}(c)=\delta_{0}$ for each $c$. Then
$q_{1}.u=u_{n}.q_{2}$. We obtain $u\succeq u_{n}$, and $\dist(u,u_{n})=\min_{q'_{1},q'_{2}\in\CQ}\|q'_{1}.u_{n}-u.q'_{2}\|$.
Consider now $q'_{1}=q_{2}$ and $q'_{2}$ such that $q'_{2}(0)$
is uniform on $\{0,...,n+1\}$. We get $\|q'_{1}.u_{n}-u_{n}.q'_{2}\|\leq1/(n+1)\xrightarrow[n\to\infty]{}0.$ 
\end{example}

\section{Value of experiments\label{sec:Value-of-experiments}}

In this section, we compute the value of multiple (conditionally)
independent Blackwell experiments.

\subsection{Blackwell experiments}

We work with a binary case $k=\left\{ 0,1\right\} $ and, to simplify
the analysis, we suppose that two states of the world are equally
probable. A (binary Blackwell) experiment is defined as a probability
$p\in\left(\frac{1}{2},1\right)$, with the interpretation that the
agent observes signal $s=k$ with probability $p$ and signal $s=1-k$
with probability $1-p$.

For each $n,m\geq0$ and each $p,r\in\left(\frac{1}{2},1\right)$,
we define an information structure $u_{n,m}$ in which player 1 observes
outcomes of $n$ conditionally independent copies of experiment $p$
and player $2$ observes $m$ conditionally independent copies of
experiment $q$. Formally, for each $k=0,1,c\leq n,d\leq m$, let
\begin{align*}
u_{n,m}(k,c,d) & =\begin{cases}
\frac{1}{2}\left(p^{n-c}(1-p)^{c}\right)\left(r^{n-d}(1-r)^{d}\right), & k=0\\
\frac{1}{2}\left(p^{c}(1-p)^{n-c}\right)\left(r^{d}(1-r)^{n-d}\right) & k=1
\end{cases},
\end{align*}
Here, $c$ is the number of 1s observed by player 1 and $d$ is the
number of 1s observed by player 2. The goal of this section is to compute
\[
\dist(u_{l,m},u_{n,m})
\]
for some $l<n$ and $m$. By Proposition \ref{prop: SA=00003D00003D00003D00003D00003D00003D00003D00003D00003D00003DGames}, we have 
\begin{equation}
\dist(u_{l,m},u_{n,m})=\dist_{1}(u_{l},u_{n}),\label{eq:BExp}
\end{equation}
where $u_{l}$ and $u_{n}$ are defined from $u_{l,m}$ and $u_{n,m}$
by taking marginals over state and information of player 1.

\subsection{Result}

We define $S_{n}$ a random variable following a binomial $\mathcal{B}(n,p)$,
and $D_{n}=2S_{n}-n$. The interpretation is that $S_{n}$ represents
the successes of the Blackwell experiment $p$ (i.e., the number of
outcomes that are equal to the state) and $D_{n}=S_{n}-\left(n-S_{n}\right)$
is the difference between the number of successes and failures. It
tuns out that we can represent (\ref{eq:BExp}) in terms of the probability
distribution of $D_{n}$. 
\begin{prop}
\label{pro11} For each $n,m$, we have 
\begin{align}
\dist_{1}({u_{n},u_{l}}) & =\max_{d\in\{0,...,l\}}\gamma_{n,l}^{d},\text{ where}\label{eq:Blackwel exp char}\\
\gamma_{n,l}^{d} & =2(1-q_{d})(\Prob(D_{n}>d)-\Prob(D_{l}>d))-2q_{d}(\Prob(D_{n}<-d)-\Prob(D_{l}<-d)).\nonumber 
\end{align}
If either (a) $n$ and $l$ have the same parity or (b) $n$ odd and
$l$ even, the maximum in (\ref{eq:Blackwel exp char}) is achieved
for $d^{*}=0$: 
\[
\dist_{1}(u_{n},u_{l})=\Prob(D_{n}>0)-\Prob(D_{l}>0)-\Prob(D_{n}<0)+\Prob(D_{l}<0).
\]
Finally, if $n=2$ and $l=1$, then the maximum is obtained for $d^{*}=1$,
and 
\[
\dist_{1}(u_{1},u_{2})=2p(1-p)(2p-1).
\]
\end{prop}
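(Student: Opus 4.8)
The plan is to compute $\dist_1(u_n,u_l)$, to which \eqref{eq:BExp} has reduced $\dist(u_{n,m},u_{l,m})$, using the single-agent form of Theorem~\ref{thm1}. Since $u_l$ is a garbling of $u_n$ (observe the $n$ readings of the experiment $p$ and discard $n-l$ of them uniformly), we have $u_n\succeq u_l$, so one term in \eqref{eq:d_1_marg} vanishes and $\dist_1(u_n,u_l)=\min_{q}\|u_n-q.u_l\|=\sup_{g\in\CG_1}\bigl(\val(u_n,g)-\val(u_l,g)\bigr)$. Because $K=\{0,1\}$ carries a uniform prior, a single-agent problem enters only through $\phi_g(\pi)=\max_i\bigl(\pi g(1,i)+(1-\pi)g(0,i)\bigr)$ -- the upper envelope of the affine maps $\pi\mapsto\pi g(1,i)+(1-\pi)g(0,i)$ -- so $\phi_g$ is convex, $[-1,1]$-valued, and $\val(u,g)=\E_{\mu_u}[\phi_g]$ for $\mu_u$ the law of player~1's posterior on $\{k=1\}$. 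Under $u_m$ this posterior equals $q_{D_m}$ with $q_e:=\rho^e/(1+\rho^e)$, $\rho=p/(1-p)$; the elementary identity $\mu_m(q_e)\,q_e=\tfrac12\Prob(D_m=e)$ then gives $\Prob(D_m>d)=2\,\E_{\mu_m}\bigl[\pi\,\1_{\{\pi>q_d\}}\bigr]$ and $\Prob(D_m<-d)=2\,\E_{\mu_m}\bigl[\pi\,\1_{\{\pi<q_{-d}\}}\bigr]$, and substituting these into the definition of $\gamma_{n,l}^d$ identifies it, after a regrouping of binomial probabilities, as $\E_{\mu_n}[\phi^{(d)}]-\E_{\mu_l}[\phi^{(d)}]$ with $\phi^{(d)}(\pi)=\max\{|2\pi-1|,\,2q_d-1\}$.

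The lower bound $\dist_1(u_n,u_l)\ge\max_{0\le d\le l}\gamma_{n,l}^d$ then comes for free: $\phi^{(d)}$ is the value function of the legitimate problem $g^{(d)}\in\CG_1$ in which player~1, having observed her signal, either bets $\pm1$ on the state or secures the constant payoff $2q_d-1\in[0,1)$ (whose indifference thresholds on the posterior are exactly $q_{-d}$ and $q_d$), so $\val(u_n,g^{(d)})-\val(u_l,g^{(d)})=\gamma_{n,l}^d$ for every $d\le l$.

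The matching upper bound $\dist_1(u_n,u_l)\le\max_d\gamma_{n,l}^d$ is the main obstacle. One route is to argue, using the $k\leftrightarrow1-k$ symmetry (which lets one restrict to symmetric $\phi_g$) and a vertex analysis of the finite linear program $\sup\{\E_{\mu_n-\mu_l}[\phi_g]:g\in\CG_1\}$ -- whose feasible $\phi$ are exactly the convex upper envelopes of affine maps sending $\{0,1\}$ into $[-1,1]$ -- that an optimum is attained within the one-parameter family $\phi^{(d)}$, with the additional step that $c\mapsto\E_{\mu_n-\mu_l}\bigl[\max\{|2\pi-1|,c\}\bigr]$ is piecewise affine in $c$ with breakpoints among the values $2q_{|e|}-1$, so its maximum lies at some $c=2q_{d^*}-1$. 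Dually, one may instead exhibit the optimal garbling: given margin $e$ under $u_l$, player~1 reconstructs the missing $n-l$ readings from the conditional law they would have under $u_n$ given her posterior $q_e$, but censored at level $d^*$, and one computes $\|u_n-q^{(d^*)}.u_l\|=\gamma_{n,l}^{d^*}$ using the $(k,e)\mapsto(1-k,-e)$ symmetry together with a telescoping of binomial tails. Carrying either argument through -- while respecting throughout that the test functions must be value functions of $[-1,1]$-valued problems (so the affine pieces, extended to $\{0,1\}$, stay in $[-1,1]$; the naive three-slope envelope already fails this for $d\ge1$) -- is the technical heart, and the step I expect to be hardest.

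Finally, the two displayed special cases are comparisons among the $\gamma_{n,l}^d$. When $n,l$ have the same parity, or $n$ is odd and $l$ even, I would show $\gamma_{n,l}^0\ge\gamma_{n,l}^d$ for all $d\ge1$ by a direct computation with binomial tails (using $n-l$ even in the first case), so that $\dist_1(u_n,u_l)=\gamma_{n,l}^0=\Prob(D_n>0)-\Prob(D_l>0)-\Prob(D_n<0)+\Prob(D_l<0)$ since $q_0=\tfrac12$. For $n=2$, $l=1$ the candidates are $d\in\{0,1\}$: $\gamma_{2,1}^0=\bigl(\Prob(D_2>0)-\Prob(D_2<0)\bigr)-\bigl(\Prob(D_1>0)-\Prob(D_1<0)\bigr)=\bigl(p^2-(1-p)^2\bigr)-\bigl(p-(1-p)\bigr)=0$, while $q_1=p$ gives $\gamma_{2,1}^1=2(1-p)\Prob(D_2>1)-2p\,\Prob(D_2<-1)=2(1-p)p^2-2p(1-p)^2=2p(1-p)(2p-1)$, so the maximum is $\gamma_{2,1}^1$ and $\dist_1(u_1,u_2)=2p(1-p)(2p-1)$.
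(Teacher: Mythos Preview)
Your lower bound and your identification $\gamma_{n,l}^{d}=\E_{\mu_n}[\phi^{(d)}]-\E_{\mu_l}[\phi^{(d)}]$ with $\phi^{(d)}(\pi)=\max\{|2\pi-1|,\,2q_d-1\}$ are correct, and the $(n,l)=(2,1)$ computation is fine. But two steps are genuine gaps rather than routine.

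\medskip
\noindent\textbf{The upper bound.} You explicitly leave the reduction to the one-parameter family $\{\phi^{(d)}\}$ as ``the step I expect to be hardest,'' and the route you sketch is circular: the piecewise-affinity of $c\mapsto \E_{\mu_n-\mu_l}[\max\{|2\pi-1|,c\}]$ only helps once you already know the optimum lies in that one-parameter family, which is exactly the point at issue. The paper's device resolves this cleanly and also dissolves your worry about the $[-1,1]$ constraint. It first relaxes the feasible set from $D$ (suprema of affine maps $\Delta(K)\to[-1,1]$) to the larger class $E$ of convex $1$-Lipschitz functions on $(\Delta(K),\|\cdot\|_1)$, normalized by symmetry and $f(1/2)=0$. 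On $E$ the problem depends on $f$ only through the finitely many right slopes $t_d=(f(q_{d+1})-f(q_d))/(q_{d+1}-q_d)$ for $0\le d\le n-1$; since $n>l$ one may set $t_d=1$ for $d\ge l$, and the objective is \emph{affine} in the remaining slopes subject to $0\le t_0\le t_1\le\cdots\le t_{l-1}\le 1$. The extreme points of this simplex-like region are the step sequences $t_d=\1_{\{d\ge d^*\}}$, which are precisely the functions $f_{d^*}=\phi^{(d^*)}$. Since each $f_{d^*}$ already lies in $D$, the relaxation is tight and \eqref{eq:Blackwel exp char} follows. This is the missing idea; your ``vertex analysis'' is the right instinct, but the slope parametrization with the monotonicity constraint is what makes it work.

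\medskip
\noindent\textbf{The case $n$ odd, $l$ even.} This is not a ``direct computation with binomial tails.'' The paper shows $\gamma_{n,l}^{0}\ge\gamma_{n,l}^{d}$ by proving, for every even $d\le l$ and every even $d'\in\{0,\dots,d-2\}$, the pointwise inequality
\[
\Prob(|D_l|=d')\,(q_d-q_{d'})\ \ge\ \Prob(|D_n|=d'+1)\,(q_d-q_{d'+1}),
\]
which after simplification reduces to $2^{\,n-l}\binom{l}{(l+d')/2}\ge\binom{n}{(n+d'+1)/2}$, checked via Stirling-type monotonicity. (By contrast, the same-parity case is a one-liner: for $|d|\le l$ with $d\equiv l\pmod 2$ one has $\Prob_n(d)\le\Prob_l(d)$, so adding atoms at $q_{\pm d}$ with $1\le d\le l$ to the flat part only decreases the objective, forcing $d^*=0$.) Your proposal should either supply this inequality or acknowledge that case (b) requires a separate combinatorial estimate.
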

We conjecture, although we are not able to show it, that if $n$ is
even and $l$ is odd, then the maximum in (\ref{eq:Blackwel exp char})
is achieved for $d^{*}=1$: 
\[
\dist_{1}(u_{n},u_{l})=2(1-p)(\Prob(D_{n}>1)-\Prob(D_{l}>1))-2p(\Prob(D_{n}<-1)-\Prob(D_{l}<-1)).
\]

\noindent We have some more explicit formulas for small number of
experiments. Quite surprisingly, although $u_{1}\preceq u_{2}\preceq u_{3}\preceq u_{4}$,
we have: 
\[
\dist_{1}(u_{2},u_{1})=\dist_{1}(u_{3},u_{1})=\dist_{1}(u_{3},u_{2})=\dist_{1}(u_{4},u_{2})=2p(1-p)(2p-1).
\]
And simple computations give: 
\begin{eqnarray*}
\dist_{1}(u_{4},u_{1}) & = & 2p(1-p)(2p-1)(1+3p-3p^{2}),\\
\dist_{1}(u_{4},u_{3}) & = & 6p^{2}(1-p)^{2}(2p-1),\\
\forall n,\dist_{1}(u_{n},u_{0}) & = & \Prob(D_{n}>0)-\Prob(D_{n}<0),\\
\forall n\geq1,\;\dist_{1}(u_{n},u_{1}) & = & 2(1-p)\Prob(D_{n}>1)-2p\Prob(D_{n}<-1).
\end{eqnarray*}

\subsection{Proof of Proposition \ref{pro11}}

We associate each element of $\Delta(K)$ with the probability $p\in[0,1]$
that the state is 1. Define for each integer $d$ in $\mathbb{Z}$,
\[
q_{d}=\frac{p^{d}}{p^{d}+(1-p)^{d}}=1-q_{-d}\in[0,1].
\]
$q_{d}$ is the belief of player 1 that $k=1$ after observing $s$
positive experiments and $s-d$ negative experiments, whatever is
$s$. Under $u_{n}$, the law of the difference $d$ is given by:
\[
\Prob_{n}(d)=\frac{1}{2}{n \choose \frac{n+d}{2}}p^{(n-d)/2}(1-p)^{(n-d)/2}(p^{d}+(1-p)^{d})=\Prob_{n}(-d)=\frac{1}{2}(\Prob(D_{n}=d)+\Prob(D_{n}=-d)),
\]
for all integers $d$ in $[-n,n]$ such that $n+d$ is even.

By \eqref{eq_d1}, we know that: 
\begin{equation}
\dist_{1}(u_{l},u_{n})=\max_{f\in{D}}\big|\tilde{u}_{n}(f)-\tilde{u}_{l}(f)\big|,\label{eq111}
\end{equation}
where $\tilde{u}_{n}(f)=\sum_{d=-n}^{n}f(q_{d})\Prob_{n}(d)$ is the
expectation of $f$ with respect to the probability induced by $u_{n}$
on the a posteriori. Similarly, $\tilde{u}_{l}(f)=\sum_{d=-n}^{n}f(q_{d})\Prob_{l}(d)$,
so $\tilde{u}_{n}(f)-\tilde{u}_{l}(f)=\sum_{d=-n}^{n}f(q_{d})(\Prob_{n}(d)-\Prob_{l}(d)).$
Let us first consider the optimization problem : 
\begin{equation}
e(u_{l},u_{n})=\max_{f\in{E}}\big|\sum_{d=-n}^{n}f(q_{d})(\Prob_{n}(d)-\Prob_{l}(d))\big|,\label{eq112}
\end{equation}
where $E$ is the set of convex 1-Lipschitz functions from $(\Delta(K),\|.\|_{1})$
to $\R$. Since $u_{n}\succeq u_{l}$, $e(u_{l},u_{n})=\max_{f\in{E}}\sum_{d=-n}^{n}f(q_{d})(\Prob_{n}(d)-\Prob_{l}(d))$.
W.l.o.g we can restrict attention to functions $f$ in $E$ such that
$f(1/2)=0$ and $f(q_{d})=f(q_{-d})$ for each $d$ (otherwise consider
$h(p)=(f(p)+f(1-p))/2$ for each $p$). Define for each $d$ in $\{0,...,n-1\}$,
the right slope of $f$ at $q_{d}$: 
\[
t_{d}=\frac{f(q_{d+1})-f(q_{d})}{q_{d+1}-q_{d}}.
\]
Since $n>l$, it is optimal to choose $t_{d}=1$ for $d\geq l$. The
quantity $\sum_{d=-n}^{n}f(q_{d})(\Prob_{n}(d)-\Prob_{l}(d))$ only
depends on $f$ though the slopes $t_{0},...,t_{n-1}$, and is affine
in $t_{0},...,t_{n-1}$. The constraints are $t_{d}\in[0,1]$ and
$t_{d+1}\geq t_{d}$ for each $d$, so for problem (\ref{eq112})
there is an optimal $f$ and $d^{*}\in\{0,...,l\}$ such that $t_{d}=0$
for $0\leq d<d^{*}$ and $t_{d}=1$ for $d\geq d^{*}$. Consequently,
we can restrict attention to the family of functions $(f_{d^{*}})$
from $\Delta(K)$ to $[0,1]$ such that: 
\[
f_{d^{*}}(q)=\left\{ \begin{array}{ccc}
|2q-1| & \mbox{if} & |q-1/2|\geq|q_{d^{*}}-1/2|\\
2q_{d^{*}}-1 & \mbox{if} & |q-1/2|\leq|q_{d^{*}}-1/2|
\end{array}\right.
\]
Since every function in the family belongs to $D$, we obtain: 
\begin{equation}
\dist_{1}(u_{l},u_{n})=\max_{d^{*}=0,...,l}\sum_{d=-n}^{n}f_{d^{*}}(q_{d})(\Prob_{n}(d)-\Prob_{l}(d)).\label{eq113}
\end{equation}
It turns out that $\sum_{d=-n}^{n}f_{d^{*}}(q_{d})\Prob_{n}(d)$ is
easy to compute: 
\begin{lem}
\label{lem111} 
\[
\sum_{d=-n}^{n}f_{d^{*}}(q_{d})\Prob_{n}(d)=(2q_{d^{*}}-1)+2\Prob(D_{n}>d^{*})(1-q_{d^{*}})-2q_{d^{*}}\Prob(D_{n}<-d^{*}).
\]
\end{lem}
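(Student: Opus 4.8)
The plan is to evaluate the sum directly, using the explicit binomial expression for $\Prob_n$ together with the definition of the posteriors $q_d$.

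The first step is to rewrite $f_{d^*}$ at the points $q_d$. Since $q_d = 1/(1+((1-p)/p)^d)$ is strictly increasing in $d$, the quantity $|q_d-1/2|$ is strictly increasing in $|d|$ and $q_d>1/2$ exactly when $d>0$; hence $|2q_d-1| = 2q_{|d|}-1$, and the definition of $f_{d^*}$ gives $f_{d^*}(q_d) = 2q_{|d|}-1$ when $|d|\geq d^*$ and $f_{d^*}(q_d) = 2q_{d^*}-1$ when $|d|\leq d^*$ (the two agree at $|d|=d^*$). Splitting the sum accordingly and using the symmetry $\Prob_n(d)=\Prob_n(-d)$ that is built into the definition of $\Prob_n$, I obtain
\[
\sum_{d=-n}^{n}f_{d^*}(q_d)\Prob_n(d) = (2q_{d^*}-1)\,\Prob(|D_n|\leq d^*) + \sum_{d>d^*}(2q_d-1)\bigl(\Prob(D_n=d)+\Prob(D_n=-d)\bigr),
\]
where in the second sum $d$ runs over the positive integers $>d^*$ with $n+d$ even.

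The key step is the pointwise identity, valid for $d>0$,
\[
(2q_d-1)\bigl(\Prob(D_n=d)+\Prob(D_n=-d)\bigr) = \Prob(D_n=d)-\Prob(D_n=-d).
\]
This follows from $2q_d-1 = \dfrac{p^d-(1-p)^d}{p^d+(1-p)^d}$ combined with the elementary formulas $\Prob(D_n=d)+\Prob(D_n=-d) = \binom{n}{(n+d)/2}\bigl(p(1-p)\bigr)^{(n-d)/2}\bigl(p^d+(1-p)^d\bigr)$ and $\Prob(D_n=d)-\Prob(D_n=-d) = \binom{n}{(n+d)/2}\bigl(p(1-p)\bigr)^{(n-d)/2}\bigl(p^d-(1-p)^d\bigr)$, so that the factor $p^d+(1-p)^d$ cancels. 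Summing over $d>d^*$ turns the second term above into $\Prob(D_n>d^*)-\Prob(D_n<-d^*)$.

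Finally I would substitute $\Prob(|D_n|\leq d^*) = 1-\Prob(D_n>d^*)-\Prob(D_n<-d^*)$ and collect terms, which yields exactly $(2q_{d^*}-1) + 2(1-q_{d^*})\Prob(D_n>d^*) - 2q_{d^*}\Prob(D_n<-d^*)$. There is no real obstacle here; the only point requiring care is the bookkeeping of the parity constraint and of the symmetrization that reduces the sum over $\{|d|>d^*\}$ to a sum over positive $d$, which is what lets the binomial identity be applied cleanly.
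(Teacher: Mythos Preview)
Your proof is correct and follows essentially the same route as the paper. Both arguments split the sum at $|d|=d^{*}$, use the symmetry $\Prob_{n}(d)=\Prob_{n}(-d)$ to reduce to positive $d$, and then exploit the binomial identity that your display $(2q_{d}-1)\bigl(\Prob(D_{n}=d)+\Prob(D_{n}=-d)\bigr)=\Prob(D_{n}=d)-\Prob(D_{n}=-d)$ expresses explicitly (the paper uses the equivalent form $2q_{d}\Prob_{n}(d)=\Prob(D_{n}=d)$); the remaining algebra is identical.
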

\begin{proof}
\noindent 
\begin{eqnarray*}
\sum_{d=-n}^{n}f_{d^{*}}(q_{d})\Prob_{n}(d) & = & 2q_{d^{*}}-1+4\sum_{d=d^{*}+1}^{n}(q_{d}-q_{d^{*}})\Prob_{n}(d)\\
\; & = & 2q_{d^{*}}-1-2q_{d^{*}}\Prob_{n}(|d|>d^{*})+2\Prob(D_{n}>d^{*}),\\
\; & = & 2q_{d^{*}}-1-2q_{d^{*}}(\Prob(D_{n}>d^{*})+\Prob(D_{n}<-d^{*}))+2\Prob(D_{n}>d^{*}),\\
 & = & 2q_{d^{*}}-1+2\Prob(D_{n}>d^{*})(1-q_{d^{*}})-2q_{d^{*}}\Prob(D_{n}<-d^{*}).
\end{eqnarray*}
\end{proof}
\noindent \textbf{Remark:} Consider the decision problem where: $(k,c)$
is selected according to $u_{n}$, player 1 receives $c$ and has
to choose $i$ in $\{-1,0,1\}$, with payoff $g(k,0)=2q_{d^{*}}-1$,
$g(1,1)=g(0,0)=1$ and $g(1,0)=g(0,1)=-1$. The following strategy
is optimal: given $c$, compute the belief $q$ that $k=1$. Then
if $q\in[q_{-d^{*}},q_{d^{*}}]$ play the safe action $0$, if $q>q_{d^{*}}$
play $i=1$ and if $q<q_{-d^{*}}$ play $i=-1$. The payoff of this
strategy is precisely $\sum_{d=-n}^{n}f_{d^{*}}(q_{d})\Prob_{n}(d)$.
\\

The characterization (\ref{eq:Blackwel exp char}) follows from equation
(\ref{eq113}) and lemma \ref{lem111}.

\noindent Assume now that $n$ and $l$ have the same parity, that
is $n-l$ is even. For all $d$ in $\{-l,...,+l\}$, $\Prob_{n}(d)\leq\Prob_{l}(d)$,
so the maximum in \ref{eq113} is simply achieved for $d^{*}=0$.

Finally, suppose that $n$ is odd and $l$ is even. We show that the
maximum in equation (\ref{eq113}) is achieved for $d^{*}=0$. We
only need to consider $d$ even in $\{0,..,l\}$, and for each such
$d$ we define $h_{d}=f_{d}-f_{0}$. We have 
\[
\tilde{u}_{n}(h_{d})=2\sum_{d'=1}^{d-1}\Prob(|D_{n}|=d')(q_{d}-q_{d'}),\;\tilde{u}_{l}(h_{d})=2\sum_{d'=0}^{d-2}\Prob(|D_{l}|=d')(q_{d}-q_{d'}),
\]
and it is enough to show that $\tilde{u}_{n}(h_{d})\leq\tilde{u}_{l}(h_{d})$.
We are going to show that : 
\begin{equation}
\forall d'=0,2,...,d-2,\;\;\Prob(|D_{l}|=d')(q_{d}-q_{d'})\geq\Prob(|D_{n}|=d'+1)(q_{d}-q_{d'+1})\label{eq114}
\end{equation}
We have 
\begin{eqnarray*}
\frac{\Prob(|D_{l}|=d')(q_{d}-q_{d'})}{\Prob(|D_{n}|=d'+1)(q_{d}-q_{d'+1})} & = & \frac{q_{d}-q_{d'}}{q_{d}-q_{d'+1}}\frac{(p^{d'}+(1-p)^{d'})}{(p^{d'+1}+(1-p)^{d'+1})}{(p(1-p))}^{(l-n+1)/2}\frac{{l \choose (l+d')/2}}{{n \choose (n+d'+1)/2}},\\
 & \geq & \frac{1-q_{d'}}{1-q_{d'+1}}\frac{(p^{d'}+(1-p)^{d'})}{(p^{d'+1}+(1-p)^{d'+1})}{(p(1-p))}^{(l-n+1)/2}\frac{{l \choose (l+d')/2}}{{n \choose (n+d'+1)/2}},\\
 & = & (1-p)^{-1}{(p(1-p))}^{(l-n+1)/2}\frac{{l \choose (l+d')/2}}{{n \choose (n+d'+1)/2}}.
\end{eqnarray*}

\noindent Since $p\in[1/2,1]$, the minimum in $p$ is achieved for
$p=1/2$, so that: 
\[
\frac{\Prob(|D_{l}|=d')(q_{d}-q_{d'})}{\Prob(|D_{n}|=d'+1)(q_{d}-q_{d'+1})}\geq2^{n-l}\frac{{l \choose (l+d')/2}}{{n \choose (n+d'+1)/2}}.
\]
We finally show that $2^{n-l}{l \choose (l+d')/2}\geq{n \choose (n+d'+1)/2}$
for all $d'=0,...,l-2$. This inequality is true for $n=l+1$, and
$\frac{1}{2^{n}}{n \choose (n+d'+1)/2}$ decreases in $n$ (for $n$
even not smaller than $d'+1$).

\noindent Assume now that $n=2$ and $l=1$. The optimal $f$ in equation
\ref{eq113} should put maximal weight for $q=q_{0},q_{2},q_{-2}$
and minimal weight for $q_{-1},q_{1}$. One easily checks that the
maximum is obtained for $d^{*}=1$, and $d(u_{1},u_{2})=2p(1-p)(2p-1)$.

\section{Special cases\label{sec:Special-cases}}

We defined the distance $\dist$ between \emph{any} information
structures as a maximum of value difference across \emph{all} zero-sum
games. In this Section, we consider versions of the definition, where
we either restrict the space of games or information structures. Given
the restrictions, we obtain a tighter characterization of the distance.
Additionally, in all examples below, we show that the restricted spaces
of information structures have compact completion, which has important consequences for the limit theorems in repeated games.

\subsection{Single-agent decision problems}

\label{subsecsadp}

We introduced metric $\dist_{1}$ to analyze the
distance in single-agent decision problems. It is easy to see that
the distance $\dist_{1}$ depends only on the information of player
$1$: for any $u,u^{\prime},v,v^{\prime}$ such that $\marg_{K\times C}u=\marg_{K\times C}u^{\prime}$
with an analogous relation for $v$ and $v^{\prime}$, we have $\dist_{1}\left(u,v\right)=\dist_{1}\left(u^{\prime},v^{\prime}\right)$.

Let $\CU_{1}=\Delta(K\times\N)$ be the set of probabilities over
states and signals for player 1. From now on, we assume that $u,v$
are elements of $\CU_{1}$. Following the same method as in Theorem
\ref{thm1}, we show that 
\[
\dist_{1}(u,v)=\max\{\min_{q\in\CQ}\|q.u-v\|,\min_{q\in\CQ}\|q.v-u\|\},
\]
and the Blackwell characterization : $u\succeq v\Leftrightarrow\exists q\in Q,q.u=v$.

Further, notice that the distance depends only on the induced distributions
of conditional beliefs over $K$. Finally, we can show that if $D$
is the set of suprema of affine functions from $\Delta(K)$ to $[-1,1]$,
then 
\begin{equation}
\dist_{1}(u,v)=\sup_{f\in D}\left|\int_{p\in\Delta(K)}f(p)d{\tilde{u}}(p)-\int_{p\in\Delta(K)}f(p)d{\tilde{v}}(p)\right|,\label{eq_d1}
\end{equation}
where $\tilde{u}$ and $\tilde{v}$ denote the distributions of first-order
beliefs of Player 1 induced by the information structures $u$ and
$v$. Hence, $\CU_{1}$ under $\dist_{1}$ is totally bounded. Its
completion $\overline{\CU_{1}}$ is compact and 
\[
\overline{\CU_{1}}\simeq\Delta(\Delta(K)).
\]

\subsection{One-sided full information}

Let $\CU_{OF}$ be the subset of $\CU$ where with probability 1,
the signal of player 1 reveals both the state and the signal of player
2. For $u\in\CU_{OF}$, all what matters is the law $\tilde{u}$ of
the belief of player 2 about the state. From \citet{renault_venel_2017},
another characterization of $\dist$ can be given for elements of
${\CU_{OF}}$. 
\[
\dist(u,v)=\sup_{f\in D_{1}}\left(\int_{p\in\Delta(K)}f(p)d{\tilde{u}}(p)-\int_{p\in\Delta(K)}f(p)d{\tilde{v}}(p)\right),
\]
where 
\[
D_{1}=\{f,\forall p,q\in\Delta(K),\forall a,b\geq0,\;af(p)-bf(q)\leq\|ap-bq\|_{1}\}.
\]

Hence, $\CU_{OF}$ under $\dist$ is totally bounded; its completion
$\overline{\CU_{OF}}$ is compact and 
\[
\overline{\CU_{OF}}\simeq\Delta(\Delta(K)).
\]

\subsection{Public signals}

Set $\CU_{P}$ of information structures where both players receive
the same signal: $\overline{\CU_{P}}$ is compact, and homeomorphic
to $\Delta(\Delta(K))$. Here given $u$ in $\CU_{P}$, what matters
is the induced law $\tilde{u}$ on the common a posteriori of the
players on $K$.

\subsection{One player more informed}

\label{subsecopmi}

A generalization of the two previous cases is the set $\CU_{OM}$
of information structures where player 1 knows the signal of player
2, i.e. when the signal of player 1 is enough to deduce the signal
of player 2. $\overline{\CU_{OM}}$ is compact, and homeomorphic to
$\Delta(\Delta(\Delta(K)))$ (see \citet{mertens_repeated_1986}, \citet{gensbittel2014}).~

\subsection{Conditionally independent signals}

Set $\CU_{CI}$ of independent information structures : $\CU_{CI}$
is the set of $u$ in $\CU$ such that $u(c,d|k)=u(c|k)u(d|k)$ (the
signals $c$ and $d$ are conditionally independent given $k$). Here
$\overline{\CU_{CI}}$ is homeomorphic to $\Delta(\Delta(K)\times\Delta(L)).$~


\section{Value-based distance and bounds on equilibrium payoffs in non-zero-sum
games}

\label{seceqpayof}

In this section, we show that the value-based distance between information
structures contains information that is useful in some questions that
are relevant for non-zero-sum games. 

The section is divided into four parts. In the first part, we focus
on the distance between the sets of feasible payoffs induced by an
information structure and a game. More precisely, given an information
structure $u$ in $\CU$ and a non zero-sum payoff function $g:K\times I\times J\rightarrow[-1,1]^{2}$
(with $I$, $J$ finite), one naturally defines the non zero-sum Bayesian
game $\Gamma(u,g)$ and one can ask whether $\dist$ can be used to
measure the distance between the feasible payoffs in these games.
We provide a positive answer in three different cases (conditionally
independent signals, public signals and one-sided full information).

Next, we present two applications. First, we show that for any game
$g$ with common interests (common payoffs for both players), if structures
$u$ and $v$ belong to any of the three cases mentioned above, the
best equilibrium payoff in $\Gamma(u,g)$ and $\Gamma(v,g)$ differ
by at most $3\dist(u,v)$. 

The second application is concerned with the infinite repetition of
the games $\Gamma(u,g)$ and $\Gamma(v,g)$. As is well-known, in
such a repeated game the set of feasible and individually rational
payoffs is the limit, when $\delta\to1$, of the sets of $\delta$-discounted
Nash equilibrium payoffs (and under mild assumptions the limit of
the sets of $\delta$-discounted subgame-perfect Nash equilibrium
payoffs). The second corollary establishes a strong connection, depending
on $\dist(u,v)$, between the sets of feasible and individually rational
payoffs in $\Gamma(u,g)$ and $\Gamma(v,g)$.

Finally, we show by means of counterexample, that some assumptions
on the information structures are necessary to obtain bounds on the
non-zero-sum equilibrium payoffs in terms of value-based distance. 

\subsection{Bounds on distance between feasible payoffs}

We let $F(u,g)\subset\R^{2}$ be the set of feasible payoffs of $\Gamma(u,g)$
(the convex hull of the feasible payoffs with pure strategies), and
use the Hausdorff distance between non empty compact sets of $\R^{2}$,
endowed with $\|.\|_{\max}$.
\begin{prop}
\label{pro33} If the information structures $u$ and $v$ have conditionally
independent signals, the distance between $F(u,g)$ and $F(v,g)$
is at most $3\dist(u,v)$. 
\end{prop}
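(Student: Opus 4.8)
The plan is to fix a non-zero-sum payoff function $g=(g_1,g_2):K\times I\times J\to[-1,1]^2$, abbreviate $\delta=\dist(u,v)$, and show that every extreme point of $F(u,g)$ lies within $3\delta$ of $F(v,g)$ for $\|\cdot\|_{\max}$; since the roles of $u$ and $v$ are symmetric this yields $\dist_{\max}^H(F(u,g),F(v,g))\le 3\delta$. As $F(u,g)$ is the convex hull of the payoff vectors $\gamma_{u,g}(q_1,q_2)$ of pure strategy profiles and $F(v,g)$ is convex, it is enough to attach to each pure profile $(q_1,q_2)$ in $\Gamma(u,g)$ a strategy profile in $\Gamma(v,g)$ whose payoff vector is $3\delta$-close in $\|\cdot\|_{\max}$: a convex combination of $u$-payoffs is then matched by the same convex combination of the associated $v$-payoffs, and the payoff of any behavioural profile in $\Gamma(v,g)$ belongs to $F(v,g)$ because a behavioural strategy is realized by privately randomizing over pure strategies.

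The first ingredient is a product representation of payoffs. Write $\pi_u=\marg_K u$, $u_1=\marg_{K\times C}u$, $u_2=\marg_{K\times D}u$, and for strategies $q_1,q_2$ let $\mu_1=q_1.u_1\in\Delta(K\times I)$ and $\mu_2=q_2.u_2\in\Delta(K\times J)$ be the induced joint laws of (state, action). Conditional independence of the signals given $k$ gives, with the convention that terms with zero denominator vanish,
\[
\gamma_{u,g}(q_1,q_2)=\sum_{k,i,j}\frac{\mu_1(k,i)\,\mu_2(k,j)}{\pi_u(k)}\,g(k,i,j).
\]
The second ingredient is that single-agent problems for player $1$ and, symmetrically, for player $2$ form subclasses of zero-sum games, so $\dist_1(u,v)\le\delta$ and the analogous player-$2$ quantity is $\le\delta$ as well. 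By the single-agent characterization (\ref{eq:d_1_marg}) and its player-$2$ analogue there are garblings $q$ and $q'$, depending only on $u$ and $v$, with $\|q.v_1-u_1\|\le\delta$ and $\|q'.v_2-u_2\|\le\delta$. Given a pure profile $(q_1,q_2)$ in $\Gamma(u,g)$, pair it with the behavioural profile $(q_1\circ q,\;q_2\circ q')$ in $\Gamma(v,g)$ and set $\mu_1'=(q_1\circ q).v_1$, $\mu_2'=(q_2\circ q').v_2$, $\pi_v=\marg_K v$. Since applying a garbling does not increase the $\ell_1$ distance, $\|\mu_1-\mu_1'\|=\|q_1.u_1-q_1.(q.v_1)\|\le\|u_1-q.v_1\|\le\delta$, likewise $\|\mu_2-\mu_2'\|\le\delta$, and $\|\pi_u-\pi_v\|\le\|\mu_1-\mu_1'\|\le\delta$ by taking $K$-marginals (or by Proposition \ref{prop:diameter}).

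It remains to bound $\|\gamma_{u,g}(q_1,q_2)-\gamma_{v,g}(q_1\circ q,q_2\circ q')\|_{\max}$, which I would do coordinatewise through the hybrid functional
\[
\widetilde\Psi(\nu_1,\nu_2)=\sum_{k,i,j}\nu_1(k,i)\,\frac{\nu_2(k,j)}{\marg_K\nu_2(k)}\,g(k,i,j),
\]
equal to $\gamma_{u,g}(q_1,q_2)$ at $(\mu_1,\mu_2)$ and to $\gamma_{v,g}(q_1\circ q,q_2\circ q')$ at $(\mu_1',\mu_2')$, estimated via the intermediate point $\widetilde\Psi(\mu_1',\mu_2)$. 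Replacing $\mu_1$ by $\mu_1'$ costs at most $\|\mu_1-\mu_1'\|\le\delta$ in each coordinate, because $\sum_j g(k,i,j)\mu_2(k,j)/\pi_u(k)\in[-1,1]$. Replacing $(\mu_2,\pi_u)$ by $(\mu_2',\pi_v)$ costs at most $\sum_{k,j}\pi_v(k)\,\big|\mu_2(k,j)/\pi_u(k)-\mu_2'(k,j)/\pi_v(k)\big|=\sum_{k,j}\big|\pi_v(k)\mu_2(k,j)/\pi_u(k)-\mu_2'(k,j)\big|$, and since $\pi_v(k)\mu_2(k,j)/\pi_u(k)=\mu_2(k,j)+(\pi_v(k)-\pi_u(k))\mu_2(k,j)/\pi_u(k)$ this is at most $\|\mu_2-\mu_2'\|+\|\pi_u-\pi_v\|\le 2\delta$. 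Summing, $\|\gamma_{u,g}(q_1,q_2)-\gamma_{v,g}(q_1\circ q,q_2\circ q')\|_{\max}\le 3\delta$, as required.

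The delicate point is this last estimate: a naive three-step telescoping (first $\mu_1$, then $\mu_2$, then $\pi$) double-counts the mismatch $\|\pi_u-\pi_v\|$ and only gives $5\delta$; the sharp constant $3$ comes from the asymmetric hybrid $\widetilde\Psi$ above, in which the discrepancy of the $K$-marginals is absorbed together with that of player $2$'s induced law rather than on its own. The only other things to verify with care are that the profile constructed on $v$ is feasible, i.e. that payoffs of behavioural profiles lie in $F(v,g)=\conv\{\text{pure payoffs}\}$, together with the routine facts that garblings are $\ell_1$-contractions and that the conditional-independence product formula above is exact.
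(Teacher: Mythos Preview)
Your argument is correct and yields the same constant $3$, but it proceeds along a genuinely different route from the paper's.

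The paper works at the level of the joint measures rather than payoffs. It invokes the full two-sided characterization of Theorem~\ref{thm1} to obtain four garblings $q_1,\ldots,q_4$ with $\|q_1.u-v.q_2\|\le\varepsilon$ and $\|q_3.v-u.q_4\|\le\varepsilon$, composes them to get $\|q_3.q_1.u-u.(q_2.q_4)\|\le 2\varepsilon$, projects onto $K\times C$, and then uses conditional independence of $u$ once to lift the marginal bound back to the joint: $\|u-q_3.v.q_2\|\le 3\varepsilon$. From this single total-variation inequality the Hausdorff bound on $\CW_L(u)$ and $\CW_L(v)$ (and hence on $F(u,g)$ and $F(v,g)$) is immediate, since pre- and post-composing by garblings contracts $\|\cdot\|$.

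Your approach instead uses only the one-player content of Theorem~\ref{thm1}, namely the single-agent formula~(\ref{eq:d_1_marg}) and its player-2 analogue, to produce garblings matching each player's \emph{marginal} separately, and then exploits conditional independence through the explicit product formula for payoffs. The sharp constant $3$ is recovered by your asymmetric hybrid $\widetilde\Psi$, which absorbs the $K$-marginal mismatch into the player-2 step instead of paying for it twice. What you gain is a somewhat more elementary argument (only single-agent garblings, no composition trick) in which the role of conditional independence is very transparent. What the paper gains is the stronger intermediate statement $\|u-q_3.v.q_2\|\le 3\varepsilon$: it is a bound on distributions rather than on payoffs in a fixed game, so it transfers at once to the whole set $\CW_L(u)$ and is directly reusable in the corollaries that follow.
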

\begin{proof}

Write $\varepsilon=\dist(u,v)$. By Theorem \ref{thm1}, there exist garblings
$q_{1}$, $q_{2}$, $q_{3}$, $q_{4}$ such that $\|q_{1}.u-v.q_{2}\|\leq\varepsilon$
and $\|q_{3}.v-u.q_{4}\|\leq\varepsilon$. We first show that $u$
can be approximated by $q_{3}.v.q_{2}$.

Define $q_{3}.q_{1}$ for the garbling which given any signal $c$,
selects $c'$ according to $q_{1}(c)$ then finally $c''$ according
to $q_{3}(c')$. Similarly, let $q_{2}.q_{4}(d)=\sum_{d'}q_{4}(d)(d')q_{2}(d')$
for all $d$. Notice that $(q_{3}.q_{1}).v=q_{3}.(q_{1}.v)$, $u.(q_{2}.q_{4})=(u.q_{4}).q_{2}$
and $\|q.u'-q.v'\|\leq\|u'-v'\|$ for all $q$, $u'$, $v'$, we obtain:
\begin{eqnarray*}
\|q_{3}.q_{1}.u-u.(q_{2}.q_{4})\| & \leq & \|q_{3}.q_{1}.u-q_{3}.v.q_{2}\|+\|q_{3}.v.q_{2}-(u.q_{4}).q_{2}\|,\\
 & \leq & \|q_{1}.u-v.q_{2}\|+\|q_{3}.v-u.q_{4}\|,\\
 & \leq & 2\varepsilon.
\end{eqnarray*}
In particular, projecting over states and signals for player 1 gives
$\sum_{k,c}|q_{3}.q_{1}.u(k,c)-u(k,c)|\leq2\varepsilon$. Since $u$
has conditionally independent signals, we have: 
\begin{eqnarray*}
\|q_{3}.q_{1}.u(k,c)-u\| & = & \sum_{k,c,d}|q_{3}.q_{1}.u(k,c,d)-u(k,c,d)|,\\
 & = & \sum_{k,c,d}|q_{3}.q_{1}.u(k,c)u(d|k)-u(k,c)u(d|k)|,\\
 & = & \sum_{k,c}|q_{3}.q_{1}.u(k,c)-u(k,c)|\leq2\varepsilon.
\end{eqnarray*}
And we obtain $\|u-q_{3}.v.q_{2}\|\leq\|u-q_{3}.q_{1}.u\|+\|q_{3}.q_{1}.u-q_{3}.v.q_{2}\|\leq3\varepsilon$.
\\

Consider now a non zero-sum payoff function $g=(g_{1},g_{2})$ with
all payoffs in $[-1,1]$. We assume without loss of generality that
for some $L$, the set of actions for each player in $g$ is $I=J=\{0,...,L-1\}$.
Define the compact subsets $\CW_{L}(u)=\{q_{1}'.u.q'_{2},\;q'_{1},q'_{2}\in\CQ(L)\}$
and $\CW_{L}(v)=\{q'_{1}.v.q'_{2},q'_{1},\;q'_{2}\in\CQ(L)\}$ of
$\Delta(K\times I\times J)$. The set of feasible payoffs can be written:
\[
F(u,g)=\{(\langle g_{1},w\rangle,\langle g_{2},w\rangle),w\in\conv\CW_{L}(u)\},
\]
\[
\;F(v,g)=\{(\langle g_{1},w\rangle,\langle g_{2},w\rangle),w\in\conv\CW_{L}(v)\}.
\]
To show that the distance between $F(u,g)$and $F(v,g)$ is at most
3 $\varepsilon$, it is enough to prove that the distance (for $\|.\|=\|.\|_{1}$)
between $\conv\CW_{L}(u)$ and $\conv\CW_{L}(v)$, or simply between
$\CW_{L}(u)$ and $\CW_{L}(v)$, is at most 3 $\varepsilon$.

To conclude, let $u'$ be in $\CW_{L}(u)$, $u'$ can be written $u'=q'_{1}.u.q'_{2}$
for some $q'_{1}$, $q'_{2}$ in $\CQ(L)$. Since $\|u-q_{3}.v.q_{2}\|\leq3\varepsilon$,
we have $\|u'-q'_{1}.q_{3}.v.(q'_{2}.q_{2})\|\leq3\varepsilon$ and
the distance from $u'$ to $\CW_{L}(v)$ is\footnote{The proof shows that the distance between the sets of feasible payoffs
using mixed strategies in $\Gamma(u,h)$ and $\Gamma(v,h)$ also differ
by at most $3\dist(u,v)$.} at most 3 $\varepsilon$. 
\end{proof}
We have an analog of the proposition for information structures with
public signals.
\begin{prop}
\label{pro34} If the information structures $u$ and $v$ have public
signals, the distance between $F(u,g)$ and $F(v,g)$ is at most $\dist(u,v)$. 
\end{prop}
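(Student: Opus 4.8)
The plan is to exploit the convex duality between nonempty bounded convex subsets of $\R^{2}$ and their support functions, together with the observation that in a public-signal game the support function of the feasible-payoff set is itself the value of an auxiliary single-agent zero-sum game, to which the definition of $\dist$ applies verbatim.

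First I would record the elementary identity
\[
\dist_{\max}^{H}(C,C')=\sup_{\theta\in\R^{2},\,\|\theta\|_{1}\leq 1}\big|h_{C}(\theta)-h_{C'}(\theta)\big|,\qquad h_{C}(\theta):=\sup_{x\in C}\langle\theta,x\rangle ,
\]
valid for nonempty bounded convex $C,C'\subseteq\R^{2}$ because $\|\cdot\|_{1}$ is the dual norm of $\|\cdot\|_{\max}$ and, for convex sets, $C\subseteq C'+\varepsilon B_{\max}$ is equivalent to $h_{C}\leq h_{C'}+\varepsilon\|\cdot\|_{1}$ (support functions are insensitive to closure, so there is no issue if the common signal set is infinite). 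Thus it suffices to bound $|h_{F(u,g)}(\theta)-h_{F(v,g)}(\theta)|$ by $\dist(u,v)$ for each fixed $\theta=(\theta_{1},\theta_{2})$ with $\|\theta\|_{1}\leq1$.

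Next, fix such a $\theta$ and set $\bar g_{\theta}:=\theta_{1}g_{1}+\theta_{2}g_{2}:K\times I\times J\to\R$, noting that $|\bar g_{\theta}|\leq|\theta_{1}|+|\theta_{2}|=\|\theta\|_{1}\leq1$, so $\bar g_{\theta}$ is an admissible payoff function. Because $u$ has a public signal, a pure strategy profile in $\Gamma(u,g)$ is a pair of maps $(\sigma,\tau)$ from the common signal set $S$ into $I$ and $J$, with payoff vector $\big(\E_{u}[g_{1}(k,\sigma(s),\tau(s))],\E_{u}[g_{2}(k,\sigma(s),\tau(s))]\big)$; applying the functional $\langle\theta,\cdot\rangle$, passing to the supremum over the convex hull, and then optimizing $\sigma(s),\tau(s)$ separately for each $s$ (an attained maximum over the finite set $I\times J$) gives
\[
h_{F(u,g)}(\theta)=\sup_{\sigma,\tau}\E_{u}\big[\bar g_{\theta}(k,\sigma(s),\tau(s))\big]=\sum_{s}\Prob_{u}(s)\,\max_{(i,j)\in I\times J}\ \sum_{k}u(k\,|\,s)\,\bar g_{\theta}(k,i,j).
\]
The right-hand side is exactly $\val(u,\hat g_{\theta})$, where $\hat g_{\theta}$ is the single-agent (player 1) problem with action set $I\times J$ (player 2 inactive) and payoff $\hat g_{\theta}(k,(i,j))=\bar g_{\theta}(k,i,j)$: in $\Gamma(u,\hat g_{\theta})$ player 1 observes the public signal $s$, updates to $u(\cdot\,|\,s)$, and picks $(i,j)$ optimally. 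Since $\hat g_{\theta}\in\CG$, the same computation for $v$ yields $|h_{F(u,g)}(\theta)-h_{F(v,g)}(\theta)|=|\val(u,\hat g_{\theta})-\val(v,\hat g_{\theta})|\leq\dist(u,v)$; taking the supremum over $\|\theta\|_{1}\leq1$ and combining with the first step gives $\dist_{\max}^{H}(F(u,g),F(v,g))\leq\dist(u,v)$.

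The conceptual heart, and the only real obstacle, is the second displayed equality: recognizing that, because a public signal lets both players condition identically and hence optimize state-by-state, the support function of $F(u,g)$ in direction $\theta$ coincides with the value of a genuine zero-sum (single-agent) Bayesian game whose payoff is the $\ell_{1}$-unit-ball combination $\theta_{1}g_{1}+\theta_{2}g_{2}$. Once this is in place, the remainder is routine support-function calculus and the definition of $\dist$. (This is also exactly the feature that fails for $\dist_{NZS}$: there the sets $\mathrm{Eq}(u,g)$ are not convex, so no support-function reduction is available.)
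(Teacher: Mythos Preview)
Your proof is correct and takes a genuinely different route from the paper's.

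The paper argues at the level of the underlying sets $\CW_{L}(u)=\{q_{1}'.u.q_{2}':q_{1}',q_{2}'\in\CQ(L)\}$. It invokes Theorem~\ref{thm1} to obtain a garbling $q$ with $\sum_{k,c}|q.v(k,c)-u(k,c)|\leq\dist(u,v)$, builds the public-signal structure $w(k,c,d)=q.v(k,c)\1_{d=c}$, and then shows $q_{1}'.w.q_{2}'\in\conv\CW_{L}(v)$ by decomposing $q$ as a mixture over deterministic relabellings $s:C\to\N$ and writing $w=\sum_{s}\lambda_{s}\,s.v.s$. This is a hands-on total-variation argument, structurally parallel to the proofs of Propositions~\ref{pro33} and~\ref{pro35}.

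Your argument bypasses Theorem~\ref{thm1} and the combinatorial decomposition entirely. The key observation---that under a public signal the two players can be merged into a single decision maker choosing $(i,j)$, so that $h_{F(u,g)}(\theta)=\val(u,\hat g_{\theta})$ for a single-agent payoff $\hat g_{\theta}\in\CG_{1}\subseteq\CG$---reduces the Hausdorff bound to the very definition of $\dist$ via the support-function/dual-norm identity. This is shorter, needs no auxiliary machinery, and in fact proves the slightly sharper bound $\dist_{\max}^{H}(F(u,g),F(v,g))\leq\dist_{1}(u,v)$, consistent with Section~\ref{sec:Special-cases} where the public-signal space is identified with $\Delta(\Delta(K))$. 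What the paper's approach buys is uniformity: the same ``approximate $u$ by a garbled $v$'' template handles all three cases (conditionally independent, public, one-sided full information), whereas your support-function trick exploits a feature specific to public signals and would not transfer to Propositions~\ref{pro33} or~\ref{pro35}.
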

\begin{proof}
We assume that both players receive the same signals in each information
structure $u$, $v$. From the proof of proposition \ref{pro33},
it is sufficient to show that for each $L$ and $u'=q'_{1}.u.q'_{2}$
with $q'_{1},q'_{2}$ in $\CQ(L)$, the distance, with respect to
$\|.\|=\|.\|_{1}$, from $u'$ to the set $\conv\CW_{L}(v)$ is at
most $\dist(u,v)$.

We have the existence of garblings $q$, $q'$ such that $\|q.v-u.q'\|\leq\dist(u,v)$,
and projecting over states and signals for player 1 gives : $\sum_{k,c}|q.v(k,c)-u(k,c)|\leq\dist(u,v)$.
Define now the information structures $w$ and $u'$ by: 
\[
\forall k,c,d,\;w(k,c,d)=q_{.}v(k,c)\1_{d=c}\;\;{\rm and}\;\;v'=q'_{1}.w.q'_{2}.
\]

We have $\|u-w\|=\sum_{k,c}|w(k,c)-u(k,c)|\leq\dist(u,v)$, so $w$
is a good approximation of $u$. Now, $\|u'-v'\|\leq\|u-w\|\leq\dist(u,v)$,
and it is enough to show that $v'$ belongs to $\conv\CW_{L}(v)$.

We first assume that the set of signals having positive probability
under $v$ is finite and written $C$. The garbling $q$ selects,
independently for each signal $c$ in $C$, an element of $\N$ with
probability $q(c)$. Define now for any map $s:C\rightarrow\N$, 
\[
\lambda_{s}=\prod_{c\in C}q(c)(s(c)).
\]
$\lambda_{s}$ is the probability that for each $c$ in $C$, $q$
chooses $s(c)$ if the signal is $c$. Since $C$ is finite there
are countably many such maps $s$, and $\sum_{s}\lambda_{s}=1$. Define
also $v_{s}=s.v.s$ in $\CW_{\infty}(v)$ as: $(k,c',d')$ is selected
according to $v$ then the signals received by the players are $c=s(c'),d=s(d')$.
We have:
\begin{eqnarray*}
\sum_{s}\lambda_{s}v_{s}(k,c,d) & = & \sum_{s}\lambda_{s}\sum_{c'}v(k,c')\1_{c=d=s(c')},\\
 & = & \1_{c=d}\sum_{c'}v(k,c')\sum_{s}\lambda_{s}\1_{c=s(c')},\\
 & = & \1_{c=d}\sum_{c'}v(k,c')q(c')(c),\\
 & = & w(k,c,d).
\end{eqnarray*}
Then $v'=q'_{1}.w.q'_{2}=\sum_{s}\lambda_{s}\,q'_{1}.v_{s}.q'_{2}$.
For each $s$, $q'_{1}.v_{s}.q'_{2}$ belongs to $\CW_{L}(v)$, hence
$v'\in\conv\CW_{L}(v)$.

Finally, if the set of signals in $v$ is infinite, we consider a
sequence $(v_{n})$ with finite support such that $\|v-v_{n}\|\to0$,
and the corresponding sequence $(v'_{n})$ will have a limit point
$v'$ in $\conv\CW_{L}(v)$ satisfying $\|u'-v'\|\leq\dist(u,v)$. 
\end{proof}
We also have an analog of the proposition for information structures
with one-sided full information.
\begin{prop}
\label{pro35} If the information structures $u$ and $v$ have one-sided
full information, the distance between $F(u,g)$ and $F(v,g)$ is
at most $\dist(u,v)$. 
\end{prop}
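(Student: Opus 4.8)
The plan is to combine the description of $\dist$ on $\CU_{OF}$ recalled earlier for one‑sided full information with the support‑function description of the Hausdorff distance. Write $\tilde{u}\in\Delta(\Delta(K))$ for the law of Player~2's posterior belief about $k$ induced by $u\in\CU_{OF}$, so that $\dist(u,v)=\sup_{f\in D_{1}}\big(\int f\,d\tilde{u}-\int f\,d\tilde{v}\big)$ with $D_{1}=\{f:\Delta(K)\to\R\ |\ af(p)-bf(q)\le\|ap-bq\|_{1}\text{ for all }a,b\ge0,\ p,q\in\Delta(K)\}$. Note $D_{1}$ is symmetric: $f\in D_{1}$ implies $-f\in D_{1}$, by swapping the roles of $(a,p)$ and $(b,q)$, so the formula is symmetric in $u,v$. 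Since $F(u,g)$ and $F(v,g)$ are convex compact subsets of $\R^{2}$ and the dual of $\|\cdot\|_{\max}$ on $\R^{2}$ is $\|\cdot\|_{1}$, we have $\dist_{\max}^{H}(F(u,g),F(v,g))=\sup_{\|\theta\|_{1}\le1}\big|h_{F(u,g)}(\theta)-h_{F(v,g)}(\theta)\big|$, where $h_{A}(\theta)=\max_{z\in A}\langle\theta,z\rangle$. It thus suffices to bound each $\big|h_{F(u,g)}(\theta)-h_{F(v,g)}(\theta)\big|$ by $\dist(u,v)$.

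The next step is to compute $h_{F(u,g)}(\theta)$ for $u\in\CU_{OF}$. A refinement of Player~1's signal leaves $F(u,g)$ unchanged, since the payoff vector of any pure strategy on the refined signal is an average, over her private randomization, of payoff vectors of pure strategies on the coarse signal; so we may assume Player~1 observes exactly $(k,d)$, while Player~2 observes $d$, equivalently his belief $p$. For a fixed $\theta$ the optimal pure strategies decouple across Player~2's types: Player~2 of belief $p$ picks $j\in J$, and Player~1, knowing $(k,p)$, picks $i\in I$ maximizing $\langle\theta,g(k,\cdot,j)\rangle$. Writing $G_{\theta}(k,j)=\max_{i\in I}\langle\theta,g(k,i,j)\rangle$, this gives
\[
h_{F(u,g)}(\theta)=\int_{\Delta(K)}f_{\theta}(p)\,d\tilde{u}(p),\qquad f_{\theta}(p):=\max_{j\in J}\sum_{k\in K}p_{k}\,G_{\theta}(k,j),
\]
and likewise for $v$ with $\tilde{v}$.

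The key step is to check $f_{\theta}\in D_{1}$ whenever $\|\theta\|_{1}\le1$. Because $|g_{1}|,|g_{2}|\le1$ we have $|G_{\theta}(k,j)|\le\|\theta\|_{1}\le1$, so $f_{\theta}$ is a maximum of affine maps $p\mapsto\sum_{k}G_{\theta}(k,j)p_{k}$ with coefficients in $[-1,1]$. For scalars $a,b\ge0$ and $p,q\in\Delta(K)$, taking $j^{*}$ attaining the maximum defining $f_{\theta}(p)$ yields $af_{\theta}(p)-bf_{\theta}(q)\le\sum_{k}G_{\theta}(k,j^{*})(ap_{k}-bq_{k})\le\|ap-bq\|_{1}$, hence $f_{\theta}\in D_{1}$. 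Therefore $\big|h_{F(u,g)}(\theta)-h_{F(v,g)}(\theta)\big|=\big|\int f_{\theta}\,d\tilde{u}-\int f_{\theta}\,d\tilde{v}\big|\le\dist(u,v)$ (using $f_{\theta},-f_{\theta}\in D_{1}$), and taking the supremum over $\|\theta\|_{1}\le1$ gives $\dist_{\max}^{H}(F(u,g),F(v,g))\le\dist(u,v)$.

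I expect the main work to be the support‑function computation: justifying the reduction to Player~1 observing exactly $(k,d)$, the type‑by‑type decoupling, and handling countable type spaces so that $F(u,g)$ is the closed convex hull and the displayed identity is an honest equality of suprema. The step $f_{\theta}\in D_{1}$ is then short. A route through the garbling template of Propositions~\ref{pro33}--\ref{pro34} seems more delicate here, because $\CU_{OF}$ is not stable under garbling a single player's signal.
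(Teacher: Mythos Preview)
Your argument is correct and takes a genuinely different route from the paper. The paper follows the garbling template of Propositions~\ref{pro33}--\ref{pro34}: using Theorem~\ref{thm1} to obtain $q,q'$ with $\|v.q-q'.u\|\le\dist(u,v)$, projecting to $(k,d)$, building an intermediate structure $w(k,c,d)=v.q(k,d)\,u(c|k,d)$ with $\|u-w\|\le\dist(u,v)$, and then writing $w$ as a convex combination $\sum_{s}\lambda_{s}v_{s}$ where each $v_{s}$ lies in $\CW_{\infty}(v)$ (Player~1 in $v_{s}$ simulates her signal via $u(\cdot|k,d)$, which is legitimate precisely because she knows $(k,d)$). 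This yields an explicit transport of any feasible payoff in $\Gamma(u,g)$ into $\conv\CW_{L}(v)$.

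Your approach bypasses the coupling construction entirely: you combine the Renault--Venel formula $\dist(u,v)=\sup_{f\in D_{1}}(\int f\,d\tilde u-\int f\,d\tilde v)$ with the support-function representation of the Hausdorff distance, and then check that each support function $h_{F(u,g)}(\theta)=\int f_{\theta}\,d\tilde u$ with $f_{\theta}\in D_{1}$. This is shorter and conceptually transparent---it explains \emph{why} the constant is exactly~$1$, since $D_{1}$ is precisely the class of functions that arises. The trade-offs are that you invoke the Renault--Venel characterization (stated but not proved here), and you do not obtain the constructive mapping between feasible payoffs that the paper's proof supplies. Your closing remark that the garbling template is ``more delicate'' is somewhat pessimistic: the paper shows it goes through once one works with the $(k,d)$-marginal and reconstructs Player~1's signal conditionally.
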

\begin{proof}
Consider $u$, $v$ with one-sided full information for player 1.
As in the previous proofs, it is sufficient to prove that for each
$L$ and $q'_{1},q'_{2}$ in $\CQ(L)$, the distance between $u'=q'_{1}.u.q'_{2}$
and $\conv\CW_{L}(v)$ is at most $\dist(u,v)$, and we assume w.l.o.g.
that the set of signals for player 2 under $v$ is the finite set
$D$. There exist maps $f:\N\rightarrow K$, $h:\N\rightarrow D$
such that under $v$, if player 1's signal is $c$ then the state
is $k=f(c)$ and the signal of player 2 is $d=h(c)$. What is important
here is the law induced over states and signals for player 2, and
one might think as if player 1's signal was the pair (state, signal
for player 2).

By Theorem \ref{thm1} we have the existence of garblings $q$, $q'$ such that
$\|v.q-q'.u\|\leq\dist(u,v)$, and projecting gives : $\sum_{k,d}|v.q(k,d)-u(k,d)|\leq\dist(u,v)$.
Define now the information structures $w$ and $v'$ by: 
\[
\forall k,c,d,w(k,c,d)=v.q(k,d)\;u(c|k,d)\;\;{\rm and}\;\;v'=q'_{1}.w.q'_{2}.
\]

$\|u'-v'\|\leq\|u-w\|=\sum_{k,c,d}u(c|k,d)|u(k,d)-v.q(k,d)|\leq\dist(u,v)$,
and the goal is now to write $v'$ as an element of $\conv\CW_{L}(v)$.

Given a map $s:D\rightarrow\N$, define:

- the information structure $v_{s}$ in $\CW_{\infty}(v)$ by: first
select $(k,c',d')$ according to $v$, then the state is $k$, the
signal for player 2 is $s(d')$ and the signal of player 1 is chosen
according to $u$ conditionally on the state being $f(c')$ and the
signal of player 2 being $s(h(c'))$, i.e. player 1 receives the signal
$c$ with probability $u(c|f(c'),s(h(c'))$.

- the probability $\lambda_{s}=\prod_{d\in D}q(d)(s(d))$ that for
each $d$ in $D$, $q$ chooses $s(d)$ if the signal is $d$.

\begin{eqnarray*}
\sum_{s}\lambda_{s}v_{s}(k,c,d) & = & \sum_{s}\lambda_{s}\sum_{c',d'}v(k,c',d')\1_{d=s(d')}u(c|f(c'),s(h(c')),\\
 & = & \sum_{c',d'}v(k,c',d')\sum_{s}\lambda_{s}\1_{d=s(d')}u(c|k,d),\\
 & = & \sum_{d'}v(k,d')\;q(d')(d)\;u(c|k,d),\\
 & = & w(k,c,d).
\end{eqnarray*}
For each $s$, $q'_{1}.v_{s}.q'_{2}$ belongs to $\CW_{L}(v)$ and
we obtain $v'=\sum_{s}\lambda_{s}q'_{1}.v_{s}.q'_{2}\in\conv\CW_{L}(v)$,
completing the proof. 
\end{proof}

\subsection{Application to common interest games}

Propositions \ref{pro33}, \ref{pro34} and \ref{pro35} directly
imply the following result for games with common interests, where
the best equilibrium payoff and the best feasible (in pure, mixed
or correlated strategies) payoff coincide.
\begin{cor}
\label{cor34} Consider a non zero-sum payoff function $g=(g_{1},g_{1})$
with common payoffs for the players, and information structures $u$
and $v$ satisfying the assumptions of at least one of the propositions
\ref{pro33}, \ref{pro34}, \ref{pro35}. The best equilibrium payoff
for player 1 in $\Gamma(u,g)$ is at most $3\dist(u,v)$ from the
best equilibrium payoff for player 1 in $\Gamma(v,g)$. 
\end{cor}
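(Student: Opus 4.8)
The plan is to reduce the statement to two elementary facts and then invoke the previous subsection. \emph{Fact (i):} in a game with common interests, the best (Bayesian Nash) equilibrium payoff for player $1$ in $\Gamma(u,g)$ equals the best feasible payoff, i.e. $\max\{p_{1}:p\in F(u,g)\}$. \emph{Fact (ii):} the map sending a non-empty compact set $A\subseteq\R^{2}$ to $\max\{a_{1}:a\in A\}$ is $1$-Lipschitz for the Hausdorff distance $\dist_{\max}^{H}$. Granting these, Propositions \ref{pro33}, \ref{pro34}, \ref{pro35} bound $\dist_{\max}^{H}(F(u,g),F(v,g))$ by $3\dist(u,v)$ (indeed by $\dist(u,v)$ in the public-signal and one-sided-full-information cases), and combining with (i) and (ii) gives $\big|\max\{p_{1}:p\in F(u,g)\}-\max\{p_{1}:p\in F(v,g)\}\big|\le 3\dist(u,v)$, which is the assertion.

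For Fact (i), fix $g=(g_{1},g_{1})$ and $u$, and assume without loss of generality that every signal of $u$ has positive probability. Among the pure strategy profiles of $\Gamma(u,g)$ (pairs of maps $C\to I$ and $D\to J$), pick $(\sigma^{*},\tau^{*})$ maximizing the ex-ante expected common payoff $\E[g_{1}]$; the induced distribution $w^{*}\in\CW_{L}(u)$ then maximizes the linear functional $\langle g_{1},\cdot\rangle$ over $\CW_{L}(u)$, hence over $\conv\CW_{L}(u)$, so the resulting common payoff equals $\max\{p_{1}:p\in F(u,g)\}$. This profile is a Bayesian Nash equilibrium: since payoffs are common and player $2$'s strategy is held fixed, changing $\sigma^{*}$ only at a single type $c$ of player $1$ changes the ex-ante payoff by $u(c)$ times the change in that type's interim expected payoff; as $u(c)>0$, an interim-profitable deviation would strictly increase $\E[g_{1}]$, contradicting optimality, and symmetrically for player $2$. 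Since every equilibrium payoff is feasible, the best equilibrium payoff for player $1$ therefore equals $\max\{p_{1}:p\in F(u,g)\}$, and likewise for $v$.

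For Fact (ii), if $A,B$ are non-empty compact subsets of $\R^{2}$ with $\dist_{\max}^{H}(A,B)\le\delta$ and $a\in A$ attains $\max\{a_{1}:a\in A\}$, there is $b\in B$ with $\|a-b\|_{\max}\le\delta$, whence $\max\{b_{1}:b\in B\}\ge b_{1}\ge a_{1}-\delta$; exchanging the roles of $A$ and $B$ yields $|\max\{a_{1}:a\in A\}-\max\{b_{1}:b\in B\}|\le\delta$. Applying this with $A=F(u,g)$, $B=F(v,g)$, $\delta=3\dist(u,v)$ (using whichever of Propositions \ref{pro33}--\ref{pro35} applies under the stated hypotheses on $u$ and $v$) completes the argument.

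The only genuinely delicate point is Fact (i): one must check that the ex-ante-optimal pure profile is an actual equilibrium, which is exactly where the common-interest hypothesis and the harmless assumption that all signals have positive probability are used; the remainder is bookkeeping with the description of $F(u,g)$ via $\conv\CW_{L}(u)$ from the preceding subsection and the triangle-type estimate for support functions.
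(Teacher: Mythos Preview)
Your proposal is correct and follows the same route as the paper: the paper's one-line proof simply asserts that in a common-interest game the best equilibrium payoff coincides with the best feasible payoff, and then invokes Propositions \ref{pro33}--\ref{pro35} to bound the Hausdorff distance between $F(u,g)$ and $F(v,g)$. You have merely made explicit the two ingredients the paper leaves implicit---your Fact (i) on best equilibrium $=$ best feasible and your Fact (ii) on the $1$-Lipschitz continuity of $A\mapsto\max\{a_1:a\in A\}$---so the arguments are essentially identical.
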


\subsection{Application to repeated non-zero-sum games}

The propositions also imply that the sets of feasible and individually
rationals payoffs of $\Gamma(u,g)$ and $\Gamma(v,g)$ are closely
related if $\dist(u,v)$ is small. In the following corollary, we
denote by $m_{1}(u,g)=\val(u,g_{1})$ and $m_{2}(u,g)=-\val(u,-g_{2})$
the respective independent minmax of the players in the game $\Gamma(u,g)$.
\begin{cor}
\label{cor33} Consider a non zero-sum payoff function $g$, and information
structures $u$ and $v$ satisfying the assumptions of at least one
of the propositions \ref{pro33}, \ref{pro34}, \ref{pro35}. Let
$x=(x_{1},x_{2})$ be a feasible payoff in the game $\Gamma(u,g)$
satisfying $x_{i}\geq m_{i}(u,g)+4\dist(u,v)$ for $i=1,2$. Then
$x$ is $3\dist(u,v)$-close to a payoff which is feasible and individually
rational in $\Gamma(v,g)$. 
\end{cor}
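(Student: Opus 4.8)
Write $\varepsilon=\dist(u,v)$ and $g=(g_1,g_2)$. The plan is to first transfer the feasible payoff $x$ from $\Gamma(u,g)$ to $\Gamma(v,g)$ using the bounds on feasible sets proved above, and then to observe that the transferred payoff is automatically individually rational because the independent minmax levels are themselves $1$-Lipschitz in $\dist$.

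First, whichever of Propositions \ref{pro33}, \ref{pro34}, \ref{pro35} applies to the pair $(u,v)$, we obtain $\dist_{\max}^{H}\big(F(u,g),F(v,g)\big)\le 3\varepsilon$ (this is the weakest of the three bounds, so it holds in all three cases). Since $F(v,g)$ is a nonempty compact subset of $\R^{2}$ and $x\in F(u,g)$, the Hausdorff bound yields a point $y=(y_1,y_2)\in F(v,g)$ with $\|x-y\|_{\max}\le 3\varepsilon$; in particular $y_i\ge x_i-3\varepsilon$ for $i=1,2$. This $y$ is the candidate payoff.

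Next I would verify that $y$ is individually rational in $\Gamma(v,g)$, i.e. $y_i\ge m_i(v,g)$ for $i=1,2$. The key remark is that $g_1$ and $-g_2$ are maps $K\times I\times J\to[-1,1]$, hence legitimate zero-sum payoff functions in $\CG$, so by the very definition of value-based distance,
\[
\big|m_1(u,g)-m_1(v,g)\big|=\big|\val(u,g_1)-\val(v,g_1)\big|\le\varepsilon,\qquad \big|m_2(u,g)-m_2(v,g)\big|=\big|\val(u,-g_2)-\val(v,-g_2)\big|\le\varepsilon.
\]
Combining this with the hypothesis $x_i\ge m_i(u,g)+4\varepsilon$ and with $y_i\ge x_i-3\varepsilon$ gives, for $i=1,2$,
\[
y_i\ \ge\ x_i-3\varepsilon\ \ge\ m_i(u,g)+\varepsilon\ \ge\ m_i(v,g).
\]
Thus $y$ is feasible and individually rational in $\Gamma(v,g)$ and $\|x-y\|_{\max}\le 3\varepsilon=3\dist(u,v)$, which is exactly the assertion of the corollary.

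I do not expect a genuine obstacle here once Propositions \ref{pro33}--\ref{pro35} are in hand; the only care needed is in bookkeeping the constants (the $4\varepsilon$ slack in the hypothesis, minus the $3\varepsilon$ displacement of $x$, leaves precisely the $\varepsilon$ needed to absorb the shift of the minmax levels) and in invoking the Hausdorff inequality in the correct direction, namely approximating the given point $x\in F(u,g)$ by a point of $F(v,g)$.
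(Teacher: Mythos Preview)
Your proof is correct and follows essentially the same route as the paper's own argument: use the Hausdorff bound from Propositions \ref{pro33}--\ref{pro35} to find $y\in F(v,g)$ within $3\varepsilon$ of $x$, then use $|m_i(u,g)-m_i(v,g)|\le\varepsilon$ (from the definition of $\dist$) together with the $4\varepsilon$ slack in the hypothesis to conclude $y_i\ge m_i(v,g)$. The paper's proof is terser but identical in substance.
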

\begin{proof}
By definition of the value-based distance, $|m_{i}(u,g)-m_{i}(v,g)|\leq\dist(u,v)$
for each player $i$. By one of the propositions, $x$ is $3\dist(u,v)$-close
to a payoff $y$ in $F(v,g)$. For each $i=1,2$, $y_{i}\geq x_{i}-3\dist(u,v)\geq m_{i}(u,g)+\dist(u,v)\geq m_{i}(v,g)$,
so $y$ is individually rational in the game $\Gamma(v,g)$. 
\end{proof}
\noindent \textbf{Remark:} From the proofs of the propositions, one
can see that the conclusion of corollary \ref{cor33} holds more generally
as soon as : $u$ has conditionally independent signals and $v$ is
arbitrary, or $v$ has one-sided full information and $u$ is arbitrary,
or both $u$ and $v$ have public signals. Same for corollary \ref{cor34}
with the conclusion: the best equilibrium payoff in $\Gamma(v,g)$
is at least the best equilibrium payoff in $\Gamma(u,g)$ minus $3\dist(u,v)$. 

\subsection{Counterexample}

We finally provide a counter-example to the three propositions and
the two corollaries when under $u$ player 1 is not more informed
and the signals are conditionally dependent.
\begin{example}
$K=C=D=\{0,1\}$. Under $u$, the signals $c$ and $d$ are uniformly
and independently distributed, and $k=c+d\mod2$. Under $v$, $c=d=0$,
and $k$ is uniformly selected. It is easy to see that $\dist(u,v)=0$.
However, consider a non zero-sum payoff function $h$ with $g_{1}(k,i,j)=g_{2}(k,i,j)=1$
if $k=i+j\mod2$ and $g_{1}(k,i,j)=g_{2}(k,i,j)=-1$ otherwise. The
payoff $x=(1,1)$ is feasible in the game $\Gamma(u,g)$, but no payoff
with positive coordinates is feasible in $\Gamma(v,g)$. 
\end{example}

\section{Other properties of characterization\label{sec:Other-properties-of}}

\subsection{Comparison of information}

The following Corollary is a direct corollary of Theorem \ref{thm1}: 
\begin{cor}
\label{cor: d charact comparison}For all information structures $u,v$,
\begin{equation}
\sup_{g\in\CG}\left(\val(v,g)-\val(u,g)\right)=\inf_{u'\preceq u,v'\succeq v}\|u'-v'\|.\label{eq:Thm1 as comparison}
\end{equation}
\end{cor}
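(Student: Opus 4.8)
The plan is to derive Corollary \ref{cor: d charact comparison} from the identity established in Theorem \ref{thm1}, namely
\[
\sup_{g\in\CG}\left(\val(v,g)-\val(u,g)\right)=\min_{q_{1},q_{2}\in\CQ}\|q_{1}.u-v.q_{2}\|,
\]
by showing that the right-hand side of this identity equals the right-hand side of \eqref{eq:Thm1 as comparison}, i.e. $\inf_{u'\preceq u,\,v'\succeq v}\|u'-v'\|$. I would prove the two inequalities separately.

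\textbf{Step 1 ($\geq$).} For any garblings $q_1,q_2\in\CQ$, set $u'=q_1.u$ and $v'=v.q_2$. Deteriorating player 1's signal makes the structure worse for player 1, so $q_1.u\preceq u$; deteriorating player 2's signal makes the structure better for player 1, so $v.q_2\succeq v$. (Both facts follow from the monotonicity of the value with respect to information, already invoked in the proof of Theorem \ref{thm1}, combined with the definition of $\succeq$.) Hence each pair $(q_1.u,\,v.q_2)$ is admissible in the infimum on the right of \eqref{eq:Thm1 as comparison}, giving $\inf_{u'\preceq u,v'\succeq v}\|u'-v'\|\leq\min_{q_1,q_2}\|q_1.u-v.q_2\|$, which is the desired direction after substituting the Theorem \ref{thm1} identity.

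\textbf{Step 2 ($\leq$).} Conversely, take any $u'\preceq u$ and $v'\succeq v$. By the Blackwell-type characterization of Corollary \ref{cor1} applied to $u\succeq u'$, there exist garblings $q_1,\hat q_1\in\CQ$ with $q_1.u=u'.\hat q_1$; similarly $v'\succeq v$ yields $q_2,\hat q_2$ with $v.q_2=v'.\hat q_2$. Since a garbling applied to player 2's side is a stochastic (and in particular $\|\cdot\|$-nonexpansive) operation — $\|w.\hat q - w''.\hat q\|\leq\|w-w''\|$ for any structures $w,w''$, exactly the estimate used in the proof of Proposition \ref{pro33} — and also $\|u'.\hat q_1 - v'.\hat q_1\|\le \|u'-v'\|$, I would like to conclude $\|q_1.u-v.q_2\|\leq\|u'-v'\|$. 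The clean way: choose the garblings so that they act on the same signal space and compare $u'.\hat q_1.q_2'$ with $v'.\hat q_1.q_2'$ for a common continuation; more simply, note $\|q_1.u - v.q_2\| = \|u'.\hat q_1 - v'.\hat q_2\|$ and, by first garbling both $u'$ and $v'$ by a product of $\hat q_1$ and $\hat q_2$ on the respective (disjoint) signal coordinates, bound this by $\|u'-v'\|$. Taking the infimum over $(u',v')$ then gives $\min_{q_1,q_2}\|q_1.u-v.q_2\|\leq\inf_{u'\preceq u,v'\succeq v}\|u'-v'\|$.

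\textbf{Main obstacle.} The delicate point is Step 2: matching up the auxiliary garblings $\hat q_1,\hat q_2$ so that the total variation distance is genuinely non-increased. Because $u'\preceq u$ only gives $q_1.u=u'.\hat q_1$ (a garbling of player 1's signal in $u'$, not an equality of structures), one must be careful that post-composing with the same continuation garbling on each side preserves the inequality — this is fine since garbling is a linear, norm-1 (hence nonexpansive in $\|\cdot\|$) operator, but it requires stating the nonexpansiveness lemma explicitly and checking the two players' garblings commute (they act on independent coordinates $c$ and $d$). Once that bookkeeping is in place the corollary is immediate. I would present it in three or four lines, citing Theorem \ref{thm1}, Corollary \ref{cor1}, and the nonexpansiveness of garbling.
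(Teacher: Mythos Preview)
Your Step 1 is correct and is exactly the intended argument. In Step 2 there is a slip that undermines the argument as written: Corollary~\ref{cor1} applied to $v'\succeq v$ gives garblings with $\hat q_2.v' = v.q_2$, where $\hat q_2$ acts on \emph{player 1's} signal, not $v.q_2 = v'.\hat q_2$ as you wrote. With your placement, both $\hat q_1$ and $\hat q_2$ would be player-2 garblings, so they are \emph{not} on ``disjoint coordinates'' and the product construction you sketch does not apply. With the correction, $\hat q_1$ (player-2 side) and $\hat q_2$ (player-1 side) commute, and one can indeed close the argument via
\[
\bigl\|\hat q_2.(q_1.u)-(v.q_2).\hat q_1\bigr\|
=\bigl\|(\hat q_2.u').\hat q_1-(\hat q_2.v').\hat q_1\bigr\|
\leq\|u'-v'\|,
\]
noting that the left-hand side is of the form $\|q_1'.u-v.q_2'\|$ for suitable composed garblings.

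That said, there is a much shorter route for Step 2 that bypasses Corollary~\ref{cor1} and all the garbling bookkeeping. For any $u'\preceq u$, $v'\succeq v$, and any $g\in\CG$, the very definition of $\succeq$ gives $\val(v,g)\leq\val(v',g)$ and $\val(u,g)\geq\val(u',g)$, so
\[
\val(v,g)-\val(u,g)\;\leq\;\val(v',g)-\val(u',g)\;\leq\;\|u'-v'\|,
\]
the last inequality being the elementary bound from footnote~\ref{fn:The-inequality-is}. Taking $\sup_{g}$ and then $\inf_{u',v'}$ yields the $\leq$ direction in one line. This is what the paper means by ``direct corollary of Theorem~\ref{thm1}'': Theorem~\ref{thm1} is used only for Step~1, and Step~2 needs nothing beyond the definition of $\succeq$ and the total-variation bound on value differences.
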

This observation provides an additional interpretation to the characterization
from Theorem \ref{thm1}: the maximum gain from replacing information structure
$u$ by $v$ is equal to the minimum total variation distance between
the set of information structures that are worse than $u$ and those
that are better than $v$.

\subsection{Optimal strategies}

Another useful property of Theorem \ref{thm1} is that the garblings in equation
\ref{eq:d characterization} can be used to transform optimal strategies in one structure
to approximately optimal strategies on another structure. Moreover,
the transformation does not depend on the particular payoffs considered.

More precisely, we say that strategy $\sigma$ of player 1 is $\varepsilon$-optimal
in game $g$ on structure $u$ if for any strategy $\tau$ of player
2, the payoff of $\sigma$ against $\tau$ is no smaller than $\val\left(u,g\right)-\varepsilon$.
We similarly define $\varepsilon$-optimal strategies for player 2.

For a strategy $\sigma\in\CQ$ and a garbling $q_{1}\in\CQ$, define
$\sigma.q_{1}$ in $\CQ$ by $\sigma.q_{1}(c)=\sum_{c^{\prime}}q_{1}(c{}^{\prime}|c)\sigma(c{}^{\prime})$
for each signal $c$: player 1 receives signal $c$, then selects
$\chi$ according to $q_{1}(c)$ and plays $\sigma(\chi)$. We have 
\begin{prop}
\label{prop: optimal strategies}Fix $u,v$ in $\CU$ and let $q_{1}$
and $q_{2}$ in $\CQ$ satisfy 
\[
\sup_{g\in\CG}\left(\val(v,g)-\val(u,g)\right)=\|q_{1}.u-v.q_{2}\|.
\]
Then, if $\sigma$ is an optimal strategy in $g$ on $v$, then $\sigma.q_{1}$
is a $2\dist\left(u,v\right)$-optimal strategy in $g$ on $u$. Similarly,
if $\tau$ is optimal for player 2 in $g$ on $u$, then $\tau.q_{2}$
is $2\dist\left(u,v\right)$-optimal for player 2 in $g$ on $v$. 
\end{prop}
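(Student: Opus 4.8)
The plan is to reduce the statement to two facts already used in the proof of Theorem~\ref{thm1}: the identification of a garbling with a strategy, so that the payoff of player~$1$ in $\Gamma(w,g)$ when the players use $\rho_1,\rho_2\in\CQ$ equals $\gamma_{w,g}(\rho_1,\rho_2)=\langle g,\rho_1.w.\rho_2\rangle$; and the fact that a garbling, being a stochastic kernel, does not expand the $\ell_1$-norm, whence $\|\rho_1.\mu.\rho_2\|\le\|\mu\|$ and, since all payoffs lie in $[-1,1]$, $|\langle g,\rho_1.\mu.\rho_2\rangle|\le\|\mu\|$ for every signed measure $\mu$ on $K\times\N\times\N$ and every $g\in\CG$.

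The first step is to record two associativity identities: for every information structure $w$,
\[
(\sigma.q_1).w=\sigma.(q_1.w)\qquad\text{and}\qquad (w.q_2).\tau=w.(\tau.q_2),
\]
which say that garbling a player's signal and then letting her follow a fixed strategy is the same as letting her follow that strategy directly on the garbled structure; here $\tau.q_2$ is the player~$2$ strategy that, on signal $d$, draws $\chi$ from $q_2(\cdot\mid d)$ and plays $\tau(\chi)$, consistently with the notation introduced before the statement. Both identities are checked by writing each side out coordinatewise from the definitions of $q.w$, $w.q$ and $\sigma.q_1$; combined with the (trivial) commutation of player~$1$'s and player~$2$'s operations, they also give $\langle g,\sigma.(v.q_2).\tau\rangle=\gamma_{v,g}(\sigma,\tau.q_2)$ and $\langle g,(\sigma.q_1).u.\tau\rangle=\langle g,\sigma.(q_1.u).\tau\rangle$. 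A by-product is that $\sigma.q_1$ and $\tau.q_2$ are genuine strategies, which is what lets me invoke optimality against them.

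With these in hand, fix $g\in\CG$. Suppose first that $\sigma$ is optimal for player~$1$ in $g$ on $v$. For an arbitrary strategy $\tau$ of player~$2$,
\[
\gamma_{u,g}(\sigma.q_1,\tau)=\langle g,\sigma.(q_1.u).\tau\rangle\ \ge\ \langle g,\sigma.(v.q_2).\tau\rangle-\|q_1.u-v.q_2\|=\gamma_{v,g}(\sigma,\tau.q_2)-\|q_1.u-v.q_2\|,
\]
where the inequality is the contraction property applied to $\mu=q_1.u-v.q_2$. Since $\sigma$ is optimal, $\gamma_{v,g}(\sigma,\tau.q_2)\ge\val(v,g)$; by hypothesis $\|q_1.u-v.q_2\|=\sup_{g'\in\CG}(\val(v,g')-\val(u,g'))\le\dist(u,v)$; and $\val(v,g)\ge\val(u,g)-\dist(u,v)$. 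Hence $\gamma_{u,g}(\sigma.q_1,\tau)\ge\val(u,g)-2\dist(u,v)$ for every $\tau$, so $\sigma.q_1$ is $2\dist(u,v)$-optimal in $g$ on $u$. The claim for $\tau.q_2$ is the mirror image: if $\tau$ is optimal for player~$2$ in $g$ on $u$, then for any strategy $\sigma'$ of player~$1$,
\[
\gamma_{v,g}(\sigma',\tau.q_2)=\langle g,\sigma'.(v.q_2).\tau\rangle\ \le\ \langle g,\sigma'.(q_1.u).\tau\rangle+\|v.q_2-q_1.u\|=\gamma_{u,g}(\sigma'.q_1,\tau)+\|v.q_2-q_1.u\|\le\val(u,g)+\dist(u,v),
\]
and $\val(u,g)\le\val(v,g)+\dist(u,v)$ yields $\gamma_{v,g}(\sigma',\tau.q_2)\le\val(v,g)+2\dist(u,v)$.

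There is no deep obstacle here; the main point requiring care is the bookkeeping in the associativity identities — keeping track of which coordinate is garbled on which side and of the order of composition — together with the routine verification that $\sigma.q_1$ and $\tau.q_2$ are legitimate strategies, so that the optimality of $\sigma$ (respectively $\tau$) may be applied against them.
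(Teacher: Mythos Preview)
Your proof is correct and follows essentially the same chain of inequalities as the paper: rewrite $\gamma_{u,g}(\sigma.q_1,\tau)$ as $\langle g,\sigma.(q_1.u).\tau\rangle$, compare with $\langle g,\sigma.(v.q_2).\tau\rangle$ via the total variation bound, identify the latter as $\gamma_{v,g}(\sigma,\tau.q_2)$, and conclude by optimality of $\sigma$ together with $\|q_1.u-v.q_2\|\le\dist(u,v)$ and $\val(v,g)\ge\val(u,g)-\dist(u,v)$. The only difference is that you spell out the associativity identities and the contraction argument more explicitly than the paper does.
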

\begin{proof}
Consider $\sigma$ an optimal strategy in $g$ on $v$, and $\tau$
in $\CQ$ arbitrary. The payoff induced by $(\sigma.q_{1},\tau)$
in the game $(u,g)$ is: 
\begin{eqnarray*}
\sum_{k,c,d}u(k,c,d)g(k,\sigma.q_{1}(c),\tau(d)) & = & \sum_{k,c,d}q_{1}.u(k,c,d)g(k,\sigma(c),\tau(d)),\\
 & \geq & \sum_{k,c,d}v.q_{2}(k,c,d)g(k,\sigma(c),\tau(d))-\|q_{1}.u-v.q_{2}\|,\\
 & \geq & \sum_{k,c,d}v(k,c,d)g(k,\sigma(c),\tau.q_{2}(d))-d(u,v),\\
 & \geq & \val(v,g)-\dist(u,v),\\
 & \geq & \val(u,g)-2\dist(u,v).
\end{eqnarray*}
The dual property is proved similarly. 
\end{proof}

\section{Uncountable information structures\label{sec:Uncountable-information-structur}}

\subsection{Uncountable information structures}

Let $K$ be a compact metric space endowed with its Borel $\sigma$-algebra
$\mathcal{B}(K)$. An (uncountable) information structure over $K$
is defined as a pair of measurable spaces $(S_{1},\mathcal{A}_{1})$
and $(S_{2},\mathcal{A}_{2})$ and a probability measure $u$ over
the product measurable space $K\times S_{1}\times S_{2}$. Here, $S_{1}$
denotes the set of signals observed by Player 1, and $S_{2}$ the
set of signals observed by Player $2$.

Due to the construction of \citet{mertzam:85} of the universal belief
space $\Omega=K\times\Theta_{1}\times\Theta_{2}$, one can associate
to $u$ a unique consistent probability $P\in\Pi\subset\Delta(\Omega)$,
such that the induced canonical information structure on the universal
belief space (in which the signal of player $i$ is her hierarchy
of beliefs in $\Theta_{i}$) is equivalent to $u$ in the sense that
\[
\forall g\in\CG,\val(u,g)=\val(P,g).
\]
(see e.g. Theorem III.2.4 and Propositions III.4.2 and III.4.4 in
\citet{mertens_sorin_zamir_2015}. In particular, the value is well-defined
for any consistent probability and for any information structure.)
Thus, in order to capture all the equivalence classes of arbitrary
measurable information structures (possibly uncountable), it is sufficient
to consider canonical ones, with signals being a hierarchy of beliefs.
However, considering consistent probabilities is sometimes confusing
in the sense that when modifying a canonical information structure
with a garbling from $\Theta_{i}$ to $\Theta_{i}$, the modified
information structure is in $\Delta(\Omega)$ but not in general a
consistent probability. Instead of working with information structures
in $\Delta(\Omega)$ (as it is often done in \citet{mertens_sorin_zamir_2015}
or \citet{gossner_value_2001}) and in order to avoid any confusions,
we find convenient to use the set ${\CU}_{c}$ of information structures
with signals in $[0,1]$. This set is sufficiently large in the sense
that any measurable information structure is equivalent to an element
of ${\CU}_{c}$ (since it is equivalent to a canonical information
structure in $\Pi$, which is itself equivalent to an element of ${\CU}_{c}$
by relabeling the signals), and is stable by transformations based
on garblings from $[0,1]$ to $[0,1]$.

Define ${\CU}_{c}=\Delta(K\times[0,1]\times[0,1])$ as the set of
information structures where signals of the players are in $[0,1]$.
Let ${\CU}_{c}^{*}$ be the set of equivalence classes for the relation
\[
u\simeq v\Leftrightarrow\forall g\in\CG,\val(u,g)=\val(v,g).
\]
$\CU^{*}$ can be identified as the subset of ${\CU}_{c}^{*}$ made
by equivalence classes which contain an element with countable support.

Let $\Phi$ be the map from ${\CU}_{c}$ to $\Pi$ which associates
to the information structure $u$ the induced consistent probability
over the (Mertens-Zamir) universal belief space $\Omega$. This map
is constant over equivalence classes since the value $\val(u,g)$
depends only on $\Phi(u)$ for every $g\in\CG$, and thus $\Phi$
induces a map from ${\CU}_{c}^{*}$ to $\Pi$, also denoted $\Phi$.
Moreover, $\Phi$ is one-to-one since the value functions of finite
games separate points in $\Pi$ (see Theorem 12 in \citet{gossner_value_2001}).
We claim that this map is also onto. Indeed, for every $\mu\in\Pi$,
one can associate a canonical information structure. Then, since the
set of coherent belief hierarchies $\Theta_{i}$ is compact metric,
there exists a Borel isomorphism $\psi_{i}$ from $\Theta_{i}$ to
$[0,1]$\footnote{According to the Borel isomorphism theorem, all the uncountable standard
Borel spaces are Borel isomorphic. Standard Borel spaces being Borel
subsets of complete separable metric spaces, compact metric spaces
are standard Borel spaces.}, which allows to relabel the signals. The probability $u$ over $(k,\psi_{1}(\theta_{1}),\psi_{2}(\theta_{2}))$
is such that $\Phi(u)=\mu$ by construction and this proves the claim.

\subsection{Total variation norm and topologies on garblings\label{subsection_TV}}

Let us recall a few facts about the total variation distance. \p
Let $X$ denote a compact metric space, $\mathcal{F}_{1}(X)$ denote
the set of (Borel) measurable functions from $X$ to $[-1,1]$ and
$\mathcal{C}_{1}(X)$ denote the set of continuous functions from
$X$ to $[-1,1]$. Then, the total variation norm satisfies 
\begin{align*}
\forall\mu,\nu\in\Delta(X),\;\|\mu-\nu\|_{TV} & =\sup_{A\in\mathcal{B}(X)}|\mu(A)-\nu(A)|\\
 & =\frac{1}{2}\sup_{f\in\mathcal{F}_{1}(X)}\int_{X}fd(\mu-\nu)\\
 & =\frac{1}{2}\sup_{f\in\mathcal{C}_{1}(X)}\int_{X}fd(\mu-\nu)
\end{align*}
The first equality is the definition, the second is a classical exercise,
and the third is obtained by using a standard approximation argument
(e.g. by using that continuous functions are dense in $L^{1}(\mu+\nu)$
and a truncation argument). \p We also have a useful formula. Let
$\pi$ be another positive $\sigma$-finite measure such that $\mu$
and $\nu$ are absolutely continuous with respect to $\pi$ (e.g.,
choose $\pi=\mu+\nu$). Then 
\[
2\|\mu-\nu\|_{TV}=\int_{X}\left|\frac{d\mu}{d\pi}-\frac{d\nu}{d\pi}\right|d\pi.
\]
Indeed, the right-hand side is an upper bound since for any $f\in\mathcal{F}_{1}(X)$
\[
\int_{X}fd(\mu-\nu)=\int_{X}f\left(\frac{d\mu}{d\pi}-\frac{d\nu}{d\pi}\right)d\pi,
\]
and one may choose $f=sgn(\frac{d\mu}{d\pi}-\frac{d\nu}{d\pi})$ to
prove that it is attained. \p Remark: When $X$ is countable, we
have $\|\mu-\nu\|_{TV}=\frac{1}{2}\|\mu-\nu\|_{1}$. \p Let now $X$
and $Y$ denote compact metric spaces and fix $\mu\in\Delta(X)$.
Let $\mathcal{T}$ denote the set of equivalence classes of transitions
probabilities from $X$ to $Y$ with respect to the relation of equality
$\mu$ almost everywhere. Recall that a transition probability is
simply a Borel measurable map from $X$ to $\Delta(Y)$ when $\Delta(Y)$
is endowed with the weak topology. \p $\mathcal{T}$ is a compact
metrizable space when endowed with the topology $\tau_{\mu}$ defined
by 
\[
q_{n}\rightarrow q\Leftrightarrow\forall f\in\mathcal{L},\int_{X}\left(\int_{Y}f(x,y)dq^{n}(x)(y)\right)d\mu(x)\rightarrow\int_{X}\left(\int_{Y}f(x,y)dq(x)(y)\right)d\mu(x),
\]
where $\mathcal{L}$ denotes the set of bounded measurable maps on
$X\times Y$ that are continuous with respect to the second variable
(see e.g. Theorems 2.2 and 2.3 in \citet{balder1988generalized}).

\subsection{Extension of Theorem 1}

Let $C=D=[0,1]$, and let us consider the class of game payoffs $\CG_{c}$
of continuous maps from $K\times C\times D$ to $[-1,1]$. Recall
that from Proposition III.4.2 in \citet{mertens_sorin_zamir_2015},
for every $u\in{\CU}_{c}$ and every $g\in\CG_{c}$, the value $\val(u,g)$
is well defined and both players have optimal strategies. \p Let
$\CQ_{c}$ denote the set of measurable functions from $[0,1]$ to
$\Delta([0,1])$. \p Given $u\in{\CU}_{c}$ and $q\in\CQ_{c}$, define
$q.u$ and $u.q$ as before, i.e. as the unique probability measures
that satisfy for every bounded measurable function $f$: 
\[
\int_{K\times C\times D}fd(q_{1}.u)=\int_{K\times C\times D}\left(\int_{C}f(k,c',d)dq_{1}(c'|c)\right)du(k,c,d).
\]
\[
\int_{K\times C\times D}fd(u.q_{2})=\int_{K\times C\times D}\left(\int_{D}f(k,c,d')dq_{2}(d'|d)\right)du(k,c,d).
\]

\begin{thm}
\textbf{\label{thm:(Extension-of-Theorem}(Extension of Theorem \ref{thm1})}
For all $u,v\in{\CU}_{c}$: 
\begin{align*}
\sup_{g\in\CG}(\val(v,g)-\val(u,g)) & =\sup_{g\in\CG_{c}}(\val(v,g)-\val(u,g))\\
 & =2\min_{q_{1},q_{2}\in\CQ_{c}}\|q_{1}.u-v.q_{2}\|_{TV}.
\end{align*}
\end{thm}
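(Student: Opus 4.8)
The plan is to follow the proof of Theorem~\ref{thm1} in Appendix~A, substituting for the finite/countable apparatus the compact topology on garblings recalled in Appendix~\ref{subsection_TV}. The argument has three parts: (i) reduce $\sup_{g\in\CG}$ to $\sup_{g\in\CG_c}$; (ii) sandwich $\sup_{g\in\CG_c}(\val(v,g)-\val(u,g))$ between $\sup_{g}\inf_{q_1,q_2}\langle g,\,v.q_2-q_1.u\rangle$ and $\inf_{q_1,q_2}\sup_{g}\langle g,\,v.q_2-q_1.u\rangle$; and (iii) collapse the sandwich with a minimax theorem. For (i), one inequality comes from embedding finite games into continuous ones without changing any value: given $g\in\CG$ with action sets $I,J$, fix distinct points $(x_i)_{i\in I},(y_j)_{j\in J}$ in $[0,1]$, take piecewise-linear continuous partitions of unity $(\phi_i)$, $(\psi_j)$ on $[0,1]$ with $\phi_i(x_{i'})=\1_{i=i'}$ and $\psi_j(y_{j'})=\1_{j=j'}$, and set $\tilde g(k,x,y)=\sum_{i,j}g(k,i,j)\phi_i(x)\psi_j(y)\in\CG_c$. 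Since $\sum_i\phi_i\equiv1$, every strategy of $\Gamma(u,\tilde g)$ is payoff-equivalent to the strategy $c\mapsto\big(\int\phi_i\,d\sigma(\cdot\mid c)\big)_{i\in I}$ of $\Gamma(u,g)$, and $c\mapsto\sum_i\tau(i\mid c)\,\delta_{x_i}$ realizes any strategy of $\Gamma(u,g)$ inside $\Gamma(u,\tilde g)$; hence $\val(u,\tilde g)=\val(u,g)$ and likewise for $v$, so $\sup_{\CG}\le\sup_{\CG_c}$. The reverse inequality is a uniform-continuity argument: for continuous $g$ and $\varepsilon>0$, a sufficiently fine finite grid $G_1\times G_2\subseteq[0,1]^2$ has the property that confining a player to their grid (via a measurable nearest-point map) changes the value of $\Gamma(u,g)$ by at most $\varepsilon$ — lowering it if we restrict the maximizer, raising it if we restrict the minimizer — so $|\val(u,g)-\val(u,g|_{G_1\times G_2})|\le2\varepsilon$, with $g|_{G_1\times G_2}\in\CG$, and similarly for $v$; letting $\varepsilon\to0$ gives $\sup_{\CG_c}\le\sup_{\CG}$.

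For (ii), observe that since both signal spaces and both action spaces are $[0,1]$, the strategy $\mathrm{Id}$ (``play your signal'') is available to each player in every $\Gamma(w,g)$ with $g\in\CG_c$, and $\gamma_{w,g}(\mathrm{Id},q_2)=\langle g,\,w.q_2\rangle$, $\gamma_{w,g}(q_1,\mathrm{Id})=\langle g,\,q_1.w\rangle$. Playing $\mathrm{Id}$ for player~1 in $\Gamma(v,g)$ and for player~2 in $\Gamma(u,g)$ yields
\[
\val(v,g)-\val(u,g)\ \ge\ \inf_{q_2\in\CQ_c}\langle g,\,v.q_2\rangle-\sup_{q_1\in\CQ_c}\langle g,\,q_1.u\rangle\ =\ \inf_{q_1,q_2\in\CQ_c}\langle g,\,v.q_2-q_1.u\rangle .
\]
Conversely, by monotonicity of the value with respect to information $\val(v,g)-\val(u,g)\le\val(v.q_2,g)-\val(q_1.u,g)$ for all $q_1,q_2$, and the saddle-point bound of footnote~\ref{fn:The-inequality-is} (using that optimal strategies exist, by Proposition~III.4.2 of \citet{mertens_sorin_zamir_2015}, since $g$ is continuous) gives $\val(v.q_2,g)-\val(q_1.u,g)\le 2\|v.q_2-q_1.u\|_{TV}=\sup_{g'\in\CG_c}\langle g',\,v.q_2-q_1.u\rangle$, the last equality being the variational formula for the total variation norm with continuous test functions recalled in Appendix~\ref{subsection_TV}. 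Taking suprema over $g\in\CG_c$ and infima over $(q_1,q_2)$ produces the announced sandwich, whose right end equals $2\inf_{q_1,q_2\in\CQ_c}\|v.q_2-q_1.u\|_{TV}$.

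For (iii), I would apply the variant of Sion's theorem used for Theorem~\ref{thm1} (\citet{mertens_sorin_zamir_2015}, Proposition~I.1.3) to the bi-affine payoff $h(g,(q_1,q_2))=\langle g,\,v.q_2-q_1.u\rangle$. The maximizer's set $\CG_c$ is convex; for the minimizer, let $\mu_1=\marg_C u$ and $\mu_2=\marg_D v$ and pass from $\CQ_c$ to the spaces $\mathcal T_1,\mathcal T_2$ of $\mu_i$-a.e.\ equivalence classes of transition probabilities $[0,1]\to\Delta([0,1])$, which by Appendix~\ref{subsection_TV} are convex, compact and metrizable for the topologies $\tau_{\mu_i}$ (and on which $h$ only depends). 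The one computation that matters is $\tau_{\mu_1}$-continuity of $q_1\mapsto\langle g,\,q_1.u\rangle$ for fixed continuous $g$: disintegrating $u$ over its first signal (possible since $[0,1]$ is standard Borel),
\[
\langle g,\,q_1.u\rangle=\int_{[0,1]}\Big(\int_{[0,1]}F(c',c)\,dq_1(c\mid c')\Big)\,d\mu_1(c'),
\]
where $F(c',c)=\E_u\big[\,g(k,c,d)\mid c'\,\big]$ is bounded, measurable in $c'$ and continuous in $c$, i.e.\ $F\in\mathcal L$, so the right side is exactly the pairing defining $\tau_{\mu_1}$; symmetrically $q_2\mapsto\langle g,\,v.q_2\rangle$ is $\tau_{\mu_2}$-continuous. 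Hence $h(g,\cdot)$ is continuous on the compact convex set $\mathcal T_1\times\mathcal T_2$, Sion's theorem gives $\sup_g\inf_{q_1,q_2}h=\min_{q_1,q_2}\sup_g h$ (attained, being a continuous function on a compact set), the sandwich of (ii) collapses, and choosing measurable representatives of the optimal equivalence classes turns the $\min$ over $\mathcal T_1\times\mathcal T_2$ into a $\min$ over $\CQ_c$. Together with (i), $\sup_{g\in\CG}(\val(v,g)-\val(u,g))=\sup_{g\in\CG_c}(\val(v,g)-\val(u,g))=2\min_{q_1,q_2\in\CQ_c}\|q_1.u-v.q_2\|_{TV}$.

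The main obstacle is step~(iii), and within it recognizing that $\langle g,\,q.u\rangle$ is literally a $\tau_\mu$-pairing against an element of $\mathcal L$: this is where continuity of $g$ in the action variable enters, and where one must disintegrate $u$ carefully and keep track of the passage between measurable garblings in $\CQ_c$ and their $\mu$-a.e.\ classes in $\mathcal T_i$. By contrast, the approximation arguments in step~(i) and the monotonicity and saddle-point estimates in step~(ii) are straightforward transcriptions of the countable case.
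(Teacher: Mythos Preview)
Your proposal is correct and follows essentially the same approach as the paper: the sandwich via the $\mathrm{Id}$ strategy and monotonicity, followed by Sion's theorem on the Balder-type compact topology $\tau_\mu$ on equivalence classes of transition probabilities, is exactly the paper's argument for the second equality. The one noteworthy difference is in the first equality: for $\sup_{\CG_c}\le\sup_{\CG}$ you discretize the continuous game via uniform continuity and a fine grid, whereas the paper invokes Proposition~III.4.2 of \citet{mertens_sorin_zamir_2015} to extract $\varepsilon$-optimal strategies with finite support and restricts the payoff to those supports; both work, and your partition-of-unity embedding for the reverse inequality $\sup_{\CG}\le\sup_{\CG_c}$ is an explicit step the paper leaves implicit.
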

As for Theorem \ref{thm1}, the result of \citet{peski_comparison_2008} can be extended
to arbitrary information structures. 
Precisely, for any $u,v\in\CU_c$,  write $u\succeq v$ if for all $g\in\CG$, $\val(u,g)-\val(v,g)\geq 0$.
By the monotony of the value with respect to information in zero-sum
games, we have $q.u\preceq u\preceq u.q$ for each garbling $q \in \CQ_c$ and 
Theorem \ref{thm:(Extension-of-Theorem} implies the following result. 
\begin{cor} \label{cor} For all $u,v\in \CU_c$,  $u\succeq v$
$\Longleftrightarrow$ there exists $q_{1}$, $q_{2}$ in $\CQ_c$ s.t.
$q_{1}.u=v.q_{2}$. \end{cor}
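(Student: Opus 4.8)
The plan is to read the corollary off Theorem~\ref{thm:(Extension-of-Theorem}, using in addition only the monotonicity of the value of a zero-sum game with respect to information. First I would record a trivial reformulation of the defining condition: since $\val(u,g)=\val(v,g)$ whenever $g$ is constant, the quantity $\sup_{g\in\CG}(\val(v,g)-\val(u,g))$ is always $\ge 0$, so $u\succeq v$ holds precisely when $\sup_{g\in\CG}(\val(v,g)-\val(u,g))=0$. For the implication ``$\Longleftarrow$'' I would then argue directly: if $q_1,q_2\in\CQ_c$ satisfy $q_1.u=v.q_2$, then, by the monotonicity of the value with respect to information used in Part~2 of the proof of Theorem~\ref{thm1} (deteriorating the maximizer's signal cannot raise the value, deteriorating the minimizer's signal cannot lower it), for every $g\in\CG$ we have $\val(q_1.u,g)\le\val(u,g)$ and $\val(v,g)\le\val(v.q_2,g)$; combining these with the factorization yields $\val(v,g)\le\val(v.q_2,g)=\val(q_1.u,g)\le\val(u,g)$ for all $g\in\CG$, i.e.\ $u\succeq v$.

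For the implication ``$\Longrightarrow$'' I would invoke Theorem~\ref{thm:(Extension-of-Theorem}. Assuming $u\succeq v$, the reformulation gives $\sup_{g\in\CG}(\val(v,g)-\val(u,g))\le 0$, while the theorem identifies this supremum with $2\min_{q_1,q_2\in\CQ_c}\|q_1.u-v.q_2\|_{TV}$, a nonnegative number; hence $\min_{q_1,q_2\in\CQ_c}\|q_1.u-v.q_2\|_{TV}=0$. Since the theorem asserts that the infimum over garblings is \emph{attained}, there exist $q_1,q_2\in\CQ_c$ with $\|q_1.u-v.q_2\|_{TV}=0$; because the total variation norm separates probability measures on $K\times[0,1]\times[0,1]$, this means exactly $q_1.u=v.q_2$, which closes the equivalence.

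I do not expect a genuine obstacle here, since all of the analytic work — the minimax duality underlying Theorem~\ref{thm1}, the reduction from continuous payoff functions $\CG_c$ to finite ones in $\CG$, and the compactness of $\CQ_c$ under the topology $\tau_\mu$ that guarantees attainment of the minimum — is already packaged into Theorem~\ref{thm:(Extension-of-Theorem}. The one point that deserves to be flagged explicitly is precisely the attainment of the minimum: without it, the hypothesis $u\succeq v$ would only deliver a sequence $(q_1^n,q_2^n)$ in $\CQ_c$ with $\|q_1^n.u-v.q_2^n\|_{TV}\to 0$, which would not by itself provide an exact factorization $q_1.u=v.q_2$. Everything else is routine.
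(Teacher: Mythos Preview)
Your proof is correct and follows exactly the approach the paper takes: the corollary is presented as an immediate consequence of Theorem~\ref{thm:(Extension-of-Theorem} together with the monotonicity $q.u\preceq u\preceq u.q$, and your write-up simply spells out the two implications that the paper leaves implicit. Your emphasis on the attainment of the minimum is well placed, since that is precisely what delivers the exact factorization $q_1.u=v.q_2$.
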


\begin{rem}
It is important to recall that signals sets can be replaced by any
other standard Borel space (by relabeling) for the following equality
to hold 
\[
\sup_{g\in\CG}(\val(v,g)-\val(u,g))=2\min_{(q_{1},q_{2})\in\CQ_{1}\times\CQ_{2}}\|q_{1}.u-v.q_{2}\|_{TV},
\]
where $\CQ_{1}$ and $\CQ_{2}$ are sets of transition probabilities
between appropriate spaces of signals. The same remark applies to Corollary \ref{cor}.
\end{rem}

\subsubsection{Proof of Theorem \ref{thm:(Extension-of-Theorem}}

We first prove the second equality. Note that for any $g\in\CG_{c}$,
the duality product $\langle g,u\rangle=\int_{K\times C\times D}gdu$
is well defined and corresponds to the payoff $\gamma_{u,g}(Id,Id)$,
where $Id\in\CQ_{c}$ is the strategy that plays with probability
one the signal received. A straightforward computation leads to 
\[
\gamma_{u,g}(q_{1},q_{2})=\langle g,q_{1}.u.q_{2}\rangle.
\]
Consequently, 
\[
\val(u,g)=\max_{q_{1}\in\CQ_{c}}\min_{q_{2}\in\CQ_{c}}\langle g,q_{1}.u.q_{2}\rangle=\min_{q_{2}\in\CQ_{c}}\max_{q_{1}\in\CQ_{c}}\langle g,q_{1}.u.q_{2}\rangle.
\]
Since both players can play the $Id$ strategy in $\Gamma({u,g})$,
we have for all $u\in{\CU}_{c}$ and $g\in\CG_{c}$ that 
\[
\inf_{q_{2}\in\CQ_{c}}\langle g,u.q_{2}\rangle\leq\val(u,g)\leq\sup_{q_{1}\in\CQ_{c}}\langle g,q_{1}.u\rangle.
\]
Notice also that for all $u$, $v$ in ${\CU}_{c}$, 
\[
2\|u-v\|_{TV}=\sup_{g\in\CG_{c}}\langle g,u-v\rangle.
\]
Fix $u$, $v$ in ${\CU}_{c}$. For $g\in\CG_{c}$, we have 
\[
\inf_{q_{1},q_{2}\in\CQ_{c}}\langle g,v.q_{2}-q_{1}.u\rangle\leq\val(v,g)-\val(u,g),
\]
so 
\begin{equation}
\sup_{g\in\CG_{c}}\left(\val(v,g)-\val(u,g)\right)\geq\sup_{g\in\CG_{c}}\inf_{q_{1},q_{2}\in\CQ_{c}}\langle g,v.q_{2}-q_{1}.u\rangle.\label{eeq2}
\end{equation}
For $g\in\CG_{c}$, $q_{1},q_{2}\in\CQ_{c}$, by monotony of the value
with respect to information, we have 
\[
\val(v.q_{2},g)\geq\val(v,g)\text{ and }\val(u,g)\geq\val(q_{1}.u,g),
\]
so that 
\[
\val(v,g)-\val(u,g)\leq d\left(q_{1}.u,v.q_{2}\right)\leq2\|q_{1}.u-v.q_{2}\|_{TV}.
\]
Hence 
\begin{align}
\sup_{g\in\CG_{c}}\left(\val(v,g)-\val(u,g)\right) & \leq\inf_{q_{1},q_{2}\in\CQ_{c}}2\|q_{1}.u-v.q_{2}\|_{TV}\\
 & =\inf_{q_{1},q_{2}\in\CQ_{c}}\sup_{g\in\CG_{c}}\langle g,v.q_{2}-q_{1}.u\rangle.\label{eeq3}
\end{align}
We are now going to show that 
\begin{equation}
\sup_{g\in\CG_{c}}\inf_{q_{1},q_{2}\in\CQ_{c}}\langle g,v.q_{2}-q_{1}.u\rangle=\inf_{q_{1},q_{2}\in\CQ_{c}}\sup_{g\in\CG_{c}}\langle g,v.q_{2}-q_{1}.u\rangle.\label{eeq1}
\end{equation}
Together with inequalities \ref{eeq2} and \ref{eeq3}, it will imply
the result. \p To prove \ref{eeq1}, we will apply a variant of Sion's
theorem (see e.g., Proposition I.1.3 in \citet{mertens_sorin_zamir_2015})
to the zero-sum game with strategy spaces $\CG_{c}$ for the maximizer,
$\CQ_{c}^{2}$ for the minimizer, and payoff $h(g,(q_{1},q_{2}))=\langle g,v.q_{2}-q_{1}.u\rangle$.
\p At first, note that the strategy sets $\CG_{c}$ and $\CQ_{c}^{2}$
are convex, and that $h$ is bilinear. \p To avoid any confusion,
let us denote $\CQ_{c}^{2}=Q_{1}\times Q_{2}$. Let $u_{C}$ denote
the marginal distribution of $u$ over the set $C$ of signals of
player $1$ and $v_{D}$ the marginal distribution of $v$ over the
set $D$ of signals of player $2$. Note that the payoff function
of the above game do not change if we replace $q_{1}$ by another
map which is equal $u_{C}$-almost everywhere to $q_{1}$, and a similar
remark holds for $q_{2}$. Therefore, we may consider that $Q_{1}$
is the set of equivalence classes of transitions w.r.t to equality
$u_{C}$ almost everywhere, endowed with the topology $\tau_{u_{C}}$
defined in section \ref{subsection_TV}, and that $Q_{2}$ is the
set of equivalence classes of transitions w.r.t to equality $v_{D}$
almost everywhere, endowed with the topology $\tau_{v_{D}}$. The
set $Q_{1}\times Q_{2}$ is thus metric compact for the associated
product topology.

It remains to check that for every $g\in\CG_{c}$, the map $(q_{1},q_{2})\rightarrow h(g,(q_{1},q_{2}))$
is continuous. Note first that is is the sum of a function of $q_{1}$
and a function of $q_{2}$. \p Let $\nu:C\rightarrow\Delta(K\times D)$
denote a version of the conditional law of $(k,d)$ given $c$ under
the probability $u$. Then, we have 
\begin{align*}
\langle g,q_{1}.u\rangle & =\int_{K\times C\times D}\left(\int_{C}g(k,c',d)dq_{1}(c'|c)\right)du(k,c,d)\\
 & =\int_{C}\int_{K\times D}\left(\int_{C}g(k,c',d)dq_{1}(c'|c)\right)d\nu(k,d|c)du_{C}(c)\\
 & =\int_{C}\int_{C}\left(\int_{K\times D}g(k,c',d)d\nu(k,d|c)\right)dq_{1}(c'|c)du_{C}(c)
\end{align*}
and the above expression is continuous with respect to $q_{1}$ using
the definition of the topology $\tau_{u_{C}}$ since using bounded
convergence, the function 
\[
f(c,c'):=\int_{K\times D}g(k,c',d)d\nu(k,d|c),
\]
is continuous with respect to the second variable. \p A similar argument
holds for $q_{2}$ and this concludes the proof. \p It remains to
prove the first equality. Let $g\in\CG_{c}$ such that $\val(u,g)>\val(v,g)+3\varepsilon$
for some $\varepsilon>0$. According to Proposition III.4.2 in \citet{mertens_sorin_zamir_2015}
(up to relabeling the signals), Player 1 has an $\varepsilon$-optimal
strategy in the game $\Gamma(u,g)$ taking values almost surely in
some finite set $A_{1}\subset C$. Similarly, player 2 has an $\varepsilon$-optimal
strategy in the game $\Gamma(v,g)$ taking values almost surely in
some finite set $A_{2}\subset D$. It results that the game with action
spaces $A_{1},A_{2}$ and payoff $\hat{g}=g|_{K\times A_{1}\times A_{2}}$
is such that 
\[
\val(u,\hat{g})\geq\val(v,\hat{g})+\varepsilon.
\]
The conclusion follows.

\subsection{Extensions of the results of section \ref{sec:Applications}}

We now explain how to extend the 5 Propositions of section \ref{sec:Applications}
to uncountable information structures. \p At first, note that Propositions
\ref{prop:diameter} and \ref{prop. Info complements} hold with the same proofs. Then, Proposition \ref{prop: SA=00003D00003D00003D00003D00003D00003D00003D00003D00003D00003DGames}
can be generalized by using the same ideas and a few technical adaptations
described below. \p Let us start with general properties of the single-agent
distance $\dist_{1}$. Let ${\CG}_{1,c}=\{g:K\times C\rightarrow[-1,1],|\,g\text{ continuous }\}$,
which can be identified with a subset of $\CG_{c}$ made of functions
that do not depend on $d$. Then, as in the second part of the above
proof of Theorem \ref{thm1}, we have 
\[
\forall u,v\in{\CU}_{c},\;\dist_{1}(u,v)=\sup_{g\in\CG_{1}}|\val(u,g)-\val(v,g)|=\sup_{g\in\CG_{1,c}}|\val(u,g)-\val(v,g)|.
\]
Recall that $C=D=[0,1]$ and define the set of single-agent information
structures as ${\CU}_{1,c}=\Delta(K\times C)$. Note that given $u\in{\CU}_{c}$,
$\marg_{K\times C}u\in{\CU}_{1,c}$. Define for $u',v'\in{\CU}_{1,c}$,
$\dist'_{1}(u',v')=\sup_{g\in{\CG}_{1,c}}|\val(v',g)-\val(u',g)|$.
For any $u,v\in{\CU}_{c}$, 
\begin{equation}
\dist_{1}(u,v)=\dist'_{1}(u',v')=\max\{\min_{q\in\CQ_{c}}2\|u'-q.v'\|_{TV},\min_{q\in\CQ_{c}}2\|q.u'-v'\|_{TV}\}\label{eeq:d_1_marg}
\end{equation}
where $u'=\marg_{K\times C}u$, $v'=\marg_{K\times C}v$, and $q.u'$
is the probability defined by 
\[
\int_{K\times C}fd(q.u')=\int_{K\times C}\int_{C}f(k,c')dq(c'|c)du'(k,c),
\]
for all bounded measurable function $f$. This result can be obtained
by mimicking (and simplifying) the arguments of the proof of Theorem
\ref{thm1}.

\subsubsection{Extension of Proposition \ref{prop: SA=00003D00003D00003D00003D00003D00003D00003D00003D00003D00003DGames}}
\begin{prop}
Suppose that $u,v\in\overline{\CU}$ are two information structures
with conditionally independent information such that $\marg_{K\times D}u=\marg_{K\times D}v$.
Then, $\dist\left(u,v\right)=\dist_{1}\left(u,v\right)$. 
\end{prop}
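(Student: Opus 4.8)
The plan is to transpose the proof of the countable version of this proposition to the measure-theoretic setting: sums become integrals, the $\ell^1$-norm becomes the total variation norm $\|\cdot\|_{TV}$, and conditional probabilities become regular conditional distributions. Since every measurable information structure is equivalent to one in $\CU_c$, I may assume $u,v\in\CU_c$ (so $C=D=[0,1]$). The two tools are Theorem~\ref{thm:(Extension-of-Theorem}, which gives $\dist(u,v)=\max\{2\min_{q_1,q_2\in\CQ_c}\|u.q_2-q_1.v\|_{TV},\ 2\min_{q_1,q_2\in\CQ_c}\|v.q_2-q_1.u\|_{TV}\}$, and the single-agent formula (\ref{eeq:d_1_marg}). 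First I would fix notation: put $\mu=\marg_K u$; the hypothesis $\marg_{K\times D}u=\marg_{K\times D}v$ forces $\marg_K v=\mu$, and disintegrating this common measure along $K$ produces a measurable family $(u^2_k)_{k\in K}$ in $\Delta(D)$, defined for $\mu$-almost every $k$, which is simultaneously the $D$-conditional of $u$ and of $v$. Conditional independence then yields measurable families $(u^1_k),(v^1_k)$ in $\Delta(C)$ with $u=\int_K (u^1_k\otimes u^2_k)\,\mu(dk)$ and $v=\int_K (v^1_k\otimes u^2_k)\,\mu(dk)$. Given garblings $q_1$ on $C$ and $q_2$ on $D$, I write $\hat v^1_k=\int_C v^1_k(d\alpha)\,q_1(\cdot\mid\alpha)$ and $\tilde u^2_k=\int_D u^2_k(d\beta)\,q_2(\cdot\mid\beta)$, so that $q_1.v=\int_K(\hat v^1_k\otimes u^2_k)\,\mu(dk)$ and $u.q_2=\int_K(u^1_k\otimes \tilde u^2_k)\,\mu(dk)$.

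The core step is the inequality $\|u.q_2-q_1.v\|_{TV}\ge\|u-q_1.v\|_{TV}$ for all $q_1,q_2$. To prove it I would, for each $k$, take the dominating finite measures $\pi^1_k=u^1_k+\hat v^1_k$ on $C$ and $\pi^2_k=u^2_k+\tilde u^2_k$ on $D$, the jointly measurable densities $f^1_k=du^1_k/d\pi^1_k$, $h^1_k=d\hat v^1_k/d\pi^1_k$, $g^2_k=du^2_k/d\pi^2_k$, $\tilde g^2_k=d\tilde u^2_k/d\pi^2_k$, and use the representation $2\|u.q_2-q_1.v\|_{TV}=\int_K\mu(dk)\int_C\pi^1_k(dc)\int_D\pi^2_k(dd)\,\big|f^1_k(c)\tilde g^2_k(d)-h^1_k(c)g^2_k(d)\big|$. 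Writing the integrand with $(k,c)$ fixed as $(h^1_k(c)-f^1_k(c))\,g^2_k(d)+f^1_k(c)\,(g^2_k(d)-\tilde g^2_k(d))$ and applying $|x+y|\ge|x|+\mathrm{sgn}(x)\,y$ pointwise in $d$ with the constant sign $\mathrm{sgn}(h^1_k(c)-f^1_k(c))$ (legitimate since $g^2_k\ge0$), the cross-term integrates to $\mathrm{sgn}(h^1_k(c)-f^1_k(c))\,f^1_k(c)\int_D(g^2_k-\tilde g^2_k)\,d\pi^2_k=0$, because garbling preserves total mass; this leaves $\int_D\pi^2_k(dd)\,|\cdots|\ge|h^1_k(c)-f^1_k(c)|$. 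Integrating in $c$ and $k$, and using that a common factor ($u^2_k$) and a common mixing measure ($\mu$) drop out of the total variation, gives $2\|u.q_2-q_1.v\|_{TV}\ge\int_K 2\|u^1_k-\hat v^1_k\|_{TV}\,\mu(dk)=2\|u-q_1.v\|_{TV}$. With $q_2=\mathrm{Id}$ the reverse inequality is trivial, so $\min_{q_1,q_2}\|u.q_2-q_1.v\|_{TV}=\min_{q_1}\|u-q_1.v\|_{TV}$; running the same computation with $u$ and $v$ exchanged (the hypotheses are symmetric in $u,v$) gives $\min_{q_1,q_2}\|v.q_2-q_1.u\|_{TV}=\min_{q_1}\|v-q_1.u\|_{TV}$.

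Finally, writing $u'=\marg_{K\times C}u=\int_K u^1_k\,\mu(dk)$ and $v'=\marg_{K\times C}v=\int_K v^1_k\,\mu(dk)$, the same tensorization identity gives $\|u-q_1.v\|_{TV}=\int_K\|u^1_k-\hat v^1_k\|_{TV}\,\mu(dk)=\|u'-q_1.v'\|_{TV}$ and likewise $\|v-q_1.u\|_{TV}=\|q_1.u'-v'\|_{TV}$, hence $\min_{q_1}\|u-q_1.v\|_{TV}=\min_q\|u'-q.v'\|_{TV}$ and $\min_{q_1}\|v-q_1.u\|_{TV}=\min_q\|q.u'-v'\|_{TV}$. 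Combining with Theorem~\ref{thm:(Extension-of-Theorem} and (\ref{eeq:d_1_marg}) then gives $\dist(u,v)=\max\{2\min_q\|u'-q.v'\|_{TV},\,2\min_q\|q.u'-v'\|_{TV}\}=\dist'_{1}(u',v')=\dist_1(u,v)$, which is the claim. I expect the real work — and the only obstacle — to be purely measure-theoretic: producing jointly measurable disintegrations and Radon--Nikodym densities, and justifying the Fubini and total-variation tensorization identities used throughout; conceptually nothing goes beyond the countable argument.
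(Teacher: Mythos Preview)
Your proof is correct and follows essentially the same route as the paper: both reduce $\|u.q_2-q_1.v\|_{TV}\ge\|u-q_1.v\|_{TV}$ to the pointwise inequality $|x+y|\ge|x|+\mathrm{sgn}(x)\,y$ together with the observation that the cross term integrates to zero because the garbled $D$-marginal has the same total mass. The only cosmetic difference is the order of operations: the paper takes global dominating measures $\pi_1$ on $K\times C$ and $\pi_2$ on $K\times D$ and then disintegrates along $K$, whereas you first disintegrate along $K$ and then dominate fiberwise with $\pi^1_k,\pi^2_k$; either way one faces the same joint-measurability bookkeeping you flag at the end.
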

Let us first introduce some additional notation. Let $u_{S}=\marg_{S}(u)$
for $S=K,K\times C,K\times D$. Let $u_{1}:K\rightarrow\Delta(C)$
denote a version of the conditional law of $c$ given $k$ induced
by $u$, and $u_{2}:K\rightarrow\Delta(D)$ denote a version of the
conditional law of $d$ given $k$ induced by $u$. Let $v_{S}$,
$v_{1}:K\rightarrow\Delta(C)$ and $v_{2}:K\rightarrow\Delta(D)$
be defined as above, and note that by assumptions $v_{K}=u_{K}$ and
$u_{2}=v_{2}$, and that the conditional law of $(c,d)$ given $k$
induced by $u$ is given by $u_{1}(.|k)\otimes u_{2}(.|k)$ (and similarly
for $v$). \p Let us fix a pair of garblings $q_{1},q_{2}$. Consider
the measure $\pi_{2}=\frac{1}{2}(u_{K\times D}+(u.q_{2})_{K\times D})$
on $K\times D$ and the Radon-Nikodym densities 
\[
\phi_{2}=\frac{du_{K\times D}}{d\pi_{2}},\;\psi_{2}=\frac{d(u.q_{2})_{K\times D}}{d\pi_{2}}.
\]
Similarly, let $\pi_{1}=\frac{1}{2}(u_{K\times C}+(q_{1}.v)_{K\times C})$
and 
\[
\phi_{1}=\frac{du_{K\times C}}{d\pi_{1}},\;\psi_{1}=\frac{d(q_{1}.v)_{K\times C}}{d\pi_{1}}.
\]
Note that $\marg_{K}(\pi_{1})=\marg_{K}(\pi_{2})=u_{K}$. \p For
any and any bounded measurable function $f$ we have using the conditional
independence assumption: 
\begin{align*}
\int_{K\times C\times D}fd(u.q_{2}) & =\int_{K\times D}\int_{C}f(k,c,d)du_{1}(c|k)d(u.q_{2})_{K\times D}(k,d)\\
 & =\int_{K\times D}\int_{C}f(k,c,d)du_{1}(c|k)\psi_{2}(k,d)d\pi_{2}(k,d)\\
 & =\int_{K\times C}\int_{D}f(k,c,d)\psi_{2}(k,d)d\pi_{2}(d|k)du_{K\times C}(k,c)\\
 & =\int_{K\times C}\int_{D}f(k,c,d)\psi_{2}(k,d)d\pi_{2}(d|k)\phi_{1}(k,c)d\pi_{1}(k,c)\\
 & =\int_{K}\int_{D}\int_{C}f(k,c,d)\psi_{2}(k,d)\phi_{1}(k,c)d\pi_{1}(c|k)d\pi_{2}(d|k)du_{K}(k).
\end{align*}
Similarly, one has 
\begin{align*}
\int_{K\times C\times D}fd(q_{1}.v)=\int_{K}\int_{D}\int_{C}f(k,c,d)\phi_{2}(k,d)\psi_{1}(k,c)d\pi_{1}(c|k)d\pi_{2}(d|k)du_{K}(k).
\end{align*}

We obtain 
\begin{align*}
\int_{K\times C\times D} & fd(u.q_{2}-q_{1}.v)\\
 & =\int_{K}\int_{D}\int_{C}f(k,c,d)\left[\psi_{2}(k,d)\phi_{1}(k,c)-\phi_{2}(k,d)\psi_{1}(k,c)\right]d\pi_{2}(d|k)d\pi_{1}(c|k)du_{K}(k),
\end{align*}
so that the supremum over all measurable $f$ bounded by $1$ is equal
to 
\begin{align*}
2 & \|u.q_{2}-q_{1}.v\|_{TV}=\int_{K}\int_{D}\int_{C}\left|\psi_{2}(k,d)\phi_{1}(k,c)-\phi_{2}(k,d)\psi_{1}(k,c)\right|d\pi_{2}(d|k)d\pi_{1}(c|k)du_{K}(k)\\
 & =\int_{K}\int_{D}\int_{C}\left|(\psi_{2}(k,d)-\phi_{2}(k,d))\phi_{1}(k,c)+\phi_{2}(k,d)(\phi_{1}(k,c)-\psi_{1}(k,c))\right|d\pi_{2}(d|k)d\pi_{1}(c|k)du_{K}(k)
\end{align*}
Because $\left|x+y\right|\geq|x|+\text{sgn}(x)y$ for each $x,y\in\R$,
we have 
\begin{align*}
2 & \|u.q_{2}-q_{1}.v\|_{TV}\\
 & \geq\int_{K}\int_{D}\int_{C}\phi_{2}(k,d)\left|\phi_{1}(k,c)-\psi_{1}(k,c)\right|d\pi_{2}(d|k)d\pi_{1}(c|k)du_{K}(k)\\
 & +\int_{K}\int_{D}\int_{C}(\psi_{2}(k,d)-\phi_{2}(k,d))\phi_{1}(k,c)\text{sgn}(\phi_{1}(k,c)-\psi_{1}(k,c))d\pi_{2}(d|k)d\pi_{1}(c|k)du_{K}(k)\\
 & =\int_{K}\int_{D}\int_{C}\phi_{2}(k,d)\left|\phi_{1}(k,c)-\psi_{1}(k,c)\right|d\pi_{2}(d|k)d\pi_{1}(c|k)du_{K}(k)\\
 & =2\|u-q_{1}.v\|_{TV}
\end{align*}
where the last equality is obtained exactly as above with $2\|u.q_{2}-q_{1}.v\|_{TV}$.

\p We deduce that $\min_{q_{1},q_{2}}\left\Vert u.q_{2}-q_{1}.v\right\Vert _{TV}=\min_{q_{1}}\left\Vert u-q_{1}.v\right\Vert _{TV}.$
Inverting the roles of the players, we also have $\min_{q_{1},q_{2}}\left\Vert v.q_{2}-q_{1}.y\right\Vert _{TV}=\min_{q_{1}}\left\Vert v-q_{1}.u\right\Vert _{TV}.$
We conclude that 
\begin{align*}
\dist(u,v) & =\max\{\min_{q_{1},q_{2}}2\left\Vert u.q_{2}-q_{1}.v\right\Vert _{TV};\min_{q_{1},q_{2}}2\left\Vert v.q_{2}-q_{1}.y\right\Vert _{TV}\}\\
 & =\max\{\min_{q_{1}}2\left\Vert u-q_{1}.v\right\Vert _{TV};\min_{q_{1}}2\left\Vert v-q_{1}.u\right\Vert _{TV}\}=\dist_{1}(u,v),
\end{align*}
where the last equality follows from (\ref{eeq:d_1_marg}) together
with the fact that $\marg_{K\times D}u=\marg_{K\times D}v$.

\subsubsection{Extension of Proposition \ref{prop. Info substitutes}}

Proposition \ref{prop. Info substitutes} holds using the same ideas and a few technical
adaptations described below. \p Let $C=C_{1}=C_{2}=D=[0,1]$. 
\[
\begin{split} & u\in\Delta\left(K\times(C\times C_{1}\times C_{2})\times D\right)\text{ and }v=\marg_{K\times(C\times C_{1})\times D}u,\\
 & u^{\prime}=\marg_{K\times(C\times C_{2})\times D}u,\text{ and }v^{\prime}=\marg_{K\times C\times D}u.
\end{split}
\]

\begin{prop}
Suppose that, under $u$, $c_{1}$ is conditionally independent from
$\left(c,c_{2},d\right)$ given $k$. Then, $\dist\left(u^{\prime},v^{\prime}\right)\geq\dist\left(u,v\right)$. 
\end{prop}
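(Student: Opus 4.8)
The plan is to follow the countable proof of Proposition~\ref{prop. Info substitutes} line by line, replacing sums by integrals, the $\ell^1$ distance by twice the total variation distance, and the elementary rearrangements by disintegrations over the state (the signal spaces may be kept as the products $C\times C_1\times C_2$, $C\times C_1$, $C\times C_2$, $C$, $D$, since they are standard Borel; cf.\ the Remark after Corollary~\ref{cor}). First I would record that $v$ is obtained from $u$ by having player~1 forget the coordinate $c_2$, so $v\preceq u$, and likewise $v'\preceq u'$. Hence, by the extension of Theorem~\ref{thm1} (Theorem~\ref{thm:(Extension-of-Theorem}), writing $q_1,q_2$ for garblings of the two players' signals,
\[
\tfrac12\dist(u,v)=\min_{q_1,q_2}\|q_1.v-u.q_2\|_{TV},\qquad
\tfrac12\dist(u',v')=\min_{q_1,q_2}\|q_1.v'-u'.q_2\|_{TV},
\]
and in the second minimum I may restrict $q_1$ to transitions $C\to\Delta(C\times C_2)$, since any mass that $q_1.v'$ places outside the support of the player-1 signal of $u'$ only increases the norm.

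Next, fix a garbling $q_2\colon D\to\Delta(D)$ and a transition $q_1\colon C\to\Delta(C\times C_2)$, and define the lifted garbling $\hat{q}_{1}\colon C\times C_1\to\Delta(C\times C_1\times C_2)$ by $\hat{q}_{1}\big((dc',dc_1',dc_2')\mid(c,c_1)\big)=\delta_{c_1}(dc_1')\otimes q_1\big((dc',dc_2')\mid c\big)$, which copies $c_1$ and feeds $c$ into $q_1$; it is a measurable element of $\CQ_{c}$. The crux of the argument is the identity
\[
\|\hat{q}_{1}.v-u.q_2\|_{TV}=\|q_1.v'-u'.q_2\|_{TV}.
\]

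To prove this I would disintegrate over the state. Write $\lambda=\marg_{K}u$ and take regular conditional laws $u_k,v_k,u'_k,v'_k$ given $k$ (these exist, all spaces being standard Borel). The hypothesis that $c_1$ is conditionally independent of $(c,c_2,d)$ given $k$ says exactly that $u_k=\mu_k\otimes\rho_k$ for $\lambda$-a.e.\ $k$, with $\mu_k\in\Delta(C_1)$ the conditional law of $c_1$ and $\rho_k\in\Delta(C\times C_2\times D)$ that of $(c,c_2,d)$. Since a garbling acting on one block of coordinates preserves such a product, one checks that for $\lambda$-a.e.\ $k$ one has $(u.q_2)_k=\mu_k\otimes(u'.q_2)_k$ and $(\hat{q}_{1}.v)_k=\mu_k\otimes(q_1.v')_k$, where $(u'.q_2)_k=\rho_k.q_2$ and $(q_1.v')_k$ is the law of $(c',c_2',d)$ when $(c,d)\sim\marg_{C\times D}\rho_k$ and $(c',c_2')\sim q_1(\cdot\mid c)$. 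All four measures share the marginal $\lambda$ on $K$, so total variation disintegrates, $\|\nu_1-\nu_2\|_{TV}=\int_K\|(\nu_1)_k-(\nu_2)_k\|_{TV}\,\lambda(dk)$, and for a fixed first factor $\|\mu_k\otimes\sigma\|_{TV}=\|\sigma\|_{TV}$ for any signed $\sigma$; both facts follow from the duality formula $2\|\,\cdot\,\|_{TV}=\sup_{\|f\|_\infty\le1}\int f\,d(\cdot)$ of Section~\ref{subsection_TV}. Subtracting the two product formulas, taking $\|\cdot\|_{TV}$ fibrewise, and integrating against $\lambda$ gives the displayed identity.

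Finally, since $\hat{q}_{1}$ is one admissible garbling of player~1's signal in $v$, the identity yields $\tfrac12\dist(u,v)\le\|\hat{q}_{1}.v-u.q_2\|_{TV}=\|q_1.v'-u'.q_2\|_{TV}$; taking the infimum over $q_2$ and over $q_1\colon C\to\Delta(C\times C_2)$ gives $\tfrac12\dist(u,v)\le\tfrac12\dist(u',v')$, i.e.\ $\dist(u,v)\le\dist(u',v')$. I expect the main obstacle to be the two product disintegrations $(u.q_2)_k=\mu_k\otimes(u'.q_2)_k$ and $(\hat{q}_{1}.v)_k=\mu_k\otimes(q_1.v')_k$: these rest on measurability of $k\mapsto(\mu_k,\rho_k)$, uniqueness of disintegration, and Fubini for transition kernels, together with the elementary but easy-to-misstate facts that total variation disintegrates over a common marginal and that $\|\mu\otimes\sigma\|_{TV}=\|\sigma\|_{TV}$.
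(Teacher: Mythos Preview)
Your proof is correct and follows essentially the same route as the paper: restrict to the lifted garblings $\hat q_1$ built from $q_1\colon C\to\Delta(C\times C_2)$, and compare $\|u.q_2-\hat q_1.v\|_{TV}$ with $\|u'.q_2-q_1.v'\|_{TV}$. The only difference is in how the key comparison is carried out. The paper uses the dual formula $2\|\cdot\|_{TV}=\sup_{|f|\le 1}\int f\,d(\cdot)$ and shows that for every test function $f$ on $K\times C\times C_1\times C_2\times D$ one has $\int f\,d(u.q_2-\hat q_1.v)=\int h\,d(u'.q_2-q_1.v')$ with $h(k,c,c_2,d)=\int f(k,c,c_1,c_2,d)\,du(c_1\mid k)$, hence $\|u.q_2-\hat q_1.v\|_{TV}\le\|u'.q_2-q_1.v'\|_{TV}$. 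You instead disintegrate over $k$ and use the product structure $(u.q_2)_k=\mu_k\otimes(u'.q_2)_k$, $(\hat q_1.v)_k=\mu_k\otimes(q_1.v')_k$ to obtain the equality $\|u.q_2-\hat q_1.v\|_{TV}=\|q_1.v'-u'.q_2\|_{TV}$. Your identity is in fact equivalent to the paper's (since every $|h|\le 1$ arises from $f=h$, constant in $c_1$), so the two arguments are the same computation viewed from different sides; your packaging is arguably cleaner. One small remark: your aside that you ``may restrict $q_1$ to transitions $C\to\Delta(C\times C_2)$'' in the formula for $\dist(u',v')$ is unnecessary---that is already the natural domain of $q_1$ there, not a restriction.
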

Recall that Theorem \ref{thm1} holds for more general signal spaces. Because
$u\succeq v$, 
\[
\dist\left(u,v\right)=\min_{q_{2}}\min_{q_{1}}2\left\Vert u.q_{2}-q_{1}.v\right\Vert _{TV}\leq\min_{q_{2}}\min_{q_{1}:C\rightarrow\Delta\left(C\times C_{2}\right)}2\left\Vert u.q_{2}-\hat{q}_{1}.v\right\Vert _{TV},
\]
where $q_{1}$ ranges through the set of garblings from $C\times C_{1}$
to $\Delta(C\times C_{1}\times C_{2})$, $q_{2}$ ranges through the
set of garblings from $D$ to $\Delta(D)$ and where in the right-hand
side of the inequality, we use a restricted set of player 1's garblings.
Precisely, given $q_{1}:C\rightarrow\Delta\left(C\times C_{2}\right)$,
we define the garbling $\hat{q}_{1}$ by the relation 
\[
\int_{C'\times C'_{1}\times C'_{2}}fd\hat{q}_{1}(c,c_{1})=\int_{C'\times C'_{2}}f(c',c_{1},c'_{2})dq_{1}(c',c'_{2}|c)
\]
where $f$ is an arbitrary bounded measurable function. Further, for
any such $q_{1}$, an arbitrary garbling $q_{2}$ and an arbitrary
measurable function $f$ bounded by $1$, we have 
\[
\begin{split} & \int fd(u.q_{2}-\hat{q}_{1}.v)\\
 & =\int_{K\times C\times C_{1}\times C_{2}\times D}\left[\int_{D'}f(k,c,c_{1},c_{2},d')dq_{2}(d'|d)-\int_{C'\times C'_{2}}f(k,c',c_{1},c'_{2},d)dq_{1}(c',c'_{2}|c)\right]du(k,c,c_{1},c_{2},d)\\
 & =\!\int_{K}\!\int_{C\times C_{2}\times D}\!\int_{C_{1}}\!\!\left[\!\int_{D'}f(k,c,c_{1},c_{2},d')dq_{2}(d'|d)-\!\int_{C'\times C'_{2}}f(k,c',c_{1},c'_{2},d)dq_{1}(c',c'_{2}|c)\right]\!\!du(c_{1}|k)du(c,c_{2},d|k)du(k)\\
 & =\int_{K}\int_{C\times C_{2}\times D}\left[\int_{D'}h(k,c,c_{2},d')dq_{2}(d'|d)-\int_{C'\times C'_{2}}h(k,c',c'_{2},d)dq_{1}(c',c'_{2}|c)\right]du(c,c_{2},d|k)du(k)\\
 & =\int_{K\times C\times C_{2}\times D}hd(u^{\prime}.q_{2}-q_{1}.v{}^{\prime})
\end{split}
\]
where we used the conditional independence assumption, and where the
function $h$ is a measurable function bounded by $1$ and defined
by 
\[
h(k,c,c_{2},d)=\int_{C_{1}}f(k,c,c_{1},c_{2},d)du(c_{1}|k).
\]
We deduce that 
\[
\min_{q_{2}}\min_{q_{1}:C\rightarrow\Delta\left(C\times C_{2}\right)}2\left\Vert u.q_{2}-\hat{q}_{1}.v\right\Vert _{TV}\leq2\left\Vert u^{\prime}.q_{2}-q_{1}.v{}^{\prime}\right\Vert _{TV}.
\]
Hence 
\[
\dist\left(u,v\right)\leq\min_{q_{2}}\min_{q_{1}:C\rightarrow\Delta\left(C\times C_{2}\right)}2\left\Vert u^{\prime}.q_{2}-q_{1}.v{}^{\prime}\right\Vert _{TV}=\dist\left(u^{\prime},v{}^{\prime}\right),
\]
and this concludes the proof.

\subsubsection{Extension of Proposition \ref{prop: joint info}}

Proposition \ref{prop: joint info} holds by adapting the definitions. \p Precisely,
we define $\varepsilon$ conditional independence using the total
variation norm. Consider a distribution $\mu\in\Delta\left(X\times Y\times Z\right)$
over compact metric spaces. We say that the random variables $x$
and $y$ are $\varepsilon$-conditionally independent given $z$ under
$\mu$ if 
\[
\int_{Z}\int_{X\times Y}2\left\Vert \mu\left(x,y|z\right)-\mu\left(x|z\right)\otimes\mu\left(y|z\right)\right\Vert _{TV}d\mu(z)\leq\varepsilon.
\]
Let $C=C_{1}=D=D_{1}=[0,1]$. Let $u\in\Delta\left(K\times(C\times C_{1})\times(D\times D_{1})\right)$
and $v=\text{marg}_{K\times C\times D}u$. 
\begin{prop}
Suppose that $d_{1}$ is $\varepsilon$-conditionally independent
from $\left(k,c\right)$ given $d$, and $c_{1}$ is $\varepsilon$-conditionally
independent from $\left(k,d\right)$ given $c$. Then, $\dist\left(u,v\right)\leq\varepsilon.$ 
\end{prop}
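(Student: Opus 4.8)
The plan is to mimic the proof of the countable case (Proposition~\ref{prop: joint info}) verbatim, substituting total-variation norms for the $\ell_1$ norms and integrals for sums, and invoking the uncountable version of Theorem~\ref{thm1} (Theorem~\ref{thm:(Extension-of-Theorem}) in place of Theorem~\ref{thm1}. As in the countable argument, it suffices to show that if $c_1$ is $\varepsilon$-conditionally independent from $(k,d)$ given $c$, then $\sup_{g\in\CG}\bigl(\val(u,g)-\val(v,g)\bigr)\leq\varepsilon$; the other half of the distance (swapping the roles of the players, using the $d_1$ hypothesis) is symmetric, and the two halves together bound $\dist(u,v)$ by $\varepsilon$.

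First I would fix the two garblings exactly as in the countable proof. Let $q_2:D\times D_1\to\Delta(D)$ be the deterministic garbling $q_2(d,d_1)=\delta_{d}$ that simply forgets $d_1$, so that $u.q_2=\marg_{K\times(C\times C_1)\times D}u$ viewed as an information structure with player 2's signal in $D$. Let $q_1:C\to\Delta(C\times C_1)$ be a regular version of the conditional law of $(c,c_1)$ given $c$ under $u$, i.e. $q_1(c)=\delta_c\otimes u(\cdot\,|\,c)$ (where $u(\cdot\,|\,c)$ denotes a version of the conditional law of $c_1$ given $c$); then $q_1.v$ reconstructs a candidate joint law on $K\times(C\times C_1)\times D$ in which $c_1$ is attached to $c$ using the marginal conditional $u(c_1|c)$. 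Both are legitimate elements of $\CQ_c$ after the standard relabeling of signal spaces as Borel subsets of $[0,1]$, as discussed in the Remark following Theorem~\ref{thm:(Extension-of-Theorem}.

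Next I would compute $2\|u.q_2-q_1.v\|_{TV}$. Using the formula $2\|\mu-\nu\|_{TV}=\int|d\mu/d\pi-d\nu/d\pi|\,d\pi$ from Section~\ref{subsection_TV} and disintegrating over $c$, this total-variation distance equals
\[
\int_{C}\int_{K\times C_1\times D}\Bigl\lvert\, u(k,c_1,d\,|\,c)-u(k,d\,|\,c)\,u(c_1\,|\,c)\,\Bigr\rvert\;du_C(c),
\]
where $u_C=\marg_C u$, and where the integrand is precisely the density (against the appropriate reference measure on $K\times C_1\times D$) of the signed measure $u(\cdot\,|\,c)-u(k,d|c)\otimes u(c_1|c)$. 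This is exactly $\int_C 2\|u(k,c_1,d|c)-u(k,d|c)\otimes u(c_1|c)\|_{TV}\,du_C(c)$, which is $\le\varepsilon$ by the ($\varepsilon$-conditional independence) hypothesis in its total-variation form from the extension subsection. Finally, by Theorem~\ref{thm:(Extension-of-Theorem}, $\sup_{g\in\CG}(\val(u,g)-\val(v,g))=2\min_{q_1',q_2'}\|q_1'.v-u.q_2'\|_{TV}\le 2\|u.q_2-q_1.v\|_{TV}\le\varepsilon$, completing that direction.

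The main obstacle is purely measure-theoretic bookkeeping rather than any new idea: one must check that $q_1$ as defined is a bona fide transition probability (existence of a regular conditional distribution, which holds since all signal spaces are standard Borel), that $q_1.v$ and $u.q_2$ are computed correctly as pushforwards, and—most delicately—that the disintegration of the total-variation distance over the conditioning variable $c$ is valid, i.e. that $2\|u.q_2-q_1.v\|_{TV}$ really does factor as the stated integral of conditional total-variation distances. This is a standard fact (Fubini for the reference measure $\pi=u_C\otimes(\text{reference on }K\times C_1\times D)$, applied to the signed density), but it is the step where one must be careful that both $u.q_2$ and $q_1.v$ share the marginal $u_C$ on $C$, so that conditioning on $c$ is well-defined simultaneously for both and the signed measure genuinely disintegrates. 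Everything else is a direct transcription of the countable proof.
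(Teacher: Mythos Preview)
Your proposal is correct and follows essentially the same approach as the paper: define the garblings $q_2(d,d_1)=\delta_d$ and $q_1(c)=\delta_c\otimes u(c_1|c)$, bound $2\|u.q_2-q_1.v\|_{TV}$ by the integral of conditional total-variation distances, and invoke the extended Theorem~\ref{thm1}. The only difference is that the paper sidesteps the disintegration bookkeeping you flag as the ``main obstacle'' by using the duality formula $2\|\mu-\nu\|_{TV}=\sup_{|f|\leq 1}\int f\,d(\mu-\nu)$ directly: it tests $u.q_2-q_1.v$ against an arbitrary bounded measurable $f$, rewrites the integral by conditioning on $c$, and bounds the inner bracket pointwise by $2\|u(k,c_1,d|c)-u(c_1|c)\otimes u(k,d|c)\|_{TV}$, which is cleaner than working with densities against a common reference measure.
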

It is enough to show that if $c_{1}$ is $\varepsilon$-conditionally
independent from $\left(k,d\right)$ given $c$, then 
\[
\sup_{g\in\CG}\val\left(u,g\right)-\val\left(v,g\right)\leq\varepsilon.
\]
For this, let $q_{2}:D\times D_{1}\rightarrow D$ be defined as $q_{2}\left(d,d_{1}\right)=\delta_{d}$
and let $q_{1}:C\rightarrow C\times C_{1}$ be defined as $q_{1}\left(c,c_{1}|c\right)=u\left(c_{1}|c\right)$.
Then, for any measurable function $f$ bounded by $1$: 
\begin{align*}
\int  & fd(u.q_{2}-q_{1}.v) \\
& =\int_{K\times C\times C_{1}\times D}\left[f(k,c,c_{1},d)-\int_{C'_{1}}f(k,c,c'_{1},d)du(c'_{1}|c)\right]du(k,c,c_{1},d)\\
 & =\int_{C}\left[\int_{K\times C_{1}\times D}f(k,c,c_{1},d)du(k,c_{1},d|c)-\int_{K\times D}\int_{C'_{1}}f(k,c,c'_{1},d)du(c'_{1}|c)du(k,d|c)\right]du(c)\\
 & \leq\int_{C}2\|u(k,c_{1},d|c)-u(c'_{1}|c)\otimes u(k,d|c)\|_{TV}du(c)\\
 & \leq\varepsilon.
\end{align*}
The claim follows from Theorem \ref{thm1}.

 \bibliographystyle{ecca}
\bibliography{zerosum03042020}

\end{document}